\documentclass[a4paper,twoside]{article}
\usepackage[utf8]{inputenc}
\usepackage[tbtags]{amsmath}                 
\usepackage{amssymb}                
\usepackage{amsbsy}
\usepackage{geometry}                
\usepackage[style=1]{mdframed}
\usepackage{amsthm}
\usepackage{amsfonts}
\usepackage{mathrsfs}                
\usepackage{bm}                      
\usepackage{cite}

\usepackage{lettrine}                

\usepackage[toc,page]{appendix}
\usepackage{enumerate}
\usepackage{titlesec}  
\usepackage{booktabs} 
\usepackage{threeparttable}
\usepackage{multirow} 
\usepackage{indentfirst}
\usepackage{xcolor}
\usepackage[numbers, sort&compress]{natbib}
\usepackage{fancyhdr}
\usepackage{enumitem}
\usepackage{hyperref}
\usepackage{graphicx}
 \usepackage{caption}
 \usepackage{extarrows}
\hypersetup{
    colorlinks=true,
    linkcolor=teal,
    filecolor=magenta, 
    urlcolor=cyan,
    citecolor=teal,
}

\titleformat{\subsection}{\large\bfseries}{\thesubsection\enspace}{1.5pt}{}
\titleformat{\subsubsection}{\it}{\thesubsubsection\enspace}{1.5pt}{}
\fancypagestyle{plain}
{
	\fancyhf{}
	
}

 \numberwithin{equation}{section}

\newtheorem{theorem}{Theorem}[section]            

\newtheorem{proposition}{Proposition}[section]
\newtheorem{lemma}{Lemma}[section]

\newtheorem{remark}{Remark}[section]

\newcommand{\curl}{\operatorname{curl}}
 \newcommand{\divv}{\operatorname{div}}
\newcommand{\tr}{\operatorname{tr}}
\newcommand{\cmode}[1]{\left\lvert {#1} \right\rvert}
\newcommand{\mode}[1]{\left\lVert {#1} \right\rVert}
\newcommand{\pa}{\partial}

\newcommand{\ot}{{\Omega_t}}
\newcommand{\gt}{{\Gamma_t}}
\newcommand{\gb}{{\Gamma_b}}
\newcommand{\dt}{D_t}
\newcommand{\at}{{\tau_t}}
\newcommand{\nt}{{n_t}}
\newcommand{\ab}{{\tau_b}}
\newcommand{\nb}{{n_b}}
\newcommand{\ddt}{\frac{\mathrm{d}}{\mathrm{d}t}}
\newcommand{\id}{\,\mathrm{d}}
\newcommand{\nat}{\nabla_\at}
\newcommand{\nnt}{\nabla_\nt}

\newcommand{\pe}{P\left(E_\ell(t)\right)}

\newcommand{\pall}{P\left(\mathcal{E}(t)\right)}

\newcommand{\lt}[1]{\left\lVert {#1} \right\rVert_{L^2(\ot)}}

\newcommand{\lgt}[1]{\left\lVert {#1} \right\rVert_{L^2(\gt)}}
\newcommand{\eo}{P\left(\mathcal{E}(0)\right)}
\newcommand{\rhs}{\left(\tr(\rho(\nabla v)^2)+\frac{1}{\rho}
(\dt\rho)^2\right)}

\newcommand{\llx}[1]{\left|\left.{#1} \right|_{X_i}\right|}
\newcommand{\xc}[1]{\left({#1}-\left.{#1}\right|_{X_i}\right)}
\newcommand{\tha}[1]{\mode{{#1}}_{\tilde{H}^\frac{1}{2}(\gt)}}
\newcommand{\thb}[1]{\mode{{#1}}_{\tilde{H}^{-\frac{1}{2}}
(\gt)}}

\newcommand{\st}{{S_t}}
\newcommand{\dgt}{{d_\gt}}
\newcommand{\gtx}{{\Gamma_{t*}}}

\newcommand{\lam}{\Lambda(\gtx,s,\delta,\pi/2)}
\newcommand{\g}{\mathrm{g}}
\newcommand{\nk}[1]{#1^{k}}
\newcommand{\nkj}[1]{#1^{k+1}}
\newcommand{\pstk}[1]{\left(\Phi_\st^{k{#1}}\right)^{-1}}
\newcommand{\dgtk}[1]{d_\gt^{k{#1}} }
\newcommand{\pallk}{\mathcal{P}_0 P\left(\mathcal{E}^{k+1}(t)\right)}

\newcommand{\bk}[1]{\overline{#1^{k}}}
\newcommand{\bkj}[1]{\overline{#1^{k+1}}}
\newcommand{\tkj}[1]{\widetilde{#1^{k+1}}}
\newcommand{\tk}[1]{\widetilde{#1^k}}

\newcommand{\gth}[1]{\overline{#1}}
\newcommand{\oth}[1]{\widetilde{#1}}

\newcommand{\po}{ \mathcal{P}_0}
\newcommand{\rv}{\nkj{\mathring{v}}}
\newcommand{\rp}{\nk{\rho}}
\begin{document}

\bibliographystyle{gbt7714-2005} 
\setlength{\baselineskip}{14pt}

\title
{\textbf{\large  Local Well-Posedness for  Compressible  Capillary-Gravity\\ Water Waves with 
 Acute Contact Angles
}}

 \author{Jingchi Huang$^\dag$\quad  Shanmu Li$^\ddag$ \quad Chao Wang$^*$}  

\footnotetext{Email:  
\it $^\dag$ huangjch25@mail.sysu.edu.cn\quad 
\it $^\ddag$ lishanmu@math.pku.edu.cn\quad
\it $^*$wangchao@math.pku.edu.cn}
      
 \date{}
\maketitle

\begin{abstract}

    Our purpose is to investigate the local well-posedness of the compressible Euler equations in a two-dimensional bounded corner domain with  acute contact angles. This configuration describes a free surface intersecting the fixed bottom at two points, where the fluid is subject to a gravitational field and the interface between the fluid and air is influenced by capillary forces. When the contact angles are less than $\pi/2$, we establish a local existence theory for the solution, with dissipation effects occurring at the contact points. The main analytical challenge arises from  contact point singularities, which renders previous methods for dealing with compressible free boundary problems inadequate. To overcome this, we first establish the geometric structure for the compressible Euler equations, an approach originally introduced by Shatah and Zeng \cite{Shatah2008} for incompressible fluids. Additionally, we provide a singularity analysis for the wave equations in the corner domain, which ensures the validity of calculations near the corner. Finally, based on the geometric structure and singularity analysis, we obtain a priori energy estimates. Using these estimates, we also prove the local well-posedness of the system in a geometric formulation. To our knowledge, this is the first result addressing compressible Euler equations with a free boundary that involves contact points.

\end{abstract}

\noindent{\sl Keywords:} Compressible water waves, free boundary, contact point dynamics, well-posedness

\noindent{\sl AMS Subject Classifications:} 35R35, 35Q35, 76N10, 76B15.



\linespread{1.4}

\section{Introduction} 
\subsection{Equations of motion}
In this paper, we consider a compressible  fluid in a bounded two-dimensional corner domain $\ot$ with  an upper free surface $\gt$ and a fixed bottom $\gb$. This time-dependent domain $\ot$ contains two intersection points $X_i,\, i=l,r$ (contact points) on the left and right side: $\gt\cap\gb=\{X_i,i=l,r\}$. The contact angles between the free surface and the bottom are denoted by $\theta_i(t),\, i=l,r$.
 Moreover,  the fixed bottom $\gb$ is assumed to be smooth enough, and reduces to  a line segment near the contact points $X_i(i=l,r)$ for the sake of simplicity. The situation is drawn in Figure~\ref{fig1}.
 \begin{figure}[htbp]
    \centering
    \includegraphics[width=0.46\linewidth]{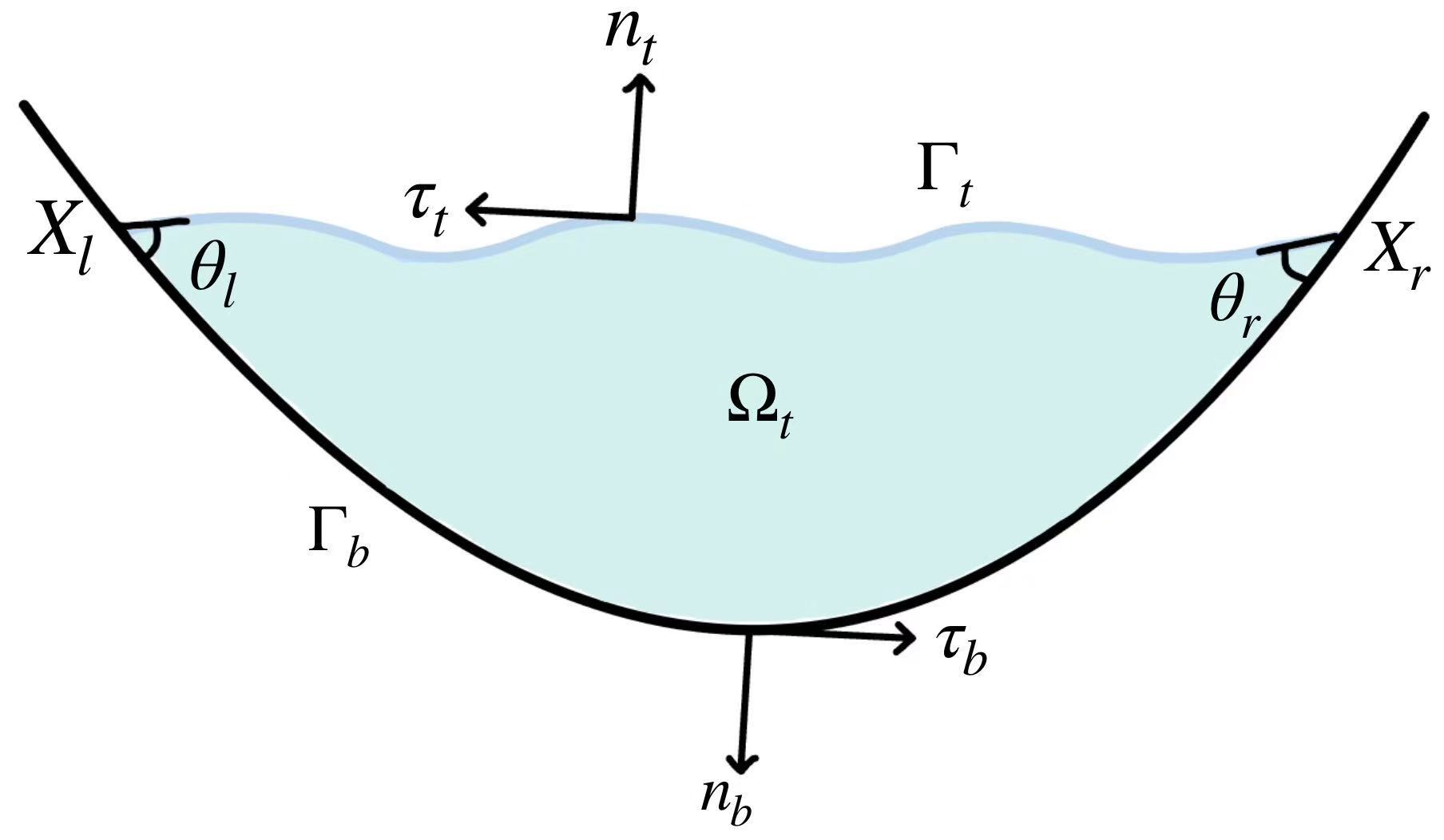}
    \caption{The corner domain}
    \label{fig1}
\end{figure}

The motion of an inviscid compressible liquid with a moving free boundary is modeled by the Euler equations:
 \begin{equation}
     \begin{cases}
         \rho (\pa_t v+v\cdot\nabla v)+\nabla P=-\rho \g e_2 & \text{in }\ot,\\ \pa_t\rho+\divv(\rho v)=0& \text{in }\ot,
     \end{cases}\label{E1}
 \end{equation}
where $v=(v_1,v_2)^T$ denotes the velocity field,  $P$  denotes the pressure of the fluid,  
and  $\rho$ denotes the density. We assume that the fluid is under the influence of gravity $-\rho \g e_2$, with constant $\g>0$ and $e_2=(0,1)^T$.
 The  equations from \eqref{E1} are conservation laws for momentum and mass. Here, we consider the free boundary problem, for which  the boundary conditions are 
 \begin{equation}\label{E3}
     \begin{cases}
         D_t= \pa_t+v\cdot\nabla \text{ is tangent to }\gt,\\
          P|_{\gt}=\sigma\kappa,\\
         v\cdot n_b|_{\gb}=0,
     \end{cases}
 \end{equation}
where $\dt$ is the material derivative,  $\sigma$ is the coefficient of surface tension,  $\kappa$ is the mean curvature of $\gt$,  and  $n_b$ is the unit outward normal vector of $\gb$. The first boundary condition in \eqref{E3}  is the classical kinematic condition on the free surface. The second one states that the fluid stress is proportional to the mean curvature of the moving surface, with the proportionality constant defining the surface tension parameter $\sigma$. The last one describes that the velocity along the fixed bottom $\gb$ is always tangential, which means there are no fluid particles transported at the bottom.
 
Surface tension effects necessitate additional boundary conditions at the contact points. The dynamic behavior of the contact line or point is significantly  delicate. The equilibrium contact angle $ \theta_s$ between the solid wall and the fluid is characterized by  Young's equation \cite{young 1805}\[\cos \theta_s=\frac{[\varsigma]}{\sigma}=\frac{\varsigma_1-\varsigma_2}{\sigma},\] where the  energetic parameters $\varsigma_1,\varsigma_2\in \mathbb{R}$ measure the free-energy per unit length associated to the solid-vapor and solid-fluid interactions, respectively.
As established  by \cite{E weinan},  the dynamic contact angle $\theta_i$ and the equilibrium angle $\theta_s$ are related via 
 \begin{equation}\label{E4}
    \mp \beta_c v\cdot\ab=\sigma(\cos \theta_s-\cos\theta_i) \quad  \text{at }X_i, i=l,r.
 \end{equation}
  Here, $v\cdot\tau_b$ denotes the fluid speed at the contact point along $\gb$,  and $\beta_c$ denotes the effective friction coefficient.
    This condition shows that the slip velocity is dominated by the unbalanced Young stress \cite{young 1805}, and it is indeed an effective variation of Young's law for stationary contact angles.

 We consider isentropic dynamics and
 use an equation-of-state to model the motion of a compressible liquid, which is  given by
 \begin{equation}\label{E5}
     P=\mathrm{A}\rho^\gamma-\mathrm{B}, \text{ for parameters }\mathrm{A},\mathrm{B}>0, \gamma>1.
 \end{equation} 
 Moreover, we assume the initial density $\rho_0$ is strictly positive, i.e., there  exists a positive constant $c_0>0$ such that
\begin{equation}\label{E1.15}
     \rho_0\geq c_0>0\quad \text{ in }\quad \overline{\Omega}_0.
 \end{equation}
To simplify the notation, 
 it is convenient to set $\mathrm{A}=\frac{1}{2}$ and $\gamma=2$ so that $\frac{\nabla P}{\rho}=\nabla \rho$. 

 Using the equation-of-state \eqref{E5},  the compressible Euler equations \eqref{E1} with boundary conditions \eqref{E3}-\eqref{E4} are equivalently written as the following system of velocity $v$ and density $\rho$,
 \begin{equation}
     \begin{cases}
         D_t v+\nabla\rho=- \g e_2 &\text{ in }\ot,\\
         \frac{1}{\rho}D_t\rho+\divv v=0 &\text{ in }\ot,\\
         \frac{1}{2}\rho^2=\sigma\kappa+\mathrm{B} &\text{ on }\gt,\\
         \dt=\pa_t+v\cdot\nabla \text{ is tangent to }\gt&\text{ on }\gt,\\
         v\cdot n_b=0&\text{ on }\gb,\\
         \mp\beta_cv\cdot\ab=\sigma(\cos \theta_s-\cos\theta_i) & \text{ at }X_i(i=l,r). 
     \end{cases}\label{E2}
 \end{equation}
The evolution system \eqref{E2} is supplemented with initial data consisting of
 \[(v,\rho)|_{t=0}=(v_0,\rho_0),\quad \Omega(0)=\Omega_0,\quad  X_i|_{t=0}=X_i(0), i=l,r,\]along with the following non-degeneracy conditions at contact points:  \begin{equation}
   - \left.  \left(\nabla_\ab\rho+\g e_2\cdot\ab\right)\right|_{(0,X_l(0))}
      \geq c_2\quad \text{ and }\quad  (\nabla_\ab \rho+\g e_2\cdot\ab)|_{(0,X_r(0))}\geq c_3, \label{E1.16}
 \end{equation} for some constants $c_2,c_3>0$. Meanwhile,  we introduce 
 the following compatibility conditions at $t=0$ for $j=0,1,2,3$:
\begin{equation}\label{E1.17}
        \begin{cases}
        \left.\dt^jP(\rho)\right|_{t=0}=\left.\sigma\dt^j\kappa\right|_{t=0}& \text{ on }\Gamma_0,\\
           \left. \dt^j (v\cdot\nb)\right|_{t=0}=0&\text{ on }\gb,\\
            \left.\mp \beta_cD_t^j v\cdot\ab\right|_{t=0}=\left.\sigma D_t^j(\cos\theta_s-\cos\theta_i )\right|_{t=0} &  \text{ at }X_i(0),\ i=l,r.
        \end{cases}
    \end{equation}

\medskip

Before we state the main results of this paper, we first define the   energy functional via \begin{equation}\label{E1.4}
    \begin{aligned}
        \mathcal{E}(t)=&\sum_{j=0}^3 \lt{\dt^j \rho}^2+\sum_{j=0}^2\lt{\nabla \dt^j\rho}^2+\sigma\sum_{j=0}^1\mode{\nat\dt^j\nnt\rho}^2_{L^2(\gt)}\\&+\mode{\omega}^2_{H^2(\ot)}+\mode{v}^2_{H^\frac{3}{2}(\ot)}+\mode{\gt}_{H^1}^2, 
    \end{aligned}
\end{equation}
and the dissipation term at the corner points via   \begin{equation}\label{E1.5}
    \begin{aligned}
       F(t)=&
    \sum_{i=l,r}\llx{(\sin\theta_i)\nat\nnt\rho}^2+\sum_{i=l,r}\left|(\sin\theta_i)(\nat\dt\nnt\rho-2(\nat v\cdot\nt)\Delta_\gt\rho)\big|_{X_i}\right|^2\\&+\left.\Big(\frac{1}{\sin\theta_i}\left(-\nabla_\ab\rho-\g e_2\cdot\ab\right)\rho|\dt^2\rho|^2\Big)\right|_{X_r}^{X_l}. 
    \end{aligned}
\end{equation}

\medskip

Our main results are stated as follows:
\begin{theorem}\label{Theorem2}

Assume that $(\rho_0, v_0, \Gamma_0)\in H^3(\Omega_0)\times H^{2+}(\Omega_0)\times H^{4.5}$ such that $\mathcal{E}(0)<\infty$, the initial contact angles satisfy $\theta_i(0)\in(0,\pi/2)$ for $i=l,r$, and the initial density $\rho_0$ complies with \eqref{E1.15} and \eqref{E1.16}. If, in addition, the compatibility conditions \eqref{E1.17} are satisfied, then there exists a time $T>0$ such that the system \eqref{E1}--\eqref{E4} admits a unique solution $(v,\rho,\ot)$ on $[0,T]$ satisfying
\[
\sup_{t\in[0,T]}\mathcal{E}(t)+\int_0^T F(t)\,\mathrm{d}t
\le
P\bigl(\mathcal{E}(0)\bigr)+\int_0^T P\bigl(\mathcal{E}(t)\bigr)\,\mathrm{d}t,
\]
where $P$ is a polynomial with positive coefficients depending only on $\sigma$, $\beta_c$, $ \mode{\gb}_{H^{4.5}}$ and $\g$, and where $T$ depends only on the same quantities and on the norm of the initial data.

\end{theorem}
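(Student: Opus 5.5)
We will follow the three-part strategy announced in the abstract: a geometric reformulation in the spirit of Shatah--Zeng, a singularity analysis for the wave-type equation satisfied by $\rho$ in the corner domain, and a priori estimates that are then turned into existence through an iteration. Applying $\dt$ to the continuity equation and using the momentum equation gives the wave equation
\[
\dt\Big(\frac{1}{\rho}\dt\rho\Big)-\Delta\rho=\rhs .
\]
The curl of the momentum equation gives the transport equation $\dt\omega+(\divv v)\,\omega=0$. The velocity is then recovered from $(\divv v,\curl v)=(-\rho^{-1}\dt\rho,\omega)$ together with $v\cdot\nb=0$ on $\gb$ and the kinematic condition, which controls $\mode{v}_{H^{3/2}}$. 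On $\gt$ the boundary condition $\frac12\rho^2=\sigma\kappa+\mathrm{B}$ is differentiated in time. Using Shatah--Zeng-type commutator identities for $\dt\kappa$ and $\dt^2\kappa$, the leading part of $\dt^2\rho$ on $\gt$ becomes $-\sigma\rho^{-1}\Delta_\gt(\nnt\rho)$ up to lower order terms. This produces a dispersive boundary operator, which explains the $\sigma\mode{\nat\dt^j\nnt\rho}^2_{L^2(\gt)}$ terms in $\mathcal{E}(t)$.

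\textbf{Energy identity.} Multiply the $\dt^j$-differentiated wave equation ($j\le 2$) by $\dt^{j+1}\rho$ and integrate over $\ot$. The boundary integral on $\gt$ involves $\nnt\dt^j\rho\,\dt^{j+1}\rho$. Substituting the boundary equation and integrating by parts along $\gt$ yields $\frac{\sigma}{2}\ddt\mode{\nat\dt^{j-1}\nnt\rho}^2$ plus contact-point terms $\big[\nat\dt^{j-1}\nnt\rho\;\dt^{j}\nnt\rho\big]_{X_l}^{X_r}$. These corner terms are handled by differentiating the contact-line law \eqref{E4} in time. Writing $\dt\cos\theta_i$ through $\nat v\cdot\nt$ and $v\cdot\ab$, and using $\dt v\cdot\ab=-\nabla_\ab\rho-\g e_2\cdot\ab$ on $\gb$, the terms acquire a sign: $\beta_c$ times a square, weighted by $\sin\theta_i$. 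This is exactly $F(t)$. The non-degeneracy condition \eqref{E1.16}, propagated for short time by continuity, gives positivity of the third group in $F$. The boundary terms on $\gb$ vanish or are lower order because $v\cdot\nb=0$ and the bottom is flat near $X_i$. The $\omega$ estimate in $H^2$ is a standard transport estimate, and $\mode{\gt}_{H^1}$ follows from the kinematic condition. Summing, and closing with elliptic estimates for $\rho$ on $\ot$ with the Dirichlet-type data on $\gt$ and Neumann data on $\gb$, gives the stated inequality with a polynomial $P$.

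\textbf{Main obstacle.} The hardest step is to justify all of these integrations by parts near $X_i$. In a corner of angle $\theta_i<\pi/2$, the mixed Dirichlet/Neumann problem for $\Delta$ has singular solutions behaving like $r^{\pi/(2\theta_i)}$. Acuteness gives exponent $>1$, and for the relevant regularity it must suffice that no singular term obstructs $H^3$ for $\rho$ or the traces at $X_i$.
\begin{itemize}
\item I would first use a Kondrat'ev/Grisvard-type expansion for the elliptic part, solving in weighted spaces near each corner.
\item Then I would transfer this to the wave equation by treating the time derivatives as source terms.
\end{itemize}
This guarantees that the pointwise corner values entering $F$ are meaningful and that no extra boundary contributions appear. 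It is also where acuteness is used in an essential way: for angles in $(\pi/2,\pi)$ the exponent would fall below $1$.

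\textbf{Existence and uniqueness.} Construct approximate solutions by an iteration $k\mapsto k+1$ (linearizing the wave equation on the domain and velocity from step $k$, with the notation $\nk{\cdot}$, $\nkj{\cdot}$). Prove uniform bounds via the same energy argument, using the compatibility conditions \eqref{E1.17} to control the initial time derivatives. Then show contraction in a lower norm and pass to the limit. Uniqueness follows from the same lower-norm difference estimate. $T$ is chosen so that \eqref{E1.15}, \eqref{E1.16} and $\theta_i(t)\in(0,\pi/2)$ persist, which holds by continuity for a time depending only on $\mathcal{E}(0)$.
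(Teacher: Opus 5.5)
You have the right ingredients: wave equation, curvature equation, boundary energy, contact-law dissipation, iteration. But several load-bearing steps would fail as written.

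\textbf{Elliptic closing.} You get $\rho\in H^3(\ot)$ from the Dirichlet datum $\rho|_\gt=\sqrt{2(\sigma\kappa+\mathrm{B})}$ plus Neumann data on $\gb$. That datum is no better than the curvature, and the regularity of $\gt$ is itself obtained from the trace of $\rho$ (see \eqref{E5.62}), so the estimate is circular. Moreover, you correctly name the leading mixed Dirichlet--Neumann singularity $r^{\pi/(2\theta_i)}$. But $r^{\lambda}\in H^s$ near the vertex only for $s<\lambda+1$, so an exponent $>1$ gives $H^{2+}$, not $H^3$. For $\theta_i\in(\pi/4,\pi/2)$ this singularity genuinely obstructs $H^3$. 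The paper uses \emph{pure Neumann} data instead. The boundary energy $\sigma\mode{\nat\dt^j\nnt\rho}_{L^2(\gt)}$ together with the curvature equation \eqref{E3.3} yields $\nnt\rho\in H^{3/2+\epsilon}(\gt)$. For the Neumann problem the leading exponent is $\pi/\theta_i>2$ (Lemma \ref{L2.8}), which gives $\rho\in H^3$ and $\dt\rho\in H^2$ on the whole acute range (Lemma \ref{L4.3}).

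\textbf{Velocity regularity.} $\mode{v}_{H^{3/2}}$ is not enough: the estimates need $v\in H^{2+\epsilon}$ with $\epsilon<\pi/\theta_i-2$. This comes from \eqref{E3.156}--\eqref{E3.168} with $\nat v\cdot\nt\in H^{3/2+\epsilon}(\gt)$, obtained from $\sigma\dt\kappa=\rho\dt\rho$ and \eqref{E3.4}. Since that is all one gets, $\nat(\nat v\cdot\at)$ and $\Delta_\gt\nnt\rho$ are not in $L^2(\gt)$. The top-order estimate (Proposition \ref{Prop5.1}) therefore repeatedly subtracts corner values and uses the $\tilde H^{1/2}$--$\tilde H^{-1/2}$ duality of Lemmas \ref{L2.6}--\ref{L2.7}. ``Justifying integration by parts near $X_i$'' does not cover this. (Also, the leading boundary term is $+\sigma\rho^{-1}\Delta_\gt\nnt\rho$; with your sign the boundary energy would come out negative.)

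\textbf{Top-order corner terms.} Inserting the differentiated contact law does not turn $\nat\dt\nnt\rho\,\dt^2\nnt\rho|_{X_i}$ into a square. By Lemma \ref{L5.3}, $\pm\dt^2\nnt\rho=-\frac{\sigma}{\beta_c}(\sin\theta_i)^2\big(\nat\dt\nnt\rho-2(\nat v\cdot\nt)\Delta_\gt\rho\big)+\widetilde r_i$, and $\Delta_\gt\rho|_{X_i}$ is not controlled, so the cross term is out of reach. The paper therefore writes the $\dt^3\kappa$ equation with principal part $\nat\big(\nat\dt\nnt\rho-2(\nat v\cdot\nt)\Delta_\gt\rho\big)$ (Lemma \ref{L5.2}). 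This makes the corner term the square $F_h$, at the price of the interior term $\mathcal I_{2,1}$.

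\textbf{Third dissipation term.} The third group of $F$ is not a square coming from the contact law. It arises from the commutator $[\nt,\dt^2]\cdot\nabla\rho=2(\nat v\cdot\nt)\nat\dt\rho+\cdots$ (in $\mathcal I_1$ and $J_1$). After integration by parts this leaves corner terms $(\nat v\cdot\nt)\rho|\dt^2\rho|^2$ with $\dt^2\rho|_{X_i}$ unknown. Only \eqref{E5.69} together with \eqref{E1.16} gives them a sign.

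\textbf{Existence scheme.} Linearizing on the domain and velocity of step $k$ and then reusing ``the same energy argument'' is precisely what the paper notes loses derivatives. Between iterates, $\dt\kappa=-\Delta_\gt v\cdot\nt-2\Pi(\at)\cdot\nat v$ and $\dt v=-\nabla\rho-\g e_2$ no longer relate the same unknowns, so the curvature structure behind the energy is lost. The paper's scheme works as follows:
\begin{enumerate}
\item It evolves $\rho^{k+1}$ by a linear wave system whose boundary condition is the linearized curvature equation with the contact-point law, using an auxiliary $\mathring v^{k+1}$ that solves linearized Euler.
\item It recovers $\Gamma_t^{k+1}$ from its mean curvature and contact-point ODEs, gaining two derivatives.
\item It obtains $v^{k+1}$ from a div--curl system.
\item At the end it shows $v=\mathring v$, so that the limit actually solves the Euler equations.
\end{enumerate}
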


\begin{remark}
 The results remain valid for the general case $\gamma>1,\mathrm{A}>0$, as one may reduce the general setting $\gamma>1$ to the special case $\gamma=2$. Define a new variable $\tilde{\rho}=\frac{\gamma}{\gamma-1}\frac{P+\mathrm{B}}{\rho}$ to retrieve the analysis developed for $\gamma=2$, owing to $\frac{\nabla P}{\rho}=\nabla \left(\frac{\gamma}{\gamma-1}\frac{P+\mathrm{B}}{\rho}\right)=\nabla\tilde{\rho}\ \text{ and }\ \frac{\dt\rho}{\rho}=\frac{1}{\gamma-1}\frac{\dt\tilde{\rho}}{\tilde{\rho}}.$  
The structure of system \eqref{E2}  is preserved under this transformation. Consequently, it suffices to study only \eqref{E2}, rather than the full system \eqref{E1}–\eqref{E4}.
\end{remark}

\begin{remark}
Here, condition \eqref{E1.16} ensures positivity of the final term in \(F(t)\). This novel structure arises specifically from compressibility. Such initial-data assumptions can be realized by taking the initial density to be near a constant in a neighborhood of the contact points.  
\end{remark}

\begin{remark}

To overcome the difficulties arising from  vorticity, 
we perform a Helmholtz decomposition of the vector field $v=\nabla \mathcal{V}+\nabla^\perp \mathcal{U}$, where $\nabla \mathcal{V}$ is the  irrotational part and $\nabla^\perp \mathcal{U}$ is the divergence-free part. This allows $v$ to be bounded by its curl, divergence and normal component. A key feature is that the vorticity $\omega$ is governed by a transport equation, so that $\omega$ and $\dt\omega$ enjoy  the same regularity, which enables us to   avoid the   derivative loss for $\omega$ during the linearization. By virtue of the foregoing arguments, we may also extend the main conclusions established in \cite{MingWang ARMA2024} to the setting of incompressible flows with vorticity.



\end{remark}

A major contribution of this work is the development of a new geometric framework for compressible free boundary problems. Classical approaches based mainly on Lagrangian coordinates  are substantially  difficult to apply for the  situation we considered. 
Inspired by the geometric formulation introduced by Shatah and Zeng \cite{Shatah2008} for incompressible flows, we extend this approach to the compressible Euler equations with boundary singularities and demonstrate that the geometric structure provides an effective framework, which is also applicable to classical compressible free boundary flows.

\subsection{Previous work}

A substantial body of work has been  devoted to establishing   local well-posedness   for water waves under various configurations: with or without
gravity, with or without surface tension, and in both finite and  infinite depth settings. 
The water waves problem for small data began with the early works of Nalimov \cite{Nalimov1974} (infinite bottom) and Yosihara \cite{Yosihara1982,Yosihara1983} (finite bottom)   in two dimensions. The first local well-posedness results for  general initial data in both  two and three dimensions were obtained by Wu \cite{Wu1997,Wu1999} who showed that 
the strong Taylor sign condition \begin{equation*}
   -\frac{\pa P}{\pa \nt}\geq c_0>0
\end{equation*}
always holds for infinite depth water wave problem. Subsequent research has extended and refined these results by considering  improved  regularity, varied bottom topography,  non-zero vorticity 
 and different approaches, we refer the reader to  Christodoulou and Lindblad  \cite{ChristodoulouLindblad2000}, Lannes  \cite{Lannes2005}, Lindblad \cite{Lindblad 2005}, Coutand and Shkoller  \cite{CountandShkoller2007} (without derivative loss), Alazard et al.  \cite{Alazard2014,Alazard2011,Alazard2016} (low regularity solutions),   Shatah and Zeng  \cite{Shatah2008}, Zhang and Zhang \cite{ZhangZhang2008},  Ambrose and Masmoudi \cite{Ambrose2005}, and the references therein. 


Compared to the classical theory on smooth domains, water waves problems with boundary singularities represent a completely new topic which started in recent years. 
 When the free interface is allowed to be non-$C^1$ and surface tension is neglected, 
  Kinsey and Wu \cite{KinseyWu2018} obtained  energy estimates that remain valid  for a class of  interfaces with angled crests, only a degenerate Taylor inequality
  \[ -\frac{\pa P}{\pa \nt}\geq 0\] holds, with  degeneracies at the crest point. They use a conformal mapping to convert the boundary singularity to the singularity for the mapping itself, after which the properties of the conformal
 mapping are carefully analyzed. 
  Later on, 
 Wu \cite{Wu2019} proved the local well-posedness of two-dimensional gravity waves for the same case. The
 crest angle is less than $\pi/2$ here and doesn’t change with respect to time. Further generalizations and related developments can be found in \cite{CordobaEncisoGrubic2023,Agrawal2020}.

We now turn to  the contact-line problem (or contact point problem in two dimensions), where the free surface and the fixed bottom intersect, i.e., \[\gt\cap\gb\neq \varnothing.\] 
 When the surface tension is ignored, as a pioneer work for this problem,  Alazard et al. \cite{Alazard2016} studied the case that the contact angle with respect to a vertical wall is equal to $\pi/2$, which keeps Taylor stability criterion non-degenerate. 
 Poyferr\'e \cite{Poyferre2019} established a priori estimate in general $n$-dimensional bounded domains with vorticity,  the angles are chosen small enough to  ensure the solution has sufficient regularity near the corner, where elliptic regularity works as in smooth domains. Very recently, Ming \cite{Ming2026}  proved a weighted  a priori energy estimate  in the absence of gravity, here, a conformal mapping is used to transform the equation for the mean curvature into an equivalent equation in a flat strip with some weights. Concerning the problem with surface tension, the case of more general angles has been tackled by Ming and Wang \cite{MingWang SIAM2020,MingWang ARMA2024,MingWang CPAM2021}, who studied the Dirichlet–Neumann operator associated with such a configuration, in two dimensions, and gave a complete description of its singularities at the corner. 
 For the contact-line problems of Navier–Stokes system, 
 Guo and Tice \cite{GuoTice2018Stokesflow} derived a priori estimates  for Stokes flow. Tice and Zheng  \cite{ZhengTice2017} investigated the local well-posedness for two-dimensional Stokes flow. Subsequently, 
 Guo and Tice \cite{Guo-Tice2024} analyzed the global dynamics and exponential stability of equilibrium states for a two-dimensional incompressible viscous fluid with moving contact points in an open-top container. 

 The compressible Euler equations are classified into gas and liquid cases according to the sound speed near the free boundary. 
 The gas case describes that the density vanishes near the free boundary, the rate of degeneracy (physical vacuum singularity) has attracted a great deal of attention, the local well-posedness results we refer to \cite{LuoXinZeng2014,CoutandShkollerCPAM2011,CoutandSComressARMA2012,IfirmTataru2024,JangMasmoudi2009,JangMasmoudi2015}. 
 For the compressible  liquid with free interface,  the  local well-posedness   restricted to bounded fluid domains was  proved by Lindblad \cite{Lindbladcompress2005}, using Nash–Moser iteration.  For subsequent works on  liquid-vacuum free interface problems in bounded  domains, we refer  the reader to \cite{Coutand2013,GinsbergLindbladLuo2020,LindbladLuoCPAM2018,WangZhangZhaoCMAA}. In the case of an unbounded fluid domain, Trakhinin \cite{Trakhinin2009} was first to prove the local well-posedness via Nash–Moser iteration which leads to a loss of regularity from initial data to solution. Later, Luo \cite{LuoAP2018}  established the a priori energy estimates for the compressible
gravity water wave with vorticity,  the corresponding  local well-posedness is obtained by Luo and Zhang \cite{LuoZhangJDE2022}. In contrast to contact-line problems for incompressible fluids, to the best of our knowledge, no literature addresses models that couple compressible fluid dynamics with contact point boundary phenomena. 

\subsection{Strategy of the proof}
We now give a brief technical overview of our methods,  in a rough sketch that highlights the main ideas and suppresses certain technical complications.  


For compressible water waves, 
a common approach  is to perform tangential-normal decomposition in the energy estimates, and use structures to express the normal part. However, this approach becomes infeasible in contact point problems, since the tangential direction is not well-defined near the corner, and its orientation may vary with the angle. Moreover, the contact points bring singularities such that the boundary of the moving domain is not regular enough.



To address these issues, we search for the geometric structure which is introduced by Shatah and Zeng \cite{Shatah2008,Shatah20082,Shatah2011} which they used to deal with incompressible Euler equations. The main idea is that they introduced the geometric structure based on the kinematic boundary condition on $\gt$, namely
\begin{equation}\label{E1.13}
     \dt\kappa=-\Delta_\gt v\cdot\nt-2\nat \nt\cdot\nat v\quad \text{ on }\gt.
\end{equation}
Motivated by their ideas, the authors of \cite{MingWang SIAM2020,MingWang CPAM2021,MingWang ARMA2024} exploited this geometric structure to investigate contact point problems. They established a priori estimates and local well-posedness for incompressible capillary water waves within two-dimensional corner domains, starting from small contact angles and subsequently extending the theory to acute contact angles. 
The geometric structure therefore remains valid for contact point problems.


 
 To be more specific, viewing from the incompressible water waves with surface tension~\cite{Shatah2008,MingWang SIAM2020}, the total pressure is decomposed as $P=P_{v,v}+\sigma\kappa_\mathcal{H}$, where $P_{v,v}$ is the Lagrangian multiplier, and $\kappa_\mathcal{H}$ is the harmonic extension of $\kappa$ into $\ot$, which demonstrates certain structure
\begin{equation*}\label{L1.12}
    \dt^2 \nabla\kappa_{\mathcal{H}}+\mathcal{A}_{\text{in} }\left(\nabla \kappa_{\mathcal{H}}\right)=R\quad \text{ with }\quad \mathcal{A}_{\text{in}}(f):=-\nabla\mathcal{H}\left(\Delta_\gt \left(f|_{\gt}\right)^\perp\right).
\end{equation*}
 Accordingly, we first establish such geometric structure for the compressible Euler system. Analogously to the incompressible setting, we may also derive a curvature equation by invoking
\eqref{E1.13}
and  the momentum equation,\begin{equation}\label{L1.6}
\frac{\rho}{\sigma}    \dt^2\rho+\mathcal{A}_{\text{com}}(\rho)=2(\nat v\cdot\nt)\nat \left(\nat v\cdot\at\right)+R\quad \text{with}\quad \mathcal{A}_{\text{com}}(f):=-\Delta_\gt \nnt f,
\end{equation}here, $R$ denotes the remainder terms. 
The main difference between $\mathcal{A}_{\text{com}}(\cdot)$ and $\mathcal{A}_{\text{in}}(\cdot)$ lies in the Dirichlet–Neumann operator:  the former acts on variables governed by a hyperbolic system in the interior, while the latter corresponds to the elliptic system. Owing to this discrepancy, the coupling of acoustic waves with corner singularities constitutes a fundamentally new difficulty, which is not a trivial generalization of  \cite{Shatah2008,MingWang SIAM2020}.

We next outline the regularity structure of $(\rho, v)$ over the corner domain. When the boundary is Lipschitz (piecewise smooth), singularities arise in elliptic estimates when higher-order regularity is involved. The solution can be decomposed into a regular part and  a singular part near the corner; the singular part,  also known as asymptotics,  depends on the shape of the corner and the left-hand side  and boundary operators of the elliptic system. 

For the density satisfying a hyperbolic system, 
spatial regularity follows by treating the material‑derivative term as the source in the Neumann-boundary elliptic system,
\[\dt^2\rho\in H^1(\ot)\xrightarrow[\eqref{E3.2}]{H^3(\ot)\text{ elliptic estimates}}\rho\in H^3(\ot),  \] for $\theta_i\in (0,\pi/2)$. 
We decompose the system of $v$ into the sum of Dirichlet-boundary elliptic system and Neumann-boundary elliptic system, for both yield exactly the same Sobolev regularity upper bound  since the dominant singularities are both of the form  $r^\frac{\pi}{\theta_i}$.   To avoid the increase of singularities,  we restrict our attention to $H^{3+}(\ot)$ estimates:  
\begin{align*}
    \dt^3\rho\in L^2 (\ot)\xrightarrow[\eqref{E4.1}]{H^2(\ot)\text{ elliptic estimates}}&\dt\rho\in H^2(\ot)\xrightarrow[\eqref{E3.156}-\eqref{E3.168}]{H^{3+\epsilon}(\ot)\text{ elliptic estimates} }v\in H^{2+\epsilon}(\ot),
\end{align*} for $\theta_i\in (0,\pi/2)$, and $\epsilon\in (0,\min\{1, \pi/\theta_i-2\})$ small enough. 
 Compared with the regularity of density,  $v$ loses some regularity due to the existence of corners. 

Moreover, 
the presence of surface tension allows us to  explore the higher regularity on the boundary. In fact, the variables $v$ and $\nabla\rho$ exhibit higher regularity in the normal direction,
 \begin{equation*}
     v\in H^{2+\epsilon}(\ot)\xrightarrow[\text{Lemma \ref{L2.5}}]{\text{Trace theorem}}\nat v\cdot\at\in H^{\frac{1}{2}+\epsilon}(\gt)\xrightarrow[\eqref{E1.13} \text{ and }\eqref{L1.6}]{\text{Curvature equations}}\begin{aligned} \nat v\cdot\nt\in H^{\frac{3}{2}+\epsilon}(\gt)\\ \nnt\rho\in H^{\frac{3}{2}+\epsilon}(\gt)\end{aligned}.
 \end{equation*}
 Essentially, the  singularities indirectly cause
 both  $v\cdot\nt$ and $\nnt\rho$ to lose some regularity. See Lemma \ref{L4.2} and Lemma \ref{L4.3} for more details.



In addition, compared with well-separated moving domain, 
   the a priori estimate exhibits a significant difference: 
dissipation occurs at the corner points, where both friction and compressibility must be taken into account. In fact, 
  each time a tangential integration by parts on the boundary integrals is performed,  corner terms appear. 
In the higher-order  energy estimates,
 certain types of singularities appear. These singularities are present in the control of boundary integrals. The subtle aspect in this context is that  $\Delta_\gt\rho|_{X_i}$ is unknown within the geometric structure at the contact points (see Lemma \ref{Lemma3.1}),  we therefore leave $\nat\Delta_\gt\rho$ in the principal of the  curvature equation,
 \begin{equation*}
     \dt^3\kappa-\nat\left(\nat\dt\nnt\rho-2(\nat v\cdot\nt)\Delta_\gt\rho\right)=R.
 \end{equation*}
 This directly leads to the second dissipation term defined  in \eqref{E1.5} at the contact points, while the first one is similar to that in the  incompressible case \cite{MingWang SIAM2020}.
  The third one arises from the fact that $\dt^2\rho|_{X_i}$ is unknown, consequently, a certain a priori assumption \eqref{E4.24} must be added to render it dissipative.






For  the   free-boundary problems,  local existence does not follow directly from the a priori estimate. Roughly speaking,  several structures are lost if one tries to construct approximate solutions, which means that in the a priori estimates, terms that rely on the symmetric structure, or the geometric structures  need to be carefully scrutinized. 
For  instance, solutions cannot be constructed via the standard iteration of Euler equation, due to unavoidable derivative loss.
 Although the  regularity loss for $v$ is only of order $\epsilon$ (with $\epsilon$  arbitrarily small),  it severely hinders the uniform estimates, owing to the fact that $\nabla v\notin L^\infty(\ot)$. 

To get the approximate solutions, we design a new strategy to address this issue. In the iteration process,  the iterates can be constructed successively in the following order,\[
\begin{aligned}
    \nk{\rho}, \nk{\Gamma_t}, \nk{v}\xrightarrow[\text{Proposition \ref{Prop6.1} }]{\text{Evolution equations \eqref{E6-14}}} &\nkj{\rho}\xrightarrow[\text{ Lemma \ref{l6.1}}]{\text{Non-degenerate elliptic system \eqref{E6.191}}}\nkj{\Gamma_t}\xrightarrow[\text{ Lemma \ref{l6.2} } ]{\text{Div-curl system \eqref{E8.16}}}\nkj{v}.\\ \text{\scriptsize Lemma \ref{l6.3}}& \uparrow\ \downarrow \text{\scriptsize Euler equation \eqref{E} }  \\ & \rv 
\end{aligned}
\]
Among these  steps, the construction of $\nkj{\rho}$ plays a central role and necessitates the introduction of an auxiliary variable   $\rv$ governed by linearized Euler equation in a given domain. 
The second step provides a geometric interpretation of the free surface,  while the last step  implies that $\nkj{v}$ is uniquely determined by its boundary behaviors, divergence and curl. 
Subsequently, we prove that the sequence $\{(\nk{\rho},\nk{v},\nk{\Omega_t})\}_{k=1}^{\infty}$ is a contraction in a short time interval, and converges to the solution of \eqref{E2}. Notably,  no derivative loss occurs in our construction.

Note that our construction differs substantially from other approaches previously used in inviscid compressible liquid with free boundary,  several works rely on parabolic regularizations (e.g., \cite{Coutand2013}) or Nash–Moser iteration (e.g., \cite{Lindbladcompress2005}).
In contrast, our method provides a framework in low-regularity Sobolev spaces, as it exploits the geometric formulation of the problem,  which is also better tailored to the general free boundary problems.

\subsection{Notations}
\begin{itemize}[itemsep=0pt, parsep=0pt, topsep=0pt, label=-]
    \item $n_j(j=t,b)$ are the unit outward normal vectors on $\Gamma_j$, and $\tau_j$ are the corresponding unit tangential vectors obeying the right-hand rule with $n_j$.
    \item $X_i,\, i=l,r$ are the coordinates of the left and right contact points at time $t$.
    \item $f|_{X_i}=\chi_l(f|_{X_l})+\chi_r(f|_{X_r})$ stands for taking values of $f$ at the contact points, where $\chi_i(i=l,r)$ are cut-off functions near the contact points $X_i$:\[\chi_i(X)=\begin{cases}
        1,&\text{when }|X-X_i|\leq \epsilon,\ X\in \ot,\\0,&\text{otherwise},
    \end{cases}\]for some small $\epsilon>0$.
    \item The notation $\mp f|_{X_i}$ is denoted by \[\mp f|_{X_i}=\begin{cases}
        -f|_{X_l}, &\text{ if }i=l,\\ f|_{X_r},&\text{ if }i=r,
    \end{cases}\]and  $\pm f|_{X_i}$ defined similarly.
    \item $D_t=\pa_t+\nabla_v$ is the material derivative.
    \item $M^*$ denotes the transport of a matrix $M$. 
    \item $\varpi ^\top=(\varpi\cdot\at)\at$
     for a vector-valued function $\varpi$ defined on $\gt$, and $\varpi^\perp=\varpi\cdot n_t$ on $\gt$.
    \item $\Pi$: the second fundamental form where $\Pi(\varpi)=\nabla_\varpi \nt\in T_X\gt$ (space of tangential vectors) for $\varpi\in T_X\gt$.
    \item $\kappa=tr\Pi=\nabla_\at\nt\cdot\at$ is the mean curvature of the surface $\gt$.
    \item We define on $\gt$ that $\mathcal{D}_XY=(\nabla_XY)^\top$ for $X,Y\in T_X\gt$, and $\mathcal{D} \varpi =\mathcal{D}_\at \varpi=(\nabla_\at \varpi\cdot\at)\at$ for a vector $\varpi\in T_X\gt$.
    \item $\Delta_\gt$ is the Beltrami–Laplace operator on $\gt$, \[\Delta_\gt f=\mathcal{D}^2 f(\at,\at)=\nabla_\at\nabla_\at f-\nabla_{\mathcal{D}_\at\at }f.\]
    \item  $\mathcal{H}(f)$ or $f_\mathcal{H}$ is the harmonic extension of a  function $f$ on $\gt$, which is defined by the elliptic system \[\begin{cases}
        \Delta \mathcal{H}(f)=0&\text{in }\ot,\\
        \mathcal{H}(f)|_\gt=f,&\nabla_\nb \mathcal{H}(f)|_\gb=0.
    \end{cases}\]
    \item $\tilde{H}^\frac{1}{2}(\Gamma_j)(j=t,b)$ is a subspace of $H^\frac{1}{2}(\Gamma_j)$ related to corner domain \[\tilde{H}^\frac{1}{2}(\Gamma_j)=\left\{u\in\dot{H}^\frac{1}{2}(\Gamma_j)\mid \lambda_i^{-\frac{1}{2}}u\in L^2 (\Gamma_j) , i=l,r\right\},\] where $\dot{H}^\frac{1}{2}(\Gamma_j)$ is the closure of $\mathscr{D}(\Gamma_j)$ in $H^\frac{1}{2}(\Gamma_j)$, and $\lambda_i=\lambda_i(X)$ for $i=l,r$ is the distance (arc length) between the point $X\in\Gamma_j$ and the end $X_i$. We define the norm \[\mode{u}_{\tilde{H}^\frac{1}{2}(\Gamma_j)}^2=\mode{u}_{H^\frac{1}{2}(\Gamma_j)}^2+\sum_{i=l,r}\int_{\Gamma_j}\lambda_i^{-1}|u|^2\id S.\]
    \item $\tilde{H}^{-\frac{1}{2}}(\Gamma_j)$ stands for the dual space of $\tilde{H}^{\frac{1}{2}}(\Gamma_j)$, see \cite{Grisvard} for more details.
     \item $C(\cdot)$ stands for a positive coefficient $C$ depending on its variables, and 
      $P(\mathcal{E}(t))$ stands for some polynomial of the energy $\mathcal{E}(t)$ with positive constant coefficients.
\end{itemize}

\section{Preliminaries}\label{Section2}

In this paper, we establish the geometric structure for the compressible Euler equations. Before that, we recall some geometric calculations from the work \cite{Shatah2008} by Shatah and Zeng. First, we have
\begin{equation}\label{E2.1}
    \dt\nt=-\left((\nabla v)^*\nt\right)^\top=-(\nabla_{\at} v\cdot \nt)\at\quad \text{ on }\gt.
\end{equation}
Moreover, since our domain is two-dimensional in this paper, we know directly that $\at$ is the parallel-transporting tangent basis satisfying 
\begin{equation}\label{E2.2}
    \dt\at=(\nabla_{\at}v\cdot\nt)\nt \ \text{ and }\ \mathcal{D}_{\at}\at=\left(\nat\at\right)^\top=0\ \text{ on }\ \Gamma_t.
\end{equation}
 The following commutators are essential, which related to the evolution of the geometry.
\begin{lemma}\label{L2.1}
    Let $f$ be a smooth function, and $i,j\in \{1,2\}$. Then, the following hold
    \begin{equation*}
        \begin{aligned}
         &  [\dt,\nabla ]f=-(\nabla v)^*\nabla f,\quad [\dt,\partial_i]f=-\partial_iv_j\partial_j f &\text{in }\ot,\\
           & [\dt,\Delta]f=-2\nabla v\cdot\nabla^2f -\Delta 
   v\cdot\nabla f&\text{in }\ot,\\
    &[\dt,\nabla_\at]f=-(\nabla_\at v)^\top\nabla f=-(\nabla_\at v\cdot\at)\nabla_\at f &\text{on }\gt,
    \\& [\dt,\nabla_\nt]f=-(\nabla_\at v\cdot\nt+\nnt v\cdot\at)\nabla_\at f-\nabla_\nt v\cdot\nt\nnt f
     &\text{on }\gt,\\
 &[\dt,\Delta_\gt]f=-2\nabla_{(\nabla_\at v)^\top}\nabla_\at f-\left(\Delta_\gt  v\cdot\at+\nabla_\at v\cdot\nabla_\at\at\right) \nabla_\at f&\text{on }\gt.\\
        \end{aligned}
    \end{equation*}
\end{lemma}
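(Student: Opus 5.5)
We verify each commutator by direct computation, first in the interior using $\dt=\pa_t+v_j\pa_j$, then on $\gt$ using the evolution formulas \eqref{E2.1}–\eqref{E2.2} for $\nt$ and $\at$.

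\textbf{Interior commutators.} We compute
\[
\dt\pa_i f-\pa_i\dt f=v_j\pa_j\pa_i f-\pa_i(v_j\pa_j f)=-\pa_iv_j\pa_jf,
\]
which in vector form is $[\dt,\nabla]f=-(\nabla v)^*\nabla f$. For the Laplacian we write $[\dt,\Delta]=\sum_i\big([\dt,\pa_i]\pa_i+\pa_i[\dt,\pa_i]\big)$ and apply the first identity twice:
\[
[\dt,\Delta]f=-\pa_iv_j\pa_j\pa_if-\pa_i(\pa_iv_j\pa_jf)=-2\pa_iv_j\pa_{ij}f-\Delta v_j\pa_jf.
\]
This is the stated formula.

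\textbf{Boundary commutators.} On $\gt$ we have $\nabla_\at f=\at\cdot\nabla f$, so
\[
\dt\nabla_\at f=(\dt\at)\cdot\nabla f+\at\cdot\dt\nabla f.
\]
By \eqref{E2.2}, $\dt\at=(\nat v\cdot\nt)\nt$. By the interior commutator,
\[
\at\cdot\dt\nabla f=\nat\dt f-\at\cdot(\nabla v)^*\nabla f=\nat\dt f-\nat v\cdot\nabla f.
\]
Splitting $\nabla f=(\nat f)\at+(\nnt f)\nt$, the normal parts $(\nat v\cdot\nt)\nnt f$ cancel, and we are left with $-(\nat v\cdot\at)\nat f$.

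The normal derivative is handled the same way. Using \eqref{E2.1}, $\dt\nt=-(\nat v\cdot\nt)\at$, we get
\[
[\dt,\nnt]f=-(\nat v\cdot\nt)\nat f-\nnt v\cdot\nabla f.
\]
Decomposing $\nnt v\cdot\nabla f=(\nnt v\cdot\at)\nat f+(\nnt v\cdot\nt)\nnt f$ gives the stated expression.

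\textbf{Beltrami–Laplacian.} In two dimensions we use $\mathcal{D}_\at\at=0$ from \eqref{E2.2}, which gives $\Delta_\gt f=\nat\nat f$ on $\gt$. Iterating the tangential commutator,
\[
[\dt,\Delta_\gt]f=[\dt,\nat]\nat f+\nat[\dt,\nat]f=-2(\nat v\cdot\at)\nat\nat f-\nat(\nat v\cdot\at)\,\nat f.
\]
The first term equals $-2\nabla_{(\nat v)^\top}\nat f$. For the coefficient of the second term,
\[
\nat(\nat v\cdot\at)=(\nat\nat v)\cdot\at+\nat v\cdot\nat\at.
\]
Since $\mathcal{D}_\at\at=0$, we have $\nat\nat v=\Delta_\gt v+\nabla_{\mathcal{D}_\at\at}v=\Delta_\gt v$, so the coefficient is $\Delta_\gt v\cdot\at+\nat v\cdot\nat\at$, matching the claim.

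\textbf{Main obstacle.} The only delicate point is a justification: the interior identities are evaluated on $\gt$, which requires that $f$ and $v$ be defined in a neighborhood of the boundary (or at least be regular up to it). It also requires that $\dt$ be tangent to $\gt$, so that $\dt$ applied to the boundary quantities $\at,\nt$ is meaningful. This holds by the kinematic condition in \eqref{E2}. Everything else is bookkeeping of the tangential and normal components.
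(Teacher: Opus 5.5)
Your computation is correct and is the direct verification the paper intends: the paper's proof cites Shatah--Zeng (Section 3.1) for most of these identities and says the rest follow by direct computation. One point to make explicit: in the $\Delta_\gt$ step you apply $[\dt,\nat]$ to $\nat f$, which is defined only on $\gt$. This is legitimate because the tangential commutator formula contains no normal derivatives, so it holds for any extension. Alternatively, you can check it intrinsically in a Lagrangian parametrization, where $\nat=|X_\alpha|^{-1}\pa_\alpha$ and $[\pa_t,|X_\alpha|^{-1}\pa_\alpha]=-(\nat v\cdot\at)\nat$.
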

\begin{proof}
    Most of the above formulas can be found in \cite{Shatah2008} Section 3.1. The rest follow by direct computation.
\end{proof}

We recall the following Reynolds transport theorem (see, e.g., \cite{Shatah2008}). 
\begin{lemma}\label{L2.3} Assume that the time-dependent $\ot\subset \mathbb{R}^2$  is transported by  Lipschitz velocity $v$. Then, for some  functions $f$ and $g$ defined on the moving domain $\ot$,   the following hold:
   \begin{align*}
       \ddt \int_\ot f \,\mathrm{d}X=&\int_\ot \dt f\,\mathrm{d}X+\int_\ot f\divv v\id X,\\
       \ddt \int_\gt f\id S=&\int_\gt \dt f\id S+\int_{\gt} f\nabla_\at v\cdot\at\id S,\\
       \ddt \int_\gt f\nabla_\at g \id S=&\int_\gt\dt f\nabla_\at g\id S+\int_\gt f\nat\dt g\id S.
   \end{align*}
\end{lemma}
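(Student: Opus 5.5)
The first identity is the classical Reynolds transport theorem. I will prove it by pulling back to a fixed reference domain. Let $\Phi(t,\cdot)$ be the Lagrangian flow of $v$, defined by $\pa_t\Phi=v\circ\Phi$ with $\Phi(0,\cdot)=\mathrm{id}$; it maps $\Omega_0$ onto $\ot$. Because $v$ is Lipschitz, $\Phi$ is bi-Lipschitz and its Jacobian $J=\det\nabla\Phi$ satisfies Liouville's formula $\pa_tJ=(\divv v)\circ\Phi\,J$. This holds a.e., and an approximation argument justifies it in the Lipschitz setting. Writing
\[
\int_\ot f\id X=\int_{\Omega_0}f\circ\Phi\,J\id Y
\]
and differentiating under the integral, the identity $\pa_t(f\circ\Phi)=(\dt f)\circ\Phi$ gives the first formula after changing variables back. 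The boundary is not differentiated at all, so the corner points play no role.

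For the boundary integrals, the kinematic condition says $\dt$ is tangent to $\gt$, so $\Phi(t,\cdot)$ maps $\Gamma_0$ onto $\gt$. The contact points are also carried along, because $v\cdot\nb=0$ on $\gb$.
\begin{itemize}
\item Parametrize $\Gamma_0$ by arclength $s$. Then $\gt$ is parametrized by $\gamma(t,s)=\Phi(t,\gamma_0(s))$ on a fixed interval, and $\id S=|\pa_s\gamma|\id s$.
\item Since $\pa_t\pa_s\gamma=\pa_s(v\circ\gamma)=|\pa_s\gamma|\,\nat v$, we get
\[
\pa_t|\pa_s\gamma|=\frac{\pa_s\gamma\cdot\pa_t\pa_s\gamma}{|\pa_s\gamma|}=|\pa_s\gamma|\,\nat v\cdot\at .
\]
\item Differentiating $\int f(t,\gamma(t,s))|\pa_s\gamma|\id s$ then yields the second formula.
\end{itemize}

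For the third identity I will apply the second formula with $f\nat g$ in place of $f$. This gives
\[
\ddt\int_\gt f\nat g\id S=\int_\gt\dt f\,\nat g\id S+\int_\gt f\,\dt\nat g\id S+\int_\gt f\nat g\,(\nat v\cdot\at)\id S .
\]
By the commutator in Lemma~\ref{L2.1}, $\dt\nat g=\nat\dt g-(\nat v\cdot\at)\nat g$. The last term above therefore cancels exactly, which leaves the claim. Equivalently, in the fixed parametrization $\nat g\id S=\pa_s(g\circ\gamma)\id s$, and the $s$-derivative commutes with $\pa_t$.

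The only delicate point is regularity. With $v$ merely Lipschitz, $\pa_s\gamma$ is only $L^\infty$ in $s$. I would first prove all three identities for smooth $v$, $f$, $g$ and then pass to the limit by density, using that all terms are continuous under the approximation. This step is the one needing care, but it is routine.
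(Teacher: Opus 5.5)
Your argument is correct: the Lagrangian pull-back with Liouville's formula, the arclength computation $\partial_t|\partial_s\gamma| = |\partial_s\gamma|\,\nat v\cdot\at$, and the cancellation via the commutator $[\dt,\nat]g=-(\nat v\cdot\at)\nat g$ (equivalently, $\nat g\,\mathrm{d}S=\partial_s(g\circ\gamma)\,\mathrm{d}s$ on a fixed parameter interval) are the standard derivation behind this lemma. The paper gives no proof of its own and quotes the result from Shatah--Zeng, so your proof is essentially the intended one; your remark that the contact points are transported by $v$, so that no endpoint terms appear in the two boundary identities, is exactly what makes the quoted formulas valid in the corner domain.
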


Next, we give the lemma which deals with traces on each piece of boundary. 

\begin{lemma}[Traces on $\gt$ or $\gb$,  Theorem 5.3 of \cite{MingWang SIAM2020}]\label{L2.5} The maps \[u\mapsto \{u, \nabla_{n_j}u\}|_{\Gamma_j},\ \text{ for }j=t,b\] have unique continuous extensions as operators from $H^s(\ot)$ onto $\Pi_{i=0}^1 H^{s-i-\frac{1}{2}}(\Gamma_j)$ for $s>\frac{3}{2}$.  Moreover, one has the estimate:\[\mode{u}_{H^{s-\frac{1}{2}}(\Gamma_j)}+\mode{\nabla_{n_j}u}_{H^{s-\frac{3}{2}}(\Gamma_j)}\leq C\left(\mode{\Gamma_j}_{H^{s-\frac{1}{2}}}\right)\mode{u}_{H^s(\ot)}.\]
\end{lemma}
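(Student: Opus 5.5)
The plan is to reduce the statement to the classical trace theorem on a single extended curve. First extend $u$ from the corner domain $\ot$ to all of $\mathbb{R}^2$; then restrict to a curve that continues $\Gamma_j$ smoothly past the contact points $X_l, X_r$. Both conditions at the corners are used at the extension step: the contact angles are bounded away from $0$ and $\pi$, and $\gb$ is straight near $X_i$. Together they make $\ot$ a Lipschitz (curvilinear polygonal) domain.

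The steps, in order:

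\emph{Extension.} I would invoke Stein's universal extension operator $E$ for Lipschitz domains. It is bounded $H^s(\ot)\to H^s(\mathbb{R}^2)$ for every $s\ge 0$, with operator norm depending only on the Lipschitz character of $\ot$. That character is governed by the two contact angles and by $\mode{\Gamma_j}_{H^{s-\frac12}}$.

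\emph{Continuation of the boundary piece.} Parametrize $\Gamma_j$ by arc length (or as a graph) on an interval $I$. I would extend the parametrization across both endpoints to a slightly larger interval $\tilde I$, using a reflection or Seeley-type extension for one-dimensional Sobolev functions. This produces a curve $\tilde\Gamma_j\supset\Gamma_j$ with $\mode{\tilde\Gamma_j}_{H^{s-\frac12}}\le C\mode{\Gamma_j}_{H^{s-\frac12}}$ and the same chord-arc constants.

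\emph{Flattening.} Near $\tilde\Gamma_j$ I would define a change of variables $\Phi(x,y)=\gamma(x)+y\,\tilde n(x)$. Here $\tilde n$ is a regularized normal, for instance $\tilde n$ replaced by its Poisson smoothing $e^{-y|D|}\tilde n$. This smoothing gains the half derivative, so $\Phi\in H^s$ locally with norm bounded by $C(\mode{\Gamma_j}_{H^{s-\frac12}})$. For $s>\frac32$ the space $H^{s-1}$ is an algebra-type multiplier space in two dimensions, so composition estimates give $\mode{(Eu)\circ\Phi}_{H^s}\le C\mode{Eu}_{H^s}$.

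\emph{Trace in the half-plane.} The standard half-space trace theorem then gives $(Eu)\circ\Phi|_{y=0}\in H^{s-\frac12}$ and $\pa_y\big((Eu)\circ\Phi\big)|_{y=0}\in H^{s-\frac32}$. Converting back, $\nabla_{n_j}u$ is a combination of $\pa_y$ and $\pa_x$ of the pulled-back function, with coefficients built from $\gamma'$ and $n_j$. These coefficients lie in $H^{s-\frac32}$, which suffices as multipliers on $H^{s-\frac32}$ when $s>\frac32$ in one dimension. Restricting from $\tilde\Gamma_j$ to $\Gamma_j$ only decreases norms.

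\emph{Uniqueness and density.} Uniqueness of the continuous extension follows from density of $C^\infty(\overline{\ot})$ in $H^s(\ot)$, which holds for Lipschitz domains. On that dense class the maps agree with pointwise restriction. "Onto" would follow by composing with a right inverse built the same way: a lifting in the half-plane, pulled back by $\Phi^{-1}$, then restricted to $\ot$.

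\emph{Main obstacle.} I expect the hard step to be tracking the constant so that it depends only on $\mode{\Gamma_j}_{H^{s-\frac12}}$ in the low-regularity range $\frac32<s\le 2$. There $\Gamma_j$ is merely $H^{1+}$, so its tangent is not Lipschitz. Two things must be checked: that the Stein extension constant and the chord-arc/Lipschitz bounds stay controlled by this norm together with the corner angles, and that the multiplier estimates for the coefficient functions close. If the direct argument fails at the endpoint, I would instead cite the corner-domain analysis of \cite{MingWang SIAM2020} and \cite{Grisvard} for the local structure near $X_i$, and handle only the smooth part by the argument above via a partition of unity.
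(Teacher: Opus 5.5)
The paper gives no argument for this lemma beyond quoting Theorem 5.3 of Ming--Wang. Your scheme is the classical proof for one side of a curvilinear polygon, as in \cite{Grisvard} under fixed $C^{k,1}$ regularity of the sides: extend $u$ off the Lipschitz domain, continue $\Gamma_j$ past $X_l,X_r$, flatten, take the half-plane trace, use density for uniqueness and a half-plane lifting for surjectivity. It does close once $\Gamma_j\in H^{\frac32+\delta}$, but as written several steps fail, precisely in the range $\frac32<s\le2$ that the statement allows.

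\emph{The flattening map.} $\Phi(x,y)=\gamma(x)+y\,e^{-y|D|}\tilde n(x)$ regularizes only the normal. The leading term $\gamma(x)$ is constant in $y$ and has only $H^{s-\frac12}$ regularity in $x$. Hence $\Phi\notin H^s_{\mathrm{loc}}$, and the bound $\mode{(Eu)\circ\Phi}_{H^s}\le C\mode{Eu}_{H^s}$ is false; take $Eu$ locally equal to a coordinate function. You must regularize the whole map. For instance, Poisson-extend $\gamma$ as well, or write $\Gamma_j$ locally as a graph $y=\eta(x)$ and use $(x,z)\mapsto(x,z+e^{-|z||D|}\eta(x))$. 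You then still have to check invertibility near $z=0$, which needs $\gamma'$ (or $\eta'$) to be continuous.

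\emph{The multiplier claims.} These are false in the same range. $H^{s-1}(\mathbb{R}^2)$ is an algebra only for $s-1>1$, and $H^{s-\frac32}(\mathbb{R})$ multiplies itself only for $s-\frac32>\frac12$; both need $s>2$. With only $\Gamma_j\in H^{s-\frac12}$, the coefficients $D\Phi\in H^{s-1}$ and $n_j\in H^{s-\frac32}$ need not even be bounded.

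More fundamentally, for $\frac32<s\le2$ the hypothesis $\Gamma_j\in H^{s-\frac12}$ with $s-\frac12\le\frac32$ does not embed into $W^{1,\infty}$. It is the curve itself, not just its tangent, that fails to be Lipschitz. So $\ot$ need not be a Lipschitz domain, and Stein's extension, the chord--arc bounds and the flattening diffeomorphism are all unavailable. The check you plan, that the Lipschitz character is controlled by $\mode{\Gamma_j}_{H^{s-\frac12}}$ and the angles, cannot succeed. This obstruction lives along all of $\Gamma_j$, not at $X_i$. Your fallback (corner analysis near $X_i$, your argument on the smooth part) therefore does not remove it; for a one-sided trace the corners are harmless once $\ot$ is Lipschitz and an extension operator exists.

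The repair is to assume $\Gamma_j\in H^{\max\{s-\frac12,\frac32+\delta\}}$ for some $\delta>0$. The paper's uses have more than this available; for example, in Lemma \ref{L4.2} the bound on $\mode{\gt}_{H^{\frac52}}$ from \eqref{E4.41} precedes the application with $s=2$. The constant $C$ should then depend on that norm, on the other boundary piece, and on a lower bound for $\sin\theta_i$. The angle dependence is unavoidable: for $u=\psi(r)$ on a sector of small opening $\theta$, with $\psi$ supported away from the vertex, one has $\mode{u}_{H^2}\sim\theta^{\frac12}$ while the trace is of order one. With the regularized flattening, your steps then close for every $s>\frac32$. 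For $s>2$ this recovers exactly the stated dependence on $\mode{\Gamma_j}_{H^{s-\frac12}}$.
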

    

Moreover, we present some special trace theorems on corner domains  involving $\tilde{H}^\frac{1}{2}(\Gamma_j)$ and $\tilde{H}^{-\frac{1}{2}}(\Gamma_j)(j=t,b)$ by applying Hardy inequality with some straightening localizations.
\begin{lemma}[Lemma 2.5 of \cite{MingWang ARMA2024}]\label{L2.6}
    Assume that $u|_\gt=f, $ with $f|_{X_i}=0\,(i=l,r)$ for a function $u\in H^1(\ot)$. Then one has $f\in \tilde{H}^\frac{1}{2}(\gt)$ and \[\mode{f}_{\tilde{H}^\frac{1}{2}(\gt)}\leq C\left(\mode{\gt}_{H^\frac{5}{2}}\right)\mode{u}_{H^1(\ot)}.\] The case on $\gb$ holds similarly.
\end{lemma}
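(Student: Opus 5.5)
The plan is to localize near each contact point, straighten the boundary there, and reduce the claim to a one-dimensional Hardy inequality. Away from the contact points nothing new is needed. The weight $\lambda_i^{-1}$ is bounded there, so the standard trace theorem (Lemma \ref{L2.5}, at the $H^1$ level) gives $\mode{f}_{H^\frac12(\gt)}\le C\mode{u}_{H^1(\ot)}$. The constant depends on the geometry of $\gt$, and $\mode{\gt}_{H^{5/2}}$ controls its $C^{1,\alpha}$ norm. So the only new content is the weighted bound $\int_\gt \lambda_i^{-1}|f|^2\id S\lesssim \mode{u}_{H^1(\ot)}^2$ near $X_i$, together with the fact that $f$ lies in $\dot H^{\frac12}(\gt)$.

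First take a partition of unity $\chi_l,\chi_r,\chi_0$ subordinate to small neighborhoods of $X_l$, $X_r$ and the remainder of $\gt$. Near $X_i$, the bottom $\gb$ is a line segment and $\gt$ is an $H^{5/2}\subset C^{1,\frac12}$ curve meeting it at angle $\theta_i\in(0,\pi)$. Use a bi-Lipschitz map $\Phi$ to send the corner region to a neighborhood of the vertex of a planar sector of angle $\theta_i$, with $\gt$ going to one ray. $\Phi$ can be built from the graph parametrization of $\gt$ over its tangent line at $X_i$, combined with a linear shear. Its Lipschitz constants depend only on $\mode{\gt}_{H^{5/2}}$ and a lower bound on $\theta_i$. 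Arc length $\lambda_i$ is comparable to the distance $s$ to the vertex along the ray. The function $w=(\chi_i u)\circ\Phi^{-1}$ belongs to $H^1$ of the sector, and its trace $g$ on the ray vanishes at the vertex in the sense used in the hypothesis.

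In the model sector, extend $w$ across the other ray by reflection in polar angle to get $H^1$ of a larger sector, or simply of a half-plane, with norm controlled by a constant depending on $\theta_i$. The problem is then to show that a trace $g\in H^{\frac12}(\mathbb R_+)$ with $g(0)=0$ satisfies $\int_0^\infty s^{-1}|g|^2\,ds\lesssim \mode{w}_{H^1}^2$.

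This last estimate is the main obstacle: $H^{\frac12}$ sits exactly at the endpoint where the Hardy inequality fails in general. The vanishing condition $f|_{X_i}=0$ is what rescues it. I would first try the characterization of $\tilde H^{\frac12}$ from \cite{Grisvard}, namely $H^{\frac12}_{00}$, the interpolation space $[L^2,H^1_0]_{1/2}$. This reduces the claim to the statement that traces of $H^1$ functions on one side of a corner, vanishing at the corner, lie in $H^{\frac12}_{00}$. That in turn follows from the known identification of $H^{\frac12}_{00}(\mathbb R_+)$ with traces on $\mathbb R_+$ of $H^1$ functions on the half-plane whose traces vanish on $\mathbb R_-$. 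I would arrange for the reflected extension to have exactly this property.

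Alternatively, one can argue directly via polar coordinates. Write $g(s)=w(s,0)=w(s,0)-w(s,\varphi)$, where the angle $\varphi$ points toward the other, vanishing boundary piece. Then apply Cauchy--Schwarz in the angle, which yields $|g(s)|^2\le C\int|\pa_\varphi w|^2 d\varphi$. Integrating against $s^{-1}\,ds$ gives $\int |s^{-1}\pa_\varphi w|^2 s\,ds\,d\varphi\le \mode{\nabla w}_{L^2}^2$.

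Finally, pull back through $\Phi$, sum over the partition of unity, and collect the constants. This gives $\mode{f}_{\tilde H^{\frac12}(\gt)}\le C(\mode{\gt}_{H^{5/2}})\mode{u}_{H^1(\ot)}$. The argument on $\gb$ is identical, with the roles of the two boundary pieces swapped.
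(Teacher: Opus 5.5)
\textbf{The gap is in the Hardy step.} The paper gives no argument of its own. It cites Ming--Wang and says the result follows from a Hardy inequality after straightening localizations, which is the skeleton you follow. The problem is that your Hardy step uses a hypothesis you do not have.

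Both of your routes need $u$ to vanish along an entire ray issuing from the vertex. The $H^{\frac12}_{00}$ route needs an extension whose trace is zero on $\mathbb{R}_-$. The polar route writes $g(s)=w(s,0)-w(s,\varphi)$ using ``the other, vanishing boundary piece''. The hypothesis only makes the single value $f(X_i)$ vanish; the trace of $u$ on $\Gamma_b$ is arbitrary, and no reflection makes it zero. For a general $H^{\frac12}$ trace, that point value is not even defined.

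In the model sector, what your angular Cauchy--Schwarz computation actually proves is the vertex compatibility estimate
\[
\int_0^{\delta}\frac{|w(s,0)-w(s,\theta_i)|^2}{s}\,ds\le C\,\|\nabla w\|_{L^2}^2 .
\]
This is the $H^1$ case of Grisvard's compatibility condition for traces at a corner (cf.\ \cite{Grisvard}). It says $f$ satisfies the weighted bound near $X_i$ exactly when the trace on the adjacent piece of $\Gamma_b$ does, and vanishing of $f$ at one point does not decide that.

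\textbf{This cannot be repaired with $u$ merely in $H^1(\Omega_t)$, because the statement as written is then false.} So your remark that $f|_{X_i}=0$ ``rescues'' the endpoint Hardy inequality is exactly where the argument breaks. Here is a counterexample in the model sector:
\begin{itemize}
\item Take $u(r,\varphi)=\eta(r)\,(\log(1/r))^{-1/2}$, with a cutoff $\eta$ supported in $[0,\frac12)$ and equal to $1$ on $[0,\frac14]$.
\item For $r<\frac14$ one has $|\nabla u|^2=\frac{1}{4r^2}(\log\frac1r)^{-3}$. Hence $\int_{r<1/4}|\nabla u|^2\,dx=\frac{\theta_i}{4}\int_0^{1/4}\frac{dr}{r(\log(1/r))^{3}}<\infty$, and since $u$ is bounded, $u\in H^1$.
\item Its trace on $\varphi=0$ is continuous and vanishes at $r=0$.
\item Yet $\int_0^{1/4} r^{-1}|u(r,0)|^2\,dr=\int_0^{1/4}\frac{dr}{r\log(1/r)}=\infty$.
\end{itemize}
Its trace on the other ray is the same function, which agrees with the display above. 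Composing with your bi-Lipschitz straightening map transplants this to $\Omega_t$, since arc length from $X_i$ is comparable to $r$.

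\textbf{What would make the argument close.} Localization plus Hardy works only with extra input written into the hypothesis:
\begin{itemize}
\item If $u|_{\Gamma_b}=0$ near $X_i$, your polar computation is a complete proof.
\item With slightly more regularity, e.g.\ $f\in H^{\frac12+\epsilon}(\Gamma_t)$ with $f(X_i)=0$, the embedding $H^{\frac12+\epsilon}\hookrightarrow C^{0,\epsilon}$ gives $|f|\le C\lambda_i^{\epsilon}\|f\|_{H^{\frac12+\epsilon}(\Gamma_t)}$. Hence $\int_{\Gamma_t}\lambda_i^{-1}|f|^2\,dS\le C_\epsilon\|f\|^2_{H^{\frac12+\epsilon}(\Gamma_t)}$.
\end{itemize}
The second version is the one that fits uses such as \eqref{E5.340}, where the function fed in is $g-g|_{X_i}$ with $g=\nabla_{\tau_t}\nabla_{n_t}\rho\in H^{\frac12+\epsilon}(\Gamma_t)$.
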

\begin{lemma}[Lemma 5.6 of \cite{MingWang SIAM2020}]\label{L2.7}
    Let $u\in H^\frac{1}{2}(\Gamma_j)(j=t,b)$, then $\nabla_{\tau_j}u$ belongs to $\tilde{H}^{-\frac{1}{2}}(\Gamma_j)$ and satisfies the estimate\[\mode{\nabla_{\tau_j}u}_{\tilde{H}^{-\frac{1}{2}}(\Gamma_j)}\leq C\left(\mode{\Gamma_j}_{H^\frac{5}{2}}\right)\mode{u}_{H^\frac{1}{2}(\Gamma_j)}.\]
\end{lemma}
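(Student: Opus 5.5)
The plan is to reduce to an interval and interpolate between two endpoint estimates. Parametrize $\Gamma_j$ by arc length, $s\in I=(0,L_j)$, with $s=0$ and $s=L_j$ corresponding to the contact points $X_l,X_r$. Then $\nabla_{\tau_j}u=\frac{\mathrm{d}}{\mathrm{d}s}(u\circ\gamma)$, up to the orientation sign. The weights $\lambda_i$ become $s$ and $L_j-s$. The norms of $H^{\frac12}(\Gamma_j)$, $\tilde H^{\frac12}(\Gamma_j)$ and $\tilde H^{-\frac12}(\Gamma_j)$ are then equivalent to the corresponding norms on $I$. The equivalence constants depend only on the length and the $C^{1,\alpha}$ character of the curve, hence on $\|\Gamma_j\|_{H^{5/2}}$ by Sobolev embedding. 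It therefore suffices to prove that $\frac{\mathrm{d}}{\mathrm{d}s}$ is bounded from $H^{\frac12}(I)$ to the dual of $H^{\frac12}_{00}(I)$. Here $H^{\frac12}_{00}(I)$ is the Lions–Magenes space, which coincides with $\tilde H^{\frac12}(I)$ as defined in the Notations: $\dot H^{\frac12}$ together with the Hardy-type weights $\int_I s^{-1}|u|^2$ and $\int_I (L_j-s)^{-1}|u|^2$.

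The two endpoint estimates are elementary.
\begin{itemize}
\item At the top level, $\frac{\mathrm{d}}{\mathrm{d}s}:H^1(I)\to L^2(I)$ is bounded with norm $1$.
\item At the bottom level, $\frac{\mathrm{d}}{\mathrm{d}s}:L^2(I)\to H^{-1}(I)=(H^1_0(I))'$ is bounded by the definition of the distributional derivative. Indeed, for $\varphi\in H^1_0(I)$ we have $\langle u',\varphi\rangle=-\int_I u\varphi'\,\mathrm{d}s$, so $|\langle u',\varphi\rangle|\le\|u\|_{L^2}\|\varphi\|_{H^1_0}$.
\end{itemize}
No boundary terms appear in the bottom estimate because test functions vanish at the endpoints. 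This is exactly the mechanism that makes the weighted space, rather than the plain $H^{-\frac12}$ dual of $H^{\frac12}$, the correct target.

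Next I interpolate with $\theta=\frac12$ via complex or real interpolation, which is legitimate since $\frac{\mathrm{d}}{\mathrm{d}s}$ is one linear operator. On the domain side, $[H^1(I),L^2(I)]_{1/2}=H^{\frac12}(I)$. On the target side, the duality theorem for interpolation of reflexive Hilbert couples gives
\[
[L^2(I),H^{-1}(I)]_{1/2}=\big([L^2(I),H^1_0(I)]_{1/2}\big)'=\big(H^{\frac12}_{00}(I)\big)'.
\]
The identification $[H^1_0,L^2]_{1/2}=H^{\frac12}_{00}$ is the classical Lions–Magenes theorem, and its norm is equivalent to the $\tilde H^{\frac12}$ norm through the Hardy inequality, cf. Grisvard. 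This yields $\|u'\|_{\tilde H^{-\frac12}(I)}\le C\|u\|_{H^{\frac12}(I)}$.

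Finally, transferring back through the arc-length parametrization gives the stated bound with $C=C(\|\Gamma_j\|_{H^{5/2}})$.

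The main obstacle is the identification of the interpolation spaces, specifically the duality step and the characterization $[H^1_0,L^2]_{1/2}=H^{\frac12}_{00}=\tilde H^{\frac12}$ with equivalent norms. The rest is routine. If one prefers to avoid abstract interpolation, the fallback is a direct argument. Split $u$ with a smooth partition of unity near each endpoint. For $\varphi\in\tilde H^{\frac12}$, estimate $\int u\varphi'$ by extending $u$ to $H^{\frac12}(\mathbb R)$, and extend $\varphi$ by zero, which stays in $H^{\frac12}(\mathbb R)$ exactly because of the weight $\int s^{-1}|\varphi|^2$. Then apply the Fourier duality $|\int_{\mathbb R} u\varphi'|\le\|u\|_{H^{1/2}}\|\varphi\|_{H^{1/2}}$.
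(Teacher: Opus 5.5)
Your proof is correct. Note, however, that the paper gives no proof of its own. It imports the lemma from Ming--Wang and only indicates that these corner trace results rest on the Hardy inequality combined with straightening localizations. That description matches your fallback argument: zero-extend $\varphi\in\tilde H^{\frac12}$ into $H^{\frac12}(\mathbb{R})$ (this is exactly where the weight $\int\lambda_i^{-1}|\varphi|^2$ and Hardy's inequality enter), extend $u$, and use the Fourier pairing.

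Your main route is genuinely different. You interpolate the single operator $\frac{\mathrm{d}}{\mathrm{d}s}$ between $H^1\to L^2$ and $L^2\to (H^1_0)'$. You then identify the target space abstractly, through the duality theorem $[L^2,H^{-1}]_{1/2}=([L^2,H^1_0]_{1/2})'$ and the Lions--Magenes characterization $[H^1_0,L^2]_{1/2}=H^{\frac12}_{00}$. The set-up is consistent:
\begin{itemize}
\item the operator is well defined on $L^2=A_0+A_1$ with values in $H^{-1}$;
\item $H^1_0$ is dense in both members of the couple $(L^2,H^1_0)$;
\item $\dot H^{\frac12}(I)=H^{\frac12}(I)$, so $\tilde H^{\frac12}(I)$ is indeed $H^{\frac12}_{00}(I)$;
\item the reduction to $I$ is exact, because $\nabla_{\tau_j}$ and the weights $\lambda_i$ are defined through arc length.
\end{itemize}

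What the interpolation route buys is brevity. It also explains transparently why the dual of the weighted space, rather than of $H^{\frac12}$, is the right target: at the lower level the test functions vanish at the endpoints. The price is reliance on two nontrivial abstract theorems, and the Hardy inequality is still hidden inside the norm equivalence $H^{\frac12}_{00}\simeq\tilde H^{\frac12}$. The localized direct argument is more elementary and makes explicit which property of $\tilde H^{\frac12}$ is actually used.
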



\medskip

Consider 
the Neumann-boundary system \begin{equation}
    \begin{cases}\label{L2.2}
        \Delta u=h&\text{in }\ot,\\
        \nnt u|_{\gt}=f,& \nabla_{n_b}u|_{\gb}=g,
    \end{cases}
\end{equation}satisfying the compatibility condition\[\int_\ot h\id X=\int_\gt f\id S+\int_\gb g\id S.\]

 An $H^1$ solution always exists by using variational methods,
 although it has to be performed modulo constants since they are always solution of the homogeneous problem.
 One would expect from the case of a regular boundary that a solution should be $H^{s+1},s>0$, singularities arise when the higher-order regularity is related.

\begin{lemma}[Proposition 2.2 and Remark 2.3 of \cite{MingWang ARMA2024}]\label{L2.8}
    \begin{enumerate}
        \item[(1)] Let $h\in L^2(\ot),\ f\in L^2(\gt),\ g\in L^2(\gb)$ in \eqref{L2.2}. Then there exists a unique (up to an additive constant) variational solution $u\in H^1(\ot)$ to \eqref{L2.2} satisfying\[\mode{u}_{H^1(\ot)}\leq C\left(\mode{\gt}_{H^\frac{5}{2}}\right)\left(\mode{h}_{L^2(\ot)}+\mode{f}_{L^2(\gt)}+\mode{g}_{L^2(\gb)}+\mode{u}_{L^2(\ot)}\right);\]
        \item[(2)] Let $h\in H^s(\ot),\ f\in H^{\frac{1}{2}+s}(\gt),\ g\in H^{\frac{1}{2}+s}(\gb)$ in \eqref{L2.2}. The contact angle $\theta_i\in (0,\pi/2)$. Then there exists a unique (up to an additive constant) solution $u\in H^3(\ot)$ to \eqref{L2.2} satisfying \[\mode{u}_{H^{2+s}(\ot)}\leq C\left(\mode{\gt}_{H^3}\right)\left(\mode{h}_{H^{s}(\ot)}+\mode{f}_{H^{\frac{1}{2}+s}(\gt)}+\mode{g}_{H^{\frac{1}{2}+s}(\gb)}+\mode{u}_{L^{2}(\ot)}\right),\] for any constant $s\in[0,1]$;  
        \item[(3)] Let $\theta_i\in (0,\pi/2)$, one can have a more delicate and also natural estimate for \eqref{L2.2} with $h\in H^{1+\epsilon}(\ot)$, $f\in H^{\frac{3}{2}+\epsilon}(\gt),\ g\in H^{\frac{3}{2}+\epsilon}(\gb)$:
         \[\mode{u}_{H^{3+\epsilon}(\ot)}\leq C\left(\mode{\gt}_{H^{3+\epsilon}}\right)\left(\mode{h}_{H^{1+\epsilon}(\ot)}+\mode{f}_{H^{\frac{3}{2}+\epsilon}(\gt)}+\mode{g}_{H^{\frac{3}{2}+\epsilon}(\gb)}+\mode{u}_{L^2(\ot)}\right),\]where $\epsilon\in (0,\min\{1, \pi/\theta_i-2\})$.
    \end{enumerate}
\end{lemma}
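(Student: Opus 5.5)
The plan is to separate the variational part (1) from the regularity statements (2)--(3). For (2)--(3) I localize to the two corners and apply Kondratiev--Grisvard singularity theory for the Neumann Laplacian in a plane sector. The acute-angle hypothesis $\theta_i<\pi/2$ will enter only in checking that no singular exponent lies in the relevant strip.

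For (1), I work in $H^1(\ot)/\mathbb{R}$, or equivalently in the subspace of $H^1$ functions with zero mean. The compatibility condition makes the weak formulation
\[
\int_\ot \nabla u\cdot\nabla\varphi \id X=-\int_\ot h\varphi\id X+\int_\gt f\varphi\id S+\int_\gb g\varphi\id S
\]
consistent on constants. Three ingredients are needed:
\begin{itemize}
\item coercivity, which is the Poincar\'e--Wirtinger inequality on the Lipschitz domain $\ot$;
\item continuity of the right-hand side, which is the $H^1(\ot)\to L^2(\Gamma_j)$ trace bound (a weaker version of Lemma \ref{L2.5});
\item Lax--Milgram, which then gives existence and uniqueness modulo constants.
\end{itemize}
The stated estimate follows by testing with $u$ itself and keeping $\mode{u}_{L^2(\ot)}$ on the right, so that the constant is not normalized away. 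The dependence on $\mode{\gt}_{H^{5/2}}$ comes only through the trace and Poincar\'e constants.

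For (2) and (3), I take a partition of unity $\{\chi_l,\chi_r,\chi_0\}$ with $\chi_0$ supported away from $X_l,X_r$, and consider the localized problems. The commutator terms $[\Delta,\chi]u$ and $(\nabla_{n}\chi)u$ are of lower order and are controlled by the $H^1$ bound from (1), which is where $\mode{u}_{L^2(\ot)}$ enters. For $\chi_0 u$ I use standard Neumann regularity on smooth pieces, flattening $\gt$ by a map whose regularity is that of $\gt\in H^3$ (respectively $H^{3+\epsilon}$). Near $X_i$, $\gb$ is a line segment, and I straighten $\gt$ by a diffeomorphism $\Phi$ fixing $\gb$ and built from the graph of $\gt$. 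This maps a neighbourhood of $X_i$ onto a neighbourhood of the vertex of the sector $S_{\theta_i}=\{0<\phi<\theta_i\}$. The pulled-back equation has the form $\Delta \tilde u + \mathcal{R}\tilde u=\tilde h$ with oblique-Neumann-type boundary operators, where $\mathcal{R}$ is a second-order operator whose coefficients vanish at the vertex and are small on a small neighbourhood. On the model sector I apply Grisvard's theory via the Mellin transform in $r$. The Neumann operator pencil has eigenvalues $\lambda_k=k\pi/\theta_i$, $k\in\mathbb{Z}$, with singular functions $r^{k\pi/\theta_i}\cos(k\pi\phi/\theta_i)$ together with polynomial/log terms when $\lambda_k$ is an integer.

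The $H^{2+s}$ regularity holds, with the stated estimate, provided no $\lambda_k$ lies in the strip $0<\operatorname{Re}\lambda\le 1+s$. The case $\lambda=0$ is the constant, which is handled by the normalization, and integer resonances need separate checking. Since $\theta_i<\pi/2$ gives $\pi/\theta_i>2\ge 1+s$ for $s\in[0,1]$, the first nontrivial exponent lies outside the strip, and $u$ is regular up to $H^3$. For (3) the strip becomes $\operatorname{Re}\lambda\le 2+\epsilon$, and the restriction $\epsilon<\pi/\theta_i-2$ is exactly what keeps $\lambda_1=\pi/\theta_i$ outside it. Corner compatibility conditions on $(f,g)$ should not be needed here. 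The two normals $n_t,n_b$ are independent at an acute corner, so the Neumann traces $f,g$ determine $\nabla u(X_i)$ consistently, and likewise the second-order data at the $H^{3+\epsilon}$ level, with no linear constraint arising.

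The main obstacle is transferring the model-sector estimate to the curved, time-dependent domain with only $\gt\in H^3$ (or $H^{3+\epsilon}$) regularity. Two points must be checked. First, the perturbation $\mathcal{R}$ and the coefficients of the transformed boundary operators must map $H^{2+s}$ into $H^{s}\times H^{\frac12+s}$ with small norm on a small neighbourhood of the vertex, so that it can be absorbed. Second, the flattening must not create a resonance at integer exponents; I expect this to hold because $\pi/\theta_i$ is non-integer or larger than the top of the strip. I would handle absorption by freezing coefficients at $X_i$ and shrinking the support of $\chi_i$, using product estimates in $H^{s}$ and the trace bounds of Lemma \ref{L2.5}. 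A final closed-graph/a priori argument then yields the stated constant $C(\mode{\gt}_{H^{3}})$, respectively $C(\mode{\gt}_{H^{3+\epsilon}})$.
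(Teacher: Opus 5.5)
Your proposal is correct and follows the same route as the source: the paper does not prove this lemma but imports it from Ming--Wang, whose argument is the one you give. That is, a variational/Lax--Milgram step for (1), then straightening $\Gamma_t$ near $X_i$ and applying Grisvard/Kondratiev corner theory to the Neumann pencil with exponents $k\pi/\theta_i$, where $\pi/\theta_i>2$ (resp.\ $\pi/\theta_i>2+\epsilon$) keeps the relevant strip free of exponents.

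One small repair is needed. For $s>0$ the cutoff commutators $2\nabla\chi\cdot\nabla u+(\Delta\chi)u$ and $(\nabla_n\chi)u$ must be bounded in $H^{s}$ and $H^{\frac12+s}$. That requires control of $\|u\|_{H^{1+s}}$, not only the $H^1$ bound from (1). You can close the estimate via $\|u\|_{H^{1+s}}\le\delta\|u\|_{H^{2+s}}+C_\delta\|u\|_{L^2}$ and absorbing the $\delta$ term, or by bootstrapping up from $s=0$.
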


One of the central questions in the theory of elliptic boundary value problems for domains with conic points is the question on the asymptotic behavior of the solutions near the singular boundary points.
  To be more specific,
  the solution can be decomposed into a regular  part and a singular part near the corners, where the singular part is indeed
 an explicit sum of singularities of the form 
 $r^\lambda\log^q r \varphi(\theta)$,
  where $r$ is the distance to the points $X_i$, and the $\lambda$ are a discrete set of real numbers and $\varphi(\theta)$ is a bounded trigonometric function (see, e.g., \cite{Poyferre2019,MingWangAA2017}). These singular functions depend on the shape of the corner as well as the left-side operators of the elliptic system including the boundary operators instead of the right side of the system. Moreover, there will be more singular functions when a higher regularity of the solution is considered. The smaller the contact angle becomes, the smoother the solution would become.
 \begin{remark}
    For $\theta_i\in (0,\pi/2)$, the $H^{3+\epsilon}$ regularity for the Dirichlet elliptic system can be achieved whenever $\epsilon\in (0,\min\{1, \pi/\theta_i-2\})$. The dominant singularity near the corner behaves like $r^{\pi/\theta_i}$ for both pure Dirichlet and pure Neumann boundary conditions.
\end{remark}

\section{Reformulation of the problem}\label{Section3}
\noindent
{\bf The wave equation for density.}
Applying  operator $\dt$ to the second equation of $\eqref{E2}$ and taking operator $\divv$ to the first equation of $\eqref{E2}$, we get  the following  wave equation for the   density $\rho$:\begin{equation}
    \begin{cases}\label{E3.2}
         \dt^2 \rho-\rho\Delta\rho= \rho\tr\left((\nabla v)^2 \right)+\frac{1}{\rho}(\dt\rho)^2\qquad  \text{in }\ot,\\
         \frac{1}{2}\rho^2|_\gt=\sigma\kappa+\mathrm{B}, \quad  \nabla_{\nb}\rho|_\gb=v\cdot\nabla_v\nb-\g e_2\cdot \nb,
    \end{cases}
 \end{equation} where $\tr ((\nabla v)^2)=\pa_i v_j\pa_j v_i$.

\noindent {\bf The curvature equation.} 
To handle the boundary energy, it is necessary to track the evolution of the density on the free boundary and isolate the error terms.
First we recall the equation for the curvature $\kappa$ from \cite{Shatah2008}, 
\begin{equation}
    \dt \kappa=-\Delta_\gt v\cdot\nt-2\Pi(\at)\cdot\nat v \quad \text{ on }\gt, \label{E3.1}
\end{equation}or equivalently, \begin{equation}\label{E3.4}
    \dt\kappa=-\nat (\nat v\cdot\nt)-(\nat v\cdot\at)\kappa\quad \text{ on }\gt.
\end{equation}

Upon applying material derivative $\dt$ to equation \eqref{E3.1}, the following records some essential terms for it,
\begin{equation*}
    \begin{aligned}
        \dt^2\kappa=&-\Delta_\gt \dt v\cdot\nt-2\Pi(\at)\cdot \nat \dt v\\&-\left([\dt,\Delta_\gt]v\right)\cdot\nt-\Delta_\gt v\cdot\dt\nt- 2\dt\left(\Pi(\at)\right)\cdot \nat v-2\Pi(\at)\cdot [\dt,\nat]v.
    \end{aligned}  
\end{equation*}
From the Euler equation $\dt v=-\nabla\rho-\g e_2$, we can find the equation for $\dt^2 \kappa$,
    \begin{equation}\label{E3.3}
        \dt^2\kappa-\Delta_\gt\left(\nabla_\nt \rho\right)= \mathfrak{R} \quad \text{ on }\gt,
    \end{equation}
    with the error term 
    \begin{equation}\label{E3.5}
        \begin{aligned}
            \mathfrak{R}= & 4\left(\nat v\cdot\at\right)\Delta_\gt v\cdot\nt+2(\nat v\cdot\nt)\Delta_\gt v\cdot \at-\Delta_\gt\nt\cdot\nabla\rho\\& +6\left(\nat v\cdot\at\right)\nat\nt\cdot\nat v+3(\nat v\cdot\nt)\nat\at\cdot\nat v\\=& 4(\nat v\cdot\at)\nat (\nat v\cdot\nt)+2(\nat v\cdot\nt)\nat (\nat v\cdot\at)-\Delta_\gt\nt\cdot\nabla\rho\\&+\kappa\left(2(\nat v\cdot\at)^2-(\nat v\cdot\nt)^2\right),
        \end{aligned}
    \end{equation}
here we have used $\nat \nt =\kappa \at  $ and $\nat \at=-\kappa\nt$.

\noindent {\bf The dynamics of contact points.}
We  introduce some geometric information about the contact angle $\theta_i$ at the contact points:\begin{equation}\label{E3.7}
    \at\cdot\ab=-\cos\theta_i,\quad \at\cdot\nb=\pm \sin \theta_i,\quad  \nt\cdot\ab=\mp\sin\theta_i,\quad \nt\cdot\nb=-\cos\theta_i\quad \text{ at }X_i(i=l,r).
\end{equation}
In addition, 
recall that the boundary condition  $v\cdot\nb=0$ on $\gb$ and the assumption that the bottom $\gb$ is a line segment near the contact points, one gets \begin{equation*}
    \nabla_\ab v\cdot\nb=-v\cdot\nabla_\ab\nb=0\quad \text{ at }X_i(i=l,r).
\end{equation*}
Moreover, 
the following lemma records the dynamics of contact points.
\begin{lemma}\label{Lemma3.1}
    The following identities hold at the  contact points $X_i(i=l,r)$: 
    \begin{equation}\label{E3.42}
        \begin{aligned}
            \pm \dt\nnt\rho=&-\frac{\sigma}{\beta_c}\nat\nnt\rho(\sin\theta_i)^2-\frac{2\sigma}{\beta_c}(\nat v\cdot\at)(\nat v\cdot\nt)(\sin\theta_i)^2\\&+\frac{\sigma}{\beta_c}\nat\rho\kappa(\sin\theta_i)^2-\frac{2\sigma}{\beta_c}(\nat v\cdot\nt)^2(\nt\cdot\ab)(\at\cdot\ab)\\&\pm \g e_2\cdot\at(\nat v\cdot\nt)\pm \dt\left(v\cdot\nabla_v \nb(\nb\cdot\nt)\right), 
        \end{aligned}
    \end{equation}
    and 
    \begin{equation}\label{E3.43}
        \begin{aligned}
            \pm\dt\nat\rho=&-\frac{\sigma}{\beta_c}\nat\nnt\rho(\nt\cdot\ab)(\at\cdot\ab)+\frac{\sigma}{\beta_c}\nat\rho\kappa(\nt\cdot\ab)(\at\cdot\ab)\\&-\frac{2\sigma}{\beta_c}(\nat v\cdot\at)(\nat v\cdot\nt)(\nt\cdot\ab)(\at\cdot\ab)+\frac{\sigma}{\beta_c}(\nat v\cdot\nt)^2(\sin^2\theta_i-\cos^2\theta_i)\\&\mp \g e_2\cdot\nt (\nat v\cdot\nt)\pm \dt (v\cdot\nabla_v \nb(\nb\cdot\at)).
        \end{aligned}
    \end{equation}
\end{lemma}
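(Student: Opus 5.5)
We differentiate the contact line condition in time and then decompose the result. At $X_i$ the velocity is tangent to $\gb$, since $v\cdot\nb=0$ there. The contact point therefore moves with velocity $v|_{X_i}$, so $\dt$ is the honest time derivative along the trajectory of $X_i$. Applying $\dt$ to the last equation in \eqref{E2} gives
\[
\mp\beta_c\,\dt v\cdot\ab=\sigma\,\dt\cos\theta_i=-\sigma\,\dt(\at\cdot\ab)\quad\text{at }X_i .
\]
Here $\gb$ is flat near $X_i$, so $\dt\ab=0$ (more precisely, $\ab$ is constant along the trajectory). Using \eqref{E2.2}, $\dt\at=(\nat v\cdot\nt)\nt$, hence
\[
\dt\cos\theta_i=-(\nat v\cdot\nt)(\nt\cdot\ab)=\pm(\nat v\cdot\nt)\sin\theta_i
\]
by \eqref{E3.7}. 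Combining this with the Euler equation $\dt v=-\nabla\rho-\g e_2$ yields the identity
\[
\pm\beta_c(\nabla_\ab\rho+\g e_2\cdot\ab)=\pm\sigma(\nat v\cdot\nt)\sin\theta_i,
\]
which expresses $\nat v\cdot\nt$ at $X_i$ through $\nabla_\ab\rho$. Equivalently, it gives $\nabla_\ab\rho=\frac{\sigma}{\beta_c}\sin\theta_i(\nat v\cdot\nt)-\g e_2\cdot\ab$.

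Next we express $\nnt\rho$ and $\nat\rho$ in the $(\ab,\nb)$ frame. With
\[
\nt=(\nt\cdot\ab)\ab+(\nt\cdot\nb)\nb,\qquad \at=(\at\cdot\ab)\ab+(\at\cdot\nb)\nb,
\]
we have $\nnt\rho=(\nt\cdot\ab)\nabla_\ab\rho+(\nt\cdot\nb)\nabla_\nb\rho$, and similarly for $\nat\rho$. The normal derivative $\nabla_\nb\rho$ is known from the Neumann condition in \eqref{E3.2}, $\nabla_\nb\rho=v\cdot\nabla_v\nb-\g e_2\cdot\nb$. We then apply $\dt$ to these expressions. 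The product rule produces three kinds of terms.

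First, $\dt$ falling on the frame coefficients gives $\dt(\nt\cdot\ab)=-(\nat v\cdot\nt)(\at\cdot\ab)$ and $\dt(\at\cdot\ab)=(\nat v\cdot\nt)(\nt\cdot\ab)$. After substituting the relation above for $\nabla_\ab\rho$, these contribute the quadratic terms $(\nat v\cdot\nt)^2(\nt\cdot\ab)(\at\cdot\ab)$ and $(\nat v\cdot\nt)^2(\sin^2\theta_i-\cos^2\theta_i)$.

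Second, $\dt$ falling on $\nabla_\nb\rho$ gives the terms $\pm\dt(v\cdot\nabla_v\nb\,(\nb\cdot\nt))$. The gravity part combines to $\pm\g e_2\cdot\at(\nat v\cdot\nt)$, respectively $\mp\g e_2\cdot\nt(\nat v\cdot\nt)$, because
\[
(\nt\cdot\ab)\ab+(\nt\cdot\nb)\nb=\nt .
\]

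Third, $\dt$ falling on $\nabla_\ab\rho$ gives the essential term $\dt\nabla_\ab\rho=\frac{\sigma}{\beta_c}\dt\big(\sin\theta_i(\nat v\cdot\nt)\big)$. Both $\dt\sin\theta_i$ and $\dt(\nat v\cdot\nt)$ must be computed. For the latter we use Lemma~\ref{L2.1}, $[\dt,\nat]v=-(\nat v\cdot\at)\nat v$, together with $\dt\nt$ from \eqref{E2.1} and the Euler equation:
\[
\dt(\nat v\cdot\nt)=-\nat\nabla\rho\cdot\nt-(\nat v\cdot\at)(\nat v\cdot\nt)-(\nat v\cdot\at)(\nat v\cdot\nt),
\]
where the last term comes from $\dt\nt$. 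Using $\nat\nt=\kappa\at$, the first term equals $-\nat\nnt\rho+\kappa\nat\rho$. These are exactly the terms $-\frac{\sigma}{\beta_c}\nat\nnt\rho$, $+\frac{\sigma}{\beta_c}\kappa\nat\rho$ and $-\frac{2\sigma}{\beta_c}(\nat v\cdot\at)(\nat v\cdot\nt)$, multiplied by $\sin\theta_i\cdot(\nt\cdot\ab)$ or $\sin\theta_i\cdot(\at\cdot\ab)$. Via \eqref{E3.7} these products become $(\sin\theta_i)^2$ for \eqref{E3.42} and $(\nt\cdot\ab)(\at\cdot\ab)$ for \eqref{E3.43}, up to the $\pm$ signs.

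The derivation is thus essentially algebraic, and \textbf{the main obstacle is sign bookkeeping}. The difficult step is tracking the $\pm/\mp$ conventions for $l$ versus $r$ in \eqref{E3.7}, and making sure that the $\dt\sin\theta_i$ contribution merges correctly with the frame-derivative terms. I expect that $\dt\sin\theta_i$, combined with $\dt(\nt\cdot\ab)$, collapses into the single coefficient $-2(\nat v\cdot\nt)^2(\nt\cdot\ab)(\at\cdot\ab)$ in \eqref{E3.42}, and into $(\sin^2\theta_i-\cos^2\theta_i)$ in \eqref{E3.43}. I would check this separately at $X_l$ and at $X_r$.
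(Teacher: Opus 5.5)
Your route is the same as the paper's. You differentiate the contact-point condition to relate $\nabla_\ab\rho$ to $\nat v\cdot\nt$ (the paper's \eqref{E3.8}). You then resolve $\nnt\rho$ and $\nat\rho$ in the $(\ab,\nb)$ frame using the Neumann condition on $\gb$ (the paper's \eqref{E3.40}--\eqref{E3.41}). Finally you apply $\dt$ using \eqref{E3.44}--\eqref{E3.45}. Your formulas for $\dt(\nt\cdot\ab)$, $\dt(\at\cdot\ab)$ and $\dt(\nat v\cdot\nt)$, and the recombination of the gravity terms, are all correct. However, your very first display has a sign error, and it is precisely the sign the lemma encodes. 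The last equation of \eqref{E2} is $\mp\beta_c v\cdot\ab=\sigma(\cos\theta_s-\cos\theta_i)$. Applying $\dt$ therefore gives
$\mp\beta_c\dt v\cdot\ab=-\sigma\dt\cos\theta_i=\sigma\dt(\at\cdot\ab)=\sigma(\nat v\cdot\nt)(\nt\cdot\ab)$, not $+\sigma\dt\cos\theta_i$. The correct relation is thus
\[
\pm(\nabla_\ab\rho+\g e_2\cdot\ab)=\frac{\sigma}{\beta_c}(\nat v\cdot\nt)(\nt\cdot\ab),\qquad\text{i.e.}\qquad \nabla_\ab\rho=-\frac{\sigma}{\beta_c}\sin\theta_i\,(\nat v\cdot\nt)-\g e_2\cdot\ab .
\]
This is \eqref{E3.8}, equivalently \eqref{E5.69}. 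Your relation instead has $+\frac{\sigma}{\beta_c}\sin\theta_i(\nat v\cdot\nt)$.

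If you carry your version through, you get $\pm\nnt\rho=-\frac{\sigma}{\beta_c}(\sin\theta_i)^2(\nat v\cdot\nt)\mp\g e_2\cdot\nt\pm v\cdot\nabla_v\nb\,(\nb\cdot\nt)$. Then $\dt(\nat v\cdot\nt)=-\nat\nnt\rho+\dots$ produces $+\frac{\sigma}{\beta_c}\nat\nnt\rho(\sin\theta_i)^2$, the opposite of \eqref{E3.42}. In fact every $\frac{\sigma}{\beta_c}$-term in both \eqref{E3.42} and \eqref{E3.43} flips sign. Only the gravity terms and the $\dt\big(v\cdot\nabla_v\nb\,(\nb\cdot\nt)\big)$-type terms are unaffected. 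Your derivation effectively replaces $\beta_c$ by $-\beta_c$.

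This is not cosmetic. The minus sign in front of $\frac{\sigma}{\beta_c}\nat\nnt\rho(\sin\theta_i)^2$ is what makes the corner term in Lemma \ref{L4.10} equal to $-\frac{\sigma}{\beta_c}F_\ell(t)$, i.e. dissipative. Your phrase ``up to the $\pm$ signs'' hides exactly this point. With the first step corrected, the rest of your argument goes through and reproduces the paper's proof. It also helps to write $\sin\theta_i=\mp\nt\cdot\ab$ from the outset, as the paper does in \eqref{E3.40}--\eqref{E3.41}. Then the $\dt\sin\theta_i$ contribution is handled automatically by \eqref{E3.44}, and the cancellation you only ``expect'' at the end is immediate.
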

\begin{proof}
     Recall that the behavior of the contact points $X_i$ is described by \[\mp\beta_c v\cdot\ab=[\varsigma]-\sigma\cos \theta_i=[\varsigma]+\sigma\at\cdot\ab\quad \text{ at }X_i(i=l,r).\] Upon taking material derivative $\dt$ on both sides of the equation and substituting the Euler equation from \eqref{E2}, noting that $\dt\at$ is given by \eqref{E2.2}, it is easy to see that
    \begin{equation}\label{E3.8} \pm (\nabla_\ab \rho+\g e_2\cdot\ab)=\frac{\sigma}{\beta_c}(\nat v\cdot\nt)\nt\cdot\ab\quad \text{ at }X_i(i=l,r),\end{equation} where one notices again that $\ab$ is a  constant vector near $X_i$. Moreover, combining \eqref{E3.7}, \eqref{E3.8} and the boundary condition $\nabla_\nb\rho|_\gb=v\cdot\nabla_v \nb-\g e_2\cdot\nb$, one obtains
    \begin{align}
        \pm\nnt\rho=\frac{\sigma}{\beta_c}(\nat v\cdot\nt)(\nt\cdot\ab)^2\mp \g e_2\cdot \nt\pm v\cdot\nabla_v\nb(\nb\cdot\nt)\quad \text{ at }X_i(i=l,r),\label{E3.40}\\
        \pm\nat\rho=\frac{\sigma}{\beta_c}(\nat v\cdot\nt)(\nt\cdot\ab)(\at\cdot\ab)\mp\g e_2\cdot\at\pm v\cdot\nabla_v \nb(\nb\cdot\at)\quad \text{ at }X_i(i=l,r).\label{E3.41}
    \end{align}
    To derive $\dt\nnt\rho$ and $\dt\nat\rho$, we need to compute the following identities. First, from \eqref{E2.1} and \eqref{E3.7}, one can write 
    \begin{equation}
        \dt(\sin\theta_i)^2=\dt(\nt\cdot\ab)^2 =-2(\nat v\cdot\nt)(\at\cdot\ab)(\nt\cdot\ab).\label{E3.44}
    \end{equation}Second, from the Euler equation $\dt v=-\nabla\rho-\g e_2$, one can find
    \begin{equation}
        \begin{aligned}
            \dt(\nat v\cdot\nt)&=\nat(\dt v\cdot\nt)-(\dt v\cdot \at)\kappa-2(\nat v\cdot\at)(\nat v\cdot\nt)\\&=-\nat\nnt\rho+\nat\rho\kappa-2(\nat v\cdot\at)(\nat v\cdot\nt).\label{E3.45}
        \end{aligned}
    \end{equation}In addition,\begin{align}
        \dt(\nat v\cdot\at)&=\nat(\dt v\cdot\at)+(\dt v\cdot\nt)\kappa-(\nat v\cdot\at)^2+(\nat v\cdot\nt)^2.\label{E3.46}
    \end{align}    
    Taking $\dt$ to  both sides of  equations \eqref{E3.40}-\eqref{E3.41} and applying 
   \eqref{E3.44}-\eqref{E3.45}, we directly obtain the desired results.
\end{proof}
   

\noindent{\bf The system for velocity field.} 
        One can use Hodge decomposition in the corner domain, which exists since $\gt$ is piecewise regular, writing 
        \begin{equation}\label{E3.156}
            v=\nabla \mathcal{V}+\nabla^\perp \mathcal{U},
        \end{equation}
      where $\nabla^\perp =(-\pa_2,\pa_1)^T$ is the perpendicular gradient, the first component is a gradient and the second depends only on $\nabla^\perp \cdot v$. More precisely,  
        we can choose $\mathcal{U}$ to
         satisfy the elliptic problem with Dirichlet data on both sides,\begin{equation}\label{E3.169}
            \begin{cases}
                \Delta\mathcal{U}=\nabla^\perp \cdot v&\text{ in }\ot,\\
                \mathcal{U}=0&\text{ on }\gt,\\
               \mathcal{U}=0&\text{ on }\gb.
            \end{cases}
        \end{equation}
        Denote the vorticity  by $\omega=\nabla^\perp\cdot v$, which satisfies the following transport equation:
        \begin{equation}\label{E3.16}
            \dt\omega=-(\divv v)\omega\quad \text{ in }\ot.
        \end{equation}
        Meanwhile, we can again use elliptic system  for $\mathcal{V}$ with Neumann data at both sides, 
        \begin{equation}\label{E3.168}
            \begin{cases}
                \Delta \mathcal{V}=\divv v=-\frac{1}{\rho}\dt\rho&\text{ in }\ot,\\
                \nnt\mathcal{V}=v\cdot\nt&\text{ on }\gt,\\
                \nabla_\nb \mathcal{V}=0&\text{ on }\gb,
            \end{cases}
        \end{equation}
        here we have used $\nabla^\perp \mathcal{U}\cdot n_j=\nabla_{\tau_j} \mathcal{U}=0$ on $\Gamma_j$ for $j=t,b$, and $v\cdot\nb=0$ on $\gb$. Note that such div-curl decomposition for $v$ is orthogonal in $L^2(\ot)$ since \begin{align*}
            \int_\ot \nabla^\perp \mathcal{U}\cdot\nabla \mathcal{V}\id X= \int_\gt\mathcal{U} \nat \mathcal{V}\id S+\int_\gb \mathcal{U}\nabla_\ab\mathcal{V}\id S -\int_\ot \mathcal{U}\nabla^\perp \cdot\nabla\mathcal{V}\id X=0.
        \end{align*}  

        In addition, assume that \[\int_\ot \mathcal{V}\id X=0.\]The norm $\mode{\mathcal{V}}_{L^2(\ot)}$ and $\mode{\mathcal{U}}_{L^2(\ot)}$ can  both  be absorbed by the higher-order norm during the proof.

\section{Lower-order energy estimates}\label{Section4}

In this section, we establish the lower-order energy estimates that control the low-frequency components of $(\rho, v, \Gamma_t)$. Meanwhile, these estimates provide the fundamental framework for the approximate solutions constructed in Section \ref{Section6}.
 
The lower-order energy $E_\ell(t)$ is defined as 
\begin{equation}\label{E11}
    \begin{aligned}
        E_\ell(t)=
        & \mode{\dt^2\rho}^2_{L^2(\ot)}+\mode{\dt\rho}_{H^1(\ot)}^2+\sigma\mode{\nat\nnt\rho}^2_{L^2(\gt)}\\
&+\mode{\rho}_{H^1(\ot)}^2+\mode{\omega}_{H^1(\ot)}^2+\mode{v}^2_{H^\frac{3}{2}(\ot)}+\mode{\gt}_{H^1}^2,
    \end{aligned}
\end{equation} and the dissipation 
\begin{equation}\label{E4.34}
F_\ell(t)=\sum_{i=l,r}\llx{(\sin\theta_i)\nat\nnt\rho}^2. 
\end{equation}

 Assume that there exists a positive constant $c_0>0$ such that the density $\rho$ is uniformly positive, i.e., \begin{equation}\label{E4.22}
    \rho\geq c_0>0\quad  \text{ in }\ot.
\end{equation}
   Let $\theta_i\in(0,\pi/2), i=l,r$, assume that there exists a positive constant $c_1>0$, such that
\begin{equation}\label{E4.23}
    \sin\theta_i(t)\geq c_1>0,
\end{equation} which can be verified via a continuity argument.
The main result of this section is: 
\begin{theorem}\label{T1}
    Let the contact angles $\theta_i\in (0,\pi/2), i=l,r$,
    assume that  \eqref{E4.22}-\eqref{E4.23} hold for $t\in [0,T]$. Then, the following holds
   \[
       \begin{aligned}
        \sup_{t\in [0,T]} E_\ell&(t)+\int_0^T F_\ell(t)\id t \leq P(E_\ell(0))\\&+\int_0^T P\left(\mode{v}_{H^{2+\epsilon}(\ot)},\mode{\rho}_{H^{\frac{5}{2}}(\ot)},\mode{\dt\rho}_{H^2(\ot)},\mode{\dt^2\rho}_{H^1(\ot)},\mode{\nat\dt\nnt\rho}_{L^2(\gt)},E_\ell(t)\right) \id t,
       \end{aligned}
        \] for $\epsilon\in (0,1)$ small enough.
\end{theorem}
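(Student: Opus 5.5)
The estimate will come from a weighted energy identity for the wave equation \eqref{E3.2}, differentiated once in time, coupled to the curvature equation \eqref{E3.3} on $\gt$. The remaining pieces of $E_\ell$ ($\omega$, $v$, $\gt$, $\rho$) will then be recovered from transport, div-curl and elliptic arguments.

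\textbf{Main identity.} Apply $\dt$ to \eqref{E3.2} and use Lemma \ref{L2.1} for $[\dt,\Delta]$:
\[
\dt^3\rho-\rho\Delta\dt\rho=R_1,
\]
where $R_1$ collects terms like $\dt\rho\Delta\rho$, $\rho\nabla v\cdot\nabla^2\rho$, $\rho\Delta v\cdot\nabla\rho$, and $\dt$ of the right side of \eqref{E3.2}. Multiply by $\frac{1}{\rho}\dt^2\rho$, integrate over $\ot$ and apply Lemma \ref{L2.3}. This gives $\frac12\ddt\int_\ot\frac1\rho|\dt^2\rho|^2+|\nabla\dt\rho|^2$ plus interior errors. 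These errors are bounded by $\|R_1\|_{L^2}\|\dt^2\rho\|_{L^2}$ and by $\|\nabla v\|_{L^\infty}$-type factors, with $H^{2+\epsilon}\subset W^{1,\infty}$. That is why $\|v\|_{H^{2+\epsilon}}$, $\|\rho\|_{H^{5/2}}$ and $\|\dt\rho\|_{H^2}$ appear inside $P$.

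The boundary term on $\gb$ involves $\nabla_\nb\dt\rho$. It is computed from $\dt$ of the Neumann condition in \eqref{E3.2} and is lower order. The boundary term on $\gt$ is $-\int_\gt\nnt\dt\rho\,\dt^2\rho$. Here I use the Dirichlet condition $\rho^2/2=\sigma\kappa+\mathrm B$, so that $\rho\dt^2\rho+(\dt\rho)^2=\sigma\dt^2\kappa$. Substituting \eqref{E3.3} gives $\sigma\Delta_\gt\nnt\rho$ plus $\sigma\mathfrak R$. Commuting $\dt$ with $\nnt$ (Lemma \ref{L2.1}) and integrating by parts tangentially once, the principal part becomes
\[
-\frac{\sigma}{2}\ddt\int_\gt\frac1\rho|\nat\nnt\rho|^2\id S+\text{corner terms}+\text{errors}.
\]
This produces the energy $\sigma\|\nat\nnt\rho\|^2_{L^2(\gt)}$.

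\textbf{Corner terms.} The corner terms have the form $\pm\frac{\sigma}{\rho}\nat\nnt\rho\,\dt\nnt\rho|_{X_i}$. I substitute the contact-point dynamics \eqref{E3.42} of Lemma \ref{Lemma3.1}. Its leading part $-\frac{\sigma}{\beta_c}\nat\nnt\rho(\sin\theta_i)^2$ gives exactly the negative-definite term $-\frac{\sigma^2}{\beta_c\rho}|(\sin\theta_i)\nat\nnt\rho|^2$, which is the dissipation $F_\ell$. The other terms in \eqref{E3.42} are products of $\nat v$, $\nat\rho$ and $\dt(v\cdot\nabla_v\nb)$ evaluated at $X_i$. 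I absorb them by Young's inequality, using \eqref{E4.23} to keep $\sin\theta_i$ away from zero. Pointwise values such as $\nat v\cdot\nt|_{X_i}$ are controlled by one-dimensional Sobolev embedding on $\gt$ from $H^{1/2+\epsilon}$ traces of $v\in H^{2+\epsilon}$ (Lemma \ref{L2.5}).

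The interior-to-boundary error terms with $\mathfrak R$ contain $\nat(\nat v\cdot\nt)$. I handle them either by $L^2(\gt)$ trace bounds or, where only $H^{-1/2}$ control is available, by the $\tilde H^{\pm1/2}$ duality of Lemmas \ref{L2.6}--\ref{L2.7}. The term $\nat\dt\nnt\rho$, which I cannot control at this level, is simply placed in the polynomial, as the statement allows.

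\textbf{Remaining norms.} The term $\|\omega\|_{H^1}$ follows from the transport equation \eqref{E3.16} by differentiating once and using Gronwall-type bounds with $\nabla v\in L^\infty$. For $\|v\|_{H^{3/2}}$ I use the Hodge decomposition \eqref{E3.156}. The $\mathcal U$-part is bounded by $\omega$ via \eqref{E3.169}. The $\mathcal V$-part is estimated through \eqref{E3.168} and Lemma \ref{L2.8}, with $\divv v=-\dt\rho/\rho\in H^1$ and boundary data $v\cdot\nt$. To avoid circularity I would instead integrate $\ddt\|v\|_{H^{3/2}}^2\le P(\cdots)$, bounding the time derivative by $\|\dt v\|_{H^{3/2}}\le\|\rho\|_{H^{5/2}}+C$. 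The terms $\|\rho\|_{H^1}$, $\|\dt\rho\|_{L^2}$ and $\|\gt\|_{H^1}$ follow the same way by integrating their time derivatives.

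\textbf{Expected obstacle.} The main difficulty is the corner bookkeeping. Each tangential integration by parts must yield only terms that are either the signed dissipation or controllable by pointwise traces. Identifying exactly the sign structure from \eqref{E3.42} is the crux; the factor $1/\rho$ in the boundary weight must be commuted carefully, since $\dt(1/\rho)$ is lower order.
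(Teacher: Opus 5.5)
Your plan is correct and is essentially the paper's proof. The paper tests the $\dt$-differentiated wave equation \eqref{E4.1} against $\dt^2\rho$ without your $1/\rho$ weight, so the boundary term already carries $\rho\dt^2\rho=\sigma\dt^2\kappa-(\dt\rho)^2$ and the corner term gives exactly $-\frac{\sigma^2}{\beta_c}F_\ell$ via Lemma \ref{L4.10}, with no weight to undo. It keeps the lower-order boundary pieces ($-\dt\nt\cdot\nabla\rho\,\rho\dt^2\rho$ and $-\dt\nnt\rho(\dt\rho)^2$) paired with the trace of $\dt^2\rho\in H^1(\ot)$, and it recovers $\|v\|_{H^{3/2}}$, $\omega$ and $\gt$ by time integration just as you propose.

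One correction to your bookkeeping: the piece of $\mathfrak R$ that fails to lie in $L^2(\gt)$ is $2(\nat v\cdot\nt)\nat(\nat v\cdot\at)$, since $\nat v\cdot\at$ is only in $H^{\frac12+\epsilon}(\gt)$. By contrast, $\nat(\nat v\cdot\nt)\in L^2(\gt)$ by Lemma \ref{L4.2}. The paper removes the bad term by one tangential integration by parts, taking the corner value of $\dt\nnt\rho$ from Lemma \ref{L4.10} and bounding the interior term with $\|\nat\dt\nnt\rho\|_{L^2(\gt)}$. Your $\tilde H^{\pm\frac12}$ duality route also works, provided you first subtract the corner value of $\dt\nnt\rho\,(\nat v\cdot\nt)$.
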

To begin with, the following lemma establishes the regularity of $\rho, v$ and $\gt$.

\begin{lemma}\label{L4.2}
    Assume that $E_\ell(t)\in L^\infty[0,T]$ for some $T>0$, if $\theta_i\in (0,\pi/2)$, one has the following estimates:
    \begin{equation*}
        \begin{aligned}
            \mode{\gt}_{H^{\frac{7}{2}}}+\mode{\nt}_{H^\frac{5}{2}(\gt)}\leq& P(E_\ell(t)),\\
            \mode{\rho}_{H^2(\ot)}+\mode{\nnt\rho}_{H^1(\gt)}\leq &P\left(E_\ell(t)\right), \\
            \mode{v}_{H^2(\ot)}+\mode{\nabla_\at v\cdot\nt}_{H^1(\gt)}\leq& P\left(E_\ell(t)\right).\\
        \end{aligned}
    \end{equation*}
\end{lemma}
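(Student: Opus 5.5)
We prove the three estimates of Lemma~\ref{L4.2} jointly, by a bootstrap that starts from the density and feeds the boundary regularity back into the geometry.

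\emph{Step 1: $H^2$ bound for $\rho$.} By \eqref{E3.2}, $\rho$ solves a Neumann-type elliptic problem
\[
\rho\Delta\rho=\dt^2\rho-\rho\tr((\nabla v)^2)-\tfrac1\rho(\dt\rho)^2 ,
\]
with $\nabla_\nb\rho|_\gb=v\cdot\nabla_v\nb-\g e_2\cdot\nb$. On $\gt$ we use $\nnt\rho$ as Neumann data. Dividing by $\rho\ge c_0$, Lemma~\ref{L2.8}(2) with $s=0$ gives
\[
\mode{\rho}_{H^2(\ot)}\lesssim \lt{\dt^2\rho}+\mode{\nabla v}_{L^4}^2+\mode{\dt\rho}_{L^4}^2+\mode{\nnt\rho}_{H^{1/2}(\gt)}+\mode{\rho}_{L^2},
\]
with a constant depending on $\mode{\gt}_{H^3}$. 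The terms $\mode{\nabla v}_{L^4}$ and $\mode{\dt\rho}_{L^4}$ are controlled by $E_\ell$ through $v\in H^{3/2}$, $\dt\rho\in H^1$ and Sobolev embedding in 2D. For $\nnt\rho$ on $\gt$ we use the energy term $\sigma\mode{\nat\nnt\rho}_{L^2(\gt)}$. Its mean value is controlled by the trace of $\nabla\rho$: Lemma~\ref{L2.5} at level $H^{3/2+}$, interpolated between $H^1$ and $H^2$ and absorbed. This gives $\mode{\nnt\rho}_{H^1(\gt)}\le P(E_\ell)$ and hence the second line, modulo the $\gt$ dependence.

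\emph{Step 2: regularity of $\gt$.} From $\tfrac12\rho^2=\sigma\kappa+\mathrm B$ on $\gt$ we get $\kappa=(\tfrac12\rho^2-\mathrm B)/\sigma$. Then
\[
\nat\kappa=\tfrac{\rho}{\sigma}\nat\rho,\qquad \nat^2\kappa=\tfrac1\sigma\bigl(\rho\Delta_\gt\rho+(\nat\rho)^2\bigr).
\]
The tangential derivatives of $\rho$ on $\gt$ are controlled by the trace of $\rho\in H^2$, which gives $\kappa\in H^{3/2}(\gt)$. To reach $\gt\in H^{7/2}$ (that is, $\kappa\in H^{3/2}$), we use the parametrization $\nt_{\at}=\kappa\at$ and $\mode{\gt}_{H^1}$ as the low-order norm; then $\nt\in H^{5/2}$. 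For the top order we exploit the curvature equation \eqref{E3.3}: $\Delta_\gt\nnt\rho$ is part of the structure, and $\nat\nnt\rho\in L^2$ feeds the estimate. There is a circularity, since the elliptic constants depend on $\mode{\gt}_{H^3}$. We close it by the standard trick: run Step 1 with an interpolation $\mode{\gt}_{H^3}\le\delta\mode{\gt}_{H^{7/2}}+C_\delta\mode{\gt}_{H^1}$ and bound the polynomial dependence a priori, using the continuity assumption that $E_\ell(t)<\infty$ on $[0,T]$.

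\emph{Step 3: velocity.} Use the decomposition \eqref{E3.156}.
\begin{itemize}
\item For $\mathcal U$, the Dirichlet problem \eqref{E3.169} with $\omega\in H^1$ gives $\mathcal U\in H^3$, hence $\nabla^\perp\mathcal U\in H^2$. This is valid because the Dirichlet singularity $r^{\pi/\theta_i}$ is harmless for $\theta_i<\pi/2$.
\item For $\mathcal V$, apply \eqref{E3.168} with $\Delta\mathcal V=-\dt\rho/\rho\in H^1$ and Neumann data $v\cdot\nt$. Lemma~\ref{L2.8}(2) with $s=1$ needs $v\cdot\nt\in H^{3/2}(\gt)$.
\end{itemize}
We get this from the curvature equation \eqref{E3.4}:
\[
\nat(\nat v\cdot\nt)=-\dt\kappa-(\nat v\cdot\at)\kappa,\qquad \dt\kappa=\tfrac{\rho}{\sigma}\dt\rho .
\]
Here $\dt\rho\in H^{1/2}(\gt)$ by trace, and $(\nat v\cdot\at)\kappa\in L^2$ from $v\in H^{3/2}$. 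Hence $\nat v\cdot\nt\in H^1(\gt)$, after fixing its mean by the trace of $v$. Writing $\nat(v\cdot\nt)=\nat v\cdot\nt+\kappa v\cdot\at$, we then get $v\cdot\nt\in H^2(\gt)\subset H^{3/2}$, and Lemma~\ref{L2.8} yields $v\in H^2$.

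\emph{Main obstacle.} The hard part is Step 2: closing the dependence of the elliptic constants on $\mode{\gt}_{H^3}$ at the corner, where Lemma~\ref{L2.8} requires $\theta_i<\pi/2$ via \eqref{E4.23}. We would first try interpolation plus Young's inequality to absorb these constants into a polynomial of $E_\ell$.
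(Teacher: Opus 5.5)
\textbf{There is a genuine gap at the step you flag as the main obstacle.} Your skeleton matches the paper: the $H^2$ Neumann estimate for $\rho$ from \eqref{E3.2}, then $\kappa=(\tfrac12\rho^2-\mathrm{B})/\sigma\in H^{3/2}(\Gamma_t)$ by trace, then $\nabla_{\tau_t}v\cdot n_t\in H^1(\Gamma_t)$ from \eqref{E3.4}, then $v\in H^2$ via the Hodge decomposition. But the circularity you identify is not closed, and interpolation plus absorption cannot close it.

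The elliptic constant $C(\|\Gamma_t\|_{H^3})$ multiplies the whole right-hand side, while $\|\Gamma_t\|_{H^{7/2}}$ is itself bounded only by $C(\|\rho\|_{H^2}^2+1)$. Inserting $\|\Gamma_t\|_{H^3}\le\delta\|\Gamma_t\|_{H^{7/2}}+C_\delta\|\Gamma_t\|_{H^1}$ therefore gives an inequality of the type $\|\rho\|_{H^2}\le A+B\,\delta^m\|\rho\|_{H^2}^{2m}$ with $m\ge1$. Its right-hand side is superlinear in $\|\rho\|_{H^2}$, so no choice of $\delta$ lets you absorb it. Such an inequality holds for every sufficiently large value, so it yields no bound. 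Appealing to $E_\ell\in L^\infty$ does not help either. Finiteness of $E_\ell$ is not quantitative control of $\|\Gamma_t\|_{H^3}$. A continuity argument in time would at best give a bound in terms of initial data on a short interval, not the fixed-time estimate $\le P(E_\ell(t))$ the lemma asserts.

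\textbf{The missing idea} is to make the bootstrap triangular by inserting a lower-regularity geometric step first. The paper uses only $\rho\in H^1(\Omega_t)$, which is part of $E_\ell$. The trace gives $\kappa\in H^{1/2}(\Gamma_t)$, hence $\|\Gamma_t\|_{H^{5/2}}+\|n_t\|_{H^{3/2}(\Gamma_t)}\le C(\|\Gamma_t\|_{H^1})(\|\rho\|_{H^1}^2+1)\le P(E_\ell)$, which is \eqref{E4.41}.

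The $H^2$ Neumann estimate for \eqref{E3.2} is then applied with a constant depending only on $\|\Gamma_t\|_{H^{5/2}}$, which is already controlled. The paper's proof writes $C(\|\Gamma_t\|_{H^{5/2}})$ there, even though Lemma~\ref{L2.8}(2) is stated with $\|\Gamma_t\|_{H^3}$ for all $s$, so your reading of the lemma was faithful. With the constants now fixed, your interpolation for $\|\nabla_{n_t}\rho\|_{L^2(\Gamma_t)}$ and the absorption of $\delta\|\rho\|_{H^2}$ become legitimate.

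Only after that does $\rho\in H^2$ give $\kappa\in H^{3/2}(\Gamma_t)$ and $\|\Gamma_t\|_{H^{7/2}}\le P(E_\ell)$. This in turn controls the $C(\|\Gamma_t\|_{H^3})$ constants needed in your Step 3.

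The appeal to the curvature equation \eqref{E3.3} in Step 2 is unnecessary. That is an evolution identity for $D_t^2\kappa$, whereas $\kappa\in H^{3/2}(\Gamma_t)$ follows directly from the trace of $\rho\in H^2(\Omega_t)$ and the algebra property of $H^{3/2}$ on a curve.
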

\begin{proof}
       

  The regularity of hypersurface $\gt$ can be measured by its curvature $\kappa$ \cite{Poyferre2019},
\begin{equation}\label{E4.41}
    \begin{aligned}
        \mode{\gt}_{H^\frac{5}{2}}+\mode{\nt}_{H^\frac{3}{2}(\gt)}\leq C\left(\mode{\kappa}_{H^\frac{1}{2}(\gt)}+\lgt{\nt}\right)\leq C\left(\mode{\gt}_{H^1}\right)\left(\mode{\rho}^2_{H^1(\ot)}+1\right)\leq P(E_\ell(t)).
    \end{aligned}
\end{equation}

Using  interpolation as in smooth domains, we have for some constant $\delta>0$ that the following holds \[\lgt{\nnt\rho}\leq \frac{\delta}{C\left(\mode{\gt}_{H^\frac{5}{2}}\right)}\mode{\rho}_{H^2(\ot)}+\frac{C\left(\mode{\gt}_{H^\frac{5}{2}}\right)}{\delta}\mode{\rho}_{H^1(\ot)}.\]
        It's straightforward to find
        \[\mode{\nnt\rho}_{H^\frac{1}{2}(\gt)}\leq C\left(\mode{\gt}_{H^\frac{5}{2}}\right)\left(\mode{\nat\nnt\rho}_{L^2(\gt)}+\mode{
        \nnt\rho
    }_{L^2(\gt)}\right). \]
Combining system \eqref{E3.2} and Lemma \ref{L2.8}, choosing a suitable $\delta\in(0,1)$,  one deduces that\[
    \begin{aligned}
         \mode{\rho}_{H^2(\ot)}&\leq C\left(\mode{\gt}_{H^\frac{5}{2}}\right)\left(\mode{\frac{1}{\rho}\dt^2\rho}_{L^2(\ot)}+\mode{\nabla v}_{L^4(\ot)}^2+\mode{\frac{1}{\rho}\dt\rho}^2_{L^4(\ot)}\right)\\&\quad +C\left(\mode{\gt}_{H^\frac{5}{2}}\right)\left(\mode{\nnt\rho}_{H^\frac{1}{2}(\gt)}+\mode{\nabla_\nb \rho}_{H^\frac{1}{2}(\gb)}+\mode{\rho}_{L^2(\ot)}\right)\\&\leq C\left(\mode{\gt}_{H^\frac{5}{2}}\right)\left(P(E_\ell(t))+\mode{v}_{H^\frac{3}{2}(\ot)}^2\right)+\delta \mode{\rho}_{H^2(\ot)}\\& \leq P(E_\ell(t)), 
    \end{aligned}    
   \] here we have used the Sobolev embedding $H^\frac{1}{2}(\ot)\hookrightarrow L^4(\ot)$. Then, we directly obtain     \begin{equation*}\label{E4.44}
       \mode{\nnt\rho}_{H^1(\gt)}\leq P(E_\ell(t)).
   \end{equation*}

  Similar to \eqref{E4.41}, we derive from Lemma \ref{L2.5} that 
   \begin{equation*}\label{E4.43}
       \mode{\gt}_{H^\frac{7}{2}}+\mode{\nt}_{H^\frac{5}{2}(\gt)}\leq C\left(\mode{\kappa}_{H^\frac{3}{2}(\gt)}+\mode{\nt}_{L^2(\gt)}\right)\leq C\left(\mode{\gt}_{H^\frac{3}{2}}\right)\left(\mode{\rho}_{H^2(\ot)}^2+1\right)\leq P(E_\ell(t)).
   \end{equation*} 

   Next we turn to gain the regularity of $v$. 
    Using the boundary condition $\sigma\dt\kappa=\rho\dt\rho$ on $\gt$ and equation \eqref{E3.4}, yields 
     \begin{equation}\label{E4.7}
        \begin{aligned}
        \mode{\nat v\cdot\nt}_{H^1(\gt)}\leq C\left(\mode{\gt}_{H^3}\right)\left(\mode{\dt\kappa}_{L^2(\gt)}+\mode{v}_{H^\frac{3}{2}(\ot)}\right)\leq P(E_\ell(t)).
    \end{aligned}
    \end{equation}
Applying Lemma \ref{L2.8} to  the elliptic systems  \eqref{E3.169} and \eqref{E3.168}, one has 
\begin{align*}
    \mode{v}_{H^2(\ot)}\leq &\mode{\mathcal{V}}_{H^3(\ot)}+\mode{\mathcal{U}}_{H^3(\ot)}\\\leq & C\left(\mode{\gt}_{H^3}\right)\left(\mode{\frac{1}{\rho}\dt\rho}_{H^1(\ot)}+\mode{v\cdot\nt}_{H^\frac{3}{2}(\gt)}+\mode{\omega}_{H^1(\ot)}+\mode{\mathcal{V}}_{L^2(\ot)}+\mode{\mathcal{U}}_{L^2(\ot)}\right)\\ \leq &P(E_\ell(t)).
\end{align*}
This completes the proof.

\end{proof}

We now deal with the dissipation at the contact points, which will appear later in the estimates and play a key role. 
\begin{lemma}\label{L4.10}
   We have the following equality at the contact point $X_i$:
   \begin{equation*}\label{E4.11}
       (\dt\nnt\rho)\nat\nnt\rho\big|^{X_l}_{X_r}=-\frac{\sigma}{\beta_c}F_\ell(t)+\sum_{i=l,r}r_i,
   \end{equation*}
here dissipation $F_\ell(t)$ is defined by \eqref{E4.34} and remainder terms $r_i$ are given by \eqref{E4.111}.  
  We have the estimates
  \begin{equation*}
      \llx{r_i}\leq P\left(\mode{ v}_{H^{2+\epsilon}(\ot)},\mode{ \rho}_{H^{2+\epsilon}(\ot)},E_\ell(t)\right)F_\ell^\frac{1}{2}(t), \ i=l,r,
  \end{equation*}
   and
   \begin{equation*}
       \llx{\dt\nnt\rho}+\llx{\dt\nat\rho}\leq \frac{\sigma}{\beta_c}F^\frac{1}{2}_\ell(t)+P\left(\mode{ v}_{H^{2+\epsilon}(\ot)},\mode{ \rho}_{H^{2+\epsilon}(\ot)},E_\ell(t)\right).
   \end{equation*}
\end{lemma}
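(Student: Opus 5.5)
The plan is to substitute the contact-point identity \eqref{E3.42} of Lemma \ref{Lemma3.1} directly into the product $(\dt\nnt\rho)\nat\nnt\rho$ evaluated at $X_l$ and $X_r$.

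\textbf{Sign bookkeeping.} By the notation $\mp f|_{X_i}$, we have $(\cdot)\big|^{X_l}_{X_r}=(\cdot)|_{X_l}-(\cdot)|_{X_r}$. The left side of \eqref{E3.42} is $+\dt\nnt\rho$ at $X_l$ (upper sign) and $-\dt\nnt\rho$ at $X_r$. Hence
\[
(\dt\nnt\rho)\nat\nnt\rho\big|^{X_l}_{X_r}=\sum_{i=l,r}\left(\pm\dt\nnt\rho\right)\nat\nnt\rho\big|_{X_i}.
\]

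\textbf{Identifying the dissipation.} Multiplying \eqref{E3.42} by $\nat\nnt\rho$, the first term gives exactly $-\frac{\sigma}{\beta_c}(\sin\theta_i)^2|\nat\nnt\rho|^2$. Summed over $i=l,r$, this is $-\frac{\sigma}{\beta_c}F_\ell(t)$. Everything else is collected into
\[
r_i=\Big[-\tfrac{2\sigma}{\beta_c}(\nat v\cdot\at)(\nat v\cdot\nt)(\sin\theta_i)^2+\tfrac{\sigma}{\beta_c}\nat\rho\,\kappa(\sin\theta_i)^2-\tfrac{2\sigma}{\beta_c}(\nat v\cdot\nt)^2(\nt\cdot\ab)(\at\cdot\ab)\pm \g e_2\cdot\at(\nat v\cdot\nt)\pm\dt\big(v\cdot\nabla_v\nb(\nb\cdot\nt)\big)\Big]\nat\nnt\rho\Big|_{X_i}.
\]

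\textbf{Bounding $r_i$.} Write $\nat\nnt\rho=(\sin\theta_i)^{-1}\cdot(\sin\theta_i)\nat\nnt\rho$. By \eqref{E4.23}, $(\sin\theta_i)^{-1}\le c_1^{-1}$, and $|(\sin\theta_i)\nat\nnt\rho|_{X_i}|\le F_\ell^{1/2}$. It therefore suffices to bound the bracket pointwise at $X_i$ by $P(\mode{v}_{H^{2+\epsilon}},\mode{\rho}_{H^{2+\epsilon}},E_\ell)$.

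\begin{enumerate}
\item Since $v\in H^{2+\epsilon}(\ot)$, Lemma \ref{L2.5} gives $\nat v\in H^{1/2+\epsilon}(\gt)$ on the one-dimensional curve $\gt$. This embeds in $L^\infty$, which controls $\nat v\cdot\at$ and $\nat v\cdot\nt$ at the endpoints.
\item Likewise $\rho\in H^{2+\epsilon}(\ot)$ gives $\nat\rho\in L^\infty(\gt)$.
\item Lemma \ref{L4.2} gives $\gt\in H^{7/2}$, so $\kappa\in H^{3/2}(\gt)\subset L^\infty$.
\item For the last term, I would expand $\dt$ using $\dt v=-\nabla\rho-\g e_2$. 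Because $\ab$ and $\nb$ are constant near $X_i$ (the bottom is a line segment there), $\nabla\nb$ vanishes near $X_i$. So this term is in fact zero, or at worst a product of $L^\infty$ traces of $v$, $\nabla v$, $\nabla\rho$ and \eqref{E2.1}-type factors. Each such factor is again bounded by Sobolev embedding of the traces of $v\in H^{2+\epsilon}$ and $\rho\in H^{2+\epsilon}$.
\end{enumerate}

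\textbf{The second estimate.} Return to \eqref{E3.42} itself rather than its product with $\nat\nnt\rho$. The leading term satisfies
\[
\big|\tfrac{\sigma}{\beta_c}(\sin\theta_i)^2\nat\nnt\rho\big|\le\tfrac{\sigma}{\beta_c}\big|(\sin\theta_i)\nat\nnt\rho\big|\le\tfrac{\sigma}{\beta_c}F_\ell^{1/2},
\]
using $\sin\theta_i\le1$. The remaining terms are bounded pointwise exactly as above. For $\dt\nat\rho$, identity \eqref{E3.43} has the same structure. Its leading coefficient satisfies $|(\nt\cdot\ab)(\at\cdot\ab)|=\sin\theta_i\cos\theta_i\le\sin\theta_i$, so that term is again at most $\frac{\sigma}{\beta_c}F_\ell^{1/2}$.

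\textbf{Main obstacle.} The delicate point is pointwise control at the corner. It relies on the trace Lemma \ref{L2.5} on each boundary piece, which needs $s>3/2$, together with the one-dimensional embedding $H^{1/2+\epsilon}\subset L^\infty$. This is exactly why the right-hand side involves the $H^{2+\epsilon}$ norms rather than only $E_\ell$: $H^2$ would give only $H^{1/2}$ traces, which fail to embed in $L^\infty$.
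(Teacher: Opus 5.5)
Your proposal is correct and follows the paper's proof. You multiply \eqref{E3.42} by $\nat\nnt\rho$, read off $-\frac{\sigma}{\beta_c}F_\ell(t)$ from the $(\sin\theta_i)^2$ term after the $\pm$ sign bookkeeping, collect the rest into $r_i$ as in \eqref{E4.111}, and bound every remaining factor pointwise at $X_i$ by trace and Sobolev embedding, using $\sin\theta_i\ge c_1$ to turn $\nat\nnt\rho|_{X_i}$ into $F_\ell^{1/2}$.

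One small nit concerns the constant in the last estimate. Adding your two leading-term bounds for $\dt\nnt\rho$ and $\dt\nat\rho$ gives $\frac{2\sigma}{\beta_c}F_\ell^{1/2}$. Since the combined leading coefficient is $\frac{\sigma}{\beta_c}\sin\theta_i(\sin\theta_i+\cos\theta_i)|\nat\nnt\rho|$, the best you can get is $\frac{\sqrt{2}\,\sigma}{\beta_c}F_\ell^{1/2}$, not the constant $\frac{\sigma}{\beta_c}$ written in the statement. This imprecision is already in the paper and is harmless for every later use.
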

\begin{proof}
    To begin with,  multiplying   both sides of  \eqref{E3.42} by $\nat\nnt\rho$ gives \[\pm\dt\nnt \rho\nat\nnt\rho|_{X_i}=-\frac{\sigma}{\beta_c}\llx{(\sin\theta_i)\nat\nnt\rho}^2+r_i\]with 
    \begin{equation}\label{E4.111}
        \begin{aligned}
            r_i=&-\frac{2\sigma}{\beta_c}(\nat v\cdot\at)(\nat v\cdot\nt)(\sin\theta_i)^2\nat\nnt\rho+\frac{\sigma}{\beta_c}\nat\rho\kappa(\sin\theta_i)^2\nat\nnt\rho\\&-\frac{2\sigma}{\beta_c}(\nat v\cdot\nt)^2(\nt\cdot\ab)(\at\cdot\ab)\nat\nnt\rho\\&\pm \left(\g e_2\cdot \at(\nat v\cdot \nt)+\dt\left(v\cdot\nabla_v \nb(\nb\cdot\nt)\right)\right)\nat\nnt\rho, \quad \text{ at }X_i(i=l,r).
        \end{aligned}
    \end{equation}
  Therefore, applying Sobolev embedding  to \eqref{E3.42}-\eqref{E3.43} and \eqref{E4.111}, we directly obtain the desired results.
\end{proof}
\begin{proof}[Proof of Theorem \ref{T1}]
    We divide the proof into several steps.

    {\bf Step1: Basic energy estimates.}
    Taking the $L^2(\ot)$-inner product of \eqref{E3.2} with $\dt \rho$, using Lemmas \ref{L2.3} and \ref{L4.2},  then integrating by parts, we get that
 \begin{equation}\label{E4.3}
    \ddt \left(\int_\ot |\dt\rho|^2\id X+\int_\ot\rho|\nabla \rho|^2\id X\right)
    \leq \pe,
 \end{equation}here we have used Sobolev embeddings 
$H^\frac{3}{2}(\ot)\hookrightarrow L^\infty(\ot)$ and $H^\frac{1}{2}(\ot)\hookrightarrow L^4(\ot)$.
    Taking $L^2(\ot)$-inner product of $\eqref{E2}_2$ with $\rho$, we find that \begin{equation}\label{E4.46}
     \ddt \int_\ot |\rho|^2\id X=-\int_\ot \rho^2\divv v\id X\leq \pe.
    \end{equation} 
    Similarly, the equation \eqref{E3.16} implies that, 
    \begin{equation}\label{E3.17}
       \ddt \mode{\omega}^2_{H^1(\ot)}
        \leq P\left(\mode{v}_{H^{2+\epsilon}(\ot)},E_\ell(t)\right).
    \end{equation}

    {\bf Step2:  Material derivative energy estimates.}
    Upon applying operator $\dt$ to \eqref{E3.2}, we arrive at 
    \begin{equation}\label{E4.1}
        \begin{cases}
            \dt^3\rho-\rho\nabla\cdot\dt\nabla\rho=\dt\rhs+\dt\rho\Delta\rho+\rho [\dt,\nabla\cdot]\nabla\rho&\text{in }\ot,\\
            \rho\dt^2\rho|_\gt=\sigma\dt^2\kappa-(\dt\rho)^2, \\\nabla_\nb\dt\rho|_{\gb}=(\nabla_\nb v-\nabla_v \nb)\cdot\nabla\rho+\dt\left(v\cdot\nabla_v\nb-\g e_2\cdot\nb\right).
        \end{cases}
    \end{equation}
    Then  taking the $L^2$-inner product of \eqref{E4.1} with $\dt^2\rho$,  integrating by parts to get that 
    \begin{equation*}
        \begin{aligned}
            &\frac{1}{2}\ddt \left(\int_\ot |\dt^2\rho|^2\id X+\int_\ot \rho|\nabla \dt\rho|^2 \id X\right)\\=&\int_\ot \frac{1}{2}\dt\rho|\nabla\dt\rho|^2\id X+\int_\gt\nt\cdot\dt \nabla\rho(\rho\dt^2\rho)\id S+\int_\gb \nb\cdot\dt\nabla\rho (\rho\dt^2\rho)\id S\\&-\int_\ot\rho \nabla\dt\rho\cdot(\nabla v)^*\nabla\dt\rho\id X-\int_\ot\nabla\rho\cdot\dt\nabla\rho\dt^2\rho\id X-\int_\ot \rho[\dt,\nabla]\rho\nabla\dt^2\rho\id X\\&+\int_\ot \left(\dt\rhs+\dt\rho\Delta\rho+\rho[\dt,\nabla\cdot]\nabla\rho\right)\dt^2\rho\id X\\&+\frac{1}{2}\int_\ot \left(|\dt^2\rho|^2 +\rho|\nabla\dt\rho|^2\right)\divv v\id X\\\leq& P\left(\mode{v}_{H^{2+\epsilon}(\ot)},\mode{\rho}_{H^{2+\epsilon}(\ot)},\mode{\dt^2\rho}_{H^1(\ot)},E_\ell(t)\right)
                \\&+ \int_\gt\rho\nt\cdot\dt \nabla\rho\dt^2\rho\id S+
                \int_\gb\rho \nb\cdot\dt\nabla\rho \dt^2\rho\id S.
        \end{aligned} 
    \end{equation*}
    We divide the boundary integral on $\gt$ into four components \begin{align*}
        \int_\gt\nt\cdot\dt \nabla\rho(\rho\dt^2\rho)\id S=&\int_\gt (-\dt\nt\cdot\nabla\rho)\rho\dt^2\rho\id S-
       \int_\gt \dt\nnt\rho(\dt\rho)^2 \id S\\&+\sigma\int_\gt\dt\nnt\rho\Delta_\gt(\nnt\rho)\id S+\sigma\int_\gt\dt\nnt\rho\mathfrak{R}\id S\\=&: I_1+I_2+I_3+I_4,
    \end{align*}here we have used the curvature equation \eqref{E3.3}.
    Now, it remains to deal with the right-hand side of equality above. In fact, 
    \begin{equation*}
        \begin{aligned}
            I_1+I_2\leq\pe\left(\mode{\dt\rho}_{H^2(\ot)}+\mode{\dt^2\rho}_{H^1(\ot)}+1\right). \\
        \end{aligned} 
    \end{equation*}
    Secondly, a direct computation using Lemma \ref{L4.10} leads to
    \begin{equation*}
        \begin{aligned}
            I_3
            =& \left(\sigma\dt\nnt\rho\nat\nnt\rho\right)\big|_{X_r}^{X_l} -\sigma\int_{\gt}\nat\dt\nnt\rho\nat\nnt\rho \id S\\=&-\frac{\sigma^2}{\beta_c}F_\ell(t)+\sum_{i=l,r}\sigma r_i|_{X_i} -\frac{\sigma}{2}\ddt \int_\gt|\nat\nnt\rho|^2\id S-\frac{\sigma}{2}\int_\gt|\nat\nnt\rho|^2(\nat v\cdot\at)\id S\\
            \leq &-\frac{\sigma^2}{2\beta_c}F_\ell(t)-\frac{\sigma}{2}\ddt \int_\gt|\nat\nnt\rho|^2\id S+P\left(\mode{v}_{H^{2+\epsilon}(\ot)},\mode{\rho}_{H^{2+\epsilon}(\ot)},E_\ell(t)\right).
        \end{aligned}
    \end{equation*}
   Recall that the term $2(\nat v\cdot\nt)\nat (\nat v\cdot\at)\notin L^2(\gt)$ in the definition of $\mathfrak{R}$, using integration by parts  and  Lemma \ref{L4.10} shows that
    \begin{align*}
        I_4\leq P\left(\mode{v}_{H^{2+\epsilon}(\ot)},\mode{\rho}_{H^{2+\epsilon}(\ot)},\mode{\dt\rho}_{H^2(\ot)},\mode{\nat\dt\nnt\rho}_{L^2(\gt)},E_\ell(t)\right) \left(1+\frac{\sigma^2}{\beta_c}F_\ell^\frac{1}{2}(t)\right).
    \end{align*}
Notice that the boundary condition in \eqref{E4.1}, yields
    \begin{equation*}
        \begin{aligned}
            \int_\gb\rho \nb\cdot\dt\nabla\rho \dt^2\rho\id S\leq \pe\mode{\dt^2\rho}_{H^1(\ot)}.
        \end{aligned}
    \end{equation*}
    Hence, from the above estimates, we deduce that
    \begin{equation}
        \begin{aligned}
            & \frac{1}{2}\ddt \left(\int_\ot|\dt^2\rho|^2\id X+\int_{\ot}\rho|\nabla\dt\rho|^2\id X+\sigma\int_\gt|\nat\nnt\rho|^2\id S\right)+\frac{\sigma^2}{4\beta_c}F_\ell(t)\\\leq &P\left(\mode{v}_{H^{2+\epsilon}(\ot)},\mode{\rho}_{H^{2+\epsilon}(\ot)},\mode{\dt\rho}_{H^2(\ot)},\mode{\dt^2\rho}_{H^1(\ot)},\mode{\nat\dt\nnt\rho}_{L^2(\gt)},E_\ell(t)\right) .
        \end{aligned}
       \label{E4.5}
    \end{equation}

{\bf Step3: Velocity and interface regularity.}
Direct calculation via the  Euler equation $\dt v=-\nabla\rho-\g e_2$, combined with Sobolev interpolation shows that 
\begin{equation}\label{E4.10}
    \mode{v}^2_{H^\frac{3}{2}(\ot)}\leq C(T) \mode{v_0}^2_{H^\frac{3}{2}(\Omega_0)}+C(T)\int_0^T P\left(E_\ell(t),\mode{v}_{H^{2+\epsilon}(\ot)},\mode{\rho}_{H^\frac{5}{2}(\ot)}\right)\id t.
\end{equation}

Second, for the term $\mode{\gt}_{H^1}$, we can parameterize $\gt$ under Eulerian coordinates $(x,z)$ by \[\gt=\{(x,z)|z=\eta(t,x), t>0,c_l(t)\leq x\leq c_r(t)\},\]where $c_i(t)$ is the $x$ coordinate for the contact point $X_i(i=l,r)$. 
 Note that the kinematic condition on $\gt$ can be written in the form of $\eta, $ $v$ as \[\pa_t\eta+v_1\pa_x\eta=v_2\quad \text{ on }[c_l(t),c_r(t)],\]
where $v=(v_1,v_2)^T$. Moreover, a direct computation shows that under this parameterization, the material derivative $\dt$ on $\gt$ is simply $\dt=\pa_t+v_1\pa_x $, so the equation above for $\eta$ can be rewritten as $\dt\eta=v_2$. As a result, it is straightforward to show that
\begin{equation}\label{E4.12}
    \pa_t\mode{\gt}^2_{H^1}=\pa_t\mode{\eta}^2_{H^1(c_l(t),c_r(t))}\leq P(E_\ell(t)). 
\end{equation}

Combining \eqref{E4.3}-\eqref{E3.17} and \eqref{E4.5}-\eqref{E4.12}, then  integrating on both sides with respect to time $t$ on $[0,T]$, we get the desired results.
\end{proof}
\section{Higher-order  energy estimates}\label{Section5}
The main result of this section gives   the a priori estimate for solutions to \eqref{E2}, which  shows that  solutions  have bounded energy  in  a sufficiently small  time interval $[0,T]$.
\begin{theorem}\label{Theorem1}
    Assume that 
    the initial contact angle $\theta_i(0)\in(0,\pi/2)$.
    Let $(\gt,v,\rho)\in H^4\times C(H^{2+}(\Omega_t))\times C(H^3(\Omega_t))$ be a solution of \eqref{E2}. Suppose that $\mathcal{E}(t)$ and $F(t)$ are defined  by \eqref{E1.4} and \eqref{E1.5}, respectively. 
    There exists a time $T>0$ such that $\forall t\in [0,T]$,  
 the following a priori estimate holds:\[\sup_{t\in[0,T]}\mathcal{E}(t)+\int_0^{T} F(t)\id t\leq P(\mathcal{E}(0))+\int_0^{T} P(\mathcal{E}(t))\id t,\]where $P(\cdot)$ is some polynomial with positive constant coefficients depending on $\sigma\, ,\beta_c\, \mode{\gb}_{H^5}$ and $\g$. The time  $T$ depends on the same objects and on the norm of the initial data. 
\end{theorem}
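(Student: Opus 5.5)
The plan follows the structure of the proof of Theorem \ref{T1}, taken one material derivative higher. The higher-order quantities $\mode{\dt^3\rho}_{L^2(\ot)}$, $\mode{\nabla\dt^2\rho}_{L^2(\ot)}$, $\sigma\mode{\nat\dt\nnt\rho}_{L^2(\gt)}$ and $\mode{\omega}_{H^2(\ot)}$ will be controlled together with $E_\ell(t)$. The first step is a regularity lemma, the higher-order analogue of Lemma \ref{L4.2}. Assuming $\mathcal{E}(t)<\infty$, it should bound by $P(\mathcal{E}(t))$ all the norms appearing on the right-hand side of Theorem \ref{T1}: $\mode{v}_{H^{2+\epsilon}(\ot)}$, $\mode{\rho}_{H^3(\ot)}$, $\mode{\dt\rho}_{H^2(\ot)}$, $\mode{\dt^2\rho}_{H^1(\ot)}$, $\mode{\nnt\rho}_{H^{\frac32+\epsilon}(\gt)}$, $\mode{\nat v\cdot\nt}_{H^{\frac32+\epsilon}(\gt)}$ and $\mode{\gt}_{H^{4}}$. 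The steps are as follows.
\begin{enumerate}
\item Apply Lemma \ref{L2.8}(2) with $s=0$ to the Neumann problem \eqref{E4.1} to get $\dt\rho\in H^2(\ot)$ from $\dt^3\rho\in L^2$. The Neumann datum $\nnt\dt\rho$ on $\gt$ is read off from $\dt$ of the curvature relation together with $\nat\dt\nnt\rho\in L^2(\gt)$.
\item Apply Lemma \ref{L2.8}(2) with $s=1$ to \eqref{E3.2} to get $\rho\in H^3(\ot)$.
\item Apply Lemma \ref{L2.8}(3) to \eqref{E3.168}, and the Dirichlet analogue to \eqref{E3.169} with $\omega\in H^2$, to get $v\in H^{2+\epsilon}(\ot)$, with $\epsilon<\min\{1,\pi/\theta_i-2\}$.
\item Recover the boundary regularity from the curvature equations \eqref{E3.4} and \eqref{E3.3}.
\end{enumerate}
For this step we need \eqref{E4.23} and $\rho\ge c_0$ to persist on $[0,T]$; this follows by a continuity argument from \eqref{E1.15} and $\theta_i(0)\in(0,\pi/2)$, since $\dt\theta_i$ and $\dt\rho$ are bounded by $P(\mathcal{E})$.

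The second step is the top-order energy identity. Apply $\dt$ to \eqref{E4.1} to get the equation for $\dt^3\rho-\rho\Delta\dt^2\rho$, and pair it with $\dt^3\rho$ in $L^2(\ot)$. All commutators are handled with Lemma \ref{L2.1}, Lemma \ref{L2.3} and the regularity lemma. The bottom integral is harmless because of the Neumann condition on $\gb$. On $\gt$, substitute $\rho\dt^3\rho=\sigma\dt^3\kappa+\dots$, using the third-order curvature equation written as
\[
\dt^3\kappa-\nat\left(\nat\dt\nnt\rho-2(\nat v\cdot\nt)\Delta_\gt\rho\right)=R .
\]
We deliberately keep $\nat\Delta_\gt\rho$ inside the principal part, because $\Delta_\gt\rho|_{X_i}$ is not determined by Lemma \ref{Lemma3.1}. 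Integrating by parts tangentially then produces three things:
\begin{itemize}
\item the boundary energy $\frac{\sigma}{2}\ddt\mode{\nat\dt\nnt\rho}^2_{L^2(\gt)}$;
\item corner terms $\dt^2\nnt\rho\,(\nat\dt\nnt\rho-2(\nat v\cdot\nt)\Delta_\gt\rho)|_{X_r}^{X_l}$;
\item a remainder, absorbed as $P(\mathcal{E})$ after possibly one more integration by parts for the terms such as $(\nat v\cdot\nt)\nat(\nat v\cdot\at)$ that are not in $L^2(\gt)$.
\end{itemize}
To handle the corner terms, differentiate \eqref{E3.42}–\eqref{E3.43} once more in $\dt$. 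This expresses $\pm\dt^2\nnt\rho$ at $X_i$ as $-\frac{\sigma}{\beta_c}(\sin\theta_i)^2(\nat\dt\nnt\rho-2(\nat v\cdot\nt)\Delta_\gt\rho)$ plus lower-order terms, which yields the second dissipation term in \eqref{E1.5}.

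The main obstacle is the corner value $\dt^2\rho|_{X_i}$, which is likewise undetermined. It enters through $\dt$ of \eqref{E3.8}: $\dt\nabla_\ab\rho$ produces $\nabla_\ab\dt\rho$ and, via the Euler equation, terms in $\dt^2\rho$. It also enters through the boundary term in the Reynolds formula for $\int_\ot|\dt^3\rho|^2$ near the corner. The plan is to collect all terms at $X_i$ containing $|\dt^2\rho|^2$ and show that they combine into
\[
\Big(\tfrac{1}{\sin\theta_i}(-\nabla_\ab\rho-\g e_2\cdot\ab)\rho|\dt^2\rho|^2\Big)\Big|_{X_r}^{X_l},
\]
whose sign is fixed by \eqref{E1.16}. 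The positivity of the coefficients $\mp(\nabla_\ab\rho+\g e_2\cdot\ab)$ is propagated on $[0,T]$ as an a priori assumption, closed again by continuity because their time derivatives are bounded by $P(\mathcal{E})$. I expect this sign bookkeeping, together with the $\epsilon$-loss in $v$ (so $\nabla v\notin L^\infty$ and every product must be placed in fractional spaces), to be the delicate part. The remaining corner remainders are bounded by $P(\mathcal{E})F^{1/2}$ and absorbed by Young's inequality, as in Lemma \ref{L4.10}.

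Finally, the pieces are assembled:
\begin{itemize}
\item the vorticity estimate $\ddt\mode{\omega}^2_{H^2}\le P(\mathcal{E})$ from the transport equation \eqref{E3.16}, using $\divv v=-\dt\rho/\rho\in H^2$;
\item the $v\in H^{3/2}$ and $\gt\in H^1$ bounds from Step 3 of Theorem \ref{T1};
\item the lower-order estimate of Theorem \ref{T1}, whose right-hand side is now $P(\mathcal{E})$ by the regularity lemma.
\end{itemize}
Adding these to the top-order identity and integrating in time gives
\[
\sup_{t}\mathcal{E}+\int_0^T F\le P(\mathcal{E}(0))+\int_0^T P(\mathcal{E})\,\mathrm{d}t .
\]
A Gronwall/continuity argument then provides the time $T$, depending only on $\mathcal{E}(0)$ and the listed constants.
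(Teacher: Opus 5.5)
There is a genuine gap at the step you yourself call the main obstacle: deriving the compressible corner term $\big(\frac{1}{\sin\theta_i}(-\nabla_\ab\rho-\g e_2\cdot\ab)\rho|\dt^2\rho|^2\big)\big|_{X_r}^{X_l}$. Neither of the sources you name produces $|\dt^2\rho|^2$ at $X_i$.

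\begin{itemize}
\item The Reynolds formula of Lemma \ref{L2.3} for $\ddt\int_\ot|\dt^3\rho|^2\id X$ has no boundary term, because $\ot$ is transported by $v$.
\item Differentiating \eqref{E3.8} in time gives $\dt\nabla_\ab\rho=\nabla_\ab\dt\rho-\nabla_\ab v\cdot\nabla\rho$ on the left. On the right it gives, via \eqref{E3.45}, $\nat\nnt\rho$, $\nat\rho\,\kappa$ and products of $\nat v$. The quantity $\dt^2\rho$ never appears.
\end{itemize}

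So ``collecting the terms at $X_i$ containing $|\dt^2\rho|^2$'' has nothing to collect. Meanwhile, the term that actually generates this corner contribution is one your plan files under commutators or ``remainder absorbed as $P(\mathcal{E})$'', and it cannot be absorbed that way.

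The real source is the commutator part of the top boundary integral. Write $\nt\cdot\dt^2\nabla\rho=\dt^2\nnt\rho+[\nt,\dt^2]\cdot\nabla\rho$. By \eqref{E2.1}, $[\nt,\dt^2]\cdot\nabla\rho=2(\nat v\cdot\nt)\nat\dt\rho+\mathfrak{R}_1(\nat\nnt\rho+1)$, as in \eqref{E2.65}. This leaves the term $2\int_\gt\rho\dt^3\rho(\nat v\cdot\nt)\nat\dt\rho\id S$, which is not bounded by $P(\mathcal{E})$:
\begin{itemize}
\item $\dt^3\rho$ has no trace on $\gt$ except through $\sigma\dt^3\kappa$;
\item $\nat\dt\rho$ is only in $H^{\frac12}(\gt)$, so Lemma \ref{L4.7} (which needs $\psi\in H^1(\gt)$) does not apply.
\end{itemize}

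One must integrate by parts in time, which produces $-\int_\gt\rho(\nat v\cdot\nt)\nat|\dt^2\rho|^2\id S$. Integrating by parts tangentially then leaves the corner term $-\big((\nat v\cdot\nt)\rho|\dt^2\rho|^2\big)\big|_{X_r}^{X_l}$, where $\dt^2\rho|_{X_i}$ is uncontrolled. The same term returns through $J_1$ in the proof of Proposition \ref{L4.11}. That arises when treating $2\sigma\int_\gt\nat\dt^2\nnt\rho(\nat v\cdot\nt)\Delta_\gt\rho\id S$, the piece left over from your principal tangential integration by parts, which your plan does not account for.

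Only at this point is the undifferentiated law \eqref{E3.8}, in the form \eqref{E5.69}, used. It turns the coefficient $\nat v\cdot\nt$ into $-\frac{\beta_c}{\sigma\sin\theta_i}(\nabla_\ab\rho+\g e_2\cdot\ab)$. Condition \eqref{E1.16}, equivalently $\nat v\cdot\nt|_{X_l}>0>\nat v\cdot\nt|_{X_r}$, then makes the term dissipative.

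Similarly, the bottom integral is not harmless merely because of the Neumann condition. The quantity $\nb\cdot\dt^2\nabla\rho$ contains $f_1\nabla_\ab\dt\rho$, which is paired with $\dt^3\rho$, and $\dt^3\rho$ has no trace on $\gb$. So this integral also needs a time integration by parts. The resulting endpoint terms $f_1\rho|\dt^2\rho|^2$ vanish only because $\gb$ is flat near $X_i$, so that $f_1=0$ there.
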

 To start with, we define the higher-order energy functional \begin{equation}\label{E12}
    \begin{aligned}
         E_h(t)=&\mode{\dt^3\rho}^2_{L^2(\ot)}+\mode{\nabla\dt^2\rho}_{L^2(\ot)}^2+\sigma\mode{\nat\dt\nnt\rho}^2_{L^2(\gt)}+\mode{\omega}_{H^2(\ot)}^2,
    \end{aligned}
\end{equation}
and higher-order dissipation term 
\begin{equation}\label{E5.47}
    F_h(t)=\sum_{i=l,r}\left|(\sin\theta_i)(\nat\dt\nnt\rho-2(\nat v\cdot\nt)\Delta_\gt\rho)\big|_{X_i}\right|^2.
\end{equation}
Assume that there exist  positive constants $c_2>0$ and $c_3>0$, such that
    \begin{equation}\label{E4.24}
        \left(-\nabla_\ab\rho-\g e_2\cdot\ab\right)|_{X_l}\geq c_2\quad \text{ and }\quad  (\nabla_\ab\rho+\g e_2\cdot\ab)|_{X_r}\geq c_3,
    \end{equation}which can be verified due to the initial settings and a short time interval (see the end of this section). Also assume that \eqref{E4.22}-\eqref{E4.23} hold for $t\in [0,T]$. The total energy and dissipation can be rewritten  as 
    \[\mathcal{E}(t)= E_\ell(t)+E_h(t)\quad \text{ and }\quad F(t)= F_\ell(t)+F_h(t)+\left.\left(\frac{1}{\sin\theta_i}\left(-\nabla_\ab\rho-\g e_2\cdot\ab\right)\rho|\dt^2\rho|^2\right)\right|_{X_r}^{X_l}.\]
Under those assumptions, the dissipation $F(t)$ is well-defined.



  The following lemma records the higher-order regularity of $\rho$ and $v$.
\begin{lemma}\label{L4.3} 
    Assume that $\theta_i\in (0,\pi/2)$,  then for some constants $\delta'_0\in(0,\frac{1}{2}]$ and  $\epsilon\in (0,\frac{\pi}{\theta_i}-2)$ small enough,  one has 
    \begin{equation*}
        \begin{aligned}
            \mode{\nabla v}_{L^\infty(\ot)}+\mode{\nabla \rho}_{L^\infty (\ot)}+\mode{v}_{H^{2+\epsilon}(\ot)}+  \mode{\nat v\cdot\nt}_{H^{\frac{3}{2}+\epsilon}(\gt)}&\leq \pe\left(E^{\epsilon\delta_0'}_h(t)+1\right),\\
            \mode{\rho}_{H^3(\ot)} +  \mode{\dt\rho}_{H^2(\ot)}+\mode{\gt}_{H^4}+\mode{n_t}_{H^3(\gt)}&\leq  \pe\left(E_h^{\delta'_0}(t)+1\right),\\
           \mode{\nnt\rho}_{H^{\frac{3}{2}+\epsilon}(\gt)}
       &\leq \pe\left(E^{\frac{1}{2}\delta_0'}_h(t)+1\right).
        \end{aligned}
    \end{equation*}
\end{lemma}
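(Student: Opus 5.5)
The plan is to follow the elliptic regularity chain sketched in the introduction, bootstrapping from the lower-order bounds of Lemma \ref{L4.2}, which already give $\rho\in H^2$, $v\in H^2$ and $\gt\in H^{7/2}$ in terms of $\pe$. The fractional powers $E_h^{\delta_0'}$ and $E_h^{\epsilon\delta_0'}$ should come from interpolating between quantities controlled by $E_\ell$ and those controlled by $E_h$. Whenever a top-order norm appears with a small factor, it will be absorbed by Young's inequality.

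\textbf{Step 1: $\dt\rho$ in $H^2(\ot)$.} Write the equation \eqref{E4.1} as a Neumann problem for $\dt\rho$:
\[
\Delta\dt\rho=\rho^{-1}\dt^3\rho+\text{l.o.t.}
\]
The boundary data are $\nnt\dt\rho=\dt\nnt\rho+[\nnt,\dt]\rho$ on $\gt$ and the given expression on $\gb$. Apply Lemma \ref{L2.8}(2) with $s=0$. The boundary data need $\dt\nnt\rho\in H^{1/2}(\gt)$, which follows from $\nat\dt\nnt\rho\in L^2(\gt)$ (part of $E_h$) together with an $L^2$ bound. The source $\dt^3\rho\in L^2$ also comes from $E_h$. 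The lower-order terms contain $\nabla v\,\nabla\dt\rho$, $\nabla^2\rho\,\nabla v$ and similar products. These are controlled by $E_\ell$ and Lemma \ref{L4.2}, or interpolated, for example $\|\nabla v\|_{L^\infty}\le \|v\|_{H^2}^{1-\theta}\|v\|_{H^{2+\epsilon}}^{\theta}$, and then absorbed.

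\textbf{Step 2: $\rho\in H^3$ and $\gt\in H^4$.} Next treat \eqref{E3.2} as a Neumann problem with source $\rho^{-1}\dt^2\rho+\text{l.o.t.}\in H^1$, and apply Lemma \ref{L2.8}(2) with $s=1$. The source is controlled because $\nabla\dt^2\rho\in L^2$ is in $E_h$. The boundary data require $\nnt\rho\in H^{3/2}(\gt)$. This will come from the curvature equation \eqref{E3.3}: since $\Delta_\gt\nnt\rho=\dt^2\kappa-\mathfrak R$ and $\sigma\dt^2\kappa=\rho\dt^2\rho+(\dt\rho)^2$, the trace of $\dt^2\rho\in H^1$ gives $\dt^2\kappa\in H^{1/2}(\gt)$. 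The difficulty is that Neumann-type data at the endpoints are needed to invert $\Delta_\gt$ on the arc. Instead, integrate once in arclength, using $\nat\dt\nnt\rho\in L^2$ and the contact-point values from Lemma \ref{Lemma3.1} together with \eqref{E3.40}--\eqref{E3.41}. Then $\kappa=\rho^2/2\sigma-\mathrm B/\sigma\in H^{5/2}(\gt)$, giving $\gt\in H^4$ and $\nt\in H^3(\gt)$ as in \eqref{E4.41}.

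\textbf{Step 3: $v$ and $\nnt\rho$ at the $\epsilon$-level.} This step I expect to be the main obstacle. Use the decomposition \eqref{E3.156}. For $\mathcal U$, apply the Dirichlet $H^{3+\epsilon}$ estimate with $\omega\in H^{1+\epsilon}$, interpolated between $H^1$ and $H^2$; this is where the factor $E_h^{\epsilon\delta_0'}$ arises. For $\mathcal V$, apply Lemma \ref{L2.8}(3), which needs the source $\rho^{-1}\dt\rho\in H^{1+\epsilon}$ (by interpolating Step 1 with $E_\ell$) and $v\cdot\nt\in H^{3/2+\epsilon}(\gt)$.

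The boundary regularity of $v\cdot\nt$ is circular, and I would close the loop as follows. From \eqref{E3.4}, $\nat(\nat v\cdot\nt)=-\sigma^{-1}\rho\dt\rho-(\nat v\cdot\at)\kappa$. The term $\rho\dt\rho$ has trace in $H^{3/2}(\gt)$ by Step 1, while $\nat v\cdot\at$ has trace in $H^{1/2+\epsilon}(\gt)$ by Lemma \ref{L2.5} applied to $v\in H^{2+\epsilon}$. Hence $\nat v\cdot\nt\in H^{3/2+\epsilon}$, with a bound that is linear in $\|v\|_{H^{2+\epsilon}}$ but carries lower-order coefficients. To close it, use interpolation and smallness, $\|v\|_{H^{3/2+\epsilon}(\gt)}\lesssim\delta\|v\|_{H^{2+\epsilon}}+C_\delta\|v\|_{H^2}$, choosing $\epsilon<\pi/\theta_i-2$ so that Lemma \ref{L2.8}(3) applies.

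Once $v\in H^{2+\epsilon}$ is established, the bound $\nnt\rho\in H^{3/2+\epsilon}(\gt)$ follows from \eqref{L1.6} in the same way. The $L^\infty$ bounds on $\nabla v$ and $\nabla\rho$ then follow from the Sobolev embedding $H^{1+\epsilon}\hookrightarrow L^\infty$ in two dimensions.
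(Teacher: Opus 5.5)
Your chain of estimates is the paper's: Neumann problems for $\dt\rho$ and $\rho$, the curvature equation for $\nnt\rho$ and $\gt$, the Hodge decomposition with Lemma~\ref{L2.8}(3) for $v$, then interpolation and Young. However, the step you single out as the main obstacle is closed with an inequality that is false.

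\textbf{The closing inequality fails.} The bound $\mode{v}_{H^{3/2+\epsilon}(\gt)}\lesssim\delta\mode{v}_{H^{2+\epsilon}(\ot)}+C_\delta\mode{v}_{H^2(\ot)}$ cannot hold, because the trace $H^{2+\epsilon}(\ot)\to H^{3/2+\epsilon}(\gt)$ is sharp. For $v=\chi\cos(Nx)e^{-Ny}$ in boundary coordinates away from the corners, the left side is of order $N^{3/2+\epsilon}$ and the right side of order $\delta N^{3/2+\epsilon}+C_\delta N^{3/2}$.

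Your $H^{3/2+\epsilon}(\gt)$ bound on $\nat v\cdot\nt$ from \eqref{E3.4} is linear in $\mode{v}_{H^{2+\epsilon}(\ot)}$ with a non-small coefficient, since the term $(\nat v\cdot\at)\kappa$ uses the full trace of $\nabla v$ in $H^{1/2+\epsilon}(\gt)$. Feeding it into the Hodge estimate gives $\mode{v}_{H^{2+\epsilon}}\le C\mode{v}_{H^{2+\epsilon}}+\cdots$, and nothing closes.

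\textbf{There is no circularity to break.} Lemma~\ref{L4.2} (estimate \eqref{E4.7}) already gives $\mode{\nat v\cdot\nt}_{H^1(\gt)}\le\pe$. Since $\nat(v\cdot\nt)=\nat v\cdot\nt+\kappa\,v\cdot\at$, this yields $\mode{v\cdot\nt}_{H^{3/2+\epsilon}(\gt)}\le\mode{v\cdot\nt}_{H^2(\gt)}\le\pe$, which is exactly what the paper uses. Then $\mode{v}_{H^{2+\epsilon}}$ re-enters its own bound only through $\mode{\dt\rho}_{H^{1+\epsilon}}\le\mode{\dt\rho}_{H^1}^{1-\epsilon}\mode{\dt\rho}_{H^2}^{\epsilon}$, because the $H^2$ bound on $\dt\rho$ contains $\mode{v}_{H^{2+\epsilon}}$ through commutators. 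It therefore enters with power $\epsilon$, and Young's inequality closes. Only afterwards is $\nat v\cdot\nt\in H^{3/2+\epsilon}(\gt)$ read off from \eqref{E3.4}. Your idea could also be rescued by interpolating $v\cdot\nt$ itself on $\gt$ between $H^{3/2}(\gt)$ and $H^{5/2+\epsilon}(\gt)$, but not by interpolating the trace of $v$.

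\textbf{Your routes overspend the exponents.} The substance of the lemma is the sub-unit exponents of $E_h$.
\begin{itemize}
\item For $\mode{\nat v\cdot\nt}_{H^{3/2+\epsilon}(\gt)}$ you use $\rho\dt\rho\in H^{3/2}(\gt)$, i.e.\ all of $\mode{\dt\rho}_{H^2}\lesssim E_h^{\delta_0'}$. The claimed $E_h^{\epsilon\delta_0'}$ requires using only $\dt\kappa\in H^{1/2+\epsilon}(\gt)$, i.e.\ $\mode{\dt\rho}_{H^{1+\epsilon}}$ interpolated as above.
\item For $\nnt\rho$, the paper uses only $\lgt{\dt^2\kappa}\le\pe(E_h^{1/4}+1)$, from the interpolated trace of $\dt^2\rho$. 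It writes \eqref{E3.3} as $\nat\bigl(\nat\nnt\rho+2(\nat v\cdot\nt)(\nat v\cdot\at)\bigr)=f\in L^2(\gt)$. This regrouping is needed anyway, since the term $2(\nat v\cdot\nt)\nat(\nat v\cdot\at)$ of $\mathfrak R$ is not in $L^2(\gt)$, a point your Step 2 passes over. The constant of integration is then fixed by $\mode{\nat\nnt\rho}_{L^2(\gt)}$, which is part of $E_\ell$, so no endpoint data and no Lemma~\ref{Lemma3.1} are needed.
\end{itemize}
Your detour through the $H^{1/2}(\gt)$ trace of $\dt^2\rho$, and through $\dt\nnt\rho$ at the contact points (via $\nat\dt\nnt\rho\in L^2(\gt)$), costs $E_h^{1/2}$. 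If the third estimate is proved ``in the same way'', this exceeds the claimed exponent $\frac12\delta_0'\le\frac14$.
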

\begin{proof}
    First, from Sobolev embedding and Sobolev interpolation, one has
    \begin{equation}\label{E5.3}
    \begin{aligned}
        \mode{\nabla v}_{L^\infty(\ot)}\leq& \pe \mode{v}_{H^{2+\epsilon}(\ot)},\\
        \mode{\nabla\rho}_{L^\infty(\ot)}\leq& \pe \mode{\rho}_{H^{2+\epsilon}(\ot)}\leq \pe \mode{\rho}^\epsilon_{H^3(\ot)},
    \end{aligned}
    \end{equation}
 for $\epsilon>0$ small enough. 
      Similar as in Lemma \ref{L4.2}, 
     one obtains
    \begin{equation}\label{E5.62}
        \begin{aligned}
            \mode{\gt}_{H^4}+\mode{\nt}_{H^3(\gt)}\leq 
            & \pe\left(\mode{\rho}^2_{H^\frac{5}{2}(\ot)}+1\right),
        \end{aligned}
    \end{equation} as well as 
     \begin{equation}\label{E5.4}
        \begin{aligned}
            \sigma\mode{\nat v\cdot\nt}_{H^{\frac{3}{2}+\epsilon}(\gt)}\leq& \pe \left(\sigma\mode{\dt\kappa}_{H^{\frac{1}{2}+\epsilon}(\gt)}+\sigma\mode{(\nat v\cdot\at)\kappa}_{H^{\frac{1}{2}+\epsilon}(\gt)}+1\right)\\\leq &\pe\left(\mode{\dt\rho}^\epsilon_{H^2(\ot)}+\mode{v}_{H^{2+\epsilon}(\ot)}+1 \right).
        \end{aligned}
    \end{equation}
   
The boundary condition   $\sigma\dt^2\kappa=\rho\dt^2\rho+(\dt\rho)^2$ on $\gt$ implies that 
\begin{equation}\label{E5.46}
    \sigma\lgt{\dt^2\kappa}\leq \mode{\rho\dt^2\rho}_{L^2(\gt)}+\mode{\dt\rho}^2_{L^4(\gt)}\leq  \pe\left(E^\frac{1}{4}_h(t)+1\right).
\end{equation}
It follows from the equation \eqref{E3.3} that \begin{align*}
    \nat\left(\nat\nnt\rho+2(\nat v\cdot\nt)(\nat v\cdot\at)\right)=f\quad \text{ on }\gt,
\end{align*}with \begin{align*}
    f:=\dt^2\kappa-2(\nat v\cdot\at)\nat(\nat v\cdot\nt)+\Delta_\gt \nt\cdot\nabla\rho-\kappa\left(2(\nat v\cdot\at)^2-(\nat v\cdot \nt)^2\right).
\end{align*}
Therefore, applying \eqref{E5.46} and Lemma \ref{L4.2} to  the definition of $f$,  we obtain  \begin{align*}
    \lgt{f}\leq \pe\left(E^\frac{1}{4}_h(t)+\mode{v}_{H^{2+\epsilon}(\ot)}+1\right),
\end{align*}which in turn implies that 
\begin{equation}\label{E5.63}
    \begin{aligned}
        \mode{\nnt\rho}_{H^{\frac{3}{2}+\epsilon}(\gt)}\leq \pe\left(E^\frac{1}{4}_h(t)+\mode{v}_{H^{2+\epsilon}(\ot)}+1\right).
    \end{aligned}
\end{equation}

Upon applying the elliptic estimate Lemma \ref{L2.8} to \eqref{E3.2}, together with Sobolev interpolation, yields for $\theta_i\in (0,\frac{\pi}{2})$, $\delta_0'\in (0,\frac{1}{2}],$
\begin{equation}\label{E5.60}
    \begin{aligned}
         \mode{\rho}_{H^3(\ot)}\leq & C\left(\mode{\gt}_{H^3}\right)\left(\mode{\frac{1}{\rho}\dt^2\rho}_{H^1(\ot)}+\mode{\tr(\nabla v)^2}_{H^1(\ot)}+\mode{\frac{1}{\rho^2}(\dt\rho)^2}_{H^1(\ot)}\right)\\&+C\left(\mode{\gt}_{H^3}\right)\left(\mode{\nnt\rho}_{H^\frac{3}{2}(\gt)}+\mode{\nabla_\nb\rho}_{H^\frac{3}{2}(\gb)}+\mode{\rho}_{L^2(\ot)}\right)\\\leq& \pe\left(E_h^{\delta'_0}(t)+\mode{v}_{H^{2+\epsilon}(\ot)}+1\right).
    \end{aligned}
\end{equation}

On the other hand, we search for the regularity of $\dt\rho$,
  we use the commutator $[\nnt,\dt]$  from Lemma \ref{L2.1} to find that 
\begin{align*}
   \mode{\nnt\dt\rho}_{H^\frac{1}{2}(\gt)}\leq & \mode{[\nnt,\dt]\rho}_{H^\frac{1}{2}(\gt)}+\mode{\dt\nnt\rho}_{H^\frac{1}{2}(\gt)}\\ \leq & \pe \left(\mode{v}_{H^{2+\epsilon}(\ot)}+E^\frac{1}{2}_h(t)+\mode{\dt\rho}^\frac{1}{2}_{H^2(\ot)}+1\right)\\ \leq & \frac{\delta}{C\left(\mode{\gt}_{H^3}\right)}\mode{\dt\rho}_{H^2(\ot)} +\pe \left(E^{\delta_0'}_h(t)+\mode{v}_{H^{2+\epsilon}(\ot)}+1\right),
\end{align*}for some constant $\delta\in (0,1)$. 
Here we have used  Sobolev interpolation and Young's inequality.   By  the elliptic estimate Lemma \ref{L2.8} as well as \eqref{E4.1}$_3$, for  $\theta_i\in (0,\pi/2)$, we have 
    \begin{equation*}
        \begin{aligned}
             \mode{\dt\rho}_{H^2(\ot)}\leq& C\left(\mode{\gt}_{H^3}\right)\left(\mode{\frac{1}{\rho}\dt^3\rho-\frac{1}{\rho}\dt\left(\rho\tr(\nabla v)^2+\frac{1}{\rho}(\dt\rho)^2\right)-\frac{1}{\rho}\dt\rho\Delta\rho+[\dt,\Delta]\rho}_{L^2(\ot)}\right)\\&+C\left(\mode{\gt}_{H^3}\right)\left(\mode{\nabla_\nt\dt\rho}_{H^\frac{1}{2}(\gt)}+\mode{\nabla_\nb\dt\rho}_{H^\frac{1}{2}(\gb)}+\mode{\dt\rho}_{L^2(\ot)}\right)\\ 
             \leq &\pe\left(E_h^{\delta'_0}(t)+\mode{v}_{H^{2+\epsilon}(\ot)}+\mode{\rho}_{H^{2+\epsilon}(\ot)}+1\right)+\delta\mode{\dt\rho}_{H^2(\ot)},
        \end{aligned}
    \end{equation*}
    choosing suitable $\delta$, and invoking \eqref{E5.60}, we get \begin{equation}\label{E5.57}
        \mode{\dt\rho}_{H^2(\ot)}\leq \pe \left(E_h^{\delta'_0}(t)+\mode{v}_{H^{2+\epsilon}(\ot)}+1\right).
    \end{equation}

Proceeding as in Lemma \ref{L4.2} to investigate the regularity of $v$, utilizing Lemma \ref{L2.8}, \eqref{E4.7} and  \eqref{E5.60}-\eqref{E5.57}, for $\epsilon\in (0,\pi/\theta_i-2)$ small enough,  one has 
\begin{equation*}
    \begin{aligned}
        \mode{v}_{H^{2+\epsilon}(\ot)}\leq &C\left(\mode{\gt}_{H^{3+\epsilon}}\right) \left(\mode{\frac{1}{\rho}\dt\rho}_{H^{1+\epsilon}(\ot)}+\mode{\omega}_{H^{1+\epsilon}(\ot)}+\mode{v\cdot\nt}_{H^{\frac{3}{2}+\epsilon}(\gt)}+\mode{\mathcal{V}}_{L^2(\ot)}+\mode{\mathcal{U}}_{L^2(\ot)}\right)\\ \leq &\pe\left(E_h^{\epsilon\delta'_0}(t)+\mode{v}_{H^{2+\epsilon}(\ot)}^{\epsilon}+1\right),
    \end{aligned}
\end{equation*}
here we have used $\nat (v\cdot\nt)=\nat v\cdot\nt-v\cdot\nat\nt$.
We obtain the desired estimate by Young's inequality, \begin{equation}\label{E5.11}
    \mode{v}_{H^{2+\epsilon}(\ot)}\leq \pe\left(E^{\epsilon\delta_0'}_h(t)+1\right).
\end{equation}
Substituting \eqref{E5.11} into \eqref{E5.3}-\eqref{E5.4} and  \eqref{E5.63}-\eqref{E5.57}   completes the proof.
\end{proof}
\begin{remark}\label{R5.1}
    In view of the estimates for $\mode{\nat v\cdot\nt}_{H^{\frac{3}{2}+\epsilon}(\gt)}$ and $\mode{\nnt\rho}_{H^{\frac{3}{2}+\epsilon}(\gt)}$ provided in  Lemma \ref{L4.3},  together with the Sobolev embedding, the following hold at the contact points $X_i(i=l,r)$, \begin{equation*}
        \llx{\nat\nnt\rho}\leq \pall, \text{ and } \llx{\nat(\nat v\cdot\nt)}\leq \pall.
    \end{equation*}
\end{remark}
With the preparations above, we are ready to derive equation for $\dt^3\kappa$. 
\begin{lemma}\label{L5.2}
    The following equation holds on $\gt$, 
    \begin{equation}\label{E7.14}
        \dt^3\kappa-\nat\left(\nat\dt\nnt\rho-2(\nat v\cdot\nt)\Delta_\gt\rho\right)=\mathfrak{R}_3
    \end{equation}
    with the remainder term defined as \begin{equation}\label{E5.33}
        \begin{aligned}
            \mathfrak{R}_3=&-6(\nat v\cdot\at)\Delta_\gt\nnt\rho+\nat\rho\Delta_\gt (\nat v\cdot\nt)-2\Delta_\gt\rho\nat (\nat v\cdot\nt)\\&+(-18(\nat v\cdot \nt)(\nat v\cdot\at)-3\nat\nnt\rho+3\nat\rho\kappa)\nat\left(\nat v\cdot\at\right) \\&+|\kappa|^2\dt\nnt\rho-(\nat \kappa)\nat\dt\rho+\mathfrak{R}_1\left(\nat\nnt\rho+\dt\kappa+1\right),
        \end{aligned}
    \end{equation}here $\mathfrak{R}_1:= R(\nabla\rho,\kappa,\nat v\cdot\nt,\nabla_\at v\cdot\at)$ denote some generic at most one-order  terms and adhere to a similar structure pattern in each context.
    
\end{lemma}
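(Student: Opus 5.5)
The plan is to apply $\dt$ to the curvature equation \eqref{E3.3}, which holds on $\gt$,
\[
\dt^2\kappa-\Delta_\gt\nnt\rho=\mathfrak{R},
\]
and to compute the result term by term. We use the commutator identities of Lemma \ref{L2.1}, the transport formulas \eqref{E2.1}--\eqref{E2.2}, the evolution laws \eqref{E3.45}--\eqref{E3.46}, and the Euler equation $\dt v=-\nabla\rho-\g e_2$. Throughout, we exploit that in two dimensions $\mathcal{D}_\at\at=0$, so $\Delta_\gt=\nat\nat$ on scalars, together with $\nat\nt=\kappa\at$ and $\nat\at=-\kappa\nt$.

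\textbf{Principal part.} We write
\[
\dt\Delta_\gt\nnt\rho=\Delta_\gt\dt\nnt\rho+[\dt,\Delta_\gt]\nnt\rho .
\]
By Lemma \ref{L2.1}, the commutator equals
\[
-2(\nat v\cdot\at)\Delta_\gt\nnt\rho-\bigl(\nat(\nat v\cdot\at)+\text{l.o.t.}\bigr)\nat\nnt\rho .
\]
This contributes to $\mathfrak{R}_3$ a multiple of $(\nat v\cdot\at)\Delta_\gt\nnt\rho$, plus terms of the form $\nat\nnt\rho\,\nat(\nat v\cdot\at)$. The second-order term $\Delta_\gt\dt\nnt\rho=\nat(\nat\dt\nnt\rho)$ is kept as the principal part.

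\textbf{Differentiating $\mathfrak{R}$.} We take the second form of \eqref{E3.5} and differentiate each factor.
\begin{itemize}
\item For $\dt(\nat v\cdot\nt)$ we use \eqref{E3.45}, which produces $-\nat\nnt\rho+\kappa\nat\rho$ plus quadratic terms.
\item For $\dt(\nat v\cdot\at)$ we use \eqref{E3.46}, which produces $-\nat\nat\rho+\kappa\nnt\rho$ plus quadratic terms, i.e. $-\Delta_\gt\rho+\dots$.
\item For $\dt\nat$ we use the commutator $[\dt,\nat]=-(\nat v\cdot\at)\nat$.
\end{itemize}
The crucial term is $2(\nat v\cdot\nt)\nat(\nat v\cdot\at)$. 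Applying $\dt$ to it gives
\[
2(\nat v\cdot\nt)\nat\dt(\nat v\cdot\at)\ni -2(\nat v\cdot\nt)\nat\Delta_\gt\rho .
\]
Since $\Delta_\gt\rho|_{X_i}$ is not controlled by the geometric structure (see Lemma \ref{Lemma3.1}), we rewrite this as
\[
-2(\nat v\cdot\nt)\nat\Delta_\gt\rho=-\nat\bigl(2(\nat v\cdot\nt)\Delta_\gt\rho\bigr)+2\Delta_\gt\rho\,\nat(\nat v\cdot\nt).
\]
The divergence-form piece $-\nat\bigl(2(\nat v\cdot\nt)\Delta_\gt\rho\bigr)$ moves to the left-hand side and gives the combination in \eqref{E7.14}. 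The remainder $\Delta_\gt\rho\,\nat(\nat v\cdot\nt)$ goes into $\mathfrak{R}_3$; its sign and coefficient follow from the orientation convention of \eqref{E3.5}.

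The analogous term $4(\nat v\cdot\at)\nat(\nat v\cdot\nt)$ is handled the same way. Through \eqref{E3.45} it yields $-4(\nat v\cdot\at)\nat\nat\nnt\rho$, which combines with the commutator contribution to give the coefficient $-6$ in front of $(\nat v\cdot\at)\Delta_\gt\nnt\rho$. It also yields $\nat\rho\,\Delta_\gt(\nat v\cdot\nt)$-type terms from $\nat(\kappa\nat\rho)$.

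\textbf{Remaining terms.}
\begin{itemize}
\item The term $-\Delta_\gt\nt\cdot\nabla\rho$: we use $\Delta_\gt\nt=\nat\kappa\,\at-\kappa^2\nt$. Differentiating gives $|\kappa|^2\dt\nnt\rho-\nat\kappa\,\nat\dt\rho$, plus terms involving $\dt\kappa$ and $\dt\nt$.
\item Terms involving $\nat\dt\kappa$: we substitute \eqref{E3.4}, which produces $\nat\nat(\nat v\cdot\nt)$ multiplied by lower-order factors.
\item The $\kappa(\cdots)^2$ terms: these contribute only products of first-order quantities with $\nat\nnt\rho$ or $\dt\kappa$.
\end{itemize}
All the leftover contributions fit the generic form $\mathfrak{R}_1(\nat\nnt\rho+\dt\kappa+1)$. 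Collecting the coefficients of $\nat(\nat v\cdot\at)$ gives
\[
-18(\nat v\cdot\nt)(\nat v\cdot\at)-3\nat\nnt\rho+3\kappa\nat\rho .
\]

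\textbf{Main obstacle.} The main difficulty is bookkeeping. We must not leave any term containing $\nat\Delta_\gt\rho$ or $\nat\nat(\nat v\cdot\at)$ outside the divergence-form principal part or the listed remainders. We also need exact numerical coefficients, which requires carefully tracking the contributions of $\dt\at$ and $\dt\nt$ inside every occurrence of $\nat v\cdot\at$ and $\nat v\cdot\nt$.
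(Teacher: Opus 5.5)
Your strategy is the paper's: apply $\dt$ to \eqref{E3.3}, expand $[\dt,\Delta_\gt]\nnt\rho$ by Lemma \ref{L2.1}, differentiate $\mathfrak{R}$ with \eqref{E3.45}--\eqref{E3.46}, and split $-2(\nat v\cdot\nt)\nat\Delta_\gt\rho=-\nat\bigl(2(\nat v\cdot\nt)\Delta_\gt\rho\bigr)+2\Delta_\gt\rho\,\nat(\nat v\cdot\nt)$. Using the second form of \eqref{E3.5} and $\Delta_\gt\nt=\nat\kappa\,\at-\kappa^2\nt$, instead of the paper's first form, is harmless.

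Since the lemma is pure bookkeeping, though, the accounting is the proof, and yours is wrong in two places.

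\textbf{First error.} You claim the terms $\kappa\bigl(2(\nat v\cdot\at)^2-(\nat v\cdot\nt)^2\bigr)$ contribute only $\mathfrak{R}_1(\nat\nnt\rho+\dt\kappa+1)$. This is false. By \eqref{E3.46} and the Euler equation, $\dt(\nat v\cdot\at)=-\Delta_\gt\rho-\kappa\nnt\rho-(\nat v\cdot\at)^2+(\nat v\cdot\nt)^2$, so $\dt\bigl(2\kappa(\nat v\cdot\at)^2\bigr)$ contains $-4\kappa(\nat v\cdot\at)\Delta_\gt\rho$. That term is second order in $\rho$ and not of the admissible form.
\begin{itemize}
\item It is absent from \eqref{E5.33} only because it cancels against $+4\kappa(\nat v\cdot\at)\Delta_\gt\rho$. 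That term comes from $4(\nat v\cdot\at)\nat\dt(\nat v\cdot\nt)\ni 4(\nat v\cdot\at)\nat(\kappa\nat\rho)=4(\nat v\cdot\at)\bigl(\kappa\Delta_\gt\rho+\nat\kappa\nat\rho\bigr)$.
\item This is exactly the piece you misattribute as producing $\nat\rho\,\Delta_\gt(\nat v\cdot\nt)$-type terms.
\item It is the $\pm 12\kappa(\nat v\cdot\at)\Delta_\gt\rho$ cancellation the paper exhibits between \eqref{E5.12} and the first line of the display containing \eqref{E5.19}; in your grouping it is $\pm 4$.
\item The term $\nat\rho\,\Delta_\gt(\nat v\cdot\nt)$ actually comes from $-\dt(\nat\kappa)\,\nat\rho$ inside $-\dt(\Delta_\gt\nt\cdot\nabla\rho)$, after substituting \eqref{E3.4} into $\nat\dt\kappa$. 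That same term supplies one of the three units in $3\kappa\nat\rho$; the other two come from $2\,\dt(\nat v\cdot\nt)\,\nat(\nat v\cdot\at)$.
\end{itemize}

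\textbf{Second error.} The coefficient $-2$ of $\Delta_\gt\rho\,\nat(\nat v\cdot\nt)$ does not follow from any ``orientation convention.'' It is the $+2$ from your divergence-form splitting plus $-4$ from $4\,\dt(\nat v\cdot\at)\,\nat(\nat v\cdot\nt)$, a contribution your sketch never records.

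With these corrections your route reproduces \eqref{E5.33} exactly:
\begin{itemize}
\item $-6(\nat v\cdot\at)\Delta_\gt\nnt\rho$ is $-2$ from the commutator plus $-4$ from $\dt\mathfrak{R}$.
\item $-18$ is $-8$ from differentiating $4(\nat v\cdot\at)\nat(\nat v\cdot\nt)$ plus $-10$ from $2(\nat v\cdot\nt)\nat(\nat v\cdot\at)$.
\item $-3\nat\nnt\rho$ is $-1$ from the commutator (which is exactly $-\nat(\nat v\cdot\at)\nat\nnt\rho$, with no lower-order part) plus $-2$ from $2\,\dt(\nat v\cdot\nt)\,\nat(\nat v\cdot\at)$.
\end{itemize}
So the method is fine. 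What is missing is a correct verification that every second-order term is either kept in the principal part, listed in $\mathfrak{R}_3$, or cancelled.
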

\begin{proof}
    
    In fact, taking $\dt$ on the curvature equation \eqref{E3.3} leads to
    \begin{equation}\label{E5.30}
        \dt^3\kappa-\dt\Delta_\gt\nnt\rho=\dt\mathfrak{R}.
    \end{equation}
   Special attention should be paid to the terms $\nat v\cdot\at$ and $\Delta_\gt v\cdot\at$, which  arise  in the commutator $[\dt,\Delta_\gt]$.
   Using Lemma \ref{L2.1} to get that\begin{equation}\label{E5.31}
        \begin{aligned}
            -\dt\Delta_\gt\nnt\rho=-\Delta_\gt\dt\nnt\rho+2(\nat v\cdot\at)\Delta_\gt\nnt\rho+\nat\left(\nat v\cdot\at\right)\nat\nnt\rho.
        \end{aligned}
    \end{equation}

    Before proceeding, we simplify the following variables using  $\nat \nt=\kappa\at$ and $\nat \at=-\kappa\nt$,
    \[\Delta_\gt\nt=-|\kappa|^2\nt+(\nat\kappa)\at=-|\kappa|^2\nt+\frac{\rho}{\sigma}\nat\rho\at=\mathfrak{R}_1,\]and \[\Delta_\gt \at=-|\kappa|^2\at-(\nat\kappa)\nt=\mathfrak{R}_1,\]which directly leads to 
    \begin{align*}
        \Delta_\gt\nabla\rho\cdot\at&=\Delta_\gt\nat \rho-\Delta_\gt\at\cdot\nabla\rho+2\kappa \left(\nat\nnt\rho-\kappa\nat\rho\right)=\Delta_\gt\nat\rho+\mathfrak{R}_1\left(\nat\nnt\rho+1\right),\\
        \Delta_\gt\nabla\rho\cdot\nt&=\Delta_\gt \nnt\rho-\Delta_\gt\nt\cdot\nabla\rho-2\kappa(\Delta_\gt \rho+\kappa\nnt\rho)=\Delta_\gt \nnt\rho-2\kappa\Delta_\gt\rho+\mathfrak{R}_1.
    \end{align*}
    Also, $\Delta_\gt v\cdot\nt$ could be transformed into $-\dt\kappa+\mathfrak{R}_1$ through \eqref{E3.1}. Therefore,  using the calculations  \eqref{E3.45}-\eqref{E3.46}, together with the above identities, the following records some basic computations of $\dt\mathfrak{R}$:
    \begin{equation}\label{E5.12}
        \begin{aligned}
             4\dt \left((\nat v\cdot\at)\Delta_\gt  v\cdot\nt\right)=&-4(\nat v\cdot\at)\Delta_\gt\nnt\rho-8(\nat v\cdot\nt)(\nat v\cdot\at)\nat\left(\nat v\cdot\at\right)\\&-4\Delta_\gt\rho \nat \left(\nat v\cdot\nt\right)
             +12\kappa (\nat v\cdot\at)\Delta_\gt\rho+\mathfrak{R}_1\left(1+\dt\kappa\right),
        \end{aligned}
    \end{equation}
    \begin{equation}\label{E5.167}
        \begin{aligned}
             2\dt\left((\nat v\cdot\nt)\Delta_\gt v\cdot \at\right)=&-2(\nat v\cdot\nt)\Delta_\gt \nat\rho+\mathfrak{R}_1\left(1+\dt\kappa+\nat\nnt\rho\right)\\&+\left(-2\nat\nnt\rho+2\nat\rho\kappa-10(\nat v\cdot\at)(\nat v\cdot\nt)\right)\nat \left(\nat v\cdot\at\right),
        \end{aligned}
    \end{equation}
    as well as 
    \begin{equation}
        \begin{aligned}
             -\dt\left(\Delta_\gt\nt\cdot\nabla\rho\right)=&\nat\rho\Delta_\gt (\nat v\cdot\nt)+\kappa\nat\rho\nat \left(\nat v\cdot\at\right)\\&+|\kappa|^2\dt\nnt\rho-(\nat\kappa)\nat\dt\rho+\mathfrak{R}_1\left(1+\dt\kappa\right).
        \end{aligned}
    \end{equation}
    The other two terms can be separated similarly, 
    \begin{align}
        \dt(6(\nat v\cdot\at)\nat\nt\cdot\nat v)=&6\dt\left((\nat v\cdot\at)^2\kappa\right)=-12\kappa(\nat v\cdot\at)\Delta_\gt\rho+\mathfrak{R}_1(1+\dt\kappa),\\
        \dt\left(3(\nat v\cdot\nt)\nat\at\cdot\nat v\right)=&-3\dt\left((\nat v\cdot\nt)^2\kappa\right)=\mathfrak{R}_1(1+\dt\kappa+\nat\nnt\rho).\label{E5.19}
    \end{align}

    As a result, $\dt\mathfrak{R}$ can be obtained by combining \eqref{E5.12}-\eqref{E5.19}. Note that the first term on the RHS of \eqref{E5.167} can be written as \[-2(\nat v\cdot\nt)\Delta_\gt\nat\rho=-\nat(2(\nat v\cdot\nt)\Delta_\gt\rho)+2\Delta_\gt\rho\nat(\nat v\cdot\nt).\]The higher-order term $\nat (2(\nat v\cdot\nt)\Delta_\gt\rho)$ and $-\Delta_\gt \dt\nnt\rho$ (in \eqref{E5.31})  are  retained in the principal part of the curvature equation, while the remaining terms  are grouped together as the remainder $\mathfrak{R}_3$ and placed on the RHS of the higher-order curvature equation \eqref{E5.30}. This completes the proof.

\end{proof}

 The following lemma 
 proceed to analyze the dissipation effects generated at the corner points $X_i$ and to estimate the remaining terms. 
\begin{lemma}\label{L5.3}
    We have at the contact points $X_i(i=l,r)$ the equations 
    \begin{equation}\label{E5.48}
        \pm\dt^2\nnt\rho=-\frac{\sigma}{\beta_c}(\nat \dt\nnt\rho-2(\nat v\cdot\nt)\Delta_\gt \rho)(\sin\theta_i)^2+\widetilde{r}_i\quad \text{ at }X_i(i=l,r),
    \end{equation}
     where the remainder terms \[\widetilde{r}_i:=\mathfrak{R}_1\left(\nat\nnt\rho+\dt\nat\rho+\dt\nnt\rho+\dt\kappa+1\right)\] satisfy the following estimates \[\llx{\widetilde{r}_i}\leq \pall .\]
    Moreover, with the high-order dissipation $F_h(t)$ defined in \eqref{E5.47}, it holds that 
    \begin{align}
         \llx{\dt^2\nnt\rho}\leq &\frac{\sigma}{\beta_c}F_h^\frac{1}{2}(t)+\pall,\label{E5.49}\\\dt^2\nnt\rho\left(\nat\dt\nnt\rho-2(\nat  v\cdot\nt)\Delta_\gt\rho\right)\big|_{X_r}^{X_l}\leq& -\frac{\sigma}{2\beta_c}F_h(t)+\pall .\label{E5.50}
    \end{align}
\end{lemma}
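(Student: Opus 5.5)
Apply $\dt$ to the identity \eqref{E3.42} of Lemma \ref{Lemma3.1}, mirroring the proof of Lemma \ref{L4.10} one order higher. Since \eqref{E3.42} holds at the moving contact points for all $t$, and $\dt$ is tangent to both $\gt$ and $\gb$ there (the contact point is transported by $v$ because $v\cdot\nb=0$), differentiating it is legitimate. The principal term on the right of \eqref{E3.42} is $-\frac{\sigma}{\beta_c}\nat\nnt\rho(\sin\theta_i)^2$. Its material derivative splits into
\[
-\frac{\sigma}{\beta_c}\bigl(\dt\nat\nnt\rho\bigr)(\sin\theta_i)^2-\frac{\sigma}{\beta_c}\nat\nnt\rho\,\dt(\sin\theta_i)^2 .
\]
By \eqref{E3.44} the second piece is $\mathfrak{R}_1\nat\nnt\rho$. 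By Lemma \ref{L2.1}, $\dt\nat\nnt\rho=\nat\dt\nnt\rho-(\nat v\cdot\at)\nat\nnt\rho$, and the commutator is again of the form $\mathfrak{R}_1\nat\nnt\rho$.

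The remaining terms of \eqref{E3.42} are products of first-order quantities, namely $\nat v\cdot\at$, $\nat v\cdot\nt$, $\nat\rho$, $\kappa$, $\theta_i$, and the bottom term $\dt(v\cdot\nabla_v\nb(\nb\cdot\nt))$. Their $\dt$-derivatives are computed with \eqref{E3.45}--\eqref{E3.46}, \eqref{E2.1}--\eqref{E2.2} and the Euler equation $\dt v=-\nabla\rho-\g e_2$.
\begin{itemize}
\item $\dt(\nat v\cdot\nt)$ produces $-\nat\nnt\rho+\mathfrak{R}_1$.
\item $\dt(\nat v\cdot\at)$ produces $\nat(\dt v\cdot\at)=-\nat\nat\rho-\g\nat(e_2\cdot\at)$. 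This is where the non-remainder term enters: $\nat\nat\rho=\Delta_\gt\rho$ since $\mathcal D_\at\at=0$. It appears through $-\frac{2\sigma}{\beta_c}(\nat v\cdot\at)(\nat v\cdot\nt)(\sin\theta_i)^2$, whose derivative contains $-\frac{2\sigma}{\beta_c}(\nat v\cdot\nt)(\sin\theta_i)^2\dt(\nat v\cdot\at)=+\frac{2\sigma}{\beta_c}(\nat v\cdot\nt)\Delta_\gt\rho(\sin\theta_i)^2+\dots$.
\item Combined with the principal term, this gives exactly $-\frac{\sigma}{\beta_c}(\sin\theta_i)^2\bigl(\nat\dt\nnt\rho-2(\nat v\cdot\nt)\Delta_\gt\rho\bigr)$, which is \eqref{E5.48}.
\item $\dt\nat\rho$ and $\dt\nnt\rho$ terms arise from $\dt(\nat\rho\kappa)$ and from the bottom term. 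In the latter, $\dt v$ and $\dt\nt$ produce $\nabla\rho$ and $\nat v\cdot\nt$ only, since $\nb$ is constant near $X_i$.
\item $\dt\kappa$ arises from differentiating $\kappa$.
\end{itemize}
All of these fall into $\widetilde r_i=\mathfrak{R}_1(\nat\nnt\rho+\dt\nat\rho+\dt\nnt\rho+\dt\kappa+1)$. The main bookkeeping obstacle is checking that no other genuinely second-order tangential term survives. For example, $\Delta_\gt\rho$ must appear only in the displayed combination, because $\Delta_\gt\rho|_{X_i}$ is not controlled. I would verify this term by term against Lemma \ref{Lemma3.1}.

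To estimate $\widetilde r_i$:
\begin{itemize}
\item $\mathfrak{R}_1$ is pointwise bounded by $\pall$ via $\|\nabla v\|_{L^\infty}$ and $\|\nabla\rho\|_{L^\infty}$ from Lemma \ref{L4.3}, plus $\kappa\in H^{2}(\gt)$.
\item $\nat\nnt\rho|_{X_i}$ is controlled by Remark \ref{R5.1}.
\item $\dt\kappa=\rho\dt\rho/\sigma$ is bounded via $\dt\rho\in H^2(\ot)$ and embedding.
\item $\dt\nnt\rho|_{X_i}$ and $\dt\nat\rho|_{X_i}$ are given explicitly by \eqref{E3.42}--\eqref{E3.43}. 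Each term there is bounded by $\pall$, again using Remark \ref{R5.1} and Lemma \ref{L4.3}.
\end{itemize}
This yields $\llx{\widetilde r_i}\le\pall$. Note this is better than Lemma \ref{L4.10}, since the higher energy now controls $\nat\nnt\rho$ pointwise.

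For \eqref{E5.49}, take absolute values in \eqref{E5.48} and use $|\sin\theta_i|\le1$, so that $(\sin\theta_i)^2|\cdot|\le|\sin\theta_i\,\cdot\,|\le F_h^{1/2}$. For \eqref{E5.50}, multiply \eqref{E5.48} by $A_i:=\nat\dt\nnt\rho-2(\nat v\cdot\nt)\Delta_\gt\rho$ at $X_i$. The sign convention $\pm$ matches the evaluation $|_{X_r}^{X_l}$, exactly as in Lemma \ref{L4.10}. This gives
\[
\dt^2\nnt\rho\,A\big|_{X_r}^{X_l}=-\frac{\sigma}{\beta_c}F_h(t)+\sum_i \widetilde r_i A_i .
\]
Then $|\widetilde r_iA_i|\le \pall\,c_1^{-1}|\sin\theta_i A_i|$ by \eqref{E4.23}, and Young's inequality absorbs half of $\frac{\sigma}{\beta_c}F_h$.
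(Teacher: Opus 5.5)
Your proposal is correct and follows the paper's route. You apply $\dt$ to \eqref{E3.42} and let the principal term combine with the $-\Delta_\gt\rho$ that $\dt(\nat v\cdot\at)$ produces in the $(\nat v\cdot\at)(\nat v\cdot\nt)(\sin\theta_i)^2$ term, giving $-\frac{\sigma}{\beta_c}(\sin\theta_i)^2\bigl(\nat\dt\nnt\rho-2(\nat v\cdot\nt)\Delta_\gt\rho\bigr)$; you then bound $\widetilde r_i$ pointwise via Lemma \ref{L4.3} and Remark \ref{R5.1}, and get \eqref{E5.49}--\eqref{E5.50} from $|\sin\theta_i|\le 1$, \eqref{E4.23} and Young's inequality, which simply spells out the paper's ``arguments analogous to Lemma \ref{L5.2}'' and ``direct computation.''
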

\begin{proof}
 Before proceeding, suppose  the bottom $\gb$ is sufficiently smooth  that the regularity of $\nb$ can be ignored. The Euler equation gives \begin{align*}
    \dt^2\left(v\cdot\nabla_v\nb\left(\nb\cdot\nt\right)\right)=\mathfrak{R}_1\left(1+\nat\nnt\rho+\dt\nabla\rho\right).
\end{align*}
Then, 
    applying the material derivative $\dt$ to  \eqref{E3.42} and employing arguments analogous to those in Lemma \ref{L5.2}, together with \eqref{E3.44}-\eqref{E3.46}, directly yields \eqref{E5.48}.
    Recall that $\dt\nnt\rho|_{X_i}$ and  $\dt\nat\rho|_{X_i}$  can be   controlled through Lemma \ref{L4.10}, applying Sobolev embedding and  Lemma \ref{L4.3}, we deduce that
    \begin{equation*}
        \begin{aligned}
            \llx{\widetilde{r}_i}\leq& \pall \left(\mode{\nnt\rho}_{H^{\frac{3}{2}+\epsilon}(\gt)}+\mode{v}_{H^{2+\epsilon}(\ot)}+\mode{\rho}_{H^{2+\epsilon}(\ot)}+\mode{\dt\rho}_{H^{1+\epsilon}(\ot)}+1\right) \leq \pall.
        \end{aligned}
    \end{equation*}
    A direct computation shows that  \eqref{E5.49}-\eqref{E5.50} hold. 
\end{proof}

The higher-order curvature equation allows us to control certain classes of boundary integrals on $\gt$ via energy and dissipation.

\begin{lemma}\label{L4.7}  Assume that  $\theta_i\in (0,\pi/2)$,  it holds that 
    \begin{align*}
        \int_\gt\rho\dt^3\rho\psi\id S\leq &\mode{\psi}_{H^1(\gt)}\left( C_{c_1,\sigma}F^\frac{1}{2}_h(t)+\pall\right),
    \end{align*}
 for some function $\psi\in H^1(\gt)$ and some constant $C_{c_1,\sigma}$ depends only on $c_1$ and $\sigma$.
\end{lemma}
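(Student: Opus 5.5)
The plan is to express the boundary trace $\rho\dt^3\rho$ on $\gt$ through the curvature, using the boundary condition $\frac12\rho^2=\sigma\kappa+\mathrm{B}$, and then to exploit the higher-order curvature equation \eqref{E7.14}. Applying $\dt^3$ to the boundary condition gives
\[
\rho\dt^3\rho=\sigma\dt^3\kappa-3\dt\rho\,\dt^2\rho \quad\text{on }\gt .
\]
The lower-order piece $-3\int_\gt\dt\rho\,\dt^2\rho\,\psi\id S$ is bounded by $\mode{\psi}_{L^2(\gt)}\mode{\dt\rho}_{L^\infty}\mode{\dt^2\rho}_{L^2(\gt)}$. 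By the trace theorem and Lemma \ref{L4.3} ($\dt\rho\in H^2$, $\dt^2\rho\in H^1$), this is at most $\mode{\psi}_{H^1(\gt)}\pall$. It therefore remains to control $\sigma\int_\gt \dt^3\kappa\,\psi\id S$.

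By Lemma \ref{L5.2} we have $\dt^3\kappa=\nat G+\mathfrak{R}_3$, where
\[
G:=\nat\dt\nnt\rho-2(\nat v\cdot\nt)\Delta_\gt\rho .
\]
For the principal part I integrate by parts along $\gt$:
\[
\sigma\int_\gt \nat G\,\psi\id S=\sigma\,(G\psi)\big|_{X_l}^{X_r}-\sigma\int_\gt G\,\nat\psi\id S .
\]
\begin{itemize}
\item For the interior integral, $\mode{\nat\dt\nnt\rho}_{L^2(\gt)}\le E_h^{1/2}/\sigma^{1/2}$. The product $(\nat v\cdot\nt)\Delta_\gt\rho$ is bounded in $L^2(\gt)$ using $\nat v\cdot\nt\in L^\infty$ and $\Delta_\gt\rho\in L^2(\gt)$, both given by Lemma \ref{L4.3} ($\rho\in H^3$, $\gt\in H^4$). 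So this integral is at most $\mode{\psi}_{H^1}\pall$.
\item For the corner terms, write $|G|_{X_i}|\le c_1^{-1}|(\sin\theta_i)G|_{X_i}|$ using \eqref{E4.23}; this is $c_1^{-1}F_h^{1/2}$. Combined with $|\psi(X_i)|\le C\mode{\psi}_{H^1(\gt)}$ (one-dimensional Sobolev embedding), this yields the term $C_{c_1,\sigma}F_h^{1/2}\mode{\psi}_{H^1}$.
\end{itemize}

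The remainder $\int_\gt\mathfrak{R}_3\psi\id S$ is where I expect the main obstacle: several terms of \eqref{E5.33} are not in $L^2(\gt)$ at the available regularity.

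Start with the terms that are already in $L^2(\gt)$.
\begin{itemize}
\item By Lemma \ref{L4.3}, $\nnt\rho\in H^{3/2+\epsilon}(\gt)$ and $\nat v\cdot\nt\in H^{3/2+\epsilon}(\gt)$. Hence $\Delta_\gt\nnt\rho$ and $\Delta_\gt(\nat v\cdot\nt)$ lie only in $H^{-1/2+\epsilon}$. Their coefficients ($\nat v\cdot\at$, $\nat\rho$) are $H^{1/2+\epsilon}(\gt)$ traces and are bounded.
\item $\Delta_\gt\rho\,\nat(\nat v\cdot\nt)$ is an $L^2\cdot L^\infty$ product, hence in $L^2$.
\item $\nat(\nat v\cdot\at)$ lies in $H^{-1/2+\epsilon}(\gt)$ only.
\item $|\kappa|^2\dt\nnt\rho$, $(\nat\kappa)\nat\dt\rho$ and $\mathfrak{R}_1(\nat\nnt\rho+\dt\kappa+1)$ are in $L^2$ by Lemma \ref{L4.3} and \eqref{E5.46}.
\end{itemize}

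For the singular terms, such as $(\nat v\cdot\at)\Delta_\gt\nnt\rho$, I integrate by parts once more along $\gt$, moving one $\nat$ onto the product of $\psi$ with the coefficient:
\[
\int_\gt a\,\nat b\,\psi\id S=(ab\psi)\big|_{X_l}^{X_r}-\int_\gt b\,\nat(a\psi)\id S .
\]
Here $b\in\{\nat\nnt\rho,\ \nat(\nat v\cdot\nt),\ \nat v\cdot\at\}$.
\begin{itemize}
\item The corner values $b|_{X_i}$ are bounded by Remark \ref{R5.1}, or by $v\in H^{2+\epsilon}$ together with the trace theorem and Sobolev embedding. This gives corner contributions bounded by $\pall\mode{\psi}_{H^1}$.
\item In the interior integral, $\nat(a\psi)=\nat a\,\psi+a\,\nat\psi$. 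Here $a\in L^\infty\cap H^{1/2+\epsilon}$, so $\nat a\in H^{-1/2+\epsilon}$; also $b\in H^{1/2+\epsilon}\subset L^p$ and $\psi\in L^\infty$.
\end{itemize}
The delicate case is the pairing $\nat a\cdot b\psi$. I would handle it either by the duality of Lemma \ref{L2.7} ($\nat a\in\tilde H^{-1/2}$ against $b\psi\in\tilde H^{1/2}$ after subtracting corner values), or more simply by noting that $a=\nat v\cdot\at$ or $\nat\rho$ has $\nat a\in L^2(\gt)$, since $v\in H^{2+\epsilon}$ gives $\nat v$ traces in $H^{1+\epsilon}$ up to the $H^{3/2+\epsilon}$-in-$\ot$ trace level. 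Checking that $v\in H^{2+\epsilon}(\ot)$ indeed puts $\nat(\nat v\cdot\at)$ in a space pairable with $H^1(\gt)$ functions, via Lemma \ref{L2.7}, is the step I would verify most carefully.

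Collecting the three parts gives the claimed bound.
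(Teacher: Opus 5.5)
Your proposal follows the paper's route. You rewrite $\rho\dt^3\rho=\sigma\dt^3\kappa-3\dt\rho\dt^2\rho$ and invoke Lemma~\ref{L5.2}. You integrate the principal $\nat(\cdots)$ term by parts, so that its corner values are controlled by $F_h^{1/2}$ through $\sin\theta_i\ge c_1$. You treat the singular pieces of $\mathfrak{R}_3$ by one more tangential integration by parts together with the $\tilde H^{1/2}$--$\tilde H^{-1/2}$ duality of Lemmas~\ref{L2.6}--\ref{L2.7}. The paper subtracts the corner values before integrating by parts, whereas you bound the resulting corner terms by continuity; the two are equivalent.

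Only your duality option is valid for $a=\nat v\cdot\at$. Your ``more simply'' alternative fails, because $v\in H^{2+\epsilon}(\ot)$ only gives $\nat v\cdot\at\in H^{\frac12+\epsilon}(\gt)$, so $\nat(\nat v\cdot\at)\notin L^2(\gt)$, exactly as your own earlier bullet says.
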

\begin{proof}
For any $\psi\in H^1(\gt)$, one can derive from Lemma \ref{L5.2} that  \begin{equation*}\label{E4.15}
   \begin{aligned}
       \rho\dt^3\rho=&\sigma\dt^3\kappa-3\dt\rho\dt^2\rho\\=&\sigma\nat\left(\nat\dt\nnt\rho-2(\nat v\cdot\nt)\Delta_\gt\rho\right)+\sigma\mathfrak{R}_3-3\dt\rho\dt^2\rho \quad \text{on }\gt,
   \end{aligned}
\end{equation*}
 Integration by parts shows that
 \begin{equation}\label{E5.90}
     \begin{aligned}
         \int_\gt \rho\dt^3\rho\psi\id S=&\sigma(\nat\dt\nnt\rho-2(\nat v\cdot\nt)\Delta_\gt\rho)\psi\big|_{X_r}^{X_l}+\int_\gt \left(\sigma\mathfrak{R}_3-3\dt\rho\dt^2\rho\right)\psi \id S\\&-\sigma\int_\gt (\nat\dt\nnt\rho-2(\nat v\cdot\nt)\Delta_\gt\rho)\nat\psi\id S.
     \end{aligned}
 \end{equation}

The treatment of the corner term 
 follows directly from the definition of $F_h$, yielding
 \begin{equation}\label{E5.28}
    \sigma(\nat\dt\nnt\rho-2(\nat v\cdot\nt)\Delta_\gt\rho)\psi\big|_{X_r}^{X_l}\leq  C_{c_1,\sigma} F^\frac{1}{2}_h(t)\mode{\psi}_{H^1(\gt)},
 \end{equation}
 where $c_1$ is given by the  assumption \eqref{E4.23}, and  the constant $C_{c_1,\sigma}$ depends only on the coefficients $c_1$ and  $\sigma$.  
 Meanwhile, the estimate of the last integral term  is standard,
 \begin{equation}\label{E5.29}
     \begin{aligned}
     &   -\int_\gt (\nat\dt\nnt\rho-2(\nat v\cdot\nt)\Delta_\gt\rho)\nat\psi\id S\\\leq & \left(\mode{\nat \dt\nnt\rho}_{L^2(\gt)}+2\mode{\nat v\cdot\nt}_{L^\infty(\gt)}\mode{\Delta_\gt\rho}_{L^2(\gt)}\right)\mode{\nat\psi}_{L^2(\gt)}\\\leq&\pall\mode{\psi}_{H^1(\gt)}.
     \end{aligned}
 \end{equation}

We  concentrate on the higher-order components of  the remainder $\mathfrak{R}_3$ (defined in \eqref{E5.33}), as these demand careful handling,
\begin{equation}\label{E5.36}
    \begin{aligned}
        \int_\gt \left(\sigma\mathfrak{R}_3-3\dt\rho\dt^2\rho\right)\psi \id S\leq& -6\int_\gt(\nat v\cdot\at)\Delta_\gt\nnt\rho \psi\id S+\int_\gt \nat\rho\Delta_\gt(\nat v\cdot\nt)\psi\id S \\&+\int_\gt \left(-18(\nat v\cdot\nt)(\nat v\cdot\at)-3\nat\nnt\rho+3\nat\rho\kappa\right)\nat(\nat v\cdot\at) \psi\id S   \\&+\pall \mode{\psi}_{L^2(\gt)}.
    \end{aligned}
\end{equation}
Inspired by Remark \ref{R5.1}, applying Lemma \ref{L2.6} and Lemma \ref{L2.7}, yields  \begin{equation}\label{E5.340}
    \begin{aligned}
        -6\int_\gt(\nat v\cdot\at)\Delta_\gt\nnt\rho \psi\id S=&6\int_\gt \xc{\nat\nnt\rho}\nat(\psi\nat v\cdot\at)\id S\\\leq & \tha{\xc{\nat\nnt\rho}}\thb{\nat(\psi\nat v\cdot\at)}
        \\\leq &\pall \mode{\psi}_{H^{\frac{1}{2}}(\gt)}.
    \end{aligned}
\end{equation}
Using integration by parts, we can achieve 
\begin{equation}
    \begin{aligned}
        \int_\gt \nat\rho\Delta_\gt(\nat v\cdot\nt)\psi\id S=&\left.\nat\rho\nat(\nat v\cdot\nt)\psi\right|_{X_r}^{X_l}-\int_\gt\Delta_\gt\rho \nat(\nat v\cdot\nt)\psi\id S\\&-\int_\gt\nat\rho\nat(\nat v\cdot\nt)\nat\psi\id S\\\leq&\pall \mode{\psi}_{H^1(\gt)}. 
    \end{aligned}
\end{equation}

Due to the regularity of $v$, we cannot directly apply the Sobolev trace theorem here, instead using the similar argument as \eqref{E5.340}, denote $f:=-18(\nat v \cdot\nt)(\nat v\cdot\at)-3\nat\nnt\rho+3\nat\rho\kappa$, so that \begin{equation}\label{E5.35}
    \begin{aligned}
        \int_\gt f\nat(\nat v\cdot\at)\psi\id S&=\int_\gt f\nat\xc{\nat v\cdot\at}\psi\id S\\&\leq \tha{\xc{\nat v\cdot\at}}\thb{\nat(f\psi)}\\&\leq \pall\mode{\psi}_{H^1(\gt)}.
    \end{aligned}
\end{equation}


We can sum up \eqref{E5.90}-\eqref{E5.35} to get the desired result.  
\end{proof}
Establishing the  higher-order energy estimate is nevertheless more challenging, as it involves integration by parts with respect to the material derivative $\dt$. To facilitate a clear presentation, we  state the following proposition, which offers a consolidated approach for estimating the associated remainder $\mathfrak{R}_3$ (defined in \eqref{E5.33}).
  Due to the limited spatial regularity and  the unknown contact point values, these estimates are fairly delicate and require a good deal of care.
\begin{proposition}\label{Prop5.1}
   Assume that  $\theta_i\in(0,\pi/2)$, then the following estimate holds:
    \begin{equation*}
        \begin{aligned}
          \int_0^T  \int_\gt \dt^2\nnt\rho\mathfrak{R}_3\id S\id t\leq &\eo+\sup_{t\in [0,T]}\pe\left(E^{\delta_0'+\epsilon}_h(t)+1\right)\\&+\int_0^T\pall\left(F^\frac{1}{2}_h(t)+1\right)\id t,
        \end{aligned}
    \end{equation*} 
    for some constants $\delta'_0\in(0,\frac{1}{2}]$ and  $\epsilon\in (0,\pi/\theta_i-2)$ small enough.
\end{proposition}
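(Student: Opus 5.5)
The plan is to bound $\int_0^T\int_\gt \dt^2\nnt\rho\,\mathfrak{R}_3\id S\id t$ term by term, using the decomposition \eqref{E5.33}. The pairing regularity drives everything. The energy gives only $\nat\dt\nnt\rho\in L^2(\gt)$, so $\dt^2\nnt\rho$ is not directly controlled in $L^2(\gt)$ with a good bound. Using $\dt^2\nnt\rho=\dt\nnt\dt\rho+\dt([\nnt,\dt]\rho)$ and the trace of $\dt^2\rho\in H^1(\ot)$ (Lemma \ref{L2.5}), we expect $\dt^2\nnt\rho$ to lie in $H^{-\frac12}(\gt)$ modulo lower-order terms. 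Its corner values are available only through \eqref{E5.48}, i.e.\ through $F_h^{1/2}$ via \eqref{E5.49}. The remainder terms in $\mathfrak{R}_3$ that are merely of the form $\mathfrak{R}_1(\nat\nnt\rho+\dt\kappa+1)$, $|\kappa|^2\dt\nnt\rho$ and $(\nat\kappa)\nat\dt\rho$ lie in $H^{\frac12}(\gt)$ by Lemma \ref{L4.3}. Pairing them against $\dt^2\nnt\rho$ in the $H^{\frac12}$–$H^{-\frac12}$ duality (or in $\tilde H^{\pm\frac12}$, via Lemmas \ref{L2.6}–\ref{L2.7} after subtracting values at $X_i$) then gives $\pall$ pointwise in time.

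The genuinely top-order pieces are $-6(\nat v\cdot\at)\Delta_\gt\nnt\rho$, $\nat\rho\Delta_\gt(\nat v\cdot\nt)$, $-2\Delta_\gt\rho\nat(\nat v\cdot\nt)$, and $f\,\nat(\nat v\cdot\at)$ with $f$ as in \eqref{E5.35}. For these I would integrate by parts in time, using $\dt^2\nnt\rho=\dt(\dt\nnt\rho)$ and Lemma \ref{L2.3}. Schematically, for a top-order term $a\,\nat b$,
\[
\int_0^T\!\!\int_\gt \dt^2\nnt\rho\, a\nat b\id S\id t=\Big[\int_\gt \dt\nnt\rho\, a\nat b\id S\Big]_0^T-\int_0^T\!\!\int_\gt \dt\nnt\rho\,\dt(a\nat b)\id S\id t+\cdots,
\]
where the dots are the terms from $\nat v\cdot\at$. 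At time $T$ the boundary term is bounded by $\|\dt\nnt\rho\|_{H^1(\gt)}\|a\nat b\|_{H^{-1}}$-type pairings, or by moving one $\nat$ onto $\dt\nnt\rho$. This produces $\mode{\nat\dt\nnt\rho}_{L^2(\gt)}\le E_h^{1/2}$ times lower factors. Since the lower factors are controlled by Lemma \ref{L4.3} with powers $E_h^{\delta_0'}$ or $E_h^{\epsilon\delta_0'}$, Young's inequality yields the $\sup_t \pe(E_h^{\delta_0'+\epsilon}+1)$ contribution; the time-$0$ term gives $\eo$. In the time integrand, $\dt$ falls on $\Delta_\gt\nnt\rho$, $\nat(\nat v\cdot\nt)$ or $\nat v\cdot\at$. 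These are rewritten through the commutators of Lemma \ref{L2.1} and the identities \eqref{E3.45}–\eqref{E3.46} as $\Delta_\gt\dt\nnt\rho$, $\nat^2\nnt\rho$, $\nat\dt\rho$ and so on, plus lower-order terms. One tangential derivative is then moved onto the lower-order factor, so that only $\nat\dt\nnt\rho\in L^2$ appears, paired with $H^{\frac12+\epsilon}$ quantities, which gives $\pall$.

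Each tangential integration by parts creates corner terms of the form $\dt\nnt\rho\cdot(\text{lower})|_{X_i}$ or $(\nat\dt\nnt\rho-2(\nat v\cdot\nt)\Delta_\gt\rho)\cdot(\text{lower})|_{X_i}$. The first type is bounded by Lemma \ref{L4.10} and Remark \ref{R5.1}. For the second type I would arrange the integration by parts so that $\nat\dt\nnt\rho$ appears exactly in the combination defining $F_h$, which gives $C F_h^{1/2}\pall$, as in \eqref{E5.28}. Any stray $\nat\dt\nnt\rho|_{X_i}$ can be corrected using $2(\nat v\cdot\nt)\Delta_\gt\rho|_{X_i}$, which is bounded by $\pall$ because $\nnt\rho, \nat v\cdot \nt\in H^{\frac32+\epsilon}(\gt)$ gives pointwise control.

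The main obstacle I expect is the term $(\nat v\cdot\at)\Delta_\gt\nnt\rho$, together with its partner $\nat(\nat v\cdot\at)$. Here $\nat v\cdot\at$ is only in $H^{\frac12+\epsilon}(\gt)$, and after the time integration by parts $\dt(\nat v\cdot\at)$ involves $\nat\nat\rho$ at the level $H^{-\frac12+}$. This cannot be paired with $\Delta_\gt\nnt\rho$ naively. My first attempt would be the device of \eqref{E5.340}: integrate by parts once, replace $\nat\nnt\rho$ by $\nat\nnt\rho-\nat\nnt\rho|_{X_i}$ so that it lies in $\tilde H^{\frac12}(\gt)$, and bound the dual factor in $\tilde H^{-\frac12}$ by Lemma \ref{L2.7}. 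If a genuine loss remains, I would instead symmetrize: write $\Delta_\gt\nnt\rho\,\dt\nnt\rho$-type products as $\frac12\ddt$ of a quadratic expression with weight $\nat v\cdot\at$, which is absorbed into the boundary-in-time term at $T$.
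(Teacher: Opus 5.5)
There is a genuine gap at the term you single out as the main obstacle, and neither of your two fallbacks closes it. Split the $\nat v\cdot\at$ part of $\mathfrak{R}_3$ as $-3\dt^2\nnt\rho\,\nat\big((\nat v\cdot\at)\nat\nnt\rho\big)-3\dt^2\nnt\rho\,(\nat v\cdot\at)\Delta_\gt\nnt\rho$. The first piece is a total tangential derivative, and your tangential/time integrations by parts do handle it. The second piece does not yield to them. Moving $\dt$ off $\dt^2\nnt\rho$ gives $\int_\gt\dt\nnt\rho\,(\nat v\cdot\at)\,\Delta_\gt\dt\nnt\rho\id S$, which pairs $\dt\nnt\rho\,(\nat v\cdot\at)\in H^{\frac12+\epsilon}(\gt)$ against $\Delta_\gt\dt\nnt\rho\in H^{-1}$. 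One tangential integration by parts then leaves $\int_\gt\nat(\nat v\cdot\at)\,\dt\nnt\rho\,\nat\dt\nnt\rho\id S$, an $H^{-\frac12+\epsilon}$ distribution against an $L^2$ function, plus a stray corner value $\nat\dt\nnt\rho|_{X_i}$. The device of \eqref{E5.340} needs one factor in $\tilde H^{\frac12}(\gt)$, and neither $\nat\dt\nnt\rho$ nor $\dt^2\nnt\rho$ is such a factor. Your symmetrization fallback also fails: $\dt^2\nnt\rho\cdot\Delta_\gt\nnt\rho$ is not $\frac12\ddt$ of any boundary quadratic form.

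The missing idea is to leave the boundary. By \eqref{E3.3} and \eqref{E4.1}$_2$, $\sigma\Delta_\gt\nnt\rho=\rho\dt^2\rho+(\dt\rho)^2-\sigma\mathfrak R$. So, up to a $[\nt,\dt^2]$ commutator and pieces of the already-treated types, the bad term becomes $-\frac3\sigma\int_\gt\nt\cdot\dt^2\nabla\rho\,(\nat v\cdot\at)\rho\dt^2\rho\id S$. This is the boundary flux of the wave equation for $\dt^2\rho$. Green's formula with the harmonic extension $\mathcal H(\nat v\cdot\at)$ moves it into $\ot$. There \eqref{E3.2} gives $\rho\nabla\cdot\dt^2\nabla\rho=\dt^4\rho+\dots$, so the term equals $-\frac3\sigma\ddt\int_\ot\dt^3\rho\,\mathcal H(\nat v\cdot\at)\,\dt^2\rho\id X$ plus terms bounded by $\dt^3\rho,\nabla\dt^2\rho\in L^2(\ot)$ and $\nabla\mathcal H(\nat v\cdot\at)\in L^{2/(1-\epsilon)}$. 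The bottom flux is handled by \eqref{E5.622}. The coupling to the interior hyperbolic energy is what closes this term; a purely boundary argument does not.

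Two further claims are wrong as stated.

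\emph{First}, pairing the lower-order remainders directly against $\dt^2\nnt\rho$ in $H^{\frac12}$--$H^{-\frac12}$ duality assumes a pointwise-in-time $H^{-\frac12}(\gt)$ bound on $\dt^2\nnt\rho$, and the energy gives no such bound:
\begin{itemize}
\item $\dt^2\rho$ is only in $H^1(\ot)$;
\item Lemma \ref{L2.5} needs $s>\frac32$ for normal traces;
\item a weak definition of $\nnt\dt^2\rho$ would need $\Delta\dt^2\rho$, i.e.\ $\dt^4\rho$.
\end{itemize}
These terms must also be integrated by parts in time, so that only $\dt\nnt\rho\in H^1(\gt)$ is ever paired. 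For $(\nat\kappa)\nat\dt\rho$, do this after a tangential integration by parts, whose corner values $\dt^2\nnt\rho|_{X_i}$ are controlled by $F_h^{1/2}$ through \eqref{E5.49}.

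\emph{Second}, $\Delta_\gt\rho|_{X_i}$ is not pointwise controlled. Since $\rho|_\gt\in H^{\frac52}(\gt)$, one only gets $\Delta_\gt\rho\in H^{\frac12}$. The $H^{\frac32+\epsilon}$ bounds on $\nnt\rho$ and $\nat v\cdot\nt$ say nothing about it. This corner value is genuinely unknown, which is exactly why it is bundled into $F_h$. Consequently stray values $\nat\dt\nnt\rho|_{X_i}$ cannot be corrected as you propose. The integrations by parts must be arranged so that only the full $F_h$ combination, or $\dt^2\nnt\rho|_{X_i}$ and $\dt\nnt\rho|_{X_i}$, appear at the corners.
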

\begin{proof}
    For the sake of brevity, throughout the proof we will repeatedly  use  five essential tools without explicitly referring to them: the Reynolds transport theorem in Lemma \ref{L2.3}, the standard Sobolev embedding,   the catalogs of lower-order regularity in Lemma \ref{L4.2} and higher-order regularity in Lemma \ref{L4.3},  the contact point value estimates in Remark \ref{R5.1} and Lemma \ref{L5.3}. We decompose the remainder integral into several terms based on distinct regularity:
    \begin{equation*}
        \begin{aligned}
             \int_\gt\dt^2\nnt\rho\mathfrak{R}_3\id S=& \int_\gt\dt^2\nnt\rho\left(-6(\nat v\cdot\at)\Delta_\gt\nnt\rho-3\nat\nnt\rho\nat(\nat v\cdot\at)\right)\id S\\&+\int_\gt\dt^2\nnt\rho\left(\nat\rho\Delta_\gt (\nat v\cdot\nt)-2\nat (\nat v\cdot\nt)\Delta_\gt\rho\right)\id S
             \\&+\int_\gt\dt^2\nnt\rho\left(-18(\nat v\cdot \nt)(\nat v\cdot\at)+3\nat\rho\kappa\right)\nat(\nat v\cdot\at)\id S
             \\&+\int_\gt\dt^2\nnt\rho(-\nat \kappa)\nat\dt\rho\id S\\&+ \int_\gt\dt^2\nnt\rho|\kappa|^2\dt\nnt\rho\id S\\&+ \int_\gt\dt^2\nnt\rho\mathfrak{R}_1\left(\nat\nnt\rho+\dt\kappa+1\right)\id S \\=:&\mathcal{J}_{1}+\mathcal{J}_{2}+\mathcal{J}_{3}+\mathcal{J}_{4}+\mathcal{J}_{5}+\mathcal{J}_{6}.
        \end{aligned}
    \end{equation*}
    In the subsequent estimation of the lower-order terms in $\mathfrak{R}_3$, we will proceed term by term, following a progression from simplicity to complexity  and from  lower to higher order.

\textbf{(1) Term $\mathcal{J}_6$.}
We now focus on the simplest one $\mathcal{J}_6$. An integration by parts in time shows that 
\begin{align*}
   & \int_\gt\dt^2\nnt\rho\mathfrak{R}_1\left(\nat\nnt\rho+\dt\kappa+1\right)\id S\\=&\ddt \int_\gt\dt\nnt\rho\mathfrak{R}_1\left(\nat\nnt\rho+\dt\kappa+1\right)\id S-\int_\gt\dt\nnt\rho(\dt\mathfrak{R}_1+\mathfrak{R}_1\nat v\cdot\at)\left(\nat\nnt\rho+\dt\kappa+1\right)\id S\\&-\int_\gt\dt\nnt\rho\mathfrak{R}_1\left(\nat\dt\nnt\rho+\dt^2\kappa\right)\id S.
\end{align*} With the help of $\mode{\dt\nnt\rho}_{L^2(\gt)}\leq \pe \left(E_h^{\frac{1}{2}\delta_0'}(t)+1\right)$, one arrives at \begin{align*}
   & \int_\gt\dt\nnt\rho\mathfrak{R}_1\left(\nat\nnt\rho+\dt\kappa+1\right)\id S\\\leq& \lgt{\dt\nnt\rho}\mode{\mathfrak{R}_1}_{L^\infty(\gt)}\left(\lgt{\nat\nnt\rho}+\lgt{\dt\kappa}+\lgt{\mathfrak{R}_1}\right)\\\leq& \pe \left(E_h^{\frac{1}{2}\delta_0'+\epsilon}(t)+1\right).
\end{align*}
A direct calculation shows that 
\begin{equation}\label{E10.10}
    \begin{aligned}
         \int_0^T \mathcal{J}_6\id t
           \leq &\eo+\sup_{t\in[0,T]}\pe\left(E_h^{\frac{1}{2}\delta_0'+\epsilon}(t)+1\right)+\int_0^T\pall \lgt{\dt\mathfrak{R}_1}\id t\\&+\int_0^T\pall \left(\lgt{\nat\dt\nnt\rho}+\lgt{\nat\nnt\rho}+\lgt{\dt^2\kappa}+1\right) \id t\\
         \leq &\eo+\sup_{t\in[0,T]}\pe\left(E_h^{\frac{1}{2}\delta_0'+\epsilon}(t)+1\right)+\int_0^T\pall\id t.
    \end{aligned}
\end{equation}

\textbf{(2) Term $\mathcal{J}_5$.} For this term we find\[ \mathcal{J}_5=\int_\gt |\kappa|^2\dt^2\nnt\rho \dt\nnt\rho\id S\leq \frac{1}{2}\ddt\int_\gt |\kappa|^2 |\dt\nnt\rho|^2\id S+\pall,\] which leads to \begin{equation*}
    \begin{aligned}
        \int_0^T\mathcal{J}_5\id t\leq \eo+\sup_{t\in[0,T]}\pe\left(E_h^{\delta_0'}(t)+1\right)+\int_0^T\pall\id t.
    \end{aligned}
\end{equation*}

\textbf{(3) Term $\mathcal{J}_4$.}
Applying the boundary condition $\rho\nat\rho=\sigma\nat\kappa$ on $\gt$, one obtains
\begin{align*}
    \mathcal{J}_4=&-\int_\gt \dt^2\nnt\rho\nat\kappa\nat\dt\rho\id S\\=&-\frac{1}{\sigma}\int_\gt \rho\nat\rho\dt^2\nnt\rho\nat\dt\rho\id S.
\end{align*}
For the sake of clarity, set $(\phi,\psi)=\left(-\frac{\rho\nat\rho}{\sigma},\dt\rho\right)$, note that 
\begin{equation*}
    \llx{\dt\phi}\leq \pall \text{ and } \llx{\dt\psi} \text{ may not exist}.
\end{equation*} In this case, $\mathcal{J}_4$ can be expanded as follows, \begin{equation*}
    \begin{aligned}
        \int_\gt \phi\dt^2\nnt\rho\nat\psi\id S=&\phi\dt^2\nnt\rho\psi\big|_{X_r}^{X_l}+\dt\phi\dt\nnt\rho\psi\big|_{X_r}^{X_l}-\ddt\int_\gt \left(\phi\nat\dt\nnt\rho\psi+\nat\phi\dt\nnt\rho \psi\right)\id S\\&+\int_\gt \phi\nat\dt\nnt\rho\dt\psi\id S+\int_\gt \dt\nnt\rho(\nat\phi\dt\psi-\dt\phi\nat\psi)\id S,
    \end{aligned}
\end{equation*}
which implies that 
\begin{equation}\label{E10.14}
    \begin{aligned}
        \int_0^T\mathcal{J}_4\id t \leq & \eo+\sup_{t\in [0,T]}\pe\left(E_h^{\delta_0'+\epsilon}(t)+1\right)+\int_0^T \pall\left(F^\frac{1}{2}_h(t)+1\right)\id t,
    \end{aligned}
\end{equation}
where $\delta_0'+\epsilon\in(0,1)$.

\textbf{(4) Term $\mathcal{J}_3$.} With the above preparations, $\mathcal{J}_3 $ can be rewritten as  \begin{align*}
    \mathcal{J}_3=&\int_\gt\dt^2\nnt\rho\left(-18(\nat v\cdot \nt)(\nat v\cdot\at)+3\nat\rho\kappa\right)\nat(\nat v\cdot\at)\id S\\=&\int_\gt (-9\nat v\cdot\nt)\dt^2\nnt\rho\nat \left((\nat v\cdot\at)^2\right)\id S+\int_\gt (3\nat \rho\kappa)\dt^2\nnt\rho\nat(\nat v\cdot\at)\id S.
\end{align*}
Choosing \[(\phi_1,\psi_1)=\left(-9\nat v\cdot\nt,(\nat v\cdot\at)^2\right),\quad (\phi_2,\psi_2)=(3\nat\rho\kappa,\nat v\cdot\at)\] respectively. Under this setting, for $k=1,2$, $\dt\phi_k|_{X_i}(i=l,r)$ exist and $\nat \psi_k\notin L^2(\gt)$.
We proceed by expanding $\mathcal{J}_3$ into the form of 
\begin{equation}\label{E5.440}
    \begin{aligned}
         \int_\gt \phi \dt^2\nnt\rho\nat \psi\id S=&\phi\dt^2\nnt\rho\psi|_{X_r}^{X_l}-\ddt\int_\gt\left(\nat\phi\dt\nnt\rho\psi+\phi\nat\dt\nnt\rho\psi\right)\id S\\&+\int_\gt \nat \dt\nnt\rho\dt(\phi\psi)\id S+\int_\gt \nat\phi\dt\nnt\rho\dt\psi\id S\\&-\int_\gt\xc{\dt\phi}\nat(\dt\nnt\rho\psi)\id S.
    \end{aligned}
\end{equation}The last integral in \eqref{E5.440} requires a different treatment \[-\int_\gt\xc{\dt\phi}\nat(\dt\nnt\rho\psi)\id S\leq \pall\mode{\dt\phi}_{H^1(\ot)}\mode{\psi}_{H^\frac{1}{2}(\gt)},\]whereas the other terms follow straightforward arguments. Therefore, 
we obtain 
\begin{equation}\label{E10.15}
    \begin{aligned}
        \int_0^T\mathcal{J}_3\id t\leq \eo+\sup_{t\in [0,T]}\pe\left(E_h^{\delta_0'+\epsilon}(t)+1\right)+\int_0^T \pall\left(F^\frac{1}{2}_h(t)+1\right)\id t,
    \end{aligned}
\end{equation}for coefficient $\delta_0'+\epsilon\in (0,1)$.

\textbf{(5) Term $\mathcal{J}_2$. }
In contrast to the previous estimates, $\mathcal{J}_2$ involves terms  of higher order, which requires more delicate treatment,
\begin{align*}
    \mathcal{J}_2=&\int_\gt\dt^2\nnt\rho\nat\rho\Delta_\gt (\nat v\cdot\nt)\id S+\int_\gt\dt^2\nnt\rho\nat (\nat v\cdot\nt)(-2\Delta_\gt\rho)\id S\\=&: \mathcal{J}_{2,1}+\mathcal{J}_{2,2}.
\end{align*}

We can now use \eqref{E3.4} to write  \[-\Delta_\gt (\nat v\cdot\nt)=\frac{1}{\sigma}(\rho\nat\dt\rho+\nat\rho\dt\rho)+\nat(\nat v\cdot\at)\kappa+\frac{1}{\sigma}\rho\nat\rho\nat v\cdot\at,\]from which we deduce
\begin{align*}
    \mathcal{J}_{2,1}=\underbrace{\int_\gt \dt^2\nnt\rho\left(-\frac{1}{\sigma}\rho\nat\rho\right)\nat\dt\rho \id S}_{\Leftrightarrow \mathcal{J}_4}+\underbrace{\int_\gt \dt^2\nnt\rho (-\nat\rho\kappa)\nat(\nat v\cdot\at)\id S}_{\Leftrightarrow \mathcal{J}_3}+\underbrace{\int_\gt \dt^2\nnt\rho\mathfrak{R}_1\id S}_{\Leftrightarrow \mathcal{J}_6},
\end{align*}
here, the symbol $\Leftrightarrow$ indicates that the corresponding integrals has already been estimated in the corresponding $\mathcal{J}_i(i=3,4,6)$. Therefore, \eqref{E10.10}, \eqref{E10.14} and \eqref{E10.15} can be used to obtain
\begin{equation}\label{E5.554}
    \begin{aligned}
        \int_0^T \mathcal{J}_{2,1}\id t\leq \eo+\sup_{t\in[0,T]}\pe\left(E_h^{\delta_0'+\epsilon}(t)+1\right)+\int_0^T\pall\left(F^\frac{1}{2}_h(t)+1\right)\id t.
    \end{aligned}
\end{equation}

For $\mathcal{J}_{2,2}$, 
a direct calculation shows that
\begin{align*}
    \mathcal{J}_{2,2}\leq& -2\ddt\int_\gt \dt\nnt\rho\nat(\nat v\cdot\nt)\Delta_\gt\rho\id S+\pall\\&+2\int_\gt \dt\nnt\rho\nat\xc{\dt(\nat v\cdot\nt)}\Delta_\gt\rho\id S\\&+2\int_\gt \dt\nnt\rho\nat(\nat v\cdot\nt)\nat\xc{\dt\nat\rho}\id S.
\end{align*}
Here we have employed the properties at the contact points:  $\llx{\dt(\nat v\cdot\nt)}\leq\pall$ and $\llx{\dt\nat\rho}\leq \pall.$ Next, applying Lemma \ref{L2.6} and Lemma \ref{L2.7}, one obtains 
\begin{align*}
    &2\int_\gt \dt\nnt\rho\nat\xc{\dt(\nat v\cdot\nt)}\Delta_\gt\rho\id S\\ \leq &\tha{\xc{\dt(\nat v\cdot\nt)}}\thb{\nat(\dt\nnt\rho\Delta_\gt\rho)}\\ \leq& \pall,
\end{align*} the other one follows  the same argument, leading to
\begin{equation}\label{E5.544}
    \int_0^T \mathcal{J}_{2,2}\id t\leq \eo+\sup_{t\in[0,T]}\pe\left(E_h^{\delta_0'+\epsilon\delta_0'}(t)+1\right)+\int_0^T\pall\id t.
\end{equation}

Consequently, combining \eqref{E5.554} and  \eqref{E5.544}, we can conclude that
\begin{equation*}
    \begin{aligned}
        \int_0^T\mathcal{J}_2\id t\leq &\eo+\sup_{t\in[0,T]}\pe\left(E_h^{\delta_0'+\epsilon}(t)+1\right)+\int_0^T\pall\left(F^\frac{1}{2}_h(t)+1\right)\id t.
    \end{aligned}
\end{equation*}

\textbf{(6) Term $\mathcal{J}_1$.} The final integral $\mathcal{J}_1$ can be rewritten as 
\begin{equation}\label{E5.588}
    \begin{aligned}
         \mathcal{J}_1=&\int_\gt\dt^2\nnt\rho\left(-6(\nat v\cdot\at)\Delta_\gt\nnt\rho-3\nat\nnt\rho\nat(\nat v\cdot\at)\right)\id S\\=&-3\nat v\cdot\at\dt^2\nnt\rho\nat\nnt\rho\big|_{X_r}^{X_l}+3\ddt\int_\gt\nat\dt\nnt\rho(\nat v\cdot\at)\nat\nnt\rho\id S\\&-3\int_\gt \nat \dt\nnt\rho\dt\left(\nat v\cdot\at\nat\nnt\rho\right)\id S-3\int_\gt \dt^2\nnt\rho (\nat v\cdot\at)\Delta_\gt\nnt\rho\id S.
    \end{aligned}
\end{equation}
We now focus solely on the final integral term. The first three terms do not pose a difficulty, as they can be handled in the same manner as discussed above. Since the last term cannot be directly controlled in terms of $\Delta_\gt\nnt\rho\notin L^2(\gt)$ and $\nat(\nat v\cdot\at)\notin L^2(\gt)$, utilizing  the fact that \eqref{E3.3} and \eqref{E4.1}$_2$, we obtain
 \[\Delta_\gt\nnt\rho=\frac{1}{\sigma}\rho\dt^2\rho+\frac{1}{\sigma}(\dt\rho)^2-\mathfrak{R}\quad \text{ on }\gt,\]with $\mathfrak{R}$ defined in \eqref{E3.5}, which  directly leads to the following decomposition 
\begin{align*}
    -3\int_\gt\dt^2\nnt\rho(\nat v\cdot\at)\Delta_\gt\nnt\rho\id S=& -\frac{3}{\sigma}\int_\gt\nt\cdot\dt^2\nabla\rho(\nat v\cdot\at)\rho\dt^2\rho\id S\\&+\frac{3}{\sigma}\int_\gt [\nt,\dt^2]\cdot\nabla\rho(\nat v\cdot\at)\rho\dt^2\rho\id S\\&+3\int_\gt \dt^2\nnt\rho(\nat v\cdot\at)\left(\mathfrak{R}-\frac{1}{\sigma}(\dt\rho)^2\right)\id S\\=&:\mathcal{J}_{1,1}+\mathcal{J}_{1,2}+\mathcal{J}_{1,3}.
\end{align*}

For $\mathcal{J}_{1,1}$, let $\mathcal{H}\left(\nat v\cdot\at\right)$ denote the harmonic extension of $\nat v\cdot\at$, we use Green’s formula to find 
\begin{equation*}
    \begin{aligned}
        \mathcal{J}_{1,1}=&-\frac{3}{\sigma}\int_\ot\nabla\cdot\dt^2\nabla\rho\mathcal{H}\left(\nat v\cdot\at\right)\rho\dt^2\rho\id X-\frac{3}{\sigma}\int_\ot\dt^2\nabla\rho\cdot\nabla\left(\mathcal{H}\left(\nat v\cdot\at\right)\rho\dt^2\rho\right)\id X\\&+\frac{3}{\sigma}\int_\gb\nb\cdot\dt^2\nabla\rho  \mathcal{H}\left(\nat v\cdot\at\right)\rho\dt^2\rho\id S\\=&:\mathcal{J}_{1,1a}+\mathcal{J}_{1,1b}+\mathcal{J}_{1,1c}.
    \end{aligned}
\end{equation*}Due to the equations \eqref{E3.2}, 
 we see that \begin{equation*}
    \begin{aligned}
           \rho\nabla\cdot\dt^2\nabla\rho=&\dt^4\rho-\dt^2\left(\tr\left(\rho(\nabla v)^2\right)+\frac{1}{\rho}(\dt\rho)^2\right)-[\dt^2,\rho\nabla\cdot]\nabla\rho,
    \end{aligned}
\end{equation*}which leads to \begin{equation*}
    \begin{aligned}
        \mathcal{J}_{1,1a}
        \leq& 
        -\frac{3}{\sigma}\ddt\int_\ot\dt^3\rho \mathcal{H}\left(\nat v\cdot\at\right)\dt^2\rho\id X+\pall.
    \end{aligned}
\end{equation*}
For $p=\frac{2}{1-\epsilon},q=\frac{2}{\epsilon}$ satisfy $\frac{1}{p}+\frac{1}{q}=\frac{1}{2}$, yields   \begin{equation*}
    \begin{aligned} 
        \mathcal{J}_{1,1b}\leq &C_\sigma\mode{\dt^2\nabla\rho}_{L^2(\ot)}\mode{\nabla(\rho\mathcal{H}\left(\nat v\cdot\at\right))}_{L^p(\ot)}\mode{\dt^2\rho}_{L^q(\ot)}\\&+C_\sigma\mode{\dt^2\nabla\rho}_{L^2(\ot)}\mode{\rho\mathcal{H}\left(\nat v\cdot\at\right)}_{L^\infty(\ot)}\mode{\nabla\dt^2\rho}_{L^2(\ot)}\\\leq &\pall, 
    \end{aligned}
\end{equation*}which is slightly more delicate than other arguments due to the Sobolev embedding $H^{\epsilon}(\ot) \hookrightarrow L^p(\ot)$.

For $\mathcal{J}_{1,1c}$, thanks to $\dt^2v=-\dt\nabla\rho$, as well as  the boundary conditions $\nabla_\nb\rho|_\gb=v\cdot\nabla_v \nb-\g e_2\cdot \nb$, one can find
\begin{equation}\label{E5.622}
    \begin{aligned}
          \nb\cdot \dt^2\nabla\rho=&\dt^2\nabla_\nb\rho-\dt^2\nb\cdot\nabla\rho-2\dt\nb\cdot\dt\nabla\rho\\=& \dt^2(v\cdot\nabla_v \nb-\g e_2\cdot \nb)-\dt(\nabla_v\nb)\cdot\nabla\rho-2\nabla_v\nb\cdot\dt\nabla\rho\\=&(-3\nabla_v\nb\cdot\ab-\nabla_\ab\nb\cdot v)\nabla_\ab\dt\rho+R_1,
    \end{aligned}
\end{equation}
where $R_1=R_{\nb}(\nabla v,\nabla \rho)$ denotes a generic remainder term that collects all one-order terms.
Here we have assumed that the bottom $\gb$ is smooth enough, so we ignore the  regularity of $\nb$.
     Thus, \begin{equation*}
        \mathcal{J}_{1,1c}\leq \pall.
    \end{equation*}
    Combining $\mathcal{J}_{1,1a}-\mathcal{J}_{1,1c}$, we conclude that\begin{equation}\label{E5.633}
        \int_0^T \mathcal{J}_{1,1}\id t\leq  \eo+\sup_{t\in[0,T]}\pe\left(E_h^{\delta_0'+\epsilon}(t)+1\right)+\int_0^T\pall\id t.
    \end{equation}

    For $\mathcal{J}_{1,2}$,
     a direct calculation shows that
    \begin{equation}\label{E2.65}
        \begin{aligned}
             \left[\nt,\dt^2\right]\cdot\nabla\rho=&-\dt^2\nt\cdot\nabla\rho-2\dt\nt\cdot\dt\nabla\rho\\
        =& 2(\nat v\cdot\nt)\nat \dt\rho+\mathfrak{R}_1(\nat\nnt\rho+1),
        \end{aligned}
    \end{equation}
    which leads to 
      \begin{equation}\label{E5.666}
         \mathcal{J}_{1,2}\leq\pall.
     \end{equation}

     For $\mathcal{J}_{1,3}$, the following expansion due to \eqref{E3.5}, 
     \begin{align*}
         \mathcal{J}_{1,3}=& \underbrace{3\int_\gt \dt^2\nnt\rho(\nat v\cdot\nt)\nat(\nat v\cdot\at)^2\id S}_{\Leftrightarrow \mathcal{J}_3}+\underbrace{\int_\gt\dt^2\nnt\rho\mathfrak{R}_1\left(1+\dt\kappa\right)\id S}_{\Leftrightarrow \mathcal{J}_6},
     \end{align*} 
which leads to 
\begin{equation}\label{E5.67}
    \int_0^T \mathcal{J}_{1,3}\id t  \leq \eo+\sup_{t\in [0,T]}\pe\left(E_h^{\delta_0'+\epsilon}(t)+1\right)+\int_0^T \pall\left(F^\frac{1}{2}_h(t)+1\right)\id t.
\end{equation}



Combining \eqref{E5.588}, \eqref{E5.633}, \eqref{E5.666} and \eqref{E5.67}, we can conclude that
\begin{equation*}
    \begin{aligned}
        \int_0^T\mathcal{J}_1\id t\leq \eo+\sup_{t\in [0,T]}\pe\left(E_h^{\delta_0'+\epsilon}(t)+1\right)+\int_0^T \pall\left(F^\frac{1}{2}_h(t)+1\right)\id t.
    \end{aligned}
\end{equation*}

Combining  the estimates from 
 $\mathcal{J}_1$ to $\mathcal{J}_6$, shows that the term $ \int_0^T\int_\gt\dt^2\nnt\rho\mathfrak{R}_3\id S\id t$ can be estimated as stated.
\end{proof}

\begin{proposition}\label{L4.11}
    Suppose that the  a priori assumptions \eqref{E4.22}-\eqref{E4.23} and \eqref{E4.24} hold.
        Let the contact angle $\theta_i\in (0,\pi/2)$. Then the following higher-order estimate holds
     \begin{align*}
        & \sup_{t\in [0,T]}E_h(t)+\int_0^T F_h(t)\id t+\int_0^T\left.\left(\frac{1}{\sin\theta_i}\left(-\nabla_\ab\rho-\g e_2\cdot\ab\right)\rho|\dt^2\rho|^2\right)\right|_{X_r}^{X_l}\id t\\\leq& \eo+ \sup_{t\in[0,T]}\pe+\int_0^T \pall\id t,
     \end{align*}  with $E_h(t)$ and $F_h(t)$ denoted by \eqref{E12} and \eqref{E5.47}, respectively.
\end{proposition}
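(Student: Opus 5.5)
We prove Proposition~\ref{L4.11} by an energy estimate for the twice-differentiated wave equation, following Step 2 of Theorem~\ref{T1} one derivative higher. Apply $\dt^2$ to \eqref{E3.2} to get
\[
\dt^4\rho-\rho\nabla\cdot\dt^2\nabla\rho=\mathcal{F}_2\quad\text{in }\ot,
\]
where $\mathcal{F}_2$ collects $\dt^2\rhs$ and the commutators $[\dt^2,\rho\nabla\cdot]\nabla\rho$. The boundary conditions are $\rho\dt^3\rho=\sigma\dt^3\kappa-3\dt\rho\dt^2\rho$ on $\gt$ and $\nb\cdot\dt^2\nabla\rho$ on $\gb$, given by \eqref{E5.622}.

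\textbf{Interior identity.} Take the $L^2(\ot)$ inner product with $\dt^3\rho$ and use Lemma~\ref{L2.3}. This gives
\[
\tfrac12\ddt\bigl(\lt{\dt^3\rho}^2+\int_\ot\rho|\nabla\dt^2\rho|^2\id X\bigr)
\]
plus interior errors, which are bounded by $\pall$ via Lemma~\ref{L4.3}. The one delicate interior term is $\dt^2\tr(\nabla v)^2$, which contains $\nabla\dt^2 v=-\nabla\dt\nabla\rho+\dots$. It is handled by the $L^p$–$L^q$ pairing used for $\mathcal{J}_{1,1b}$, with $v\in H^{2+\epsilon}$. The vorticity part $\mode{\omega}_{H^2}$ follows directly from the transport equation \eqref{E3.16}. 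The bottom integral $\int_\gb\rho\,\nb\cdot\dt^2\nabla\rho\,\dt^3\rho\id S$ contains $\nabla_\ab\dt\rho\,\dt^3\rho$, and $\dt^3\rho$ has no trace bound. I would integrate it by parts in time. This produces $\ddt\int_\gb(\cdots)\dt^2\rho$ and lower terms. It also produces, after a tangential integration by parts, corner values at $X_i$ of the form $(\nabla_\ab\rho+\g e_2\cdot\ab)\rho\dt^2\rho\,\dt^2\rho$. Using \eqref{E3.8} and $\nb\cdot\nt=-\cos\theta_i$ to convert normal data, these corner values should assemble into exactly the third dissipation term in $F$, with the weight $1/\sin\theta_i$. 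That term has a good sign by \eqref{E4.24}.

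\textbf{Free-surface integral.} The term $\int_\gt \rho\,\nt\cdot\dt^2\nabla\rho\,\dt^3\rho\id S$ is split as in $I_1$–$I_4$. First write $\nt\cdot\dt^2\nabla\rho=\dt^2\nnt\rho+[\nt,\dt^2]\cdot\nabla\rho$ and bound the commutator using \eqref{E2.65}. Next, substitute $\rho\dt^3\rho$ from Lemma~\ref{L5.2}. The commutator contribution is paired with $\rho\dt^3\rho$ through Lemma~\ref{L4.7}, which yields $F_h^{1/2}$ times a polynomial. The principal part is
\[
\sigma\int_\gt\dt^2\nnt\rho\,\nat\bigl(\nat\dt\nnt\rho-2(\nat v\cdot\nt)\Delta_\gt\rho\bigr)\id S .
\]
Integrate it by parts along $\gt$. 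The corner term is bounded by \eqref{E5.50}, which produces $-\frac{\sigma^2}{2\beta_c}F_h$. The interior term is
\[
-\sigma\int_\gt\nat\dt^2\nnt\rho\,\bigl(\nat\dt\nnt\rho-2(\nat v\cdot\nt)\Delta_\gt\rho\bigr)\id S .
\]
Its first part equals $-\frac{\sigma}{2}\ddt\lgt{\nat\dt\nnt\rho}^2$ up to a commutator $[\dt,\nat]$, using Lemma~\ref{L2.1} and Lemma~\ref{L2.3}. The second part is integrated by parts in time, moving $\dt$ onto $(\nat v\cdot\nt)\Delta_\gt\rho$. The resulting corner values are controlled by Lemma~\ref{L5.3} and Remark~\ref{R5.1}. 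The remainder $\sigma\int_\gt\dt^2\nnt\rho\,\mathfrak{R}_3$ is handled by Proposition~\ref{Prop5.1}, and $-3\dt\rho\dt^2\rho$ is lower order.

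\textbf{Closing.} Integrate in time. Every term $\sup\pe(E_h^{\delta_0'+\epsilon}+1)$ is absorbed into $\sup E_h$ by Young's inequality, since $\delta_0'+\epsilon<1$. Likewise $\pall F_h^{1/2}\le \tfrac{\sigma^2}{8\beta_c}F_h+\pall$. This gives the stated estimate.

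\textbf{Main obstacle.} I expect the hardest step to be the bottom/corner term producing the compressible dissipation. It requires showing that the corner contributions from the $\gb$ integral and from the $\gt$ integration by parts combine with the right sign and the weight $1/\sin\theta_i$. This involves the unknown $\dt^2\rho|_{X_i}$. My first attempt would be to use the identities \eqref{E3.7}–\eqref{E3.41} to express $\nabla_\ab$ and $\nb$ components in terms of the $\gt$ frame at $X_i$, and then to match the terms directly.
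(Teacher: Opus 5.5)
\textbf{There is a genuine gap: the compressible dissipation term comes from the free-surface commutator, not from the bottom, and your substitute step fails.}

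The overall skeleton matches the paper: apply $\dt^2$ to \eqref{E3.2}, test with $\dt^3\rho$, split the $\gt$-integral into commutator, $\sigma\dt^3\kappa$ and $\dt\rho\dt^2\rho$ pieces, then use \eqref{E5.50}, Proposition~\ref{Prop5.1}, and close with Young. The problem is the step you substitute for the dissipation mechanism. You propose to bound $\int_\gt\rho\dt^3\rho\,[\nt,\dt^2]\cdot\nabla\rho\id S$ with Lemma~\ref{L4.7}. That lemma needs $\psi\in H^1(\gt)$. By \eqref{E2.65}, the leading part of the commutator is $2(\nat v\cdot\nt)\nat\dt\rho$, and $\nat\dt\rho$ is only in $H^{\frac12}(\gt)$, since it is the trace of $\dt\rho\in H^2(\ot)$. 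The remaining part contains $\nat\nnt\rho\in H^{\frac12+\epsilon}(\gt)$, which is also too rough.

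The paper instead integrates this term by parts in time, as in \eqref{E5.65}. This produces $\int_\gt\rho(\nat v\cdot\nt)\nat|\dt^2\rho|^2\id S$. Because $\dt^2\rho$ has only an $H^{\frac12}(\gt)$ trace, this must then be integrated by parts along $\gt$. That leaves the corner term $-\bigl((\nat v\cdot\nt)\rho|\dt^2\rho|^2\bigr)\big|_{X_r}^{X_l}$, whose point values $\dt^2\rho|_{X_i}$ are controlled by no energy. This is the actual source of the third term in $F$. By \eqref{E5.69}, $\nat v\cdot\nt=-\frac{\beta_c}{\sigma\sin\theta_i}(\nabla_\ab\rho+\g e_2\cdot\ab)$ at $X_i$. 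So the corner term equals $-\frac{\beta_c}{\sigma}$ times the weighted quantity in the statement. By \eqref{E4.24} that quantity is nonnegative, so it moves to the left-hand side as dissipation. Without this mechanism, your argument either breaks at Lemma~\ref{L4.7} or is left with an uncontrolled point value.

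Conversely, the bottom integral yields no corner values at all. In \eqref{E5.622}, the coefficient $f_1=-3\nabla_v\nb\cdot\ab-\nabla_\ab\nb\cdot v$ and the remainder $R_1$ vanish near $X_i$, because $\gb$ is a straight segment there. So the matching you plan in your ``main obstacle'' paragraph cannot happen on $\gb$.

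The same mechanism recurs in the second half of your interior term, $2\sigma\int_\gt\nat\dt^2\nnt\rho\,(\nat v\cdot\nt)\Delta_\gt\rho\id S$, which you dispose of too quickly. Integrating by parts in time does not leave only corner values. It leaves $-2\sigma\int_\gt\nat\dt\nnt\rho\,(\nat v\cdot\nt)\,\nat\dt\nat\rho\id S$, which pairs an $L^2$ function against an $H^{-\frac12}$ one. The paper handles it in three steps:
\begin{itemize}
\item Subtract $\dt\nat\rho|_{X_i}$, which is bounded by Lemma~\ref{L4.10}, and integrate by parts along $\gt$.
\item Replace $\sigma\Delta_\gt\dt\nnt\rho$ using \eqref{E7.14} together with $\sigma\dt^3\kappa=\rho\dt^3\rho+3\dt\rho\dt^2\rho$.
\item The resulting piece $2\int_\gt\rho\dt^3\rho(\nat v\cdot\nt)\dt\nat\rho\id S$ is again of commutator type. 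It contributes a second copy of the same corner term, which is why \eqref{E4.27} carries the factor $2$.
\end{itemize}
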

\begin{proof}
   Taking $\dt^2 $  on both sides of \eqref{E3.2},
\begin{equation}\label{E7.21}
    \begin{cases}
      \dt^4\rho-\rho\nabla\cdot(\dt^2\nabla\rho)=\dt^2\rhs+[\dt^2,\rho\nabla\cdot]\nabla\rho&\text{ in }\ot,\\ \rho\dt^3\rho|_\gt=\sigma\dt^3\kappa-3\dt\rho\dt^2\rho,\quad \nabla_\nb\dt^2\rho|_\gb=\dt\nabla_\nb\dt\rho+[\nabla_\nb,\dt]\dt\rho,
    \end{cases}
\end{equation} 
and applying the $L^2(\ot)$ inner product in \eqref{E7.21} with $\dt^3\rho$, we obtain
\begin{equation*}
    \begin{aligned}
        &\frac{1}{2}\ddt \left(\int_\ot|\dt^3\rho|^2\id X+\int_\ot\rho|\nabla\dt^2\rho|^2\id X\right)\\=&\frac{1}{2}\int_\ot \dt\rho|\nabla\dt^2\rho|^2\id X+\int_\gt \rho\nt\cdot\dt^2\nabla\rho\dt^3\rho\id S+\int_\gb \rho\nb\cdot\dt^2\nabla\rho\dt^3\rho \id S-\int_\ot\rho\dt^2\nabla\rho\cdot[\nabla,\dt]\dt^2\rho\id X\\&-\int_\ot\nabla\rho\cdot\dt^2\nabla\rho\dt^3\rho\id X-\int_\ot \rho[\dt^2,\nabla]\rho\dt\nabla\dt^2\rho\id X +\int_\ot \dt^2\rhs\dt^3\rho\id X\\&+\int_\ot[\dt^2,\rho\nabla\cdot]\nabla\rho\dt^3\rho\id X+\frac{1}{2}\int_\ot\left(|\dt^3\rho|^2+\rho|\nabla\dt^2\rho|^2\right)\divv v\id X\\\leq& \pall\left(1+\mode{\dt[\dt^2,\nabla]\rho}_{L^2(\ot)}+\mode{\dt^2\nabla v}_{L^2(\ot)}+\mode{[\dt^2,\rho\nabla\cdot]\nabla\rho}_{L^2(\ot)}\right)+\int_\gt\rho \nt\cdot\dt^2\nabla\rho\dt^3\rho\id S\\&+\int_\gb\rho \nb\cdot\dt^2\nabla\rho\dt^3\rho \id S+\ddt\int_\ot\rho\left(\dt((\nabla v)^*\nabla\rho)+(\nabla v)^*\nabla\dt\rho\right)\nabla\dt^2\rho\id X.
    \end{aligned}
\end{equation*}
Moreover, a direct computation shows that \[\begin{aligned}
   \dt^2 \nabla v=&[\dt^2,\nabla ]v+\nabla \dt^2 v\\=&\dt[\dt,\nabla]v+[\dt,\nabla]\dt v-\nabla\dt\nabla\rho\\=&-\dt \left((\nabla v)^* \nabla v\right)-(\nabla v)^*\nabla\dt v-\nabla\left(-(\nabla v)^*\nabla \rho+\nabla\dt\rho\right),
\end{aligned}\] as well as
\begin{align*}
    [\dt^2,\rho\nabla\cdot]\nabla\rho=\dt^2\rho\Delta\rho+2\dt\rho\dt\Delta\rho-\rho\dt(\nabla v\cdot\nabla^2\rho)-\rho\nabla v\cdot\nabla\dt\nabla\rho.
\end{align*}The commutator $\dt[\dt^2,\nabla]\rho$ can be  derived similarly to the  above two identities. 
This implies that\[\mode{\dt[\dt^2,\nabla]\rho}_{L^2(\ot)}+\mode{\dt^2\nabla v}_{L^2(\ot)}+\mode{[\dt^2,\rho\nabla\cdot]\nabla\rho}_{L^2(\ot)}
\leq \pall.\]
As a result, we arrive at
\begin{equation}\label{E4.18}
    \begin{aligned}
       &\sup_{t\in[0,T]}  \frac{1}{2}\left(\int_\ot|\dt^3\rho|^2\id X+\int_\ot
\rho|\nabla\dt^2\rho|^2\id X\right)\\\leq&\eo+\sup_{t\in [0,T]}\pe\left(E_h^{\delta_0'+\epsilon}(t)+1\right)+\int_0^T\pall\id t\\&+\int_0^T\int_\gt\rho \nt\cdot\dt^2\nabla\rho\dt^3\rho\id S\id t +\int_0^T\int_\gb \rho\nb\cdot\dt^2\nabla\rho\dt^3\rho \id S\id t,
    \end{aligned}
\end{equation}
for some constants $\delta'_0\in(0,\frac{1}{2}]$ and  $\epsilon\in (0,\pi/\theta_i-2)$ small enough.

For the integral term on the bottom, recall from \eqref{E5.622} that $  \nb\cdot \dt^2\nabla\rho=(-3\nabla_v\nb\cdot\ab-\nabla_\ab\nb\cdot v)\nabla_\ab\dt\rho+R_1$ on $\gb$.  Denote $f_1:=-3\nabla_v\nb\cdot\ab-\nabla_\ab\nb\cdot v$ for the sake of brevity. Note that the bottom $\gb$ is a line segment near the contact points, which implies that $f_1$ and $R_1$ vanish near the contact points.
Therefore, we can decompose the boundary integral into two parts:
\begin{align*}\label{E5.71}
    \int_\gb \rho\nb\cdot\dt^2\nabla\rho\dt^3\rho \id S=&\int_\gb f_1\nabla_\ab\dt\rho(\rho\dt^3\rho)\id S
    +\int_\gb R_1(\rho\dt^3\rho)\id S.
\end{align*}
Using the transport identity \[\ddt \int_\gb gh \id S=\int_\gb h \dt g \id S+\int_\gb g\dt h\id S-\int_\gb v\cdot\ab\nabla_\ab(gh)\id S. \]
Then, integrating by parts with respect to time,  
 together with Lemma \ref{L4.3},  the remainder term arrives at 
\begin{equation}\label{E5.662}
    \begin{aligned}
        \int_0^T \int_\gb R_1\rho\dt^3\rho\id S\id t\leq \eo+\sup_{t\in[0,T]}\pe\left(E_h^{\frac{1}{2}\delta_0'+\epsilon}(t)+1\right)+\int_0^T\pall\id t.
    \end{aligned}
\end{equation}
On the other hand, we can expand
\begin{align*}
    \ddt \int_\gb f_1\rho\nabla_\ab\dt\rho\dt^2\rho\id S
    =& \int_\gb f_1\nabla_\ab\dt\rho(\rho\dt^3\rho)\id S+\int_\gb ([\dt,f_1\rho\nabla_\ab]\dt\rho)\dt^2\rho\id S\\&-\frac{1}{2}\int_\gb\nabla_\ab(\rho f_1) |\dt^2\rho|^2\id S+\int_\gb \nabla_\ab(v\cdot\ab)\left(f_1\rho\nabla_\ab\dt\rho\dt^2\rho\right)\id S,
\end{align*}
 here  we have used integration by parts.
Upon integrating in time from $0$ to $T$, one has
\begin{equation}\label{E5.663}
    \begin{aligned}
         & \int_0^T\int_\gb f_1\nabla_\ab\dt\rho(\rho\dt^3\rho)\id S\id t\\\leq& \eo+\sup_{t\in[0,T]}\pe\left(E_h^{\delta_0'}(t)+1\right) +\int_0^T\pall\id t.
    \end{aligned}
\end{equation}
Combining \eqref{E5.662} and \eqref{E5.663},
 we obtain that the bottom integral $ \int_0^T \int_\gb \rho\nb\cdot\dt^2\nabla\rho\dt^3\rho\id S$ shares the same bound as \eqref{E5.663}.

Now, we address the most challenging boundary integral in \eqref{E4.18} by decomposing it into three parts,
\begin{equation*}
    \begin{aligned}
        \int_\gt \rho\nt\cdot \dt^2\nabla\rho\dt^3\rho\id S=&\int_\gt\rho[\nt,\dt^2]\cdot\nabla\rho\dt^3\rho\id S+\sigma\int_\gt \dt^2\nnt\rho \dt^3\kappa\id S-3\int_\gt \dt^2\nnt\rho\dt\rho\dt^2\rho\id S\\\triangleq&\mathcal{I}_1+\mathcal{I}_2+\mathcal{I}_3.
    \end{aligned}
\end{equation*}

\textbf{- Term $\mathcal{I}_1$.}
 Recall from \eqref{E2.65} that 
 \begin{align*}
     \mathcal{I}_1=2\int_\gt \rho\dt^3\rho (\nat v\cdot\nt)\nat\dt\rho\id S+\int_\gt\rho\dt^3\rho\mathfrak{R}_1(\nat \nnt\rho+1)\id S.
 \end{align*}
 Following  arguments analogous to those used in the bottom integral, the following expansions hold, \begin{equation}\label{E5.65}
     \begin{aligned}
        2\int_\gt \rho\dt^3\rho (\nat v\cdot\nt)\nat\dt\rho\id S  =&2\ddt \int_\gt \rho\dt^2\rho(\nat v\cdot\nt)\nat\dt\rho\id S\\&-2\int_\gt \dt\left(\rho\nat v\cdot\nt\right)\dt^2\rho\nat\dt\rho\id S-\int_\gt \rho(\nat v\cdot\nt)\nat |\dt^2\rho|^2\id S,
     \end{aligned}
 \end{equation} and \begin{equation}\label{E5.66}
     \begin{aligned}
        \int_\gt\rho\dt^3\rho\mathfrak{R}_1(\nat \nnt\rho+1)\id S=&\ddt\int_\gt\rho\dt^2\rho\mathfrak{R}_1(\nat \nnt\rho+1)\id S\\&-\int_\gt \dt^2\rho\left(\mathfrak{R}_1\nat\dt\nnt\rho+\dt\mathfrak{R}_1(1+\nat\nnt\rho)\right)\id S.
     \end{aligned}
 \end{equation}
Integrating  \eqref{E5.65} and \eqref{E5.66} over time, we arrive at \begin{equation*}\label{E4.28}
     \begin{aligned}
        \int_0^T\mathcal{I}_1\id t\leq &\eo+\sup_{t\in[0,T]}\pe\left(E_h^{\delta_0'}(t)+1\right)+\int_0^T\pall\id t\\&+\int_0^T\left(-(\nat v\cdot \nt)\rho|\dt^2\rho|^2\right)\big|_{X_r}^{X_l}\id t.
     \end{aligned}
 \end{equation*}

\textbf{- Term $\mathcal{I}_3$.}
 Next we  turn to the  simpler one:  \begin{equation}\label{E10.2}
    \begin{aligned}
       \mathcal{I}_3=&  -3\int_\gt \dt^2\nnt\rho\dt\rho\dt^2\rho\id S\\=&-3\ddt\int_\gt\dt\nnt\rho\dt\rho\dt^2\rho \id S+3\int_\gt \dt\nnt\rho\dt\rho\dt^2\rho(\nat v\cdot\at)\id S\\&+3\int_\gt \dt\nnt\rho(\dt^2\rho)^2\id S+3\int_\gt \dt\nnt\rho\dt\rho\dt^3\rho\id S.
    \end{aligned}
 \end{equation}
 The last integral in the RHS of  \eqref{E10.2} follows directly from   Lemma \ref{L4.7},  \[\begin{aligned}
     3\int_\gt\dt\nnt\rho\dt\rho\dt^3\rho\id S
     \leq\pall\left(1+F_h^\frac{1}{2}(t)\right),
 \end{aligned} 
     \]
 which leads to 
\begin{equation*}\label{E4.19}
     \begin{aligned}
         \int_0^T \mathcal{I}_3\id t\leq  \eo+\sup_{t\in[0,T]}\pe\left(E_h^{\delta'_0+\epsilon}(t)+1\right)+\int_0^T \pall\left(1+F_h^\frac{1}{2}(t)\right)\id t,
     \end{aligned}
 \end{equation*}for coefficient $\delta_0'+\epsilon\in (0,1)$. Note that $\mathcal{I}_3$ has a similar structure to $\mathcal{J}_{1,1}+\mathcal{J}_{1,2}$. However the fact that  $\dt\rho\in H^1(\gt)$ whereas $(\nat v\cdot\at)\notin H^1(\gt)$ leads to a different treatment.
 
 \textbf{- Term $\mathcal{I}_2$.}
 It remains to address the final and most challenging term we encounter. 
It follows from Lemma \ref{L5.2}   that \begin{equation}\label{E10.18}
    \begin{aligned}
        \mathcal{I}_2=&\sigma\int_\gt \dt^2\nnt\rho \dt^3\kappa\id S\\=& \sigma\int_\gt \dt^2\nnt\rho\nat\left(\nat\dt\nnt\rho-2(\nat v\cdot\nt)\Delta_\gt\rho\right)\id S+\sigma\int_\gt \dt^2\nnt\rho\mathfrak{R}_3\id S.
    \end{aligned}
\end{equation}
For the first term in $\mathcal{I}_2$, integrating $\nat$ by parts, invoking Lemma \ref{L5.3}, it contributes to the energy term and dissipation term
\begin{equation}\label{E10.19}
    \begin{aligned}
          &\sigma\int_\gt \dt^2\nnt\rho\nat\left(\nat\dt\nnt\rho-2(\nat v\cdot\nt)\Delta_\gt\rho\right)\id S\\=&\sigma\dt^2\nnt\rho\left(\nat\dt\nnt\rho-2(\nat v\cdot\nt)\Delta_\gt\rho\right)\big|_{X_r}^{X_l}\\& -\sigma\int_\gt \nat\dt^2\nnt\rho(\nat\dt\nnt\rho-2(\nat v\cdot\nt)\Delta_\gt\rho)\id S\\\leq& -\frac{\sigma^2}{2\beta_c}F_h(t) -\frac{\sigma}{2}\ddt\int_\gt|\nat\dt\nnt\rho|^2\id S+\underbrace{2\sigma\int_\gt \nat\dt^2\nnt\rho(\nat v\cdot\nt)\Delta_\gt\rho\id S}_{=:\mathcal{I}_{2,1}}+\pall.
    \end{aligned}
\end{equation}
Here, 
the main challenge lies in transforming the higher-order term $\nat\dt^2\nnt\rho$ into  lower-order ones. In fact, one finds that  
\begin{equation}\label{E5.38}
    \begin{aligned}
        \mathcal{I}_{2,1}=&2\sigma\ddt\int_\gt\nat\dt\nnt\rho(\nat v\cdot\nt)\Delta_\gt\rho\id S
        -2\sigma\int_\gt \nat\dt\nnt\rho \mathfrak{R}_1(1+\nat\nnt\rho)\Delta_\gt\rho\id S
         \\&-2\sigma\int_\gt \nat\dt\nnt\rho(\nat v\cdot\nt)\nat\dt\nat\rho\id S.
    \end{aligned}
\end{equation}
We now focus exclusively on the last term in \eqref{E5.38} while the other terms are standard, their details are thus omitted. Recall  from Lemma \ref{L4.10} that $\llx{\dt\nat\rho}$ is a bounded function with respect to time, by using integrating by parts and Lemma \ref{L4.3}, we obtain
 \begin{equation}\label{E5.39}
    \begin{aligned}
       & -2\sigma\int_\gt \nat\dt\nnt\rho(\nat v\cdot\nt)\nat\dt\nat\rho\id S\\\leq &2\sigma\int_\gt \Delta_\gt\dt\nnt\rho(\nat v\cdot\nt)\xc{\dt\nat\rho}\id S+\pall.
    \end{aligned}
\end{equation}
Thanks to Lemma \ref{L5.2} and the boundary condition $\sigma\dt^3\kappa=\rho\dt^3\rho+3\dt\rho\dt^2\rho$ on $\gt$, we have \[\sigma\Delta_\gt\dt\nnt\rho=\rho\dt^3\rho+3\dt\rho\dt^2\rho+2\sigma\nat((\nat v\cdot\nt)\Delta_\gt\rho)-\sigma\mathfrak{R}_3.\]
Thus, the first term in the RHS of \eqref{E5.39}  is decomposed into five parts: \begin{equation}
    \begin{aligned}
        &2\sigma\int_\gt \Delta_\gt\dt\nnt\rho(\nat v\cdot\nt)\xc{\dt\nat\rho}\id S\\= &
        2\int_\gt \rho\dt^3\rho(\nat v\cdot\nt)\dt\nat\rho\id S+4\sigma\int_\gt \nat\left((\nat v\cdot\nt)^2\Delta_\gt\rho\right)\xc{\dt\nat\rho}\id S\\& -2\dt\nat\rho\big|_{X_i}\int_\gt \rho \dt^3\rho(\nat v\cdot\nt)\id S\\& +\int_\gt\left(6\dt\rho\dt^2\rho-4\sigma\Delta_\gt\rho\nat(\nat v\cdot\nt)\right)(\nat v\cdot\nt)\xc{\dt\nat\rho}\id S\\& -2\sigma\int_\gt\mathfrak{R}_3(\nat v\cdot\nt)\xc{\dt\nat\rho}\id S\\=&:J_1+J_2+J_3+J_4+J_5,
    \end{aligned}
\end{equation}which will be handled differently.

The first term $J_1$ has the following expansion 
\[J_1=2\int_\gt \rho\dt^3\rho(\nat v\cdot\nt)\nat\dt\rho\id S+\int_\gt\rho\dt^3\rho\mathfrak{R}_1\id S,\]
which
has an analogous structure as $\mathcal{I}_{1}$,  a similar argument leads to
\begin{equation}
    \begin{aligned}
        \int_0^T J_1\id t\leq &\eo+\sup_{t\in[0,T]}\pe\left(E_h^{\delta_0'}(t)+1\right)+\int_0^T\pall\id t\\&+\int_0^T\left(-(\nat v\cdot \nt)\rho|\dt^2\rho|^2\right)\big|_{X_r}^{X_l}\id t.
    \end{aligned}
\end{equation} 

For the second term $J_2$,   Lemmas \ref{L2.6} and \ref{L2.7} allow us to control 
\begin{equation}
    \begin{aligned}
        J_2\leq C \thb{\nat\left((\nat v\cdot\nt)^2\Delta_\gt\rho\right)}\tha{\xc{\dt\nat\rho}}\leq\pall.
    \end{aligned}
\end{equation}

The third term $J_3$  follows from using Lemmas \ref{L4.10} and  \ref{L4.7}, \begin{equation}
    \begin{aligned}
        J_3\leq \pall\left(1+F^\frac{1}{2}_h(t)\right).
    \end{aligned}
\end{equation}

For  $J_4$, owing to Lemma \ref{L4.3}, we obtain
\begin{equation}\label{E5.55}
    J_4\leq \pall.
\end{equation}

 The control of $J_5$ is almost the same as that of Proposition \ref{Prop5.1}; hence, we only point out the necessary modifications.
For instance, \begin{align*}
   & \int_\gt \nat v\cdot\nt\nat v\cdot\at\Delta_\gt\nnt\rho\xc{\dt\nat\rho}\id S\\\leq &\tha{\nat v\cdot\nt\nat v\cdot\at\xc{\dt\nat\rho}}\thb{\Delta_\gt\nnt\rho}\\\leq& \pall.
\end{align*}
The control of other integrals in $J_5$ follows from a straightforward adaptation of the procedure described in  Proposition \ref{Prop5.1}, 
 thus we omit the details. We conclude that \begin{equation}\label{E5.79}
    J_5\leq\pall.
\end{equation}

Combining \eqref{E5.38}-\eqref{E5.79}, we conclude that
\begin{equation}\label{E5.80}
    \begin{aligned}
        \int_0^T\mathcal{I}_{2,1}\id t\leq& \eo+\sup_{t\in[0,T]}\pe\left(E_h^{\frac{3}{2}\delta_0'}(t)+1\right)\\&+\int_0^T\pall\left(1+F^\frac{1}{2}_h(t)\right)\id t+\int_0^T(-(\nat v\cdot\nt)\rho|\dt^2\rho|^2)\big|_{X_r}^{X_l}\id t.
    \end{aligned}
\end{equation}

As a result, combining Proposition \ref{Prop5.1}, \eqref{E5.80}  together with \eqref{E10.18}-\eqref{E10.19}, we obtain 
\begin{equation*}\label{E10.21}
    \begin{aligned}
   & \frac{\sigma}{2}\sup_{t\in [0,T]}\int_\gt |\nat\dt\nnt\rho|^2 +\frac{\sigma^2}{2\beta_c}\int_0^T F_h(t)\id t+    \int_0^T \mathcal{I}_2\id t\\\leq &\eo+\sup_{t\in[0,T]}\pe\left(E_h^{\frac{3}{2}\delta_0'}(t)+1\right)\\&+\int_0^T\pall\left(1+F^\frac{1}{2}_h(t)\right)\id t+\int_0^T(-(\nat v\cdot\nt)\rho|\dt^2\rho|^2)\big|_{X_r}^{X_l}\id t,
    \end{aligned}
\end{equation*}for coefficient $\frac{3}{2}\delta_0'\in (0,1)$.

\textbf{- Synthesize.} Consequently, combining $\mathcal{I}_1-\mathcal{I}_3$ and bottom integral estimate with \eqref{E4.18},
 we derive for $\delta_0=\frac{3}{2}\delta_0'\in (0,1)$,
\begin{equation}\label{E4.27}
    \begin{aligned}
        & \frac{1}{2}\sup_{t\in[0,T]} \left(\int_\ot|\dt^3\rho|^2\id X+\int_\ot \rho|\nabla\dt^2\rho|^2\id X+\sigma\int_\gt |\nat\dt\nnt\rho|^2\id S\right)\\& +\frac{\sigma^2}{4\beta_c}\int_0^T F_h(t)\id t+2\int_0^T\left.\left((\nat v\cdot\nt)\rho|\dt^2\rho|^2\right)\right|_{X_r}^{X_l}\id t \\\leq &\eo+\sup_{t\in[0,T]}\pe\left(E_h^{\delta_0}(t)+1\right)+\int_0^T \pall\id t.
    \end{aligned}
\end{equation}

   Moreover, applying  $\nabla^2$ to $\eqref{E3.16}$ and taking the  $L^2(\ot)$-inner product   with $\nabla^2\omega$, yields
   \begin{equation}\label{E4.281}
       \frac{1}{2}\ddt\int_\ot |\nabla^2\omega|^2\id X\leq \pall\left(1+\mode{v}_{H^{2+\epsilon}(\ot)}\right)\leq \pall.
   \end{equation}
    Then, combining \eqref{E4.27} and \eqref{E4.281},    it follows from  the  Young's inequality and   $\delta_0\in (0,1)$ that
    \begin{equation}\label{E5.681}
        \begin{aligned}
        &\sup_{t\in [0,T]} E_h(t)+\int_0^TF_h(t)\id t+\int_0^T\left.\left(\frac{1}{\sin\theta_i}\left(-\nabla_\ab\rho-\g e_2\cdot\ab\right)\rho|\dt^2\rho|^2\right)\right|_{X_r}^{X_l}\id t
        \\ \leq&\eo+C_{\delta_0}\sup_{t\in [0,T]}\pe +\int_0^T\pall\id t+\delta_0\sup_{t\in [0,T]}E_h(t),
    \end{aligned}  
    \end{equation}
    here we have employed the fact that 
\begin{align}\label{E5.69}
    \nat v\cdot\nt=-\frac{\beta_c}{\sigma\sin\theta_i}(\nabla_\ab\rho+\g e_2\cdot\ab)\quad \text{ at }X_i(i=l,r),
\end{align}which is derived from \eqref{E3.8}.
    The last term in the RHS of \eqref{E5.681} can be absorbed into the left-hand side. 
   This completes the proof.
\end{proof}

    Here we are in the position to prove the main result.
    \begin{proof}[Proof of Theorem \ref{Theorem1}]
        It follows from
       Theorem \ref{T1}, Lemma \ref{L4.3} and Proposition \ref{L4.11} that    for $\theta_i\in (0,\pi/2)$ and $T<1$(to be fixed later) small enough, 
        \begin{align*}
            \sup_{t\in[0,T]} \mathcal{E}(t)+\int_0^T F(t)\id t&\leq P(\mathcal{E}(0))+\sup_{t\in[0,T]}\pe+\int_0^TP(\mathcal{E}(t))\id t
            \\ &\leq P(\mathcal{E}(0))+\int_0^TP(\mathcal{E}(t))\id t.
        \end{align*}

        In the end, it remains to verify the assumptions \eqref{E4.22}-\eqref{E4.23} and \eqref{E4.24} through the continuity arguments.    
        In fact, the uniform positivity of the initial density implies its persistence over the interval $[0,T]$ since \[|\rho(t,x)-\rho_0(x)|\leq T \sup_{t\in [0,T]}|\dt\rho|\leq T\sup_{t\in [0,T]}P(\mathcal{E}(t)).\]

        Secondly, 
        since the initial contact angle satisfies $\theta_i(0)\in (0,\pi/2)$, we have $\sin\theta_i(0)=\mp\nt(0)\cdot \ab(0)|_{X_i(0)}>0, (i=l,r),$ where $\ab(0)$ is a constant vector since we started in the beginning that $\gb$ becomes a straight line segment near the contact  point. By continuity, we can choose $T$ small enough such that for any $t\in [0,T]$, $\ab(t)|_{X_i}=\ab(0)|_{X_i(0)}$, this guarantees 
        \begin{align*}
            \left|\sin\theta_i(t)-\sin\theta_i(0)\right|&=\big|\nt(t)|_{X_i}-\nt(0)|_{X_i(0)}\big||\ab(0)|\\&\leq T\sup_{t\in [0,T]}\left|\dt\nt|_{X_i}\right||\ab(0)|\\&\leq T\sup_{t\in [0,T]}P(\mathcal{E}(t)).
        \end{align*}
        Consequently, if $T$ is small enough, then
         \[\sin\theta_i(t)\geq c_1\quad\text{for } t\in [0,T]\]with some constant $c_1>0$. Meanwhile, we can also have $\theta_i(t)\in (0,\pi/2)$ for $t\in [0,T]$. 

        For the last assumption \eqref{E4.24}, since the fixed bottom is inclined at the contact points, one has the fact that for constant $\g>0$,
        \begin{equation}\label{E4.61}
            -\g e_2\cdot \ab(0)|_{X_l(0)}>0\text{ and } \g e_2\cdot \ab(0)|_{X_r(0)}>0.
        \end{equation}
        Taking the initial density to be near a constant in a neighborhood of the contact points. Then, for  $t\in [0,T]$,  combining \eqref{E4.61} and the following argument
        \begin{align*}
            \left|\left.(-\nabla_\ab\rho-\g e_2\cdot\ab)\right|_{X_i}-\left.(-\nabla_\ab\rho-\g e_2\cdot\ab)(0)\right|_{X_i(0)}\right|\leq& \frac{\sigma}{\beta_c}T\sup_{t\in [0,T]}\llx{\dt(\sin\theta_i\nat v\cdot\nt)}\\\leq& T\sup_{t\in [0,T]}P(\mathcal{E}(t)),
        \end{align*}we  can conclude that \eqref{E4.24} is valid up to a time $T$. 
        These verify the necessary a  prior assumptions and establish our main theorem.
    \end{proof}

    \section{Well-posedness of the nonlinear system}\label{Section6}

    In this section, we give the proof of local well-posedness. Unlike the prior works on the free boundary problems of compressible Euler flows, which mainly focus on   parabolic regularization methods in Lagrangian coordinates, here we propose a  different approach that is fully implemented  within the setting of Eulerian coordinates.


\subsection{Reformulation of the problem}\label{S6.1}
    In this part, we need to choose a good formulation to construct approximate solutions. The basic idea we refer to \cite{MingWang CPAM2021,Shatah2011}, with some necessary modifications are needed and presented here. We start by introducing a universal coordinate map $\Phi_\st$ on the boundary $S_t=\gt\cup\gb$ that transforms the moving domain $\ot$ into a fixed domain $\Omega_*$. Choosing a compact,  reference domain $\Omega_*$ with the upper surface $\Gamma_{t*}$ and the  bottom $\Gamma_{b*}$. Without loss of generality,   $\Omega_*$ can be taken as the initial domain $\Omega_0$. 
      The corresponding contact points are denoted by $X_{i*}$ for $i=l,r$ with contact angles $\theta_{i*}\in (0,\pi/2)$. The variable $\dgt$ stands for the  “distance” between $\Gamma_{t*}$ and $\gt$, the boundary conditions for $\dgt$ (i.e. the evolution  for the contact points) take the form of an ODE.
    
    \subsubsection{Surface coordinates}
    

      First, we establish a coordinate system in a tubular neighborhood of $\Gamma_{t*}$. More precisely,  define a unit upward vector field $\mu\in H^s(\Gamma_{t*},\mathcal{S}^1)$ for some large $s$ satisfying
      \begin{equation}
          \mu\cdot\nt_*\geq c_0\text{ on }\gt_*\text{ and }\mu|_{X_{l*}}=-\tau_{b*}|_{X_{l*}},\ \mu|_{X_{r*}}=\tau_{b*}|_{X_{r*}}\label{E6.11}
      \end{equation}
      for some fixed constant $c_0\in (0,1)$. Such  vector field can be obtained by slightly regularizing the unit normal vector of  surface $\gt_*$.
    From the implicit function theorem, there exists a small constant $d_0>0$  such that
    \begin{equation*}
        \Phi: \gt_*\times [-d_0,d_0]\rightarrow \mathbb{R}^2\quad \text{ where }\Phi(X,d)\triangleq X+d\mu(X)
    \end{equation*}is an $H^s$ diffeomorphism from its domain to a neighborhood of $\gt_*$. This coordinate system associates each hypersurface $\gt$ close to $\gt_*$ with a unique function \[d_\gt: \gt_*\rightarrow \mathbb{R}.\] Plugging $\dgt$ into $\Phi$, 
     we can define the following universal coordinate map \[\Phi_\gt: \gt_*\rightarrow \gt\subset \mathbb{R}^2\quad \text{ with }\Phi_\gt (X)=X+d_\gt (X)\mu(X),\] which means that the free  interface can be constructed from a diffeomorphism defined on the reference domain in a tubular neighborhood  coordinate system. Sometimes, the  function $d_\gt(X)$ is also used as the expression of the upper surface $\gt$. 
    Moreover, let $\Phi_\st : S_*=\gt_*\cup \gb_* \rightarrow\st=\gt\cup\gb$, then $\Phi_\gt$ can be extended to the entire boundary $S_*$.
     
     We construct harmonic coordinates  on $\Omega_*$ in the following manner: \[\mathcal{T}_\st :\Omega_*\rightarrow \Omega_t\ \text{ with }\ \mathcal{T}_\st =\mathcal{H}_*\left(\Phi_\st -\mathrm{Id}_{S_*}\right)+\mathrm{Id}, \]where  $\mathcal{H}_*(\Phi_\st -\mathrm{Id}_{S_*})$ is the harmonic extension of $\Phi_\st -\mathrm{Id}_{S_*}$ satisfying \begin{equation*}
        \begin{cases}
            \Delta\mathcal{H}_*\left(\Phi_\st -\mathrm{Id}_{S_*}\right)=0&\text{ in }\Omega_*,\\ \left.\mathcal{H}_* \left(\Phi_\st -\mathrm{Id}_{S_*}\right)\right|_{\gt_*}=d_\gt\mu,& \left.\mathcal{H}_*\left(\Phi_\st -\mathrm{Id}_{S_*}\right)\right|_{\gb_*}=\Phi_\st|_{\gb_*}-\mathrm{Id}_{\gb_*}.
        \end{cases}
    \end{equation*}
    The corresponding compatibility conditions for the Dirichlet boundary conditions are \[\dgt\mu(X_{i*})=\left(\Phi_\st|_{\Gamma_{b*}}-\mathrm{Id}_{\gb_*}\right)(X_{i*}),\ i=l,r.\]
    So long as $\gtx$ does not self-intersect, for each curve $\gt$ in the neighborhood of $\Gamma_{t*}$: \[\lam\triangleq \left\{\gt \mid s\in(1.5,4.5],\  \mode{\dgt}_{H^s(\Gamma_{t*})}<\delta,\ \mode{\Phi_\st-\mathrm{Id}_{S_*}}_{H^s(\Gamma_{b*})}\leq\delta,\ \theta_i\in (0,\pi/2), i=l,r\right\},\] provided that $\delta \ll 1$, $\Phi_\st$ is a diffeomorphism both in $H^s(\gtx,\gt)$ and $H^s(\Gamma_{b*},\gb)$, and $\mathcal{T}_\st$ is a diffeomorphism from $\Omega_*$ to $\mathcal{T}_\st(\Omega_*)$ since \[\mode{\nabla\mathcal{T}_\st-\mathrm{I}}_{H^{s-\frac{1}{2}}(\Omega_*)}\leq C\left(\mode{\dgt}_{H^s(\Gamma_{t*})}+\mode{\Phi_\st-\mathrm{Id}_{S_*}}_{H^s(\Gamma_{b*})}\right).\]From the definition of $\Phi_\st$, the norm $\mode{\Phi_\st-\mathrm{Id}_{S_*}}_{H^s(\Gamma_{b*})}$ on the bottom can be controlled by $\dgt$ as well.
    \begin{remark}
        One can see from \cite{Shatah2011} that $s>\frac{n+1}{2}$ (here  $n=2$ in our case) ensures that the mapping $\gt\in \lam$, or equivalently $\dgt$, to its mean curvature $\kappa\circ \Phi_\st\in H^{s-2}(\Gamma_{t*})$ is smooth, and $s\leq 4.5$ follows from \eqref{E7.28}.
    \end{remark}

    \subsubsection{Pull-back Formulas}
When the domain $\ot$ is defined by $\mathcal{T}_\st$, 
the evolution of the free surface function $d_\gt$ can be defined by the normal component of $v$. In fact, the kinematic condition on $\gt$ can be rewritten as
\begin{equation*}\label{E8.2} \pa_t\Phi_\st\cdot (\nt \circ\Phi_\st)=(v\cdot\nt)\circ \Phi_\st,\text{ where }\pa_t\Phi_\st=(\pa_t d_\gt)\mu \text{ on }\gt_*,\end{equation*}which leads to 
\begin{equation}\label{E8.7}
    v\cdot\nt=(\pa_t d_\gt\mu)\circ \Phi_\st^{-1}\cdot\nt. 
\end{equation}

In order to pull back the system \eqref{E2} into $\Omega_*$, one has to handle the pull-back  of derivatives. For  notational simplicity, we denote by 
\begin{equation*}\label{E6.31}
    \gth{f}=f\circ \Phi_\st,\ \text{ and }\  \oth{g}=g\circ\mathcal{T}_\st,
\end{equation*}for any function $f$ defined on $\gt$, and $g$ defined on $\ot$, which hints the fact that \[(\pa_i f)\circ\Phi_\st =\left(\pa_i\left(\Phi_\st^{-1}\right)_j\circ \Phi_\st\right)\pa_j \gth{f},\text{ and }(\pa_i g)\circ\mathcal{T}_\st=\left(\pa_i\left(\mathcal{T}_\st^{-1}\right)_j\circ \mathcal{T}_\st\right)\pa_j\oth{g}.\]


Moreover, we construct the Lagrangian map from $v$. In fact, denoting by $\Omega_0$ and $\Gamma_0$ the initial domain and upper surface respectively, 
the velocity field $v$ induces a flow map $U(t,\cdot):S_0\rightarrow S_t$ by solving the ODE\[U(0,\cdot)=\mathrm{Id}_{S_0},\quad \pa_t U(t,\cdot)=v(t,U(t,\cdot)).\]Together with the diffeomorphism $\Phi_\st$, this induces a conjugate flow map $U_*: S_*\rightarrow S_*$ by \[U_*(t,\cdot)\triangleq \Phi_\st ^{-1}\circ U(t,\cdot)\circ \Phi_{s_0},\] and the corresponding $v_*$ is given by \[\pa_t U_*(t,\cdot)=v_* (t,U_*(t,\cdot)).\]Therefore, the velocity field induced by this family of transformations $U_*(t,\cdot)$ can be represented as 
\begin{equation*}\label{E8.1}
    v_*=D\Phi^{-1}_\st\left(v\circ \Phi_\st-\pa_t d_\gt \mu \right)=D\Phi^{-1}_\st \left(v^\top \circ \Phi_\st-\pa_t d_\gt\mu^\top \right)\text{ on }\gt_*.
\end{equation*}
The material derivative $D_{t*}$ associated to $v_*$ is denoted by  \[D_{t*}=\pa_t+\nabla_{v_*}.\] In this case, for any function $f$ defined on $\gt$, 
\begin{equation}\label{E8.5}
    (\dt f)\circ \Phi_\st=D_{t*}(f\circ \Phi_\st).
\end{equation}

Similar analysis is employed, denote
\begin{equation*}\label{E10.22}
v_\sharp   =D\mathcal{T}^{-1}_\st (v\circ\mathcal{T}_\st-\mathcal{H}_*(\pa_t\dgt\mu)),
\end{equation*} and the corresponding material derivative  is denoted by $\mathbb{D}_t=\pa_t+\nabla_{ v_\sharp}$, which implies that \begin{equation*}\label{E8.14}
    \dt g\circ \mathcal{T}_\st=\mathbb{D}_t(g\circ \mathcal{T}_\st),
\end{equation*}for any function $g$ defined in $\ot$.

\subsubsection{Constructing the free surface from its mean curvature}\label{E6.1.3}
 The interface cannot  be constructed directly from its mean curvature due to the degeneracy of the associated elliptic problem. 
 Indeed, the linearized operator (e.g., $L(S_*)d=(-\Delta_*-|\Pi_*|^2)d$) may not be invertible, see Section 2.3 of \cite{Shatah2011} for more details.  However, in our case, 
  the contact points make the problem invertible since they remove rigid-body translations and rotations, while the imposed properties  \eqref{E6.11} ensure the non-zero normal displacement, thereby 
  removing tangential reparametrization.  

Next, define\[d_{\gt}(X_{i*})=d_i,\ i=l,r.\]The evolution equations for $d_i=d_i(t) (i=l,r)$ are derived as follows.
In fact, for $X_i(i=l,r)$, utilizing \eqref{E8.7} and \eqref{E8.5}, we obtain immediately that
\begin{equation}\label{E6.7}
    \begin{aligned}
         \dt v\cdot\nt=&\left(\pa_t\dgt \mu\circ \Phi^{-1}_\st-v\right)\cdot\dt\nt+\left(\pa_t^2 \dgt+\nabla_{v_*}\pa_t\dgt\right)\circ\Phi_\st^{-1}\left(\mu \circ\Phi^{-1}_\st \cdot \nt\right)\\&+\pa_t \dgt \circ\Phi^{-1}_\st \left(\nabla_{v_*}\mu\circ\Phi_\st^{-1}\cdot\nt\right),
    \end{aligned}
\end{equation}
which implies that for $i=l,r$,
\begin{equation}\label{E6.16}
    \begin{aligned}
          \pa_t^2 d_i(t)=\mathfrak{B}_i\triangleq &-\frac{1}{\mu\cdot(\nt\circ\Phi_\st)}\left(\nabla_{v_*}\pa_t\dgt (\mu\cdot(\nt\circ\Phi_\st))+\pa_t\dgt \nabla_{v_*}\mu \cdot(\nt\circ \Phi_\st)\right)
   \\&\left.-\frac{1}{\mu\cdot(\nt\circ\Phi_\st)}\left((\pa_t\dgt\mu-v\circ\Phi_\st)(\dt\nt\circ\Phi_\st)+ \left(\nnt\rho+\g e_2\cdot\nt\right)\circ\Phi_\st\right)\right|_{X_{i*}}.
    \end{aligned}
\end{equation}

Define the set $B_{\delta_1}\triangleq \{(\kappa\circ \Phi_\st,d_l,d_r)\mid \mode{\kappa\circ \Phi_\st-\kappa_{*}}_{H^s(\Gamma_{t*})},|d_l|,|d_r|<\delta_1\}$, where $\kappa_{*}$ is the value of $\kappa$ taken at $\Gamma_{t*}$. Since the linearized curvature equation reduces
to a standard elliptic system  with boundary conditions, 
the operator 
\begin{align*}
   \mathcal{K}:
   \Lambda(\Gamma_{t*},s+2,\delta,\pi/2)\subset H^{s+2}(\Gamma_{t*}) &\rightarrow \mathcal{K}\left(\Lambda(\Gamma_{t*},s+2,\delta,\pi/2)\right)\subset B_{\delta_1}\\\dgt&\mapsto  (\kappa\circ \Phi_\st,d_l,d_r)
\end{align*}
is invertible, and by the inverse function theorem it is easy to conclude that the map $\mathcal{K}^{-1}$ is a local diffeomorphism and has a regularity gain of two derivatives.  
See Proposition 4.2 in \cite{MingWang CPAM2021} for more details.
Subsequently,  the  free surface $\gt$ (or equivalently $\dgt$) can be recovered from  $(\kappa\circ \Phi_\st-\kappa_*)$ and $d_i(i=l,r)$.

\subsection{The linearized system}\label{S6.2}

For the purpose of constructing solutions to the nonlinear system, assuming that the free surface $\gt\in \lam$ is known already, we consider the following linear system for $f(t,X)$ with $X\in \Gamma_{t*}$,
\begin{equation*}\label{E6.101}
    \begin{cases}
        \frac{\gth{\rho}}{\sigma} D_{t*}^2 \gth{f}+ \left((a-\Delta_\gt)\nnt(\gth{f}\circ\Phi_\st^{-1})\right)\circ\Phi_\st =\gth{F_2}&\text{ on }\Gamma_{t*},\\
       \pm D_{t*} (\nnt (\gth{f}\circ \Phi_\st ^{-1}))\circ \Phi_\st
          +\frac{\sigma}{\beta_c}\left((\sin\theta_i)^2\nat\nnt (\gth{f}\circ\Phi^{-1}_\st)\right)\circ\Phi_\st
         =\gth{F_{4,i}}&\text{ at }X_{i*}(i=l,r),
    \end{cases}
\end{equation*}with the given initial data, and  $ a>0$. We utilize  Galerkin's method to construct approximate solutions of  $\gth{f}$.  Since $H^1(\Gamma_{t*})$ is separable, it possesses a countable  basis $\{\mathfrak{e}_j\}_{j=1}^\infty$, which is  an orthogonal basis of $L^2(\Gamma_{t*})$. Denote \[(\phi,\psi)_{\Gamma_{t*}}=\int_{\Gamma_{t*}}\phi\psi|\det D\Phi_\st|\id S.\]
For a positive integer $m$, taking the approximation under the form 
\[\gth{f}^m(t,X):=\sum_{j=1}^m a_j^m(t)\mathfrak{e}_j(X),\quad \text{with }a_j^m:[0,T]\rightarrow \mathbb{R}.\]
For each $\ell=1,\dots, m$, the approximation leads to  a finite-dimensional linear system of ordinary differential equations:
\begin{equation*}\label{E6.111}
    \begin{aligned}
    & \frac{1}{\sigma}(\gth{\rho}\mathfrak{e}_j,\mathfrak{e}_\ell)_{\Gamma_{t*}} \frac{\id^2 }{\id t^2} a_j^m(t)+\left(\frac{2}{\sigma}(\gth{\rho}\nabla_{v_*}\mathfrak{e}_j,\mathfrak{e}_\ell)_{\Gamma_{t*}}+\sum_{i=l,r}\left.\frac{\beta_c}{\sigma (\sin\theta_i)^2\circ\Phi_\st}\left(\nnt (\mathfrak{e}_j\circ\Phi_\st^{-1})\right)\circ\Phi_\st\mathfrak{e}_\ell\right|_{X_{i*}}\right)\ddt a_j^m(t) \\&+\left(\frac{1}{\sigma}\left(\gth{\rho}(\nabla_{v_*}^2+\nabla_{\pa_t v_*})\mathfrak{e}_j,\mathfrak{e}_\ell\right)_{\Gamma_{t*}}+\int_\gt \nnt \left(\mathfrak{e}_j\circ\Phi_\st^{-1}\right)(a-\Delta_\gt)(\mathfrak{e}_\ell\circ\Phi_\st^{-1})\id S
       \right)a_j^m(t)
       \\& +\left(\sum_{i=l,r}\left.\frac{\beta_c}{\sigma (\sin\theta_i)^2\circ\Phi_\st}D_{t*}\left(\nnt (\mathfrak{e}_j\circ\Phi_\st^{-1})\right)\circ\Phi_\st\mathfrak{e}_\ell\right|_{X_{i*}}+ \left.\left(\nnt \left(\mathfrak{e}_j\circ\Phi_\st\right)\nat (\mathfrak{e}_\ell\circ\Phi_\st)\right)\circ\Phi_\st^{-1}\right|_{X_{r*}}^{X_{l*}}\right)a_j^m(t)\\&=(\gth{F_2},\mathfrak{e}_\ell)_{\Gamma_{t*}}+\left.\frac{\beta_c}{\sigma (\sin\theta_i)^2\circ\Phi_\st}\gth{F_{4,i}}\mathfrak{e}_\ell\right|_{X_{r*}}^{X_{l*}},
    \end{aligned}
\end{equation*}
here we have used \[D_{t*}^2=\pa_t^2+\nabla_{\pa_tv_*}+2\nabla_{v*}\pa_t+(\nabla_{v_*})^2.\] Owing to the positivity of $\gth{\rho}$ and the regularity of the mapping, the coefficient matrix $M_{m\times m}$ with $M_{j\ell}=(\gth{\rho}\mathfrak{e}_j,\mathfrak{e}_\ell)_{\Gamma_{t*}} $, is symmetric and uniformly positive definite on $[0,T]$. The standard existence and uniqueness theorem for linear ODEs with variable coefficients  guarantees a unique solution $a_j^m(t)$ on the whole interval $[0,T]$.
Then, 
 the existence of the following system
\begin{equation*}
    \begin{cases}
        \dt^2 f-\rho\Delta f=F_1&\text{ in }\ot,\\
        f=\gth{f}\circ \Phi_\st^{-1}&\text{ on }\gt,\\
           \nabla_\nb f=F_3&\text{ on }\gb,\\
    \end{cases}
\end{equation*}
can be established by standard arguments pertaining to the initial value problem for  hyperbolic system. 

\subsection{Uniform estimate}\label{S6.3}
We shall establish the existence of solutions via  Picard iterations.  To begin with, the initial boundary is set as  $S_0=S_*$. For each $k\in\mathbb{N}$, we denote the $k$-th iteration by $\left(\nk{\rho},\nk{v},\nk{\Omega_t}\right)$. For notational convenience, the following abbreviations are adopted: 
\[\nk{n_j}=n_j,\  \nk{\tau_j}=\tau_j, \text{ for }j=t,b,\]
 and
\[\dt=\pa_t+v^k\cdot\nabla,\ \Delta_\gt=\Delta_{\Gamma_t^k},\ \kappa=\nabla_{\nk{\tau_t}}\nk{n_t}\cdot \nk{\tau_t}.\]
The solution $\nkj{\rho}$ is constructed via the following  linear system  whose coefficients depend on $(\nk{\rho},\nk{v},\nk{\Omega_t})$, 
\begin{equation}\label{E6-14}
    \begin{cases}
        \dt^2\nkj{\rho}-\rp\Delta\nkj{\rho}=F_{1}^{k} &\text{ in }\Omega_t^k,\\ \frac{\rp}{\sigma}\dt^2 \nkj{\rho}+(a-\Delta_\gt)\nnt\nkj{\rho}=\nk{F_2}&\text{ on }\nk{\Gamma_t},\\
        \nabla_\nb \nkj{\rho}=F_{3}^{k}&\text{ on }\Gamma_b^k,\\
        \pm \dt\nnt \nkj{\rho}=-\frac{\sigma}{\beta_c}\nat\nnt\nkj{\rho}(\sin\theta_i^k)^2+\nk{F_{4,i}}&\text{ at }X_i^k(i=l,r),
    \end{cases}
\end{equation}
with 
\begin{equation*}
    \begin{aligned}
         F_{1}^{k}\triangleq &\frac{1}{\rp}\left(\dt\rp\right)^2+
\rp\mathrm{tr}\left((\nabla \nk{v})^2\right),\\\nk{F_2}\triangleq &  4\left(\nat \nk{v}\cdot\at\right)\nabla_\at\left(\nabla_\at\nk{v}\cdot\nt\right)+2\left(\nat\nk{v}\cdot\nt\right)\nat\left(\nat \rv\cdot \at\right)-\Delta_\gt n_t\cdot\nabla\rp\\&+\kappa\left(2(\nat \nk{v}\cdot\at)^2-(\nat \nk{v}\cdot\nt)^2\right) +a\nnt\rp-\frac{1}{\sigma}\left(\dt\rp\right)^2,\\ \nk{F_{3}}\triangleq &\nk{v}\cdot\nabla_{\nk{v}}\nb-\g e_2\cdot\nb ,\\
\nk{F_{4,i}} \triangleq & - \frac{2\sigma}{\beta_c}\left(\nat \rv\cdot\at\right)\left(\nat \nk{v}\cdot\nt\right)(\sin\theta^k_i)^2\\&+\frac{\sigma}{\beta_c}\left(\frac{\sigma}{\beta_c}(\nat\nk{v}\cdot\nt)\sin\nk{\theta_i}\cos\nk{\theta_i}-\g e_2\cdot\at+\nk{v}\cdot\nabla_{\nk{v}}\nb\left(n_b\cdot\at\right)\right)\kappa(\sin\theta^k_i)^2\\&-\frac{2\sigma}{\beta_c}(\nat \nk{v}\cdot\nt)^2(\nt\cdot\ab)(\at\cdot\ab)\pm \g e_2\cdot\at(\nat \nk{v}\cdot\nt)\pm\dt\left((\nb\cdot\nt)\nk{v}\cdot\nabla_{\nk{v}}\nb\right) .  
\end{aligned}
\end{equation*}
Note that the “hidden” unknown $\rv$ appears in $\nk{F_2}$ and $\nk{F_{4,i}}$ is pivotal, whereas in other quantities we use $\nk{v}$. The evolution of $\rv$ is governed by  the following version of Euler equation,
\begin{equation}\label{E}
    \dt \nkj{\mathring{v}}=-\nabla \nkj{\rho}-\g e_2,\quad \text{ in }\nk{\Omega_t}, \text{ with }\rv|_{t=0}=v_0,
\end{equation}
  which is introduced so that, upon applying the material derivative $\dt$ to \eqref{E6-14}$_2$, the principal terms match those of  the higher-order curvature equation \eqref{E7.14}, with the corresponding adjustment made to the principal terms in \eqref{E6-14}$_4$. Remark  that the notion $\nk{\rho}$ appears in \eqref{E6-14} is actually  $\nk{\rho}\circ \mathcal{T}_\st^{k-1}\circ\left(\mathcal{T}_\st^k\right)^{-1}$, we keep this notion $\nk{\rho}$ for simplicity as they do not affect the subsequent analysis.

  In parallel, a vorticity $\nkj{\omega}$   is introduced, subject to the linear equation given below, with coefficients that depend explicitly on the preceding  iterates,
\begin{equation}\label{E7.13}
    \dt \nkj{\omega}=-(\divv \nk{v})\nabla^\perp \cdot\nk{v}\quad \text{ in }\Omega_t^k.
\end{equation}

Subsequently, the moving interface $ \nkj{\Gamma_t}$, or equivalently $d_\gt^{k+1}$,  is determined via the following non-degenerate system 
\begin{equation}\label{E6.191}
\begin{cases}
      d_\gt^{k+1}= \mathcal{K}^{-1} \left(\frac{1}{\sigma}P(\nkj{\rho})\circ\Phi^k_\st ,\nkj{d_l}(t),\nkj{d}_r(t) \right),\\
       \frac{\id^2}{\id t^2}\nkj{d}_i(t)=\mathfrak{B}_i^k, \ i=l,r,\\
       d^{k+1}_i(0)=d_{i,0},\ \ddt d_i^{k+1}(0)=d_{i,1}.
\end{cases}
\end{equation}where  $\mathfrak{B}_i^k$ is given in  \eqref{E6.16}.

Based on the construction of the moving domain $\nkj{\Omega_t}$, one can recover the velocity fields $\nkj{v}$. More precisely, the velocity $\nkj{v}$ on $\nkj{\Omega_t}$ is obtained as the solution of   the following div-curl problem: 
\begin{equation}\label{E8.16}
    \begin{cases}
        \divv \nkj{v}=-\left(\frac{1}{\rp}\dt \nkj{\rho}\right)\circ \mathcal{T}_\st^{k} \circ\left(\nkj{\mathcal{T}_\st}\right)^{-1}&\text{ in }\nkj{\Omega_t},\\
        \nabla^\perp \cdot \nkj{v}=\nkj{\omega}\circ \mathcal{T}_\st^{k} \circ\left(\nkj{\mathcal{T}_\st}\right)^{-1}&\text{ in }\nkj{\Omega_t},\\
        \nkj{v}\cdot \nkj{n_t}=\left(\pa_t \nkj{d_\gt}\mu\right)\circ \left(\nkj{\Phi_\st}\right)^{-1}\cdot\nkj{n_t}&\text{ on }\nkj{\Gamma_t},\\
        \nkj{v}\cdot \nkj{n_b}=\varUpsilon &\text{ on }\nkj{\Gamma_b},
    \end{cases}
\end{equation}
where $\nkj{n_j}(j=t,b)$ are the unit outward normal vectors of $\nkj{\Gamma_j}$, and $\varUpsilon=\varUpsilon(t)$ is the balancing function to ensure the compatibility condition \[-\int_{\Omega_*}\left(\frac{1}{\nk{\rho}}\dt\nkj{\rho}\right)\circ \mathcal{T}_\st^k\left|\det D\mathcal{T}_\st^{k+1}\right|\id X=\int_{\Gamma_{t*}}\pa_t \nkj{d_\gt}\mu\cdot\left( n_{t}^{k+1}\circ\Phi_\st^{k+1}\right)\left|\det D\nkj{\Phi_\st}\right|\id S
+\varUpsilon |\nkj{\Gamma_b}|,\] for each fixed moment $t$.  We remark here that the simply connectedness of $\nk{\Omega_t}$ ensures the unique solvability of the div-curl system \eqref{E8.16}, see Section \ref{Section3} for more details. 
Now we are ready to give the uniform-in-$k$ estimates for $\left(\nkj{\rho},\nkj{v},\nkj{\Omega_t}\right)$. To  begin with, we define the energy functional for $k+1$-th iterates as
\begin{equation*}
    \begin{aligned}
        \mathcal{E}^{k+1}(t)=&a\sum_{j=0}^1 \mode{\dt^j\nnt\nkj{\rho}}^2_{L^2(\Gamma_t^k)}+\sum_{j=0}^3\mode{\dt^j\rho^{k+1}}^2_{L^2(\Omega_t^k)}+\sum_{j=0}^2\mode{\nabla\dt^j\nkj{\rho}}_{L^2(\Omega_t^k)}^2\\&+\sigma\sum_{j=0}^1 \mode{\nabla_{\tau_t}\dt^j\nabla_{n_t}\rho^{k+1}}^2_{L^2(\Gamma_t^k)}+\mode{\nkj{\omega}}_{H^2(\Omega_t^k)}^2\\&+\sum_{i=l,r}\left(\cmode{d_i^{k+1}(t)}^2+\cmode{\ddt d_i^{k+1}(t)}^2\right).
    \end{aligned}
\end{equation*}
Moreover, the dissipation $F^{k+1}(t)$ is defined as
\begin{equation*}
    \begin{aligned}
        F^{k+1}(t)=& \sum_{i=l,r}\left|(\sin\nk{\theta_i})\nat\nnt\nkj{\rho}\big|_{X_i^k}\right|^2 \\&+\sum_{i=l,r}\left|\left(\sin\theta^k_i\right)\left(\nat \dt\nnt\nkj{\rho}-2(\nat \nk{v}\cdot\nt)\Delta_\gt\nkj{\rho}\right)\Big|_{X^k_i}\right|^2\\&
        +\left.\left(\nat\nk{v}\cdot\nt\, \nk{\rho}\left|\dt^2\nkj{\rho}\right|^2\right)\right|_{X^k_r}^{X^k_l}.
    \end{aligned}
\end{equation*}

The following proposition is the main result for the uniform estimates:
\begin{proposition}\label{Prop6.1}
    Assume  that $\nk{\rho}\geq c_0$ for some $c_0>0$, and the contact angles $\theta_i^k$ satisfy \[\min_i \sin\theta^k_i\geq c_1 \text{ for some }c_1>0.\]  Moreover, suppose that at the left and right contact points,  \[ \left.\left(\nat\nk{v}\cdot\nt\right)\right|_{X_l^k}\geq c_2\ \text{ and } \ -\left.\left(\nat\nk{v}\cdot\nt\right)\right|_{X_r^k}\geq c_3\ \text{ for some }c_2,c_3>0.\]
    There exists constant $T>0$ small enough, the inequality below holds
    \begin{equation}\label{E6.252}
        \sup_{t\in [0,T]} \mathcal{E}^{k+1}(t)+\int_0^T F^{k+1}(t)\id t\leq P(\mathcal{E}(0)). 
    \end{equation}
\end{proposition}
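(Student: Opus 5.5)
The plan is to repeat the a priori scheme of Sections \ref{Section4}--\ref{Section5} for the linear system \eqref{E6-14}. This time the coefficients $(\nk{\rho},\nk{v},\nk{\Omega_t})$ are frozen, and I will argue by induction on $k$. The inductive hypothesis is that $\sup_{[0,T]}\mathcal{E}^{k}(t)+\int_0^TF^k\,\mathrm{d}t\le P(\mathcal{E}(0))$, with $P$ fixed. Under this hypothesis, together with the non-degeneracy assumptions on $\nk{\rho}$, $\sin\theta_i^k$ and $\nat\nk{v}\cdot\nt|_{X_i^k}$, Lemmas \ref{L4.2} and \ref{L4.3} (applied to the previous iterate, via the div-curl system \eqref{E8.16} and the curvature inversion \eqref{E6.191}) bound every coefficient norm by $P(\mathcal{E}(0))$: $\|\nk{v}\|_{H^{2+\epsilon}}$, $\|\nk{\rho}\|_{H^3}$, $\|\Gamma_t^k\|_{H^4}$, and the contact point values. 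These are now \emph{known} functions, so all the structural identities used in Section \ref{Section5} become linear identities for $\nkj{\rho}$ with given coefficients. The one exception is the auxiliary velocity $\rv$. I will first estimate it through \eqref{E}: since it is transported by $\nk{v}$ with source $-\nabla\nkj{\rho}$, its norms $\|\rv\|_{H^{3/2}}$ and $\|\nat\rv\cdot\at\|_{H^{1/2+\epsilon}(\gt)}$ (via Lemma \ref{L2.5}) are bounded by $P(\mathcal{E}(0))+\int_0^t P(\mathcal{E}^{k+1})$.

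Next come the energy identities, run level by level. For $j=0,1,2$, I will pair the interior wave equation $\dt^{j}$\eqref{E6-14}$_1$ with $\dt^{j+1}\nkj{\rho}$. In the resulting boundary integral on $\Gamma_t^k$, I will replace $\dt^{j}(\rp\dt^2\nkj{\rho})/\sigma$ using \eqref{E6-14}$_2$, and for $j=2$ also \eqref{E6-14}$_2$ differentiated once. The operator $(a-\Delta_\gt)\nnt$ then produces, after one tangential integration by parts, the energies $a\|\dt^j\nnt\nkj{\rho}\|^2_{L^2}$ and $\sigma\|\nat\dt^j\nnt\nkj{\rho}\|^2_{L^2}$ together with corner terms. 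The corner terms are handled with the ODE \eqref{E6-14}$_4$ and its $\dt$-derivative, in exact analogy with Lemmas \ref{L4.10} and \ref{L5.3}, which yields $-\frac{\sigma}{\beta_c}F^{k+1}$ up to lower-order pieces. The term $2(\nat\nk{v}\cdot\nt)\nat(\nat\rv\cdot\at)$ in $\nk{F_2}$ was designed so that, once $\dt$ is applied and \eqref{E} is used, it reproduces $-\nat(2(\nat\nk{v}\cdot\nt)\Delta_\gt\nkj{\rho})$, i.e.\ the same principal structure as \eqref{E7.14}. This is why the second dissipation term closes with the correct sign. The third dissipation term comes, as in $\mathcal{I}_1$ and $J_1$, from the boundary term $(\nat\nk{v}\cdot\nt)\nk{\rho}|\dt^2\nkj{\rho}|^2$, and it is nonnegative by the assumed signs $c_2,c_3$. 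The remainders are treated exactly as in Proposition \ref{Prop5.1} and Lemma \ref{L4.7}: integration by parts in time, the $\tilde H^{\pm1/2}$ duality of Lemmas \ref{L2.6}--\ref{L2.7} after subtracting contact-point values, and the $L^p$--$L^q$ trick for the harmonic extension. The vorticity estimate follows directly from the linear transport equation \eqref{E7.13}. The quantities $d_i^{k+1}$ come from integrating the ODE $\ddot d_i^{k+1}=\mathfrak{B}_i^k$, whose right-hand side involves only level-$k$ quantities and is therefore bounded by $P(\mathcal{E}(0))$.

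Altogether this should give
\[
\sup_{[0,t]}\mathcal{E}^{k+1}+\int_0^tF^{k+1}\le P(\mathcal{E}(0))+\delta\sup_{[0,t]}\mathcal{E}^{k+1}+C\bigl(P(\mathcal{E}(0))\bigr)\int_0^t\bigl(\mathcal{E}^{k+1}+1\bigr),
\]
where the $\delta$ term comes from the interpolation exponents $E_h^{\delta_0}$ with $\delta_0<1$. Absorbing the $\delta$ term, applying Gr\"onwall, and taking $T$ small (independent of $k$) closes the induction with the same $P$. The main obstacle I expect is the loss of the exact geometric identities. At the iterative level, $\Delta_\gt\nnt\nkj{\rho}$ is no longer tied to the curvature of $\Gamma_t^k$, so the compatibility used in $\mathcal{J}_1$ (expressing $\Delta_\gt\nnt\rho$ through $\dt^2\rho$) must instead come from \eqref{E6-14}$_2$ itself. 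The $\rv$-term must also be controlled without closing the loop through $\nkj{v}$, which is not yet built. I would first check that every term of Proposition \ref{Prop5.1} that used $\sigma\kappa=P$ can be rewritten using \eqref{E6-14}$_2$ plus \eqref{E}, with mismatches involving only level-$k$ quantities. Such mismatches are bounded by the induction hypothesis and are harmless after multiplication by $T$.
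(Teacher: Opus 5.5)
Your overall structure matches the paper's: induction on $k$, frozen level-$k$ coefficients, and three-level energy identities in which \eqref{E6-14}$_2$ replaces the curvature relation. As in the paper, the $D_t$-differentiated corner ODE \eqref{E6-14}$_4$ produces $F^{k+1}$, and the sign of $\nat v^k\cdot\nt$ at $X_i^k$ gives the third dissipation term.

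\textbf{The gap is the auxiliary velocity.} You claim that transporting \eqref{E} bounds $\|\nat\mathring v^{k+1}\cdot\at\|_{H^{1/2+\epsilon}(\Gamma_t^k)}$ via Lemma \ref{L2.5}. That trace estimate needs $\mathring v^{k+1}\in H^{2+\epsilon}(\Omega_t^k)$. The $H^{3/2}$ bound you actually list does not even give a trace of $\nabla\mathring v^{k+1}$. A transport estimate cannot produce $H^{2+\epsilon}$ either: the source $-\nabla\rho^{k+1}$ lies only in $H^2(\Omega_t^k)$, because the energy controls $\rho^{k+1}$ in $H^3$ but not in $H^{3+\epsilon}$. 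So \eqref{E} propagates exactly $H^2$, which is \eqref{E6.32}. In two dimensions this does not make $\nabla\mathring v^{k+1}$ continuous up to the corners.

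Yet the contact value $\nat\mathring v^{k+1}\cdot\at|_{X_i^k}$ is indispensable, for two reasons. First, it enters $F^k_{4,i}$ pointwise. It therefore appears in the corner remainder already at the lower-order level (the analogue of $r_i$ in Lemma \ref{L4.10}), and again after one $D_t$. Second, the term $2(\nat v^k\cdot\nt)\nat(\nat\mathring v^{k+1}\cdot\at)$ of $F^k_2$, and its $D_t$-image in $\mathfrak{R}_3^{k+1}$, are not in $L^2(\Gamma^k_t)$. They can only be treated by the $\tilde H^{1/2}$--$\tilde H^{-1/2}$ duality of Lemmas \ref{L2.6}--\ref{L2.7} after subtracting that contact value, as in \eqref{E5.35}.

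\textbf{The missing idea is Lemma \ref{l6.3}:} you obtain $H^{2+\epsilon}$ from a div--curl/trace decomposition that exploits the $\rho^{k+1}$-system.
\begin{itemize}
\item Taking $\operatorname{div}$ of \eqref{E} and using \eqref{E6-14}$_1$, the $\Delta\rho^{k+1}$ terms cancel. So $\operatorname{div}\mathring v^{k+1}+\frac{1}{\rho^k}D_t\rho^{k+1}$ is transported with a lower-order source, while $\frac{1}{\rho^k}D_t\rho^{k+1}\in H^{1+\epsilon}$.
\item $\nabla^\perp\cdot\mathring v^{k+1}$ is transported with a source involving only $\nabla v^k\nabla\mathring v^{k+1}$, since $\nabla^\perp\cdot\nabla\rho^{k+1}=0$.
\item $\mathring v^{k+1}\cdot\nt$ is transported with source $-\nnt\rho^{k+1}$. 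This source has extra regularity $H^{3/2+\epsilon}(\Gamma_t^k)$ coming from the surface-tension equation (cf.\ \eqref{E5.63}).
\item $\mathring v^{k+1}\cdot\nb$ is handled through \eqref{E6-14}$_3$.
\end{itemize}
The Hodge elliptic estimate at level $H^{2+\epsilon}$, with $\epsilon<\pi/\theta_i-2$, together with Gr\"onwall on a short interval, then closes the argument.

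\textbf{Two further failures of ``harmless after multiplication by $T$''.} Your claim about level-$k$ mismatches also breaks down in two places, because those mismatches are paired with quantities the energy does not control.
\begin{itemize}
\item The corner remainder contains $D_t^2v^k|_{X_i^k}$, which is not controlled by $D_t^2 v^k\in H^1$. The paper recovers it from $D_t^2v^k\cdot\nt$ and $D_t^2 v^k\cdot\nb$ through the boundary conditions of \eqref{E8.16}, i.e.\ through $d^k_\gt$, $d_i^k$ and $\varUpsilon$.
\item On $\Gamma^k_b$ one has $v^k\cdot\nb=\varUpsilon\neq 0$, and no Euler equation links $v^k$ to $\rho^{k+1}$. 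There the term $D_t^2v^k\cdot\ab$ is paired with $D_t^3\rho^{k+1}$ on the boundary. The paper moves this into the interior with a cut-off and a harmonic extension before integrating by parts in time.
\end{itemize}
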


The proof proceeds by induction. Under the induction hypothesis that the energies up to the $k$-th iteration are bounded, we assume the following given quantities are  bounded as well, on a time interval $[0,T]$, 
\[\pa_t^3\nk{d_\gt}\in H^1(\Gamma_{t*}), \ \pa_t^2\nk{d_\gt} \in H^{2.5}(\Gamma_{t*}) ,\ \pa_t\nk{d_\gt} \in H^{3.5}(\Gamma_{t*}),\ \nk{d_\gt}\in  H^{4.5}(\Gamma_{t*}) ,\]and $\nk{v}\in H^{2+\epsilon}(\nk{\Omega_t})$, $\dt \nk{v}\in H^2(\nk{\Omega_t})$, as well as $\dt^2\nk{v}\in H^1(\nk{\Omega_t})$ with uniform bounds $\po:=P(\mathcal{E}(0))$, all of which can be verified in the subsequent analysis.

Now, we consider the estimates for $\nkj{d_\gt}$.
\begin{lemma}\label{l6.1}
    Let $\mathcal{P}_0:=P(\mathcal{E}(0))$. For some $T>0$,  suppose that $\mathcal{E}^k(t)\leq \po$ for all $t\in [0,T]$. Let $\nk{\Gamma_t}\in\lam$ for $t\in[0,T]$. Then the following estimates hold:
   \begin{align*}
  \mode{\pa_t^3\nkj{d_\gt}}_{H^1(\Gamma_{t*})}  +\mode{\pa_t^2\nkj{d_\gt}}_{H^{2.5}(\Gamma_{t*})}+\mode{\pa_t\nkj{d_\gt}}_{H^{3.5}(\Gamma_{t*})}+\mode{\nkj{d_\gt}}_{H^{4.5}(\Gamma_{t*})}\leq\pallk.
   \end{align*}
\end{lemma}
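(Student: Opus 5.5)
The estimate will come directly from the construction \eqref{E6.191}: $d_\gt^{k+1}$ is the image under $\mathcal{K}^{-1}$ of the prescribed curvature $\frac{1}{\sigma}P(\nkj{\rho})\circ\Phi^k_\st$ and the contact point heights $\nkj{d}_i$. The plan is to use the two-derivative gain of $\mathcal{K}^{-1}$ (Proposition 4.2 of \cite{MingWang CPAM2021}, recalled in Section \ref{E6.1.3}). Differentiating the identity $\mathcal{K}(d^{k+1}_\gt)=(\frac1\sigma P(\nkj\rho)\circ\Phi^k_\st,\nkj d_l,\nkj d_r)$ in $t$ up to three times, we will express $\pa_t^j d^{k+1}_\gt$ through the linearized operator $D\mathcal{K}(d^{k+1}_\gt)$. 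This linearized operator is invertible with the same gain, because $\Gamma_t^{k+1}$ stays in $\Lambda(\Gamma_{t*},s,\delta,\pi/2)$ for short time. The resulting data are $\pa_t^j$ of the curvature data, plus lower-order multilinear terms in $\pa_t^{j'}d^{k+1}_\gt$ with $j'<j$.

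\textbf{Step 1 (contact points).} The ODEs $\frac{\mathrm d^2}{\mathrm dt^2}\nkj d_i=\mathfrak B_i^k$ have right-hand sides that depend only on the $k$-th iterate. Their pointwise size is controlled by $\po$ via Sobolev embedding of $\nnt\rho^k\in H^{3/2+\epsilon}(\gt^k)$, $v^k\in H^{2+\epsilon}$ and $\pa_t d^k_\gt$. So $\nkj d_i,\pa_t\nkj d_i,\pa_t^2\nkj d_i$ are bounded by $P(\mathcal E(0))+T\po$. The third time derivative $\pa_t^3\nkj d_i=\pa_t\mathfrak B_i^k$ is bounded using the induction bounds on $\pa_t^2 d^k_\gt$, $\dt v^k$, and the contact point values from Lemma \ref{L4.10}/Lemma \ref{L5.3} applied to iterate $k$.

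\textbf{Step 2 (curvature data).} We will transfer the regularity of $\nkj\rho$ to the boundary:
\begin{align*}
\dt^j\nkj\rho|_{\Gamma_t^k}\in H^{2.5-j}(\Gamma^k_t),\ j=0,1,2,\qquad \dt^3\nkj\rho|_{\Gamma^k_t}\in H^{-1}(\Gamma_t^k).
\end{align*}
This uses the trace Lemma \ref{L2.5} together with the elliptic upgrades of Lemma \ref{L4.3}, rerun for the linear system \eqref{E6-14}: $\nkj\rho\in H^3$, $\dt\nkj\rho\in H^2$, $\dt^2\nkj\rho\in H^1$, all bounded by $\mathcal P_0P(\mathcal E^{k+1})$. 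For the top order, we write $\dt^3\nkj\rho$ on the boundary via \eqref{E6-14}$_2$ differentiated in time; equivalently, we test against $H^1$ functions as in Lemma \ref{L4.7}. With this, $\pa_t^3$ of the data lies in $H^{-1}$, and $\mathcal K^{-1}$ returns $H^1$, matching $\pa_t^3 d_\gt\in H^1$.

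\textbf{Step 3 (conversion to $\pa_t$).} We convert $\dt$ to $\pa_t$ on $\Gamma_{t*}$ using \eqref{E8.5}: $\pa_t=D_{t*}-\nabla_{v_*}$, and commute with the pull-back $\Phi^k_\st$. The tangential transport terms $\nabla_{v_*^k}$ cost one derivative, but they act on lower-order time derivatives already controlled at higher spatial regularity (e.g.\ $\nabla_{v_*}\pa_t^2\nkj\rho$ requires $\pa_t^2$ in $H^{0}$ with $v_*\in H^{1.5+\epsilon}$). Products are handled by the $H^s$ algebra property and the induction bounds on $d^k_\gt$.

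\textbf{Main obstacle.} The main obstacle is the top-order term $\pa_t^3 d^{k+1}_\gt\in H^1$. It requires the negative-norm bound for $\pa_t^3$ of the curvature data, and $\mathcal K^{-1}$'s invertibility in the low-regularity pair $H^{-1}\to H^1$ with contact point data. For the latter, I would invoke the variational (Lax--Milgram) form of the linearized curvature operator on $\Gamma_{t*}$ with the two endpoint values prescribed. Coercivity holds by \eqref{E6.11} and the contact points removing the kernel.
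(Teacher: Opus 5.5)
Your architecture matches the paper's. That includes the contact-point heights from the ODEs $\frac{\mathrm{d}^2}{\mathrm{d}t^2}d_i^{k+1}=\mathfrak{B}_i^k$, the two-derivative gain of $\mathcal{K}^{-1}$ and of its linearization for $\partial_t^j d^{k+1}_{\Gamma_t}$, and traces of $\rho^{k+1}\in H^3$, $D_t\rho^{k+1}\in H^2$, $D_t^2\rho^{k+1}\in H^1$. It also includes the top order, obtained by differentiating \eqref{E6-14}$_2$ in time and solving a Dirichlet problem for $\partial_t^3 d^{k+1}_{\Gamma_t}$ with endpoint values $\partial_t^3 d_i^{k+1}$, and the conversion $D_{t*}\to\partial_t$.

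\textbf{The gap: $\mathring{v}^{k+1}$ is never controlled.} The forcing of the linear system is not built from the $k$-th iterate alone. $F_2^k$ contains $2(\nabla_{\tau_t}v^k\cdot n_t)\nabla_{\tau_t}(\nabla_{\tau_t}\mathring{v}^{k+1}\cdot\tau_t)$, where $\mathring{v}^{k+1}$ is the auxiliary velocity solving \eqref{E}. Neither the induction hypothesis nor $\mathcal{E}^{k+1}$ controls it, and your ``rerun of Lemma~\ref{L4.3}'' silently treats it as the iterate's velocity. It is needed twice:
\begin{enumerate}
\item \emph{The $H^{4.5}$ bound.} Getting $\rho^{k+1}\in H^3(\Omega_t^k)$, hence the $H^{2.5}$ curvature data behind $d^{k+1}_{\Gamma_t}\in H^{4.5}$, needs $\nabla_{n_t}\rho^{k+1}\in H^{3/2}(\Gamma_t^k)$; the energy only gives $H^1$. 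You can read this off only from $(a-\Delta_{\Gamma_t})\nabla_{n_t}\rho^{k+1}=F_2^k-\frac{\rho^k}{\sigma}D_t^2\rho^{k+1}$. That requires $\nabla_{\tau_t}(\nabla_{\tau_t}\mathring{v}^{k+1}\cdot\tau_t)$ in a space of order $-\frac12$, i.e.\ $\mathring{v}^{k+1}\in H^2(\Omega_t^k)$.
\item \emph{The top order.} $D_tF_2^k$ contains $D_t(\nabla_{\tau_t}v^k\cdot n_t)\,\nabla_{\tau_t}(\nabla_{\tau_t}\mathring{v}^{k+1}\cdot\tau_t)$ and commutator terms such as $\nabla_{\tau_t}((\nabla_{\tau_t}v^k\cdot\tau_t)(\nabla_{\tau_t}\mathring{v}^{k+1}\cdot\tau_t))$. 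Only the terms with $D_t\mathring{v}^{k+1}$ can be rewritten via \eqref{E} as $-\nabla\rho^{k+1}-\mathrm{g}e_2$. The rest again need $\|\mathring{v}^{k+1}\|_{H^2(\Omega_t^k)}$ to land in $H^{-1}$.
\end{enumerate}

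\textbf{The fix.} The missing step, which the paper performs inside this lemma, is an energy estimate for the linear transport equation \eqref{E}:
\[
\frac{\mathrm{d}}{\mathrm{d}t}\|\mathring{v}^{k+1}\|_{H^2(\Omega_t^k)}\le\mathcal{P}_0\|\mathring{v}^{k+1}\|_{H^2(\Omega_t^k)}+\|\nabla\rho^{k+1}+\mathrm{g}e_2\|_{H^2(\Omega_t^k)},\qquad \mathring{v}^{k+1}(0)=v_0,
\]
followed by Gronwall on $[0,T]$ with $T<1$. Note that $\|\rho^{k+1}\|_{H^3}$ and $\|\mathring{v}^{k+1}\|_{H^2}$ depend on each other: the former through $F_2^k$, the latter through the forcing $\nabla\rho^{k+1}$. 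So the two must be closed together by Gronwall. This yields a bound in terms of $\sup_{[0,t]}\mathcal{E}^{k+1}$, which is what the bootstrap in Proposition~\ref{Prop6.1} uses.

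\textbf{A smaller point.} Do not import Lemma~\ref{L4.7} literally for the $H^{-1}$ bound. Testing against arbitrary $\psi\in H^1$ produces corner values of $\nabla_{\tau_t}D_t\nabla_{n_t}\rho^{k+1}$. These are controlled only by the dissipation, hence only in $L^2$ in time, which would destroy a pointwise-in-$t$ bound. After homogenizing the endpoint values $\partial_t^3 d_i^{k+1}$, you need the data only in the dual of $H^1_0(\Gamma_{t*})$. There $(a-\Delta_{\Gamma_t})D_t\nabla_{n_t}\rho^{k+1}$ is bounded by $\|D_t\nabla_{n_t}\rho^{k+1}\|_{H^1(\Gamma_t^k)}$ with no corner contribution. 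This is what your Lax--Milgram formulation with prescribed endpoint values does anyway.
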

\begin{proof}
    The diffeomorphism $\nk{\Phi_\st}(X)=X+\nk{d}_\gt(X)\mu(X)$ from $S_*$ to $S_t^k$, and its inverse satisfying \[\mode{\nk{\Phi_\st}}_{H^{4.5}(S_*)}+\mode{(\nk{\Phi_\st})^{-1}}_{H^{4.5}(S^k_t)}\leq \po.\]  In addition, 
    the variable\begin{equation}\label{E7.61}
        \nk{v_*}:=D\left(\nk{\Phi_\st}\right)^{-1}\left(\nk{v}\circ\Phi_\st^k-\pa_t\nk{d_\gt}\mu\right)
    \end{equation}
     is introduced precisely to ensure the equality \[(\dt f)\circ \Phi_\st^k=(\pa_t +\nk{v_*}\cdot\nabla)\left(f\circ\Phi_\st^k\right)=: D_{t*}\gth{f},\]  and the quantity 
     \begin{equation}\label{E7.62}
         v_\sharp^k:=D\left(\mathcal{T}_{\st}^{k}\right)^{-1}\left(v^k\circ\mathcal{T}_\st^{k}-\mathcal{H}_*(\pa_t\dgtk{}\mu)\right)
     \end{equation}
      is defined in an analogous manner. Then,  $\nk{v_*}$ and $\nk{v_\sharp}$ enjoy the following property, 
    \begin{equation}\label{E7.22}
        \mode{\nk{v}_*}_{H^{1.5+\epsilon}(\Gamma_{t*})}+\mode{\nk{v}_\sharp}_{H^{2+\epsilon}(\Omega_*)}\leq\po. 
    \end{equation}
      Note that the boundary condition \eqref{E8.16}$_3$ constrains the derivative $\nabla_{\nk{v}_*}$ to be tangential, namely, $\nabla_{\nk{v}_*}=(\nk{v_*}\cdot \tau_{t*})\pa_{s}$.    Applying $D_{t*}$ to  both sides of \eqref{E7.61}, yields\[D_{t*} \nk{v_*}=\left(D\nk{\Phi_\st}\right)^{-1}\left(-(\pa_t+\nabla_{\nk{v_*}})\left(D\nk{\Phi_\st}\right) \nk{v_*}+\left(\dt \nk{v}\right)\circ \nk{\Phi_\st}-(\pa_t+\nabla_{\nk{v_*}})\left(\pa_t\nk{d_\gt}\mu\right)\right),\] and similar derivations for $\mathbb{D}_t\nk{v_\sharp}$, one obtains  \begin{equation}\label{E7.241}
         \mode{D_{t*}\nk{v}_*}_{H^{1.5}(\Gamma_{t*})}+\mode{\mathbb{D}_t\nk{v}_\sharp}_{H^{2}(\Omega_*)}\leq \po.
    \end{equation}
    Exploiting the equality
    \begin{equation}\label{E7.242}
        D_{t*}^2=\pa_t^2+\nabla_{\nk{v_*}}\pa_t+D_{t*}\nk{v_*}\cdot\nabla+\nk{v_*}\cdot D_{t*}\nabla\text{ and }\mathbb{D}_t^2=\pa_t^2+\nabla_{\nk{v_\sharp}}\pa_t+\mathbb{D}_t \nk{v_\sharp}\cdot\nabla+\nk{v}_\sharp\cdot\mathbb{D}_t\nabla,
    \end{equation}
    the first one in \eqref{E7.242} yields $D_{t*}^2\pa_t\nk{d_\gt}\in H^{1}(\Gamma_{t*})$ and $D_{t*}^2\left(D\Phi_\st^k\right)\in H^{1.5}(\Gamma_{t*})$. Based on these, 
    applying $D_{t*}^2$ to \eqref{E7.61} and $\mathbb{D}_t^2$ to \eqref{E7.62},  respectively, a direct computation shows that
    \begin{equation}\label{E7.24}
        \begin{aligned}
            \mode{D_{t*}^2\nk{v}_*}_{H^{0.5}(\Gamma_{t*})} +\mode{\mathbb{D}_t^2\nk{v_\sharp}}_{H^1(\Omega_*)}\leq\po,
        \end{aligned}
    \end{equation}
The following norms can be derived by using \eqref{E7.22}-\eqref{E7.242}, 
\begin{equation}\label{E7.26}
 \mode{\pa_t^2\oth{\nkj{\rho}}}_{H^1(\Omega_*)}+\mode{\pa_t \oth{\nkj{\rho}}}_{H^2(\Omega_*)}+\mode{\oth{\nkj{\rho}}}_{H^3(\Omega_*)}\leq \pallk,
\end{equation}
here $\tkj{\rho}=\nkj{\rho}\circ \nk{\mathcal{T}_\st}.$

In addition,  following the idea of  Lemma 4.9 in \cite{MingWang CPAM2021}, 
which expresses the unit tangential vector   as  \[\at\circ\Phi_\st^k(s)=\frac{\pa_s\Phi_\st^k(s)}{\cmode{\pa_s\Phi_\st^k(s)}}\text{ with }\pa_s\Phi_\st^k(s)=X'(s)+\pa_s\nk{d_\gt}(s)\mu(s)+\nk{d_\gt}(s)\mu'(s),\]where $s$ denotes the arc length parameter and $\Phi^k_\st=\Phi_\st^k(s)$, and in conjunction with \eqref{E7.22}-\eqref{E7.24}, as well as the decomposition\begin{equation}\label{E6.26}
    D_{t*}^3=\pa_t^3+\nabla_{\nk{v}_*}\pa_t^2+D_{t*}\nabla_{\nk{v}_*}\pa_t+D_{t*}^2\nabla_{\nk{v}_*},
\end{equation} the following estimates are derived,
\begin{equation}\label{E7.33}
    \mode{\varpi}_{H^{3.5}(\nk{\Gamma_t})}+\mode{\dt\varpi}_{H^{1.5+\epsilon}(\nk{\Gamma_t})}+\mode{\dt^2\varpi}_{H^{1.5}(\nk{\Gamma_t})}+\mode{\dt^3\varpi}_{L^2(\nk{\Gamma_t})}\leq \po\quad \text{ for vector }\varpi=\tau_t^k,n_t^k.
\end{equation}

    Recall that $d_i^{k+1}$ is given by \eqref{E6.16}, then applying  \eqref{E7.22}-\eqref{E7.24} together with \eqref{E7.33}
     leads to some essential estimates,
    \begin{equation}\label{E7.23}
        \begin{aligned}
            \cmode{\pa_t^2\nkj{d_i}}+\cmode{\pa_t^3\nkj{d_i}}\leq&\po,\\
            \cmode{\nkj{d_i}}+\cmode{\pa_t\nkj{d_i}}\leq &\cmode{d_i(0)}+t\cmode{d_i'(0)}+\int_0^T \int_0^{t}|\mathfrak{B}^k_i(\tau)|\id \tau\id t+\int_0^T |\mathfrak{B}_i^k(t)|\id t\\\leq &\po +\int_0^T \po\id t\quad  \text{ for }i=l,r.
        \end{aligned}
    \end{equation}
Following the similar argument that used in the proof of Proposition 4.2 in \cite{MingWang CPAM2021}, utilizing \eqref{E6.191}, \eqref{E7.26} and \eqref{E7.23}, yields 
    \begin{equation}\label{E7.28}
        \begin{aligned}
            \mode{d^{k+1}_\gt}_{H^{4.5}(\Gamma_{t*})}\leq &C\left(\mode{\frac{1}{\sigma}P\left(\tkj{\rho}\right)-\kappa_{*}}_{H^{2.5}(\Gamma_{t*})}+\cmode{d_l^{k+1}}+\cmode{d_r^{k+1}}\right)\leq\pallk.
        \end{aligned}
    \end{equation}Then,
      applying   Proposition 4.12 in \cite{MingWang CPAM2021}, we arrive at  
    \begin{equation*}
        \begin{aligned}
             \mode{\pa_t d^{k+1}_\gt}_{H^{3.5}(\Gamma_{t*})}\leq &C\left(\mode{\frac{1}{\sigma}P\left(\tkj{\rho}\right)-\kappa_{*}}_{H^{1.5}(\Gamma_{t*})},\cmode{d_i^{k+1}}\right)\times\left(\mode{\pa_tP\left(\tkj{\rho}\right)}_{H^{1.5}(\Gamma_{t*})}+\cmode{\pa_t d_i^{k+1}}\right)\\\leq&\pallk,\\ 
             \mode{\pa_t^2\nkj{d_\gt}}_{H^{2.5}(\Gamma_{t*})}\leq& C\left(\mode{\frac{1}{\sigma}P\left(\tkj{\rho}\right)-\kappa_{*}}_{H^{0.5}(\Gamma_{t*})},\mode{\pa_t\tkj{\rho}}_{H^{0.5}(\Gamma_{t*})},\cmode{d_i^{k+1}},\cmode{\pa_t d_i^{k+1}}\right)\\&\quad \times\left(\mode{\pa_t^2 P\left(\tkj{\rho}\right)}_{H^{0.5}(\Gamma_{t*})}+\cmode{\pa_t^2 d_i^{k+1}}\right)\\\leq& \pallk.
        \end{aligned}
    \end{equation*}

    Moreover,  it remains to analyze the regularity of $\pa_t^3\nkj{d}_\gt$. 
     On the one hand, differentiating the equation \eqref{E6-14}$_2$ in time and utilizing the decomposition \eqref{E6.26}, we obtain the expression
     \begin{equation*}
         \begin{aligned}
               \pa_t^3\gth{\nkj{\rho}}=&(\dt^3\nkj{\rho})\circ\nk{\Phi_\st}-\left(\nabla_{\nk{v_*}}\pa_t^2+D_{t*}\nabla_{\nk{v_*}}\pa_t+D_{t*}^2\nabla_{\nk{v_*}}\right)\gth{\nkj{\rho}}
        \\
       =&\frac{\sigma}{\nk{\rho}\circ\nk{\Phi_\st}}\left((a-\Delta_\gt)\dt\nnt\nkj{\rho}-[\dt,\Delta_\gt]\nnt\nkj{\rho}+\dt\nk{F_2}-\frac{\dt\nk{\rho}}{\sigma}\dt^2\nkj{\rho}\right)\circ\nk{\Phi_\st}\\&-\left(\nabla_{\nk{v_*}}\pa_t^2+D_{t*}\nabla_{\nk{v_*}}\pa_t+D_{t*}^2\nabla_{\nk{v_*}}\right)\gth{\nkj{\rho}},
         \end{aligned}
     \end{equation*}
    here $\gth{\nkj{\rho}}=\nkj{\rho}\circ\nk{\Phi_\st}$. 
    On the other hand, recall the linearized operator (see, e.g., (2.4) in \cite{Shatah2011}) $L(S_*)d=-(\mu\cdot n_{t*})\Delta_{S_*}d+r(\pa_s d)$. Then, 
     the non-degenerate elliptic system for $\pa_t^3\nkj{d}_\gt$ can be roughly expanded as
     \begin{equation*}\label{E6.30}
         \begin{cases}
        -\left(\mu\cdot\nt\circ\nk{\Phi_\st}\right)\Delta_{S_*} \left(\pa_t^3\nkj{d_\gt}\right)=\frac{1}{\sigma}\pa_t^3 P\left(\gth{\nkj{\rho}}\right)+r\left( \pa_t^3\pa_s\nkj{d_\gt}\right),\\
        \pa_t^3\nkj{d_\gt}(X_{i*})=\pa_t^3\nkj{d_i},\ i=l,r.
    \end{cases}
     \end{equation*}
    Note that $\mu\circ\left(\nk{\Phi_\st}\right)^{-1}\cdot\nt\geq c_0>0$ for $t\in [0,T]$ and for some constant $c_0>0$. Therefore, 
    the standard elliptic analysis (after homogenizing the boundary conditions) shows that
    \begin{equation}\label{E7.73}
        \begin{aligned}
             \mode{\pa_t^3\nkj{d}_\gt}_{H^1(\Gamma_{t*})}\leq& \po\left(\mode{\dt\nnt\nkj{\rho}}_{H^1(\nk{\Gamma_t}))}+\mode{\nnt\nkj{\rho}}_{H^\frac{3}{2}(\nk{\Gamma_t})}+\mode{\dt^2\nkj{\rho}}_{L^2(\Gamma_t^k)}\right)\\&+\po\left(\mode{\dt\nkj{\rho}}_{H^1(\Gamma_t^k)}+\mode{\nkj{\rho}}_{H^2(\nk{\Gamma_t})}+\mode{\rv}_{H^2(\nk{\Omega_t})}\right) ,
        \end{aligned}
    \end{equation}
    here we have used \begin{equation}\label{E6.311}
        \mode{\nat\nk{v}\cdot\nt}_{H^{1.5+\epsilon}(\nk{\Gamma_t})}+\mode{\dt \left(\nat \nk{v}\cdot\nt\right)}_{H^{1.5}(\nk{\Gamma_t})}+\mode{\dt^2\left(\nat\nk{v}\cdot\nt\right)}_{L^2(\Gamma_t^k)}\leq \po,
    \end{equation}
    which can be derived from  \eqref{E7.33}  via the identity $\nat \nk{v}\cdot\nt=-\dt\nt\cdot\at$. Following  similar arguments as in Lemma \ref{L4.3}, the terms in the RHS of \eqref{E7.73} involving $\nkj{\rho}$ pose no difficulty.  
    One can derive from the Euler equation \eqref{E} to gain the  regularity of $\rv$,
    \[\ddt   \mode{\rv}_{H^2(\Omega_{t}^k)}\leq \po   \mode{\rv}_{H^2(\Omega_{t}^k)}+\mode{\nabla\nkj{\rho}+\g e_2}_{H^2(\Omega_t^k)}, \]
 using the Gronwall's inequality, and integrating over $[0,T]$, with  $T<1$, yields
 \begin{equation}\label{E6.32}
     \mode{\rv}_{H^2(\nk{\Omega_t})}\leq\pallk.
 \end{equation}
    Hence,  $\pa_t^3\nkj{d_\gt}$ can be controlled as stated.
\end{proof}
The following lemma shows the elliptic estimates of $\nkj{v}$. 
\begin{lemma}\label{l6.2}
    Let $\mathcal{P}_0:=P(\mathcal{E}(0))$. For some $T>0$,  suppose that $\mathcal{E}^k(t)\leq \po$ for all $t\in [0,T]$. Let $\nkj{\Gamma_t}\in\lam$ for $t\in[0,T]$.  Then the following estimates hold:
    \[
    \begin{aligned}
        \mode{\nkj{v}}_{H^{2+\epsilon}\left(\nkj{\Omega_t}\right)}+\mode{\left(\pa_t+\nkj{v}\cdot\nabla\right)\nkj{v}}_{H^2\left(\nkj{\Omega_t}\right)}\leq\pallk,\\\mode{\left(\pa_t+\nkj{v}\cdot\nabla\right)^2\nkj{v}}_{H^1\left(\nkj{\Omega_t}\right)}\leq \pallk.
    \end{aligned}    
    \] 
\end{lemma}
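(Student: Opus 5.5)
We will obtain all three bounds from the div-curl system \eqref{E8.16}, using the Hodge decomposition of Section \ref{Section3} on the new domain $\nkj{\Omega_t}$. Lemma \ref{l6.1} shows that $\nkj{\Omega_t}$ is uniformly regular, and the acute-angle hypothesis makes the elliptic estimates of Lemma \ref{L2.8} available there.

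\textbf{Step 1: the bound on $\nkj{v}$.} Write $\nkj{v}=\nabla\mathcal{V}+\nabla^\perp\mathcal{U}$. The potential $\mathcal{U}$ solves the Dirichlet problem \eqref{E3.169} with data $\nkj{\omega}\circ\mathcal{T}^k_\st\circ(\nkj{\mathcal{T}_\st})^{-1}$. The potential $\mathcal{V}$ solves the Neumann problem \eqref{E3.168} with the following data:
\begin{itemize}
\item interior data $-(\frac{1}{\rp}\dt\nkj{\rho})\circ\cdots$;
\item top boundary data $(\pa_t\nkj{d_\gt}\mu)\circ(\nkj{\Phi_\st})^{-1}\cdot\nkj{n_t}$;
\item bottom boundary data $\varUpsilon(t)$.
\end{itemize}
The $H^{3+\epsilon}$ estimate of Lemma \ref{L2.8}(3), together with its Dirichlet analogue, then gives
\[
\mode{\nkj{v}}_{H^{2+\epsilon}}\leq C\big(\mode{\nkj{\Gamma_t}}_{H^{3+\epsilon}}\big)\Big(\mode{\tfrac{1}{\rp}\dt\nkj{\rho}}_{H^{1+\epsilon}}+\mode{\nkj{\omega}}_{H^{1+\epsilon}}+\mode{\pa_t\nkj{d_\gt}}_{H^{1.5+\epsilon}(\Gamma_{t*})}+|\varUpsilon|\Big).
\]
Each term on the right is controlled as follows.
\begin{itemize}
\item The domain norm comes from $\mode{\nkj{d_\gt}}_{H^{4.5}}$ in Lemma \ref{l6.1}.
\item The interior term comes from \eqref{E7.26} by interpolating between $H^1$ and $H^2$. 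Composing with the change of variables $\mathcal{T}^k_\st\circ(\nkj{\mathcal{T}_\st})^{-1}$ costs only a factor $C(\mathcal{P}_0)$, since both maps are $H^4$-diffeomorphisms.
\item The vorticity term comes from $\mode{\nkj{\omega}}_{H^2}$, which is part of $\mathcal{E}^{k+1}$ and is obtained from the transport equation \eqref{E7.13} by Gronwall.
\item The top boundary term comes from $\pa_t\nkj{d_\gt}\in H^{3.5}$.
\item $|\varUpsilon|$ is read off from the compatibility identity and is bounded by the same quantities.
\end{itemize}

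\textbf{Step 2: the bounds on $(\pa_t+\nkj{v}\cdot\nabla)^j\nkj{v}$.} Write $\mathbf{D}=\pa_t+\nkj{v}\cdot\nabla$. We apply $\mathbf{D}$, and then $\mathbf{D}^2$, to each line of \eqref{E8.16}. The commutators $[\mathbf{D},\divv]$ and $[\mathbf{D},\nabla^\perp\cdot]$ are of the form $\nabla\nkj{v}\cdot\nabla(\cdot)$ by Lemma \ref{L2.1}. This gives a new div-curl system for $\mathbf{D}\nkj{v}$:
\begin{itemize}
\item its divergence involves $\mathbf{D}(\frac{1}{\rp}\dt\nkj{\rho})$ plus $\nabla\nkj{v}\!:\!\nabla\nkj{v}$;
\item its curl involves $\mathbf{D}\nkj{\omega}$ plus quadratic terms;
\item its normal trace involves $\pa_t^2\nkj{d_\gt}$, $\pa_t\nkj{d_\gt}$ and $\mathbf{D}\nkj{n_t}=-(\nat\nkj{v}\cdot\nkj{n_t})\nkj{\tau_t}$.
\end{itemize}
We estimate this system in $H^2$, using Lemma \ref{L2.8}(2) with $s=1$; this is where $\theta_i<\pi/2$ is needed. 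For the $H^1$ bound on $\mathbf{D}^2\nkj{v}$ we estimate the second-order system with Lemma \ref{L2.8}(1)–(2) at $s=0$, with normal data given by $\pa_t^3\nkj{d_\gt}\in H^1(\Gamma_{t*})$ from Lemma \ref{l6.1}.

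\textbf{Main obstacle.} The hardest point is that $\mathbf{D}$ is built from $\nkj{v}$, whereas $\dt\nkj{\rho}$ and $\nkj{\omega}$ are controlled along the old material derivative $\dt=\pa_t+\nk{v}\cdot\nabla$ on $\nk{\Omega_t}$. We plan to rewrite everything in the reference domain through the pull-backs $\nkj{\mathcal{T}_\st}$ and $\mathcal{T}^k_\st$, and to express
\[
\mathbf{D}=\mathbb{D}_t+\nabla_{v^{k+1}_\sharp-v^k_\sharp}
\]
there. The extra transport term then costs one spatial derivative on quantities that have a spare derivative:
\begin{itemize}
\item $\nkj{\rho}\in H^3$ and $\pa_t\tkj{\rho}\in H^2$;
\item $\nkj{\omega}\in H^2$;
\item $v_\sharp\in H^{2+\epsilon}$, from Step 1 and \eqref{E7.22}.
\end{itemize}
The $\mathbb{D}_t$ part is controlled by \eqref{E7.24} and \eqref{E7.26}, together with the equations \eqref{E6-14} and \eqref{E7.13}.

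The boundary data of the $\mathbf{D}^2$ system contain $\mathbf{D}^2\nkj{n_t}$, which we need in $H^{1/2}(\nkj{\Gamma_t})$. We obtain it from the analogue of \eqref{E7.33} for the $(k+1)$-th surface, using Lemma \ref{l6.1} and the identity \eqref{E6.26}. This step must not lose $\epsilon$ regularity. Products such as $\nabla\nkj{v}\,\nabla\mathbf{D}\nkj{v}$ in $L^2$ are handled with $\nabla\nkj{v}\in L^\infty$, which Step 1 provides because $\epsilon>0$.

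Collecting all the bounds gives $\mathcal{P}_0P(\mathcal{E}^{k+1}(t))$.
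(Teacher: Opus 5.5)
Your proposal is correct and follows the paper's proof essentially step for step. The shared steps are:
\begin{itemize}
\item Gronwall on \eqref{E7.13} for $\nkj{\omega}$;
\item the Hodge decomposition with the corner elliptic estimates for the $H^{2+\epsilon}$ bound on $\nkj{v}$;
\item applying $\pa_t+\nkj{v}\cdot\nabla$, and then its square, to the div-curl system \eqref{E8.16}, using the same key identity that on pulled-back quantities the new material derivative equals $\mathbb{D}_t+\nabla_{v^{k+1}_\sharp-v^k_\sharp}$.
\end{itemize}
You in fact supply details the paper leaves as ``analogous'', namely the $H^{1/2}$ control of the second material derivative of $\nkj{n_t}$ and the use of $\pa_t^3\nkj{d_\gt}\in H^1$ for the $H^1$ bound.
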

\begin{proof}
    It follows from \eqref{E7.13} that 
    \begin{align*}
        \ddt \mode{\nkj{\omega}}_{H^2(\nk{\Omega_t})}\leq \po \mode{\nkj{\omega}}_{H^2(\nk{\Omega_t})}+\mode{\divv \nk{v}\nabla^\perp \cdot \nk{v}}_{H^2(\nk{\Omega_t})},
    \end{align*}applying the Gronwall inequality and integrating over $[0,T]$ shows that
    \begin{equation}\label{E6.33}
       \mode{\nkj{\omega}}_{H^2(\nk{\Omega_t})}\leq C(T)\po. 
    \end{equation}
    Therefore, combining the estimates \[\mode{\divv \nkj{v}}_{H^2(\nkj{\Omega_t})}+\mode{\nabla^\perp\cdot \nkj{v}}_{H^2(\nkj{\Omega_t})}+  \mode{\nkj{v}\cdot\nkj{n_t}}_{H^{3.5}(\nkj{\Gamma_t})}\leq \pallk,\] with the Hodge decomposition from Section \ref{Section3} yields the asserted control of $ \mode{\nkj{v}}_{H^{2+\epsilon}\left(\nkj{\Omega_t}\right)}$. Next, applying $\dt^{k+1}\triangleq \pa_t+\nkj{v}\cdot\nabla$ to the div-curl system \eqref{E8.16}, yields
    \begin{align*}
         \divv \nkj{D_t}\nkj{v}=&\tr\left((\nabla\nkj{v})^2\right)-\left(\dt\left(\frac{1}{\nk{\rho}}\dt\nkj{\rho}\right)\right)\circ \nk{\mathcal{T}_\st}\circ\left(\nkj{\mathcal{T}_\st}\right)^{-1}\\&+\left(\nabla_{\nkj{v_\sharp}-\nk{v}_\sharp}\left(\frac{1}{\nk{\rho}}\dt\nkj{\rho}\circ \nk{\mathcal{T}_\st}\right)\right)\circ \left(\nkj{\mathcal{T}_\st}\right)^{-1},\\
\nabla^\perp\cdot \nkj{D_t}\nkj{v}=&-\divv \nkj{v}\left(\nabla^\perp\cdot\nkj{v}\right)\\&+\left(\dt\nkj{\omega}\circ\nk{\mathcal{T}_\st}+\nabla_{\nkj{v_\sharp}-\nk{v_\sharp}}\left(\nkj{\omega}\circ\nk{\mathcal{T}_\st}\right) \right)\circ \left(\nkj{\mathcal{T}_\st}\right)^{-1},\\
\nkj{D_t}\nkj{v}\cdot\nkj{n_t}=&\nkj{D_t}\left(\pa_t\nkj{d_\gt}\mu\circ \left(\Phi_\st^{k+1}\right)^{-1}\cdot\nkj{n_t}\right)-\nkj{v}\cdot\nkj{D_t}\nkj{n_t},\\
\nkj{D_t}\nkj{v}\cdot\nkj{n_b}=&\pa_t\varUpsilon -\nkj{v}\cdot\nabla_{\nkj{v}}\nkj{n_b}.
    \end{align*}
    Therefore, combining Lemma \ref{l6.1} and \eqref{E6.33}, one arrives at \[\mode{\divv\nkj{D_t} \nkj{v}}_{H^1(\nkj{\Omega_t})}+\mode{\nabla^\perp\cdot \nkj{D_t}\nkj{v}}_{H^1(\nkj{\Omega_t})}+  \mode{\nkj{D_t}\nkj{v}\cdot\nkj{n_t}}_{H^{1.5+\epsilon}(\nkj{\Gamma_t})}\leq \pallk.\] 
    Note that 
    \begin{equation}
        \sup_{t'\in[0,t]}\left(\cmode{\pa_{t'}\varUpsilon(t')}+\cmode{\pa_{t'}^2\varUpsilon(t')}\right)\leq\pallk.\label{E7.34}
    \end{equation}
Then $\mode{\nkj{D_t}\nkj{v}}_{H^2(\nkj{\Omega_t})}$ is  controlled as desired. 
   The argument for $\left(\nkj{D_t}\right)^2\nkj{v}$ is analogous, the details are thus omitted.
\end{proof}
Unlike Lemma \ref{l6.2}, the variable $\rv$ is determined exclusively by $\nkj{\rho}$, inequality \eqref{E6.32} shows that $\rv$ gains $H^2(\Omega_t^k)$ regularity from the Euler structure \eqref{E}. However, this alone  cannot guarantee that $\nat \rv\cdot\at|_{X_i^k}$ is controllable. We must therefore exploit further structures arising from the evolution system of  $\nkj{\rho}$.

\begin{lemma}\label{l6.3}
    Let $\mathcal{P}_0:=P(\mathcal{E}(0))$. For some $T>0$,  suppose that $\mathcal{E}^k(t)\leq \po$ for all $t\in [0,T]$. Let $\nk{\Gamma_t}\in\lam$ for $t\in[0,T]$. Then the following estimates hold:
\[\mode{\rv}_{H^{2+\epsilon}(\Omega_t^k)}\leq \pallk.\]
\end{lemma}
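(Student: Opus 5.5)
Since $\rv$ is governed only by the transport equation \eqref{E}, a Gronwall estimate in $H^{2+\epsilon}$ as in \eqref{E6.32} fails: it would need $\nabla\nkj{\rho}\in H^{2+\epsilon}(\nk{\Omega_t})$, which is not available. I will therefore mimic the proof of Lemma \ref{L4.3} and recover $\rv$ from its divergence, curl and normal trace through the Hodge decomposition \eqref{E3.156}. The plan is to write $\rv=\nabla\mathring{\mathcal V}+\nabla^\perp\mathring{\mathcal U}$ on $\nk{\Omega_t}$, with $\mathring{\mathcal U}$ solving the Dirichlet problem \eqref{E3.169} with source $\nabla^\perp\cdot\rv$, and $\mathring{\mathcal V}$ solving the Neumann problem \eqref{E3.168} with data $\divv\rv$, $\rv\cdot\nt$ on $\nk{\Gamma_t}$ and $\rv\cdot\nb$ on $\nk{\Gamma_b}$. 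Lemma \ref{L2.8}(3) and its Dirichlet analogue (the Remark after Lemma \ref{L2.8}) hold for $\theta_i^k\in(0,\pi/2)$ and $\epsilon<\pi/\theta_i^k-2$. They give
\[
\mode{\rv}_{H^{2+\epsilon}}\le \po\Big(\mode{\divv\rv}_{H^{1+\epsilon}}+\mode{\nabla^\perp\cdot\rv}_{H^{1+\epsilon}}+\mode{\rv\cdot\nt}_{H^{\frac32+\epsilon}(\nk{\Gamma_t})}+\mode{\rv\cdot\nb}_{H^{\frac32+\epsilon}(\nk{\Gamma_b})}+\mode{\rv}_{L^2}\Big).
\]
The last term is already controlled by \eqref{E6.32}.

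Next I estimate the interior sources by applying $\divv$ and $\nabla^\perp\cdot$ to \eqref{E} and using the commutator $[\dt,\nabla]$ of Lemma \ref{L2.1}. This gives
\[
\dt\divv\rv=-\Delta\nkj{\rho}-\tr\big(\nabla\nk{v}\nabla\rv\big),\qquad \dt(\nabla^\perp\cdot\rv)=-(\nabla^\perp\nk{v}):\nabla\rv,
\]
because the gradient term has zero curl. The curl equation is a transport equation whose right side involves only $\nabla\rv$. A Gronwall argument in $H^{1+\epsilon}$ then closes with the factor $\mode{\nk{v}}_{H^{2+\epsilon}}\le\po$, applied with $\mode{\rv}_{H^{2+\epsilon}}$ on the right. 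This is a bootstrap: after integration in time it gives a bound of the form $T\po\sup\mode{\rv}_{H^{2+\epsilon}}$, which can be absorbed for small $T$. For the divergence equation, $\Delta\nkj{\rho}\in H^{1+\epsilon}$ follows from $\nkj{\rho}\in H^{3+\epsilon}$ locally in time, or from integrating the equation in time together with $\nkj{\rho}\in H^3$ and $\dt\nkj{\rho}\in H^2$ from \eqref{E7.26}. If the $\epsilon$-loss is a problem there, I will use instead the identity $\divv\rv=-\int_0^t\Delta\nkj{\rho}+\dots$ and the wave equation \eqref{E6-14}$_1$, which expresses $\Delta\nkj{\rho}$ as $(\dt^2\nkj{\rho}-F_1^k)/\rp\in H^1$, and then interpolate.

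The boundary terms are the main obstacle. On $\nk{\Gamma_b}$, $\dt(\rv\cdot\nb)$ equals $-F_3^k$ plus lower-order terms by \eqref{E6-14}$_3$, so it is smooth. On the top, the key is the surface-tension structure. Here I will use $\nat(\rv\cdot\nt)=\nat\rv\cdot\nt+\rv\cdot\kappa\at$, and treat $\nat\rv\cdot\nt$ through its evolution as in \eqref{E3.45}:
\[
\dt(\nat\rv\cdot\nt)=-\nat\nnt\nkj{\rho}+\nat\nkj{\rho}\,\kappa+\text{l.o.t.}(\nabla\nk{v},\nabla\rv).
\]
Hence the needed bound $\nat\rv\cdot\nt\in H^{\frac12+\epsilon}$ follows from $\nnt\nkj{\rho}\in H^{\frac32+\epsilon}(\nk{\Gamma_t})$. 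I obtain the latter exactly as in \eqref{E5.63}, but from the linearized boundary equation \eqref{E6-14}$_2$: inverting $(a-\Delta_\gt)$ on $\nnt\nkj{\rho}$ with $\dt^2\nkj{\rho}\in H^{\frac12}(\nk{\Gamma_t})$ and $F_2^k$ in a suitable negative norm. The term in $F_2^k$ involving $\nat(\nat\rv\cdot\at)$ contains $\rv$ again, at the level $H^{-\frac12+\epsilon}$, which is again controlled by $\mode{\rv}_{H^{2+\epsilon}}$ times $\mode{\nat\nk{v}\cdot\nt}_{L^\infty}$. I expect to close this loop with time integration giving a factor $T$, or by interpolation with \eqref{E6.32}, absorbing as above. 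I expect this circular dependence through $F_2^k$, together with the corner compatibility that the Neumann data must satisfy, to be the delicate point. Combining the three pieces and choosing $T$ small yields the lemma.
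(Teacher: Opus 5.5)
Your overall architecture is the paper's. You use a Hodge decomposition with Lemma \ref{L2.8}(3), a transport/Gronwall bound for $\nabla^\perp\cdot\rv$ in $H^{1+\epsilon}$, and evolution equations for the normal traces. These are driven by $\nnt\nkj{\rho}\in H^{\frac32+\epsilon}(\nk{\Gamma_t})$, obtained from \eqref{E6-14}$_2$ as in \eqref{E5.63}, and you absorb the terms linear in $\mode{\rv}_{H^{2+\epsilon}}$ for small $T$. Tracking $\nat\rv\cdot\nt$ in $H^{\frac12+\epsilon}$ instead of $\rv\cdot\nt$ in $H^{\frac32+\epsilon}$ is an equivalent variant of the paper's $\dt(\rv\cdot\nt)$ estimate.

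A few small corrections:
\begin{itemize}
\item For the $H^{-\frac12+\epsilon}$ bound of $2(\nat\nk{v}\cdot\nt)\nat(\nat\rv\cdot\at)$ an $L^\infty$ multiplier is not enough, but \eqref{E6.311} gives $\nat\nk{v}\cdot\nt\in H^{\frac32+\epsilon}$, which suffices.
\item On $\nk{\Gamma_b}$ one has exactly $\dt(\rv\cdot\nb)=(\rv-\nk{v})\cdot\nabla_{\nk{v}}\nb$. This term enters the bootstrap rather than being smooth.
\item The corner compatibility you worry about is not an obstacle for acute angles. Since $\pi/\theta_i^k>2$, only the global Neumann compatibility is needed, and it holds by the divergence theorem.
\end{itemize}
The genuine gap is the divergence step, which does not close as written.

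First, $\nkj{\rho}\in H^{3+\epsilon}$ is not available. Via Lemma \ref{L2.8}(3) it would need $\Delta\nkj{\rho}=(\dt^2\nkj{\rho}-F_1^k)/\rp\in H^{1+\epsilon}$, while the energy only gives $\dt^2\nkj{\rho}\in H^1$. Your fallback ``$\Delta\nkj{\rho}\in H^1$, then interpolate'' gains nothing, since there is no higher norm of $\divv\rv$ to interpolate against. Integrating $\Delta\nkj{\rho}$ in time by itself also gains no spatial regularity.

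The missing idea is that the $\dt^2\nkj{\rho}$ term supplied by the wave equation must be integrated exactly in time, i.e.\ moved to the left-hand side. From \eqref{E6-14}$_1$ one gets
\[
\dt\Big(\divv\rv+\frac{1}{\rp}\dt\nkj{\rho}\Big)=\frac{\dt\rp}{(\rp)^2}\big(\dt\rp-\dt\nkj{\rho}\big)+\tr\big(\nabla\nk{v}\nabla\nk{v}\big)-\tr\big(\nabla\nk{v}\nabla\rv\big)\quad\text{in }\Omega_t^k .
\]
The right side contains no second derivatives of $\nkj{\rho}$. Using only $\dt\nkj{\rho}\in H^2$, it is bounded in $H^{1+\epsilon}$ by $\pallk+\po\mode{\rv}_{H^{2+\epsilon}}$. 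Gronwall for this combination, together with $\frac{1}{\rp}\dt\nkj{\rho}\in H^2\subset H^{1+\epsilon}$, then gives $\mode{\divv\rv}_{H^{1+\epsilon}}$ in exactly the bootstrappable form you need. This is the paper's argument. Your remark about using $\dt\nkj{\rho}\in H^2$ points toward it, but this cancellation is what makes the step work, and the proposal never identifies it.
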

\begin{proof}
    The further structure of $\rv$ can be derived from the iteration system \eqref{E6-14}. In fact,  applying the operator $\divv$ to \eqref{E} and combining \eqref{E6-14}$_1$, we deduce that \begin{equation*}
        \begin{aligned}
             \dt\left(\divv \rv+\frac{1}{\nk{\rho}}\dt\nkj{\rho}\right)=&\frac{1}{(\nk{\rho})^2}\dt\nk{\rho}\left(\dt\nk{\rho}-\dt\nkj{\rho}\right)\\&+\tr\left(\nabla\nk{v}\nabla\nk{v}\right) -\tr\left(\nabla\nk{v}\nabla\rv\right)
     \quad \text{ in }\Omega_t^k,
        \end{aligned}
    \end{equation*}
    which implies that 
    \begin{equation*}
        \begin{aligned}
             \mode{\divv \rv(t)}_{H^{1+\epsilon}(\nk{\Omega_t})} \leq & \mode{\frac{1}{\nk{\rho}}\dt\nkj{\rho}(t)}_{H^{1+\epsilon}(\Omega_t^k)}+\mode{\divv \rv(t)+\frac{1}{\nk{\rho}}\dt\nkj{\rho}(t)}_{H^{1+\epsilon}(\nk{\Omega_t})}\\ \leq & \pallk+ \po e^{\po t}\\&+e^{\po t}\int_0^t\po\left(\po+ P(\nkj{\mathcal{E}}(t'))+\mode{\rv(t')}_{H^{2+\epsilon}(\nk{\Omega_{t'}})}\right)\id t'.
        \end{aligned}
    \end{equation*}

The rotational part of $\rv$ can be derived by  invoking the operator $\nabla^\perp\cdot$ to \eqref{E}, \begin{equation*}
        \dt \left(\nabla^\perp \cdot\rv\right)=\pa_2 \nk{v}_j\pa_j\rv_1-\pa_1\nk{v}_j\pa_j\rv_2\quad \text{ in }\Omega_t^k,
    \end{equation*} which leads to \begin{equation*}
        \mode{\nabla^\perp\cdot\rv(t)}_{H^{1+\epsilon}(\nk{\Omega_t})}\leq \po e^{\po t}+e^{\po t}\int_0^t \po\mode{\rv(t')}_{H^{2+\epsilon}(\nk{\Omega_{t'}})}\id t' .
    \end{equation*}

    Following the arguments of \eqref{E5.63}, together with $\dt\left(\rv\cdot\nt\right)=-\nnt\nkj{\rho}-\g e_2\cdot\nt-\rv\cdot\dt\nt$, one has
    \begin{equation*}
        \begin{aligned}
           & \ddt \mode{\rv\cdot\nt}_{H^{1.5+\epsilon}(\Gamma_t^k)}\\\leq &\po\mode{\rv\cdot\nt}_{H^{1.5+\epsilon}(\Gamma_t^k)}+\mode{\nnt\nkj{\rho}}_{H^{1.5+\epsilon}(\Gamma_t^k)}+\mode{\g e_2\cdot\nt+\rv \cdot\dt\nt}_{H^{1.5+\epsilon}(\Gamma_t^k)}\\\leq&\po\mode{\rv\cdot\nt}_{H^{1.5+\epsilon}(\Gamma_t^k)}+\po+\pallk+\po \mode{\rv}_{H^{2+\epsilon}(\nk{\Omega_t})}.
        \end{aligned}
    \end{equation*}
    Applying Gronwall inequality directly leads to 
    \begin{equation*}
        \mode{(\rv\cdot\nt)(t)}_{H^{1.5+\epsilon}(\nk{\Gamma_t})}\leq \po e^{\po t}+e^{\po t}\int_0^t\po\left(\po+ P(\nkj{\mathcal{E}}(t'))+\mode{\rv(t')}_{H^{2+\epsilon}(\nk{\Omega_{t'}})}\right)\id t'.
    \end{equation*}

    Also, combining \eqref{E} and  \eqref{E6-14}$_3$ implies that 
    \begin{equation*}
        \dt (\rv \cdot\nb)=\left(\rv-\nk{v}\right)\cdot\nabla_{\nk{v}}\nb\quad \text{ on }\Gamma_b^k.
    \end{equation*}Then, \begin{equation*}
        \mode{\rv\cdot n_b}_{H^{1.5+\epsilon}(\nk{\Gamma_b})}\leq \po e^{\po t}+e^{\po t}\int_0^t\po\left(\po+\mode{\rv(t')}_{H^{2+\epsilon}(\nk{\Omega_{t'}})}\right)\id t'.
    \end{equation*}

   Following the arguments in Lemma \ref{L4.3} and combining the inequalities derived above, 
   the regularity of $\mode{\rv}_{H^{2+\epsilon}(\Omega_t^k)}$ can be governed by 
   \begin{equation*}
       \begin{aligned}
             \mode{\rv(t)}_{H^{2+\epsilon}(\Omega_t^k)}\leq& \pallk+ \po e^{\po t}\\&+e^{\po t}\int_0^t\po\left(\po+ P(\nkj{\mathcal{E}}(t'))+\mode{\rv(t')}_{H^{2+\epsilon}(\nk{\Omega_{t'}})}\right)\id t'\\\leq & \pallk,
       \end{aligned}
   \end{equation*}
   for $t\in[0,T]$, choosing $T<1$ small enough such that $e^{\po t}\po t< \frac{1}{2}$. This completes the proof. 
\end{proof}

\begin{proof}[Proof of Proposition \ref{Prop6.1}]
   Suppose that the energy bound \eqref{E6.252} holds for  all $\mathcal{E}^{\ell}(t), \ell\leq k$. Subsequently, we prove by induction that \eqref{E6.252} also holds for $k+1$. We shall only highlight the differences from the arguments in Section~\ref{Section4} and Section~\ref{Section5}.

   Similar as Lemmas \ref{L5.2} and \ref{L5.3}, applying $\dt$ to the both sides of \eqref{E6-14}$_2$ and $\eqref{E6-14}_4$,   together with \eqref{E},  one has 
    \begin{equation*}\label{E6.36}
        \begin{cases}
            \frac{\nk{\rho}}{\sigma}\dt^3\nkj{\rho}-\nat \left(\nat \dt\nnt\nkj{\rho}-2\left(\nat \nk{v}\cdot\nt\right)\Delta_\gt\nkj{\rho}\right)\\\qquad\qquad\qquad\qquad\qquad\qquad\qquad\qquad\qquad\qquad+a\dt\nnt\nkj{\rho}=\mathfrak{R}_3^{k+1}&\text{ on }\Gamma_t^k,\\
            \pm \dt^2\nnt\nkj{\rho}= -\frac{\sigma}{\beta_c}\left(\nat \dt\nnt\nkj{\rho}-2(\nat \nk{v}\cdot\nt)\Delta_\gt\nkj{\rho}\right)(\sin\nk{\theta_i})^2+\nkj{\tilde{r}_i}& \text{ at }X_i^k(i=l,r),
        \end{cases}
    \end{equation*}with\begin{align*}
        \mathfrak{R}_3^{k+1}\triangleq&-2(\nat \nk{v}\cdot\at)\Delta_\gt\nnt\nkj{\rho}-\nat\left(\nat\nk{v}\cdot\at\right)\nat\nnt\nkj{\rho}\\&+4\left(\nat\nk{v}\cdot\at\right)\dt\nat\left(\nat\nk{v}\cdot\nt\right)+4\dt (\nat \nk{v}\cdot\at)\nat\left(\nat \nk{v}\cdot\nt\right)\\&+2\dt\left(\nat\nk{v}\cdot\nt\right)\nat\left(\nat\rv\cdot\at\right)+\text{lower-order terms},
    \end{align*}the details are  omitted. 
    Combining \eqref{E6.311} and Lemma \ref{l6.3} shows that 
 \begin{equation*}\label{E6.35}
       \nkj{\tilde{r}_i}\big|_{X_i^k}=\pm\dt^2((\nb\cdot\nt)\nk{v}\cdot\nabla_{\nk{v}}\nb)+\text{controllable terms}.
 \end{equation*}Therefore,  we focus on $\dt^2\nk{v}|_{X_i^k}$. It follows from  \eqref{E8.16} that \begin{align*}
     \dt^2\nk{v}\cdot\nt=&\dt^2\left(\left(\pa_t \nk{d_\gt}\mu \right)\circ \left(\nk{\Phi_\st}\right)^{-1}\cdot\nt\right)-\dt^2\nt\cdot\nk{v}-2\dt\nt\cdot\dt\nk{v},\\
     \dt^2\nk{v}\cdot\nb=&\pa_t^2\varUpsilon-\dt^2 \nb\cdot\nk{v}-2\dt\nb\cdot\dt\nk{v},\label{E6.38}
 \end{align*}
 combining Lemma \ref{l6.1}, \eqref{E7.33} and \eqref{E7.34}, one arrives at \[\cmode{\nkj{\tilde{r}_i}|_{X_i^k}}\leq \po\cmode{\dt^2\nk{v}|_{X_i^k}}+\pallk\leq\pallk.\]

Compared with Proposition \ref{Prop5.1}, we only point out that the third term in $\nkj{\mathfrak{R}_3}$, has an analogous structure to $\mathcal{J}_1$, yet it must be handled with appropriate adjustments.
  Utilizing the fact that
\[\nat\nk{v}\cdot\nt=\nat\left((\pa_t\nk{d_\gt}\mu)\circ (\nk{\Phi_\st})^{-1}\cdot\nt\right)-\nat \nt\cdot\nk{v},\]
let $h$ be some polynomial  function, one has
\begin{align*}
\dt\nat\left(\nat\nk{v}\cdot\nt\right)=&\dt\Delta_\gt \left((\pa_t\nk{d_\gt}\mu)\circ (\nk{\Phi_\st})^{-1}\cdot\nt\right)-\dt\nat\left(\nat\nt\cdot\nk{v}\right)\\=& \dt \left(\left(\pa_t\gth{\nk{\rho}}\right)\circ \left(\nk{\Phi_\st}\right)^{-1}\right)+r(D_{t*}\pa_t\pa_s\nk{d_\gt})\circ(\nk{\Phi_\st})^{-1}\\&-h\left(\dt\Delta_\gt\nt,\nk{v},\mu \circ\left(\nk{\Phi_\st}\right)^{-1}\right)+\text{lower-order terms}.
\end{align*} Then, Lemma \ref{l6.1} allows the  integral 
\begin{align*}\label{E6.37}
4\int_{\nk{\Gamma_t}}\dt^2\nnt \nkj{\rho}\left(\nat\nk{v}\cdot\at\right)\dt\nat\left(\nat\nk{v}\cdot\nt\right)\id S
\end{align*} to be bounded as  in Proposition \ref{Prop5.1} (following the treatment of $\mathcal{J}_1$, $\mathcal{J}_4$). The lower-order terms are routine, following mutatis mutandis  in Proposition \ref{Prop5.1}. 

Next, we  follow the arguments of Proposition \ref{L4.11}. The difference lies in the bottom boundary integral,  recall from \eqref{E6-14} and  \eqref{E8.16} that \begin{align*}
    \nb\cdot \dt^2\nabla\nkj{\rho}=&\dt^2\left(\nk{v}\cdot\nabla_{\nk{v}}\nb-\g e_2\cdot\nb\right)-\dt\left(\nabla_{\nk{v}}\nb\right)\cdot\nabla\nkj{\rho}-2\nabla_{\nk{v}}\nb\cdot\dt\nabla\nkj{\rho}
    \\=&\dt^2\nk{v}\cdot\ab\left(\nabla_{\nk{v}}\nb\cdot\ab+\nabla_\ab\nb\cdot\nk{v}\right)-2 \nabla_{\nk{v}}\nb\cdot\ab\nabla_{\ab}\dt\nkj{\rho}\\&+\left(\pa_t^2\varUpsilon -\dt^2\nb\cdot\nk{v}-2\dt\nb\cdot\dt\nk{v}\right)\left(\nabla_{\nk{v}}\nb\cdot\nb+\nabla_{\nb}\nb\cdot\nk{v}\right)+\text{lower-order terms},
\end{align*}which differs substantially from \eqref{E5.622}. Since the bottom $\Gamma_b^k$ is a line segment near the contact points $X_i^k(i=l,r)$, 
we decompose the bottom  into two parts, i.e. $\Gamma_b^k=\Gamma_{b_1}^k\cup \Gamma_{b_2}^k$, where $\Gamma_{b_1}^k$ denotes the flat portions in the neighborhoods of two contact points, and the remaining part $\Gamma_{b_2}^k$. 
Introduce a cut-off function $\chi$ defined in $\Omega_t^k$ such that
\[0\leq \chi\leq 1 \text{ smooth enough},\ \chi|_{\Gamma_t^k}=0\text{ and } \chi|_{\Gamma_{b_2}^k}=1.\] 
Denote $f_1:=\left(\nabla_{\nk{v}}\nb\cdot\ab+\nabla_{\ab}\nb\cdot\nk{v}\right)\nk{\rho}$, which shows that $f_1=0$ in $\nk{\Gamma_{b_1}}$. Then, the bottom integral can be expressed as
\begin{align*}
    \int_{\nk{\Gamma_b}}\nk{\rho}\nb\cdot\dt^2\nabla\nkj{\rho}\dt^3\nkj{\rho}\id S=&\int_{\nk{\Gamma_b}}\chi f_1\dt^2\nk{v}\cdot\ab \dt^3\nkj{\rho}\id S+\text{controllable terms}\\=&\int_{\nk{\Omega_t}}\chi\mathcal{H}_{f_1}\nabla^\perp \cdot \dt^2\nk{v}\dt^3\nkj{\rho}\id X+\int_{\nk{\Omega_t}}\chi\mathcal{H}_{f_1}\dt^2\nk{v}\cdot \nabla^\perp \dt^3\nkj{\rho}\id X\\&+\text{controllable terms},
\end{align*}where $\mathcal{H}_{f_1}$ is the harmonic extension of $f_1$  with $\Delta \mathcal{H}_{f_1}=0$ in $\ot$ and $\mathcal{H}_{f_1}|_{\Gamma_b^k}=f,\ \nnt \mathcal{H}_{f_1}|_{\Gamma_t^k}=0$. The bottom integral is controlled as desired by applying integration by parts in time, together with $\mode{\chi\mathcal{H}_{f_1}\dt^3\nk{v}}_{L^2(\nk{\Omega_t})}\leq\po$ (applying Hodge-type elliptic estimates to $\chi\mathcal{H}_{f_1}\dt^3\nk{v}$).

As a result,
one can show that when $T$ is small enough, the following estimate holds 
\begin{equation*}
    \begin{aligned}
        \sup_{t\in [0,T]}\nkj{\mathcal{E}}(t)+\int_0^T \nkj{F}(t)\id t\leq &\po+\po\int_0^TP\left(\nkj{\mathcal{E}}(t)\right) \id t.
    \end{aligned}
\end{equation*}
Therefore, by a bootstrap argument, we can prove the desired bound \eqref{E6.252} holds for $k+1$-th solutions.
\end{proof}

\subsection{Cauchy sequence}\label{S6.4}
So far we have established the uniform-in-$k$ estimates \eqref{E6.252} for all $k\in\mathbb{N}$. In this part, we are finally in a position to prove that the sequence $(\nk{\rho},\nk{v},\nk{\Omega_t})$ is indeed a Cauchy sequence, which shows that this sequence converges to $(\rho,v,\Omega_t)$.

To  simplify the notation, for any function $f$ defined on $S_t^k$, let
   \[\bkj{f}=\nkj{f}\circ\nk{\Phi_\st},\quad \delta_{\bk{f}}=\bkj{f}-\bk{f}
  =\nkj{f}\circ\nk{\Phi_\st}-\nk{f}\circ \Phi_\st^{k-1},
 \] and for any $h$ defined on $\Omega_t^k$, let \[\tkj{h}=\nkj{h}\circ \nk{\mathcal{T}_\st},\quad \delta_{\tk{h}}
 =\tkj{h}-\tk{h}=\nkj{h}\circ \nk{\mathcal{T}_\st}-\nk{h}\circ\mathcal{T}_\st^{k-1}. \]

 One can pull the evolution system \eqref{E6-14} back to $\Omega_*$ through the coordinate map $\mathcal{T}_\st^k$. For the simplicity of notations, we introduce the following operators, 
\begin{align*}
    \mathcal{B}(\dgtk{})f\triangleq& 
      \left(\rho^k\Delta (f\circ(\mathcal{T}_{\st}^k)^{-1})\right)\circ \nk{\mathcal{T}_\st},
    \\\mathcal{A}(\dgtk{})f\triangleq& \left((a-\Delta_\gt)\nnt \left(f\circ
     \pstk{}
    \right)\right)\circ \nk{\Phi_\st},
     \\\mathcal{N}_j(\dgtk{}) f\triangleq & \left(\nabla_{n_j}(f\circ (\Phi_\st^k)^{-1})\right)\circ\Phi_\st^k\quad  \text{ for }j=t,b,\\
     \mathcal{S}_i(\dgtk{})f\triangleq &\frac{\sigma}{\beta_c}\left((\sin\theta_i^k)^2\nat\nnt(f\circ (\Phi_\st^k)^{-1})\right)\circ\Phi^k_\st\quad \text{ for }i=l,r.
\end{align*}
Then, the variation $\delta_{\tk{\rho}}$ satisfies the following system \begin{equation}\label{E6.471}
    \begin{cases}
        \mathbb{D}_t^2 \delta_{\tk{\rho}}-\mathcal{B}(\dgtk{})\delta_{\tk{\rho}}=F_{1,\delta}&\text{ in }\Omega_*,\\
     \frac{1}{\sigma}\nk{\rho}\circ\nk{\Phi_\st}D_{t*}^2 \delta_{\bk{\rho}}+ \mathcal{A}(\dgtk{})\delta_{\bk{\rho}}=F_{2,\delta}& \text{ on }\Gamma_{t*},\\
        \mathcal{N}_b(\dgtk{})\delta_{\bk{\rho}}=F_{3,\delta}
        &\text{ on }\Gamma_{b*},\\
        \pm   D_{t*}\mathcal{N}_t(\dgtk{})\delta_{\bk{\rho}}+\mathcal{S}_i(\dgtk{})\delta_{\bk{\rho}}=F_{4,i,\delta}&\text{ at }X_{i*}, i=l,r,
    \end{cases}
\end{equation}
where
\begin{align*}
   F_{1,\delta}\triangleq& -\left((\pa_t+v_\sharp^k\cdot\nabla)^2-(\pa_t+v_\sharp^{k-1}\cdot\nabla)^2\right)\tk{\rho}+\left(\mathcal{B}(\dgtk{})-\mathcal{B}(\dgtk{-1})\right)\tk{\rho}+\delta_{\widetilde{F_{1}^k}},\\
    F_{2,\delta}\triangleq &-\frac{1}{\sigma}\left(\nk{\rho}\circ\nk{\Phi_\st}\left(\pa_t+\nk{v}_*\cdot\nabla\right)^2-\rho^{k-1}\circ\Phi_\st^{k-1}\left(\pa_t+v^{k-1}_*\cdot\nabla\right)^2\right)\bk{\rho}- \left(\mathcal{A}\left(\dgtk{}\right)-\mathcal{A}\left(\dgtk{-1}\right)\right)\bk{\rho}+\delta_{\bk{F_2}},\\
    F_{3,\delta}\triangleq &-\left(\mathcal{N}_b(\dgtk{})-\mathcal{N}_b(\dgtk{-1})\right)\bk{\rho}+\delta_{\bk{F_{3}}},\\
  F_{4,i,\delta}\triangleq &\mp\left(\left(\pa_t+v_*^k\cdot\nabla\right)\mathcal{N}_t(\dgtk{})-\left(\pa_t+v_*^{k-1}\cdot\nabla\right)\mathcal{N}_t(\dgtk{-1})\right)\bk{\rho}-(\mathcal{S}_i(\dgtk{})-\mathcal{S}_i(\dgtk{-1}))\bk{\rho}+\delta_{\bk{F_{4,i}}}.
\end{align*}Moreover, similar  equations for  $\left(\delta_{\tk{v}},\delta_{\tk{\mathring{v}}},\delta_{\tk{\omega}}, \delta_{\nk{d_\gt}},\delta_{\nk{d}_i}\right)$ can be derived, and we omit the details here.


Therefore, we define the energy of the difference as \begin{align*}
     \mathcal{E}_\delta^k(t)=&
     a\mode{\mathcal{N}_t(\dgtk{})\delta_{\bk{\rho}}}_{C([0,T];L^2(\Gamma_{t*}))}
     +\mode{\pa_t^2 \delta_{\tk{\rho}}}_{C([0,T];L^2(\Omega_*))}+\mode{\pa_t\delta_{\tk{\rho}}}_{C([0,T];H^1(\Omega_*))}\\&+\sigma \mode{\mathcal{N}_t(\dgtk{})\delta_{\bk{\rho}}}_{C([0,T];H^1(\Gamma_{t*}))}+\mode{\delta_{\tk{\rho}}}_{C([0,T];H^1(\Omega_*))}+\mode{\delta_{\tk{\omega}}}_{C([0,T];H^1(\Omega_*))}
     \\&+\sum_{i=l,r}\left(\cmode{\delta_{\nk{d_i}}}_{C([0,T])}+\cmode{\delta_{\id (\nk{d_i})/\id t}}_{C([0,T])}\right),
\end{align*}and the dissipation of the difference \[F_\delta^k(t)=\sum_{i=l,r}\left|\left.\left(\sin \theta_i^k\nat\nnt \left(\delta_{\bk{\rho}}\circ(\Phi_\st^k)^{-1}\right)\right)\circ \Phi_\st^k\right|_{X_{i*}}\right|^2.\]
Next, we turn to estimate the various variables defined on $\Omega_*$ and $\Gamma_{t*}$ under the energy functional $\mathcal{E}_\delta^k(t)$.

\begin{lemma}
    Let $\mathcal{P}_0:=P(\mathcal{E}(0))$. For $T>0$ small enough,  suppose that $\nk{\mathcal{E}}(t)\leq \po$ for all $k\in\mathbb{N}, t\in [0,T]$. Let $\Gamma_t^k\in\lam$.  Then,  
    the following estimates hold, 
    \begin{align*}
        \mode{\delta_{\dgtk{}}}_{H^{3.5}(\gtx)}+ \mode{\pa_t\delta_{\dgtk{}}}_{H^{2.5}(\gtx)}+\mode{\pa_t^2\delta_{\dgtk{}}}_{H^{1}(\gtx)} \leq \po\left(\mathcal{E}^k_\delta(t)+\mathcal{E}^{k-1}_\delta(t)\right),\\
\mode{\delta_{\tk{v}}}_{H^2(\Omega_*)}+\mode{\pa_t\delta_{\tk{v}}}_{H^1(\Omega_*)}+\mode{\delta_{\tk{\mathring{v}}}}_{H^2(\Omega_*)}\leq \po\left(\mathcal{E}^k_\delta(t)+\mathcal{E}^{k-1}_\delta(t)\right),
    \end{align*}
    here $\delta_{d_\gt^k}=d_\gt^{k+1}-d_\gt^{k}$. 
\end{lemma}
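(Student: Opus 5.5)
We prove the lemma by differencing, in the fixed reference domain, the three constructions of the iteration: the surface map $\mathcal{K}^{-1}$ in \eqref{E6.191}, the div-curl system \eqref{E8.16}, and the auxiliary Euler equation \eqref{E}. Each difference is estimated at exactly one derivative below the uniform bounds of Lemmas \ref{l6.1}--\ref{l6.3}. The uniform bounds $\mathcal{E}^k\le\po$ let us treat every coefficient as a $\po$-bounded smooth function of the iterates. Throughout, we use the calculus fact that for smooth maps $G$ on $\lam$,
\[
\|G(d^{k+1}_\gt)-G(d^{k}_\gt)\|\le \po\,\|\delta_{d^k_\gt}\|.
\]
We apply it to $\Phi_\st$, $\mathcal{T}_\st$, $n_t$ and $\tau_t$.

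\textbf{Step 1: surface estimate.} For the contact heights, subtract the ODEs \eqref{E6.191}$_2$ for consecutive indices. The forcing $\mathfrak{B}^k_i-\mathfrak{B}^{k-1}_i$ in \eqref{E6.16} is Lipschitz in $(\pa_t d_\gt,v_*,\nnt\rho)$ at the corners. The corner trace $\nnt\rho$ is controlled by the $H^1(\Gamma_{t*})$ term $\sigma\|\mathcal{N}_t\delta_{\bk{\rho}}\|$ in $\mathcal{E}_\delta$ through the embedding $H^1(\Gamma_{t*})\hookrightarrow C^0$. Integrating twice in time with zero initial data, the difference $\delta_{d^k_i}$ is bounded by $\mathcal{E}^k_\delta+\mathcal{E}^{k-1}_\delta$.

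For the interior of the surface, write
\[
\delta_{d^k_\gt}=\mathcal{K}^{-1}(y^{k+1})-\mathcal{K}^{-1}(y^{k}).
\]
The local diffeomorphism property of $\mathcal{K}^{-1}$ (two-derivative gain, Proposition 4.2 of \cite{MingWang CPAM2021}) and the mean value theorem give
\[
\|\delta_{d^k_\gt}\|_{H^{3.5}}\le \po\Big(\|\delta(\tfrac1\sigma P(\rho)\circ\Phi_\st)\|_{H^{1.5}(\Gamma_{t*})}+|\delta_{d_i}|\Big).
\]
The $H^{1.5}$ norm of the curvature difference is controlled by $\|\mathcal{N}_t\delta_{\bk{\rho}}\|_{H^1}$ together with $\|\delta_{\tk{\rho}}\|_{H^1}$. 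The mechanism is the same as in Lemma \ref{L4.2}: differencing the linear boundary equation \eqref{E6-14}$_2$ recovers one extra tangential derivative. The change of base point $\Phi^k_\st$ versus $\Phi^{k-1}_\st$ contributes a term $\po\|\delta_{d^{k-1}_\gt}\|$, which accounts for the $\mathcal{E}^{k-1}_\delta$ on the right-hand side. For $\pa_t\delta_{d_\gt}$ and $\pa_t^2\delta_{d_\gt}$, we differentiate in time and repeat the argument with Proposition 4.12 of \cite{MingWang CPAM2021}, as in Lemma \ref{l6.1}, one derivative lower each time. The lowest level, $H^1$, uses the non-degenerate equation for $\pa_t^2 d_\gt$ with Dirichlet data $\pa_t^2 d_i$ at $X_{i*}$, exactly as in \eqref{E7.73}.

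\textbf{Step 2: velocity estimate.} Pull \eqref{E8.16} back by $\mathcal{T}^{k+1}_\st$ and subtract the $k$-th system. The difference $\delta_{\tk{v}}$ solves a div-curl system on $\Omega_*$ whose coefficients are perturbations of the identity, and whose data are:
\begin{itemize}
\item divergence: $\delta\big(\tfrac1\rho\dt\rho\big)$ in $H^1$;
\item curl: $\delta_{\tk\omega}$ in $H^1$;
\item normal trace: $\delta(\pa_t d_\gt\,\mu\cdot n_t)$ in $H^{1.5}$;
\item bottom trace: $\delta\varUpsilon$;
\item commutator terms $(D\mathcal{T}^{k+1}-D\mathcal{T}^k)\nabla\tk{v}$, bounded by $\po\|\delta_{d_\gt}\|_{H^{3.5}}$ times $\|\tk{v}\|_{H^{2+\epsilon}}$.
\end{itemize}
Applying the Hodge decomposition and Lemma \ref{L2.8}(2) with $s=0$ (valid for $\theta_i<\pi/2$) gives the $H^2$ bound; only $H^2$, not $H^{2+\epsilon}$, is needed. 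For $\pa_t\delta_{\tk v}$ in $H^1$, we difference the $D_t$-differentiated system from the proof of Lemma \ref{l6.2}. For $\delta_{\tk{\mathring v}}$, we subtract \eqref{E} for consecutive indices. This gives a linear transport equation forced by $\nabla\delta_{\tk\rho}$, by $(v^k_\sharp-v^{k-1}_\sharp)\cdot\nabla\mathring v$, and by the metric differences. Following Lemma \ref{l6.3}, we estimate the divergence, curl and normal traces via their transport equations, apply Gronwall, and then apply the elliptic estimate.

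\textbf{Main obstacle.} The difficulty is the loss of derivatives from differencing a quasilinear quantity. A term like $\nabla_{\delta v_\sharp}\mathbb{D}_t\tk\rho$ puts one derivative on $\tk\rho$, which is affordable because $\rho\in H^3$. The real obstruction is in the curvature step: $\delta_{d_\gt}$ in $H^{3.5}$ requires the curvature difference in $H^{1.5}$, i.e. $\delta\rho$ in $H^{1.5}(\Gamma_t)$, while $\mathcal{E}_\delta$ controls only $\delta_{\tk\rho}$ in $H^1(\Omega_*)$. We plan to recover the missing derivative from the boundary energy $\sigma\|\mathcal{N}_t\delta\rho\|_{H^1}$: write the differenced boundary condition $\tfrac12\delta(\rho^2)=\sigma\,\delta\kappa$ and read off $\nabla_\tau\delta\kappa$ from $\rho\,\nabla_\tau\delta\rho$. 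If this fails to close, we fall back to measuring $\delta_{d_\gt}$ in $H^{2.5}$. In that case we would modify the statement's norms correspondingly, still keeping it one derivative below the uniform bound so that the contraction closes.
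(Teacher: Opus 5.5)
Your Step~1 does not establish the top-order bound $\mode{\delta_{\dgtk{}}}_{H^{3.5}(\gtx)}$, and the mechanism you propose for it cannot work.

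\begin{itemize}
\item Reading $\nat\delta\kappa$ off $\rho\nat\delta\rho$ in $\frac12\delta(\rho^2)=\sigma\delta\kappa$ presupposes $\delta_{\bk{\rho}}\in H^{1.5}(\gtx)$. That is exactly what is missing, so the argument is circular.
\item Differencing \eqref{E6-14}$_2$ only expresses $D_{t*}^2\delta_{\bk{\rho}}$ through $(a-\Delta_\gt)\mathcal{N}_t(\dgtk{})\delta_{\bk{\rho}}$. It controls time derivatives of the trace in $H^{-1}(\gtx)$, which is what you correctly use for $\pa_t^2\delta_{\dgtk{}}$. It gives no spatial regularity of $\delta_{\bk{\rho}}$ itself.
\item The two quantities you appeal to, $\mode{\mathcal{N}_t(\dgtk{})\delta_{\bk{\rho}}}_{H^1(\gtx)}$ and $\mode{\delta_{\tk{\rho}}}_{H^1(\Omega_*)}$, cannot control that trace in $H^{1.5}$. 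In a flat boundary chart, take $N^{-1/2}\psi(Ny)\cos(Nx)$ with $\psi(0)=1$ and $\psi'(0)=0$. It has zero Neumann data and bounded $H^1$ norm, but its trace has $H^{1.5}$ norm of order $N$, because nothing controls its Laplacian.
\end{itemize}
Your fallback of measuring $\delta_{\dgtk{}}$ in $H^{2.5}$ changes the statement rather than proving it.

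The missing ingredient is the interior elliptic step that the paper's ``analogous to Lemmas \ref{l6.1}--\ref{l6.3}'' relies on. In \eqref{E7.26}--\eqref{E7.28}, and in Lemma \ref{L4.2} one level lower, surface regularity comes from interior regularity of $\tk{\rho}$ through the trace theorem, not from the boundary condition. For the difference, proceed as follows.
\begin{itemize}
\item Read \eqref{E6.471}$_1$ as the Neumann problem $\mathcal{B}(\dgtk{})\delta_{\tk{\rho}}=\mathbb{D}_t^2\delta_{\tk{\rho}}-F_{1,\delta}$, with data $\mathcal{N}_t(\dgtk{})\delta_{\bk{\rho}}\in H^1(\gtx)$ on $\gtx$ and $F_{3,\delta}$ on $\Gamma_{b*}$.
\item The source lies in $L^2(\Omega_*)$ for two reasons. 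First, $\mathcal{E}^k_\delta$ contains the second time derivative of $\delta_{\tk{\rho}}$ in $L^2(\Omega_*)$; this is the analogue of $\mode{\dt^2\rho}_{L^2(\ot)}$ in Lemma \ref{L4.2}, and your proposal never uses it. Second, $F_{1,\delta}$ is bounded by coefficient differences such as $\delta_{\dgtk{-1}}$ and $v^k_\sharp-v^{k-1}_\sharp$.
\item The earlier index enters through these coefficient differences. No base-point correction arises inside $\mathcal{K}^{-1}$, since $\delta_{\bk{\rho}}$ already compares $\rho^{k+1}\circ\Phi^k_\st$ with $\rho^k\circ\Phi^{k-1}_\st$.
\item Lemma \ref{L2.8}(2) with $s=0$ gives $\delta_{\tk{\rho}}\in H^2(\Omega_*)$. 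Lemma \ref{L2.5} then gives $\delta_{\bk{\rho}}\in H^{1.5}(\gtx)$, and the two-derivative gain of $\mathcal{K}^{-1}$ gives the $H^{3.5}$ bound.
\end{itemize}

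The same $H^2$ bound is what puts your Step~2 divergence datum into $H^1$, since that datum contains $v^k_\sharp\cdot\nabla\delta_{\tk{\rho}}$. Also, the $H^2$ velocity bound there needs Lemma \ref{L2.8}(2) with $s=1$, not $s=0$.

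Finally, consider $\delta_{\tk{\mathring v}}$. Differencing the transport equations of Lemma \ref{l6.3} produces $(v^k_\sharp-v^{k-1}_\sharp)\cdot\nabla$ acting on the $k$-th transported quantities, essentially $\divv\mathring v^k$ and $\nabla^\perp\cdot\mathring v^k$. Lemma \ref{l6.3} only places these in $H^{1+\epsilon}$, so the $H^1$ bounds on their differences need an argument your sketch does not supply.
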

\begin{proof}
    The proof is analogous to the proof of Lemmas \ref{l6.1}-\ref{l6.3} and is left to the reader.
\end{proof}

Now, we are able to prove the convergence result.
\begin{proposition}
    The sequence $\left(\nk{\rho},\nk{v},\nk{\Omega}\right)$ is a Cauchy sequence.
\end{proposition}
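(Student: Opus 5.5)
The plan is to show that $\mathcal{E}^k_\delta$ contracts in a low norm on a short time interval:
\[
\mathcal{E}^k_\delta(t)+\int_0^T F^k_\delta(t)\id t\le C_T\,\mathcal{E}^{k-1}_\delta(t),\qquad C_T\le \po T^{\alpha}
\]
for some $\alpha>0$, with $T$ small enough that $C_T\le\frac12$. Summing the geometric series then makes $\{(\tk{\rho},\tk{v},\nk{d_\gt})\}$ Cauchy in $C([0,T];H^1(\Omega_*))\times C([0,T];H^2(\Omega_*))\times C([0,T];H^{3.5}(\gtx))$. The uniform bound \eqref{E6.252} and interpolation then upgrade the convergence to every norm strictly below the energy level, and the limit $(\rho,v,\ot)$ satisfies \eqref{E2}.

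\textbf{Step 1: energy identity.} I would work with the pulled-back system \eqref{E6.471} and repeat the \emph{lower-order} argument of Theorem \ref{T1}, not the higher-order one. Apply $\mathbb{D}_t$ to \eqref{E6.471}$_1$ and pair with $\mathbb{D}_t^2\delta_{\tk{\rho}}$ in $L^2(\Omega_*)$, using the weight $|\det D\mathcal{T}^k_\st|$ so that the transport identities of Lemma \ref{L2.3} hold. Integrate by parts.

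The top boundary term is rewritten through \eqref{E6.471}$_2$. This gives the $\sigma\mode{\mathcal{N}_t\delta_{\bk{\rho}}}_{H^1}$ and $a\mode{\mathcal{N}_t\delta_{\bk{\rho}}}_{L^2}$ parts of the energy.

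The corner terms are handled by \eqref{E6.471}$_4$, exactly as in Lemma \ref{L4.10}. This produces $-\frac{\sigma}{\beta_c}F^k_\delta$ plus a remainder bounded by $|F_{4,i,\delta}|\,(F^k_\delta)^{1/2}$, which Young's inequality absorbs.

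The bottom term uses \eqref{E6.471}$_3$ and an integration by parts in time, as in \eqref{E5.663}. The pieces $\delta_{\tk{\omega}}$ and $\delta_{\nk{d_i}}$ are linear transport and ODE differences; Gronwall controls them directly from \eqref{E7.13} and \eqref{E6.191}.

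\textbf{Step 2: sources.} Each of $F_{1,\delta},\dots,F_{4,i,\delta}$ has two kinds of terms. The first are operator differences such as $(\mathcal{A}(\dgtk{})-\mathcal{A}(\dgtk{-1}))\bk{\rho}$. These are Lipschitz in $\delta_{\dgtk{-1}}$ in $H^{3.5}$ and act on a function bounded at the top regularity by \eqref{E6.252}. The second are differences of the nonlinearities, such as $\delta_{\bk{F_2}}$, which are bilinear in a bounded factor and a difference factor.

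The preceding lemma bounds every geometric and velocity difference by $\po(\mathcal{E}^k_\delta+\mathcal{E}^{k-1}_\delta)$. Since the top-order energies are uniform, each source should be controlled by $\po\big(\mathcal{E}^{k-1}_\delta+\mathcal{E}^k_\delta\big)$ in the dual norm matched to the energy. Integrating in time then gives the factor $T$, and the $\mathcal{E}^k_\delta$ contributions are absorbed on the left.

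\textbf{Main obstacle.} The hard terms are those containing the hidden unknown $\rv$ and the products $\nat(\nat v\cdot\at)$ that fail to lie in $L^2(\gt)$, for instance $2(\nat\nk{v}\cdot\nt)\nat(\nat\delta_{\rv}\cdot\at)$ in $\delta_{\bk{F_2}}$, together with the corner values of $\delta F_{4,i}$.

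For the boundary terms I would first try not estimating them in $L^2$ at all. Instead, pair them against $D_{t*}\delta_{\bk{\rho}}$ and move the tangential derivative using Lemmas \ref{L2.6}–\ref{L2.7} on $\tilde H^{\pm1/2}$, subtracting the corner value as in \eqref{E5.35}. The corner values themselves are bounded via Sobolev embedding from $\delta_{\tk{\mathring{v}}}\in H^2$, at the cost of possibly adding an $H^{2+\epsilon}$ difference estimate for $\mathring v$ in the spirit of Lemma \ref{l6.3}.

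A further difficulty is that differences of the contact angles and of the corner locations enter the dissipation structure. I would control these through $|\delta_{\nk{d_i}}|$ and require the uniform lower bounds $c_1,c_2,c_3$, which Proposition \ref{Prop6.1} propagates, so that the coefficient of $F^k_\delta$ stays positive.

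Once these bounds are in place, choosing $T$ small closes the contraction.
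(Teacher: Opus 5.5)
Your overall scheme (a lower-order energy estimate for the pulled-back difference system \eqref{E6.471}, zero difference data at $t=0$, and smallness of $T$) is the paper's. However, Step~2 has a genuine gap: not every source is controlled by the preceding difference lemma.

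Write $w=\delta_{v^{k-1}_\sharp}=v^k_\sharp-v^{k-1}_\sharp$. Then
\[
(\pa_t+v^k_\sharp\cdot\nabla)^2\tkj{\rho}-(\pa_t+v^{k-1}_\sharp\cdot\nabla)^2\tk{\rho}
=\mathbb{D}_t^2\delta_{\tk{\rho}}+\mathbb{D}_t\big(w\cdot\nabla\tk{\rho}\big)+w\cdot\nabla(\pa_t+v^{k-1}_\sharp\cdot\nabla)\tk{\rho},
\]
so $F_{1,\delta}$ contains $-\mathbb{D}_t(w\cdot\nabla\tk{\rho})$. You apply $\mathbb{D}_t$ to \eqref{E6.471}$_1$ and pair with $\mathbb{D}_t^2\delta_{\tk{\rho}}\in L^2(\Omega_*)$. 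The source then contains $(\mathbb{D}_t^2 w)\cdot\nabla\tk{\rho}$, which would have to lie in $L^2(\Omega_*)$. That requires two time derivatives of the velocity difference. Through the term $\mathcal{H}_*(\pa_t d_\gt\mu)$ in the definition of $v_\sharp$, it also requires three time derivatives of $\delta_{\dgtk{-1}}$.

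The lemma only gives $\pa_t\delta_{\tk{v}}\in H^1(\Omega_*)$ and $\pa_t^2\delta_{\dgtk{}}\in H^1(\gtx)$, and the difference energies do not reach that level. This is a loss of one derivative. The same structure reappears in $F_{2,\delta}$, whose first term contains a time derivative of $v^k_*-v^{k-1}_*$. Your ``main obstacle'' paragraph (the $\rv$ terms and the corner values) does not touch these terms, so the bounds you list would not close.

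The missing idea is the good unknown
\[
\mathcal{Q}=\mathbb{D}_t\delta_{\tk{\rho}}+\delta_{v^{k-1}_\sharp}\cdot\nabla\tk{\rho}.
\]
In terms of $\mathcal{Q}$, the left side of the identity above equals $\mathbb{D}_t\mathcal{Q}+\delta_{v^{k-1}_\sharp}\cdot\nabla(\pa_t+v^{k-1}_\sharp\cdot\nabla)\tk{\rho}$, so the offending term is absorbed into the unknown. The paper then:
\begin{enumerate}
\item applies $\mathbb{D}_t$ to \eqref{E6.471}$_1$;
\item pairs with $\mathbb{D}_t\mathcal{Q}$ instead of $\mathbb{D}_t^2\delta_{\tk{\rho}}$;
\item rewrites the top equation \eqref{E6.471}$_2$ in terms of $D_{t*}\mathcal{Q}$.
\end{enumerate}
After this, the sources involve at most one material derivative of $\delta_{v^{k-1}_\sharp}$, which the lemma controls, and the rest proceeds as in Theorem \ref{T1}.

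A smaller bookkeeping correction concerns the recursion. Coefficient differences such as $\mathcal{B}(\dgtk{})-\mathcal{B}(\dgtk{-1})$, and $\delta_{v^{k-1}_\sharp}$ itself, are bounded by $\po(\mathcal{E}^{k-1}_\delta+\mathcal{E}^{k-2}_\delta)$, not by $\po(\mathcal{E}^{k}_\delta+\mathcal{E}^{k-1}_\delta)$. What closes is therefore the two-step recursion $\sup_{[0,T]}\mathcal{E}^k_\delta\le\lambda\big(\sup_{[0,T]}\mathcal{E}^{k-1}_\delta+\sup_{[0,T]}\mathcal{E}^{k-2}_\delta\big)$ with $\lambda=O(\po T)$, not your one-step contraction. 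This still yields a Cauchy sequence once $\lambda<\tfrac12$.
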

\begin{proof}
     Define a good unknown \[\mathcal{Q}=\mathbb{D}_t\delta_{\tk{\rho}}+\delta_{v^{k-1}_\sharp}\cdot\nabla\tk{\rho}, \]with the aim of avoiding the appearance of $\mathbb{D}_t^2\delta_{v^{k-1}_\sharp}$ in the subsequent analysis.
     Under this setting, it holds that \begin{align*}
        (\pa_t+\nk{v}_\sharp\cdot\nabla)^2\tkj{\rho}-(\pa_t+v_\sharp^{k-1}\cdot\nabla)^2\tk{\rho}=&\mathbb{D}_t^2 \delta_{\tk{\rho}}+\mathbb{D}_t\left(\delta_{v^{k-1}_\sharp}\cdot\nabla\tk{\rho}\right)+\delta_{v^{k-1}_\sharp}\cdot\nabla(\pa_t+v_\sharp^{k-1}\cdot\nabla) \tk{\rho}\\=& \mathbb{D}_t \mathcal{Q}+\delta_{v^{k-1}_\sharp}\cdot\nabla(\pa_t+v_\sharp^{k-1}\cdot\nabla) \tk{\rho}.
    \end{align*}Thus, applying $\mathbb{D}_t$ to the both side of \eqref{E6.471}$_1$,  it holds that 
   \begin{equation}
     \begin{aligned}
        \mathbb{D}_t^2 \mathcal{Q}-\mathbb{D}_t\mathcal{B}(\dgtk{})\delta_{\tk{\rho}}=&-\mathbb{D}_t\left(\delta_{v^{k-1}_\sharp}\cdot\nabla(\pa_t+v_\sharp^{k-1}\cdot\nabla) \tk{\rho}\right)\\&+\mathbb{D}_t \left(\left(\mathcal{B}(\dgtk{})-\mathcal{B}(\dgtk{-1})\right)\tk{\rho}+\delta_{\widetilde{F_{1}^k}}\right)\quad \text{ in }\Omega_*,\label{E6.57}
    \end{aligned}  
   \end{equation}
    where the right-hand side satisfies \[\mode{\text{RHS of \eqref{E6.57}}}_{L^2(\Omega_*)}\leq \po\left(\mathcal{E}^{k-1}_\delta(t)+\mathcal{E}^{k-2}_\delta(t)\right).\]
    Meanwhile, \eqref{E6.471}$_2$ can be rewritten for $Q$ as 
    \begin{equation}\label{E6.58}
        \begin{aligned}
             \frac{1}{\sigma}\nk{\rho}\circ\Phi^k_\st D_{t*}Q+\mathcal{A}(\dgtk{})\delta_{\bk{\rho}}&-\delta_{\gth{2\nat \nk{v}\cdot\nt\nat(\nat \rv\cdot\at)}}\\&=  \frac{1}{\sigma}\nk{\rho}\circ\Phi^k_\st D_{t*} \left(\delta_{v^{k-1}_\sharp}\cdot\nabla\tk{\rho}\right)+F_{2,\delta}-\delta_{\gth{2\nat \nk{v}\cdot\nt\nat(\nat \rv\cdot\at)}},
         \end{aligned}
    \end{equation}
  with the right-hand side satisfies \[\mode{\text{RHS of \eqref{E6.58}}}_{L^2(\Gamma_{t*})}\leq \po\left(\mathcal{E}^k_\delta(t)+\mathcal{E}^{k-1}_\delta(t)+\mathcal{E}^{k-2}_\delta(t)\right).\]
    
  Upon taking the $L^2(\Omega_*)$-inner product of \eqref{E6.57} with $\mathbb{D}_t Q$, the subsequent analysis proceeds in analogy to that presented in  Theorem \ref{T1}, and is thus omitted. Note that $\left(\delta_{\tk{\rho}},\delta_{\tk{\mathring{v}}},\delta_{\tk{\omega}},\delta_{\nk{d}_l},\delta_{\nk{d}_r}\right)$ have zero initial data, one can show that their time derivatives also vanish on $\{t=0\}$, this implies $\mathcal{E}_\delta^k(0)=0$. Consequently, 
   we obtain the preliminary inequality 
    \[\mathcal{E}_\delta^k(t)+\int_0^t F_\delta^k(t')\id t' \leq \po\int_0^t \left(\mathcal{E}^k_\delta(t')+\mathcal{E}^{k-1}_\delta(t')+\mathcal{E}^{k-2}_\delta(t')\right)\id t',\]for $0\leq t\leq T$. Choosing $T$ sufficiently small that \[\sup_{0\leq t\leq T}\mathcal{E}^k_\delta(t)\leq \lambda\left(\sup_{0\leq t\leq T}\mathcal{E}^{k-1}_\delta(t)+\sup_{0\leq t\leq T}\mathcal{E}^{k-2}_\delta(t)\right),\] for some $\lambda=4\po T\in (0,1)$, this implies immediately that the sequence of iterates  $\{\left(\nk{\rho},\nk{v},\nk{\Omega}\right)\}_{k=2}^\infty$ is convergent, the proof is finished.
\end{proof}

In conclusion, we know immediately that the sequence of approximate solutions $\{\left(\nk{\rho},\nk{v},\nk{\Omega_t}\right)\}_{k\in\mathbb{N}}$ has a strong convergent subsequence, with the limit $(\rho,v,\Omega_t)$. The uniqueness follows from  a parallel argument.

\subsection{Back to the Euler equation}\label{S6.5}

Assume that the initial data $(\rho_0,v_0,\Omega_0)$  satisfy the following  conditions
\begin{equation}\label{E6.40}
    \begin{cases}
    P(\rho)|_{t=0}=\sigma\kappa|_{t=0}, &\text{ on }\Gamma_0,\\
      v\cdot n_b|_{t=0}=0 &\text{ on }\Gamma_b,\\
       (\nat v\cdot\nt)|_{(0,X_l(0))}>0\text{ and } (-\nat v\cdot\nt)|_{(0,X_r(0))}>0,\\
       \left.\mp \beta_cD_t^j v\cdot\ab\right|_{t=0}=\left.\sigma D_t^j(\cos\theta_s-\cos\theta_i )\right|_{t=0}   \text{ for } j=0,1,&\text{ at }X_i(0),\ i=l,r.
    \end{cases}
\end{equation}
Note that the initial setting \eqref{E6.40}$_3$ is equivalent to \eqref{E1.16} under the equality \eqref{E5.69}.

We now demonstrate that  the solution $(\rho,v,\Omega_t)$ constructed in Sections \ref{S6.3} and \ref{S6.4} is equivalent to the solution of  Euler equations  \eqref{E2} for the given initial data $(\rho_0,v_0,\Omega_0)$.
Denote 
\begin{equation}\label{E6.51}
    \mathscr{V}_0=\dt v+\nabla\rho+\g e_2,
\end{equation}
it follows from \eqref{E}, that
\begin{equation}\label{E7.41}
    \dt\left(v-\mathring{v}\right)=\mathscr{V}_0.
\end{equation}
applying operators $\divv$ and $\nabla^\perp\cdot$ to \eqref{E6.51} respectively, we deduce from the equation \eqref{E6-14}$_1$, \eqref{E7.13} and \eqref{E8.16}$_{1-2}$   that  \begin{equation}\label{E6.544}
    \divv \mathscr{V}_0=0\quad \text{ and }  \quad \nabla^\perp \cdot \mathscr{V}_0=0\quad \text{ in }\ot.
\end{equation}

On the one hand, combining 
the  limit system of \eqref{E6.191} and \eqref{E8.16}$_3$ shows that
\begin{equation}\label{E6.52}
    \dt\left(\frac{1}{\sigma}P(\rho)\right)=-\Delta_\gt v\cdot\nt-2\nat \nt\cdot\nat v\quad \text{ on }\gt,
\end{equation}
hence, $P(\rho)|_\gt=\sigma\kappa$ is automatically satisfied for $t\in[0,T]$ under the initial condition \eqref{E6.40}$_1$.

On the other hand, since equation \eqref{E6-14}$_2$ is satisfied by any evolving family of interfaces, 
combining with \eqref{E6.51} and  \eqref{E6.52} leads to \begin{equation*}
    \Delta_\gt \mathscr{V}_0\cdot\nt+2\nat \nt\cdot\nat\mathscr{V}_0=2(\nat v\cdot\nt)\nat\left(\nat \left(v-\mathring{v}\right)\cdot\at\right) \quad \text{ on }\gt,
\end{equation*}
or equivalently, \begin{equation}\label{E7.44}
    \Delta_\gt (\mathscr{V}_0\cdot\nt)+|\kappa|^2(\mathscr{V}_0\cdot\nt)=(\nat \kappa)\at\cdot\mathscr{V}_0+2(\nat v\cdot\nt)\nat\left(\nat \left(v-\mathring{v}\right)\cdot\at\right)\quad  \text{ on }\gt.
\end{equation}
Meanwhile, recalling \eqref{E6.7} which leads to the evolution of $d_i$, one has \begin{equation*}\label{E6.573}
    \mathscr{V}_0\cdot\nt=0\quad \text{ at }X_i(i=l,r),
\end{equation*}combining with \eqref{E7.44} implies that \begin{equation*}
    \mode{\mathscr{V}_0\cdot\nt}_{H^1(\gt)}\leq \po\left(\mode{\mathscr{V}_0}_{L^2(\gt)}+\mode{v-\mathring{v}}_{H^1(\gt)}\right).
\end{equation*}

Also,  the boundary condition  \eqref{E6-14}$_3$ combined with \eqref{E8.16}$_4$  yields
\begin{equation*}\label{E6.56}
    \mathscr{V}_0\cdot \nb=\dt \left(v\cdot \nb\right)=\pa_t\varUpsilon \quad \text{ on }\gb.
\end{equation*}Using \eqref{E6.544}, we obtain 
\begin{equation*}
    \pa_t \varUpsilon|\gb|=\int_{\ot}\divv \mathscr{V}_0\id X-\int_{\gt}\mathscr{V}_0\cdot\nt\id S=-\int_{\gt}\mathscr{V}_0\cdot\nt\id S.
\end{equation*}

Recall that $\curl \mathscr{V}_0=0$, then $\mathscr{V}_0=\nabla\phi$ with potential $\phi$ satisfies $\int_\ot \phi\id X=0$. 
Therefore, under the initial setting \eqref{E6.40}$_3$, combining the above analysis together with the elliptic estimates show that
\begin{align*}
    \mode{\mathscr{V}_0}_{H^\frac{3}{2}(\ot)}\leq \mode{\phi}_{H^\frac{5}{2}(\gt)}\leq &\po\left(\mode{\mathscr{V}_0\cdot\nt}_{H^1(\gt)}+\mode{\mathscr{V}_0\cdot\nb}_{H^1(\Gamma_b)}+\mode{\phi}_{L^2(\ot)}\right)\\
    \leq &\po\left(\mode{\phi}_{H^1(\ot)}+\mode{v-\mathring{v}}_{H^\frac{3}{2}(\gt)}\right)\\ \leq &\po \mode{v-\mathring{v}}_{H^\frac{3}{2}(\gt)},
\end{align*}the last inequality follows by  absorbing $\mode{\phi}_{H^1(\ot)}$ into the left-hand side.
Then, applying the energy estimates to \eqref{E7.41}, and using Sobolev interpolation, one finds that for $t\in[0,T]$, 
\begin{equation*}
    \begin{aligned}
        \mode{(v-\mathring{v})(t)}_{H^\frac{3}{2}(\ot)}\leq& C(T) \int_0^t\mode{\mathscr{V}_0(t')}_{H^\frac{3}{2}(\ot)}\id t'\\\leq&C(T)\int_0^t\po \mode{(v-\mathring{v})(t')}_{H^\frac{3}{2}(\ot)}\id t'.
    \end{aligned}
\end{equation*}
A   continuity argument on a sufficiently small interval thus gives $v=\mathring{v}$, from which the Euler equation \eqref{E2}$_1$ follows. Consequently, 
 this in turn implies that $v\cdot\nb=0$ on $\gb$ under the initial setting \eqref{E6.40}$_2$, and the contact point condition \eqref{E2}$_6$ holds under  \eqref{E6.40}$_4$.

As a result, we can finally retrieve the solution $(\rho,v, \ot)$ to the compressible water waves system \eqref{E2}.



    \bigskip

    \noindent{\bf Acknowledgement.}   The author Jingchi Huang is supported by NSF  of China  under Grant No. 12471196.
     The author Chao Wang is supported by NSF of China under Grant No. 12471189.

\end{document}